\let\frak\mathfrak
\let\Bbb\mathbb
\def\>{\relax\ifmmode\mskip.666667\thinmuskip\relax\else\kern.111111em\fi}
\def\<{\relax\ifmmode\mskip-.333333\thinmuskip\relax\else\kern-.0555556em\fi}
\def\vsk#1>{\vskip#1\baselineskip}
\def\vv#1>{\vadjust{\vsk#1>}\ignorespaces}
\def\vvn#1>{\vadjust{\nobreak\vsk#1>\nobreak}\ignorespaces}
  \let\ssize\scriptstyle
\let\sssize\scriptscriptstyle
\let\Medskip\medskip
\def\medskip{\par\Medskip}
\let\Bigskip\bigskip
\def\bigskip{\par\Bigskip}
\let\Maketitle\maketitle
\def\maketitle{\Maketitle\thispagestyle{empty}\let\maketitle\empty}
\newtheorem{thm}{Theorem}[section]
\newtheorem{cor}[thm]{Corollary}
\newtheorem{lem}[thm]{Lemma}
\newtheorem{prop}[thm]{Proposition}
\numberwithin{equation}{section}
\theoremstyle{definition}
\newtheorem{defn}[thm]{Definition}
\newtheorem{rem}[thm]{Remark}
\newtheorem{example}[thm]{Example}
\let\mc\mathcal
\let\nc\newcommand
\let\al\alpha
\let\bt\beta
\let\dl\delta
\let\gm\gamma
\let\la\lambda
\let\La\Lambda
\let\phi\varphi
\let\si\sigma
\let\Si\Sigma
\let\thi\vartheta
\let\Ups\Upsilon
\let\om\omega
\let\Om\Omega
\let\der\partial
\let\Hat\widehat
\let\ox\otimes
\let\Tilde\widetilde
\let\geq\geqslant
\let\leq\leqslant
\let\on\operatorname
\let\bi\bibitem
\let\bs\boldsymbol
\def\C{{\mathbb C}}
\def\Z{{\mathbb Z}}
\def\Q{{\mathbb Q}}
\def\Pb{{\mathbb P}}
\def\H{{\mathbb H}}
\def\F{{\mc F}}
\def\+#1{^{\{#1\}}}
\def\rank{\on{rank}}
\def\d{\on{d}\!}
\def\gl{\mathfrak{gl}}
\def\sln{\mathfrak{sl}_N}
\def\beq{\begin{equation}}
\def\eeq{\end{equation}}
\def\be{\begin{equation*}}
\def\ee{\end{equation*}}
\nc{\bea}{\begin{eqnarray*}}
\nc{\eea}{\end{eqnarray*}}
\nc{\bean}{\begin{eqnarray}}
\nc{\eean}{\end{eqnarray}}
\nc{\bal}{\begin{align*}}
\nc{\eal}{\end{align*}}
\nc{\baln}{\begin{align}}
\nc{\ealn}{\end{align}}
\def\h{{\mathfrak h}}
\nc{\Il}{{\mc I_{\bs\la}}}
\nc{\bla}{{\bs\la}}
\nc{\Fla}{\F_\bla}
\nc{\tfl}{{T^*\Fla}}
\nc{\GL}{{GL_n(\C)}}
\nc{\GLC}{{GL_n(\C)\times\C^*}}
\let\sd s 
\def\ddk_#1{\kk_{#1}\<\>\frac\der{\der\<\>\kk_{#1}}}
\def\bul{\mathbin{\raise.2ex\hbox{$\sssize\bullet$}}}
\def\intt{\mathchoice
{\mathop{\raise.2ex\rlap{$\,\,\ssize\backslash$}{\intop}}\nolimits}
{\mathop{\raise.3ex\rlap{$\,\sssize\backslash$}{\intop}}\nolimits}
{\mathop{\raise.1ex\rlap{$\sssize\>\backslash$}{\intop}}\nolimits}
{\mathop{\rlap{$\sssize\<\>\backslash$}{\intop}}\nolimits}}
\let\kk q 
\let\cc c
\let\Ko K
\def\GZ/{Gelfand-Zetlin}
\def\KZ/{{\slshape KZ\/}}
\def\qKZ/{{\slshape qKZ\/}}
\def\XXX/{{\slshape XXX\/}}
\nc{\slnl}{{\sln (\lambda)}}
\nc{\PCN}{{   (\C[x])^N   }}
\nc{\di}{\on{Diag}}
\nc{\dio}{\on{Diag}_0}
\nc{\Mm}{{\mc M}}
\nc{\Nn}{{\mc N}}
\nc{\A}{{\mc C}}
\nc{\PCr}{{  P  (\C[x])^n   }}
\nc{\Pk}{{(\bs{P}^1)^k}}
\nc{\N}{{\Bbb N}}
\nc{\Ll}{{\mc L}}
\nc{\ord}{{\on{ord}\,}}
\nc{\Sing}{{\on{Sing}\,}}
\nc{\sing}{{\on{Sing}\,}}
\nc{\Hess}{{\on{Hess}}}
\nc{\R}{{\Bbb R}}
\let\on\operatorname
\nc{\Kk}{{\bs K}}
\nc{\Ap}{{A_\Phi(z)}}
\nc{\ap}{{A_\Phi(z)}}
\nc{\sv}{{\sing V}}
\nc{\cd}{{\C^n-\Delta}}
\nc{\UT}{{U^0}}   
\nc{\ep}{\epsilon}
\newcommand\cyr{\fontencoding{OT2}\fontfamily{wncyr}\selectfont
   \language\fakelanguage}
\DeclareTextFontCommand{\textcyr}{\cyr}
\numberwithin{equation}{section}
\DeclareMathOperator{\HOM}{\mathscr{H}\text{\kern -3pt {\calligra\large om}}\,}
\newsavebox{\@brx}
\newcommand{\llangle}[1][]{\savebox{\@brx}{\(\m@th{#1\langle}\)}%
  \mathopen{\copy\@brx\kern-0.5\wd\@brx\usebox{\@brx}}}
\newcommand{\rrangle}[1][]{\savebox{\@brx}{\(\m@th{#1\rangle}\)}%
  \mathclose{\copy\@brx\kern-0.5\wd\@brx\usebox{\@brx}}}
\newcommand{\bsh}{\begin{shaded}}
\newcommand{\esh}{\end{shaded}}
\newcommand{\ic}{\sqrt{-1}}
\newcommand{\hol}{\mathscr O}
\newcommand\pnorm[2]{\lVert #2\rVert_{#1}}
\newcommand\snorm[1]{\lVert#1\rVert_\infty}
\numberwithin{figure}{section}
\begin{document}

\title[a Levinson type theorem on complex domains]{Asymptotic solutions for linear ODEs with not-necessarily meromorphic coefficients: a Levinson type theorem on complex domains, and applications}
\author[G.\,Cotti, D.\,Guzzetti, D.\,Masoero]{G.\,Cotti$\>^{\circ}$, D.\,Guzzetti$\>^{\star,\circledast}$, D.\,Masoero$\>^{\circ,\oslash}$}
\address{Departamento de Matem\'atica, Instituto Superior T\'ecnico, Av.\,Rovisco Pais
1049-001 Lisboa, Portugal}
\address{Instituto Superior de Agronomia, Universidade de Lisboa, Tapada da Ajuda, 1349-017 Lisboa, Portugal}
\address{Scuola Internazionale Superiore di Studi Avanzati, Via Bonomea 265, 34136 Trieste, Italy\newline INFN Sezione di Trieste, via Valerio 2, 34127 Trieste, Italy\vskip1,5mm}
\email{giordano.cotti@tecnico.ulisboa.pt, guzzetti@sissa.it, davidem@isa.ulisboa.pt}
\subjclass[2020]{Primary: 34M03, 34M30; Secondary: 34M25, 34M35. }
\maketitle
{\small\begin{center}
\textit{ $^\circ\>$Grupo de F\'isica Matem\'atica \\
Departamento de Matem\'atica, Instituto Superior T\'ecnico\\ Av. Rovisco Pais
1049-001 Lisboa, Portugal\/\\
\vskip1.5mm
$^\star\!$ Scuola Internazionale Superiore di Studi Avanzati\\
Via Bonomea 265, 34136 Trieste, Italy\\
\vskip1.5mm
$^\circledast\!$ INFN Sezione di Trieste\\
Via Valerio 2, 34127 Trieste, Italia\\
\vskip1.5mm
$^\oslash\!$  Instituto Superior de Agronomia, Universidade de Lisboa\\
Tapada da Ajuda, 1349-017 Lisboa, Portugal}
\end{center}}
{\let\thefootnote\relax
\footnotetext{\vskip5pt 
\noindent
$^\circ\>$\textit{ E-mail}:  giordano.cotti@tecnico.ulisboa.pt, guzzetti@sissa.it, davidem@isa.ulisboa.pt}}

\begin{abstract}
In this paper, we consider systems of linear ordinary differential equations, with analytic coefficients on big sectorial domains, which are asymptotically diagonal for large values of $|z|$. Inspired by \cite{Lev48}, we introduce two conditions on the dominant diagonal term (the $L$-{\it condition}) and on the perturbation term (the {\it good decay condition}) of the coefficients of the system, respectively. Assuming the validity of these conditions, we then show the existence and uniqueness, on big sectorial domains, of an {\it asymptotic} fundamental matrix solution, i.e.\,\,asymptotically equivalent (for large $|z|$) to a fundamental system of solutions of the unperturbed diagonal system. Moreover, a refinement (in the case of subdominant solutions) and a generalization (in the case of systems depending on parameters) of this result are given. 

As a first application, we address the study of a class of ODEs with not-necessarily meromorphic coefficients, the leading diagonal term of the coefficient being a generalized polynomial in $z$ with real exponents. We provide sufficient conditions on the coefficients ensuring the existence and uniqueness of an asymptotic fundamental system of solutions, and we give an explicit description of the maximal sectors of validity for such an asymptotics. Furthermore, we also focus on distinguished examples in this class of ODEs arising in the context of open conjectures in Mathematical Physics relating Integrable Quantum Field Theories and affine opers ({\it ODE/IM correspondence}). Notably, our results fill two significant gaps in the mathematical literature pertaining to these conjectural relations.

Finally, as a second application, we consider the classical case of ODEs with meromorphic coefficients. Under an {\it adequateness} condition on the coefficients (allowing ramification of the irregular singularities), we show that our results reproduce (with a shorter proof) the main asymptotic existence theorems of Y.\,Sibuya \cite{Sh62,Sh68} and W.\,Wasow \cite{Was65} in their optimal refinements: the sectors of validity of the asymptotics are maximal, and the asymptotic fundamental system of solutions is unique. 

\end{abstract}

\newpage
\tableofcontents

\section{Introduction}

\noindent{1.1.} {\bf Asymptotically diagonal systems. }Consider the linear ordinary differential equation
\beq\label{eq1} \frac{dY}{dz}=\left(\La(z)+R(z)\right)Y,\qquad z\in U,
\eeq where 
\begin{enumerate}
\item $U\subseteq\mathbb C$ is an unbounded simply-connected domain given by the union of lines, whose distance from 0 is greater than a fixed $a>0$ and whose slopes vary in an open interval 
 $\left]\tau_{\rm min};\tau_{\rm max}\right[\subseteq\R$, see Figure \ref{19ottobre2023-1};
\item $\La(z), R(z)$ are analytic $n\times n$ matrix-valued functions on $U$;
\item $\La(z)={\rm diag}(\La_{11}(z),\dots,\La_{nn}(z))$ is a diagonal matrix.
\end{enumerate} 
The arguments of points $z$ of $U$ vary in the interval $\left] \tau_{\min};\tau_{\rm max}+\pi\right[$. The choice of a determination of the argument of points of $U$ allows us to actually see $U$ as a subdomain\footnote{In the main body of the paper, the more general case $U\subseteq \Tilde{\C^*}$ will be considered. Here, for the sake of simplicity of exposition, we limit ourselves to the case $U\subseteq \C$.} of $\widetilde{\mathbb C^*}$, the universal cover of $\mathbb C^*:=\mathbb C\setminus\{0\}$. 
\vskip1,5mm
\begin{figure}
\centerline{\includegraphics[width=0.5\textwidth]{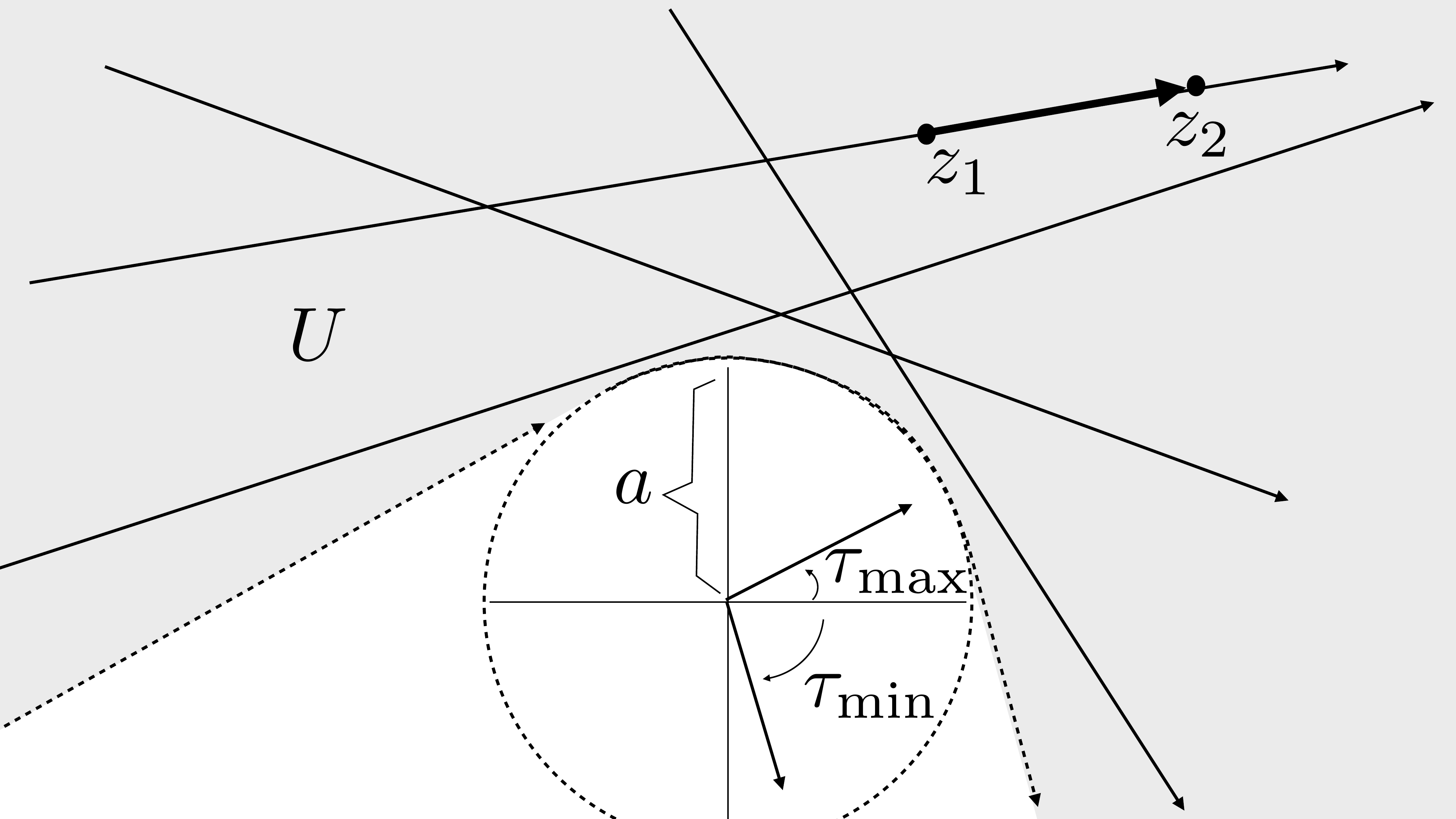}}
\caption{The domain $U$, union of lines (some are represented) with natural orientation. Also an example of an oriented segment $[z_1,z_2]$ is represented.}
\label{19ottobre2023-1}
\end{figure}
Assume that the summand $R$ is ``small'' (in a sense to be defined) in the regime $|z|\to \infty$. In such a case, equation \eqref{eq1} can be seen as a ``perturbation'' of the diagonal equation
\beq
\label{eq2} \frac{dY}{dz}=\La(z)Y.
\eeq It is then natural to investigate up to which extent solutions of \eqref{eq2} approximate solutions of \eqref{eq1} for big values of $|z|$.
A fundamental matrix solution for the unperturbed equation \eqref{eq2} is 
\[Y_o(z):=\exp\int_{z_o}^z\La(\zeta)\d \zeta,\quad \text{for an arbitrarily fixed }z_o\in U.
\]
In this paper, we address the following question:

\begin{center}
{\bf Q: }\emph{Under which conditions on the triple $(U,\La,R)$, does there exist a unique fundamental matrix solution $Y\colon U\to GL(n,\C)$ of \eqref{eq1} asymptotically equivalent to $Y_o$, that is
\[Y(z)Y_o(z)^{-1}=I_n+o(1),\quad z\to\infty,\quad z\in U?
\]}
\end{center}

Inspired by the work of N.\,Levinson \cite{Lev48} for linear ODEs on the real line, our main result, Theorem \ref{mainth}, will provide an answer to question {\bf Q}. It will give, at the same time, sufficiently general conditions on $(U,\La,R)$ to be applicable in a wide range of situations, as we are going to expose.
\vskip2mm
\noindent{1.2.} {\bf Description of the main results. } In order to formulate the main results of the paper, we need first to introduce two preliminary definitions.
\vskip1,5mm
The first notion we want introduce is the $L${\it -condition} for the pair $(U,\La)$.
\vskip1,5mm
Any line $\ell\subseteq U$ is the boundary of an half-plane $\mathbb H\subseteq U$. Hence, the natural complex orientation of $\mathbb H$ induces an orientation of $\ell=\der\mathbb H$. We will refer to this orientation as the {\it natural orientation} of $\ell$.

Given $z_1,z_2\in U$, denote by $[z_1,z_2]$ the oriented segment $\{(1-t)z_1+tz_2,\,0\leq t\leq 1\}$, with orientation from $z_1$ to $z_2$. Let $\mathcal P_U$ be the set of all oriented segments in $U$, obtained by truncating the naturally oriented lines in $U$. See Figure \ref{19ottobre2023-1}.
\vskip1,5mm
We will say that the pair $(U,\La)$ satisfies the\footnote{Here $L$  stands for {\it Levinson}, being the condition above our generalization to the complex domain of some simpler conditions, due to N.\,Levinson, required in the real case, see \cite{Lev48},\cite[Ch.1 and references therein]{East89}.} $L${\it -condition} if for any pair $(i,j)$, with $i,j=1,\dots,n$ and $i\neq j$, the set of real numbers
\[A_{i,j}:=\left\{{\rm Re}\int_{[z_1,z_2]}\left(\La_{ii}(\zeta)-\La_{jj}(z)\right)\d\zeta,\quad [z_1,z_2]\in\mathcal P_U\right\}\subseteq \R
\]has an upper bound and/or a lower bound.
\vskip1,5mm
The second notion we want to introduce is the {\it good decay condition} for the pair $(U,R)$.
\vskip1,5mm
Given a line $\ell\subseteq U$, denote by $d_\ell$ the distance of $\ell$ from the origin $0\in\C$.
We will say that $(U,R)$ satisfies the {\it good decay condition} if (the restriction of) $R$ is $L^1$-integrable along any line $\ell\subseteq U$, and moreover
\[|\!| R|_\ell|\!|_1\to 0, \quad \text{for }d_\ell\to+\infty,\quad |\!| \cdot|\!|_1\text{ being the 1-norm on }L^1(\ell, dz).
\]
We are now able to state the main result of the paper.
\begin{thm}[Theorem \ref{mainth}]\label{introthm1}
If $(U,\La)$ satisfies the $L$-condition, and $(U,R)$ the good decay condition, then the differential equation \eqref{eq1} admits a unique fundamental matrix solution $Y\colon U\to GL(n,\C)$ such that
\beq\label{eq3}
Y(z)=\left(I_n+o(1)\right)\exp\int_{z_o}^z\La(\zeta)\d\zeta,\quad z\to\infty,\quad \al\leq \arg z\leq \bt+\pi,
\eeq for arbitrary $\al,\bt$ satisfying $\tau_{\rm min}<\al\leq\bt<\tau_{\rm max}$.
\end{thm}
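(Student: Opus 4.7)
The plan is to adapt N.\,Levinson's classical real-line argument \cite{Lev48} to the complex domain $U$, via a gauge transformation followed by a contractive Volterra-type integral equation on lines in $U$, with the paths of integration dictated by the $L$-condition. First I would substitute $Y(z) = Y_o(z) Z(z)$, reducing \eqref{eq1} to $Z' = B Z$ where $B(z) = Y_o(z)^{-1} R(z) Y_o(z)$ has entries $B_{ij}(z) = R_{ij}(z) \exp\int_{z_o}^z (\La_{jj} - \La_{ii})$; the target \eqref{eq3} becomes $Z(z) \to I_n$. Fixing a column index $j$ and writing $Z e_j = e_j + v$, the $i$-th component satisfies $v_i' = (\La_{ii}-\La_{jj})v_i + R_{ij} + \sum_k R_{ik} v_k$, which, via variation of parameters along a line $\ell \subset U$, is equivalent to the Volterra equation
\[
v_i(z) = \int_{\gamma_i(z)} \exp\Bigl(\int_\zeta^z (\La_{ii}-\La_{jj})(w)\,dw\Bigr)\Bigl(R_{ij}(\zeta) + \sum_k R_{ik}(\zeta) v_k(\zeta)\Bigr)d\zeta.
\]
Here $\gamma_i(z) \subset \ell$ is a semi-infinite portion of $\ell$ ending at $z$; by the $L$-condition $A_{i,j}$ is bounded above or below, and accordingly we take $\gamma_i$ along the natural orientation of $\ell$ (upper bound) or against it (lower bound), which makes the exponential kernel uniformly bounded by some $e^M$. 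For $i=j$ the kernel is $1$ and we pick the end at infinity ensuring $v_j \to 0$.

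Next I would solve the integral equation by contraction in $L^\infty(\ell, \C^n)$. Denoting the integral operator by $T_\ell$, the estimate
\[
\lVert T_\ell v - T_\ell w \rVert_\infty \;\le\; n\, e^M\, \lVert R|_\ell \rVert_1\, \lVert v - w \rVert_\infty
\]
combined with the good decay condition $\lVert R|_\ell \rVert_1 \to 0$ as $d_\ell \to \infty$ shows that $T_\ell$ is a strict contraction whenever $\ell$ is sufficiently far from the origin, hence has a unique fixed point $v^\ell$ with $\lVert v^\ell \rVert_\infty = O(\lVert R|_\ell \rVert_1)$. This produces, on each far line, a canonical column-solution of \eqref{eq1} asymptotic to $e_j \exp\int_{z_o}^z \La_{jj}$ along $\ell$.

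To globalise, I would extend each column solution from a fixed far line $\ell_0$ to all of the simply-connected $U$ by analytic continuation. Independence from the choice of initial line is enforced by linear uniqueness: any two such extensions differ by right multiplication by a constant matrix $C$, and by restricting to any common far line one sees that both must solve the same contraction problem, so $C = I_n$. For the asymptotic on the sector $\al \le \arg z \le \bt + \pi$ with $\tau_{\min} < \al \le \bt < \tau_{\max}$, I would use that every $z$ in this sector with $|z|$ large lies on lines of $U$ of slopes $\tau$ in a closed sub-interval of $(\tau_{\min}, \tau_{\max})$ with $d_\ell = |z|\,|\sin(\arg z - \tau)|$ of order $|z|$ (pick $\tau$ near $\arg z - \pi/2$ within the admissible range). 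The estimate on $v^\ell$ then forces $v(z) = O(\lVert R|_\ell \rVert_1) \to 0$ uniformly on the sector, whence $Y(z)Y_o(z)^{-1} \to I_n$. Uniqueness of $Y$ is then immediate: two fundamental solutions with asymptotic \eqref{eq3} differ by a constant matrix $C$, and writing both as $Y_o(I_n + v_i)$ with $v_i \to 0$ and letting $z \to \infty$ in the sector forces $C = I_n$.

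The main obstacle will be the patching step: making precise that the line-by-line contraction constructions assemble into a single analytic solution on $U$ whose asymptotic is uniform over full sub-sectors, rather than only along individual lines. The joint role of the two hypotheses is crucial here: the $L$-condition furnishes uniform kernel bounds over the whole family $\mathcal P_U$ of segments (not only within a fixed line), while the good decay condition supplies the smallness line by line. Combining these with the flexibility to choose, for each $z$ in the sector, a line of $U$ through $z$ whose perpendicular distance to the origin grows like $|z|$, is what converts the family of line-wise fixed points into the unique global $Y$ of the theorem.
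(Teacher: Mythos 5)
Your skeleton coincides with the paper's starting point (a scalar gauge per column, Levinson-type kernels $\exp\int_\zeta^z(\La_{ii}-\La_{jj})$ with the direction of integration along a line chosen according to whether $A_{i,j}$ is bounded above or below, and a contraction driven by the smallness of $\|R|_\ell\|_1$ on far lines), but there is a genuine gap exactly at the step you flag yourself: the passage from line-wise fixed points to one global solution with uniform asymptotics. Your patching argument ("any two extensions differ by a constant matrix $C$, and by restricting to any common far line one sees that both must solve the same contraction problem, so $C=I_n$") does not go through as stated: to know that the analytic continuation of the solution built on a line $\ell_1$, restricted to another far line $\ell_2$, is a fixed point of $T_{\ell_2}$, you would already need to know that it is bounded, indeed $o(1)$ in the appropriate sense, on $\ell_2$ --- which is precisely what is being proved --- and even granting boundedness, the implication "ODE solution with asymptotics along one line $\Rightarrow$ solution of the integral equation on that line" cannot be established from one-line data alone, because homogeneous terms $c_i e_i\exp\int_{z_o}^z(\La_{ii}-\La_{jj})$ in the (L-1) case stay bounded along the line and cannot be excluded there; this is exactly the content of Remark \ref{30settembre2024-1}, where it is pointed out that Levinson-style line-wise conditions yield existence but not the equivalence needed for uniqueness. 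The same omission reappears in your closing uniqueness paragraph: to deduce $C=I_n$ from $Y_o C Y_o^{-1}\to I_n$ you must argue that for each pair $(i,j)$ the exponential $\exp\int_{z_o}^z(\La_{ii}-\La_{jj})$ does not tend to $0$ everywhere at infinity in the sector, and this is where the $L$-condition has to be invoked (under (L-1), the bound $\mathrm{Re}\int_{[w,z]}\leq K_1$ keeps the exponential bounded away from zero at the negative end of every line; under (L-2), at the positive end). (A cosmetic point: you gauge by the full matrix $Y_o$ but then write the component equation belonging to the scalar gauge $Y e_j=e^{\int\La_{jj}}(e_j+v)$; the integral equation you display is nonetheless the correct one.)

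The paper closes this gap by not working line by line. It first shows (step (ii) in the proof of Lemma \ref{lemmak}) that, thanks to the good decay condition and an auxiliary transversal line, the value of $K_{\pm,j_o}[f;z]$ is independent of which line through $z$ is used for the integration; hence the operators act on the Banach space $\mc H_{J,a}$ of bounded holomorphic functions on the whole two-dimensional domain $\H_{J,a}$, and the Banach--Caccioppoli fixed point is automatically a single analytic function with uniform smallness on $\H_{J,a}$ --- no patching is required. Uniqueness of the asymptotic solution then follows from Lemma \ref{lemdiffeqintegeq}, whose "converse" direction uses the $L$-condition over the whole domain (not along a single line) to force the coefficients $c_i$ of the homogeneous terms to vanish. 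To salvage your line-wise construction you would need to supply precisely these two ingredients --- line-independence of the integrals and a domain-wide argument excluding the homogeneous terms; as written, the proposal establishes asymptotic column solutions along individual far lines, but neither a single analytic $Y$ on $U$ with asymptotics uniform on the sectors $\al\leq\arg z\leq\bt+\pi$ nor its uniqueness.
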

For short, we will refer to the fundamental matrix solution $Y$ satisfying \eqref{eq3} as the {\it asymptotic solution} of \eqref{eq1}.
\vskip1,5mm
In Sections \ref{20settembre2023-2} and \ref{20settembre2023-1}, we also obtain a refinement and a generalization, respectively, of the main result above. 
\vskip1,5mm
On the one hand, in Section \ref{20settembre2023-2}, we focus on the study of those columns of $Y(z)$ defining {\it subdominant} solutions of \eqref{eq1} on $U$. These are vector-valued solutions $y_{\rm sub}(z)$ of \eqref{eq1} corresponding to the $j_o$-th columns of $Y(z)$, where $j_o\in\{1,\dots,n\}$ is such that all the sets $A_{i,j_o}$ have lower bound for any $i=1,\dots,n$ with $i\neq j_o$. In particular, the solutions $y_{\rm sub}(z)$ are dominated by any other non-zero vector-valued solutions $y(z)$, in the sense that\footnote{Here $|\cdot|$ is an arbitrarily fixed norm on the space of matrices $M(n,\C)$.} $|y_{\rm sub}(z)|=O(|y(z)|)$ for $z\to\infty$ along any line $\ell\subseteq U$. In our Theorem \ref{subth} we show that the validity of the asymptotic estimate given by the $j_o$-column of both sides of \eqref{eq3} is valid in a bigger domain $\widetilde U$ than $U$, under mild assumptions on $(\widetilde U,\La,R)$.
\vskip1,5mm
On the other hand, in Section \ref{20settembre2023-1} we address the case of equation \eqref{eq1} with coefficients holomorphically depending on further parameters varying in an open connected subset of $\C^m$, with $m\geq 1$. Under uniform analogs of both the $L$-condition and the good decay condition, together with a uniform integrability assumption, our Theorem \ref{mainth-bis} provides a parametric analog of Theorem \ref{introthm1}.
\vskip1,5mm

We assert that Theorem \ref{mainth} introduces at least two significant innovations to the field. Firstly, it addresses differential equations \eqref{eq1} with coefficients that may not be meromorphic, and, in fact, does not require their analytic continuations to be single-valued, or even to exist at all around the singularity. Secondly, it is important to emphasize that our result not only predicts the existence of asymptotic solutions but also establishes their uniqueness. The former innovation has practical applications in addressing challenging open problems in mathematical physics: we will explain how Theorem \ref{mainth} can be used to fill gaps in the literature on the {\it ODE/IM correspondence}\,\footnote{\,IM stands for ``Integrable Models''.}. The latter, when applied to the classical case of ODEs with meromorphic coefficients on $\Pb^1(\C)$, allows us to derive more robust versions of classical results regarding the existence and uniqueness of asymptotic solutions.

\vskip2mm

\noindent{1.3.} {\bf Applications. } As a first application of our results, we address the study of a family of linear ODEs with not-necessarily meromorphic coefficients on $\mathbb P^1(\C)$. These equations are indeed defined on $U:=\{z\in\Tilde{\C^*}\colon |z|>a\}$ and are of the form \eqref{eq1}, with 
\[\Lambda(z)=\sum_{k=0}^h \Lambda^{(k)} z^{\sigma_k -1},
\quad
              \Lambda^{(k)}\text{ diagonal matrices},
 \quad 
              1=\sigma_0>\sigma_1>\dots>\sigma_h>0,\quad \si_i\in\R_{>0},
\]
and $R\colon U\to M(n;\C)$ analytic satisfying the good decay condition on $U$ (this holds, for example, if $R=O(|z|^{-1-\dl})$ on $U$, for some $\dl>0$).
\vskip2mm
For ODEs of such a form,  
a notion of {\it Stokes rays} can be defined, see Definition \ref{9giugno2023-2}. In Section \ref{secadeqtripl1}, under adequate conditions\footnote{In particular $\La$ has to admit adequate tuples, as defined in Definition \ref{18settembre2023-3}.} on $\La$, we show that these Stokes rays can be used to select the maximal sectors of $U$ on which the matrix $\La$ satisfies the $L$-condition. Consequently, for adequate $\La$, on these maximal sectors the existence and uniqueness of the asymptotic solution is guaranteed by Theorem \ref{mainth}, see Theorem \ref{5giugno2023-3}. Moreover, we also specialize to this case our results on subdominant solutions (Theorem \ref{22giugno2023-5}) and to the case of equations depending on parameters (Theorem \ref{23ottobre2023-1}).
\vskip1,5mm
Examples of equations of the form above arise in the context of the {\it ODE-IM correspondence}. The ODE/IM correspondence is a mostly conjectural duality between Integrable Quantum Field Theories (IM) and linear
ordinary differential operators with analytic coefficients (ODE).
It states that every solution of the Bethe (Ansatz) Equations
originating from a Quantum Field Theory can be represented
as the spectral determinant of a linear differential operator, see e.g. \cite{CM1,CM2,DT,DDT,BLZ,FF,FH,LZ,MRV,GLVW}. Therefore, any state of any integrable quantum field theory should correspond to a linear differential
operator.
\vskip1,5mm
Oftentimes, quantum field theories come in families parametrized by moduli, one of which can be identified with the degree at $z=\infty$ of the coefficient of the corresponding linear differential operator. As a result, this degree is not necessarily a rational number.

Because of this and
due to the lack, prior to the present paper, of a theory of not-necessarily meromorphic ODEs, the literature on
the ODE/IM correspondence suffers of
two fundamental gaps: the first is the construction of a distinguished basis of solutions at $z=\infty$, and the
second is the description of the maximal sector for subdominant solutions (the latter solutions were indeed constructed by ad-hoc methods \cite{MRV}).
\vskip1,5mm
In Section \ref{ODEIM}, these gaps are filled by Theorem \ref{thm:matrixodeim}, deduced as a special case of the results of Sections  \ref{secstok}--\ref{18settembre2023-6}. Furthermore, we also apply this new result to the most studied instance of the ODE/IM correspondence, the duality between
quantum $\mathfrak{g}^{(1)}$-Drinfeld--Sokolov hierarchy \cite{BLZ2,FF0} (also known
as Quantum $\mathfrak{g}$-KdV) and ${}^L\mathfrak{g}^{(1)}$-opers on $\C^*$.
\vskip1,5mm
As a second application, we consider  the classical case of linear ODEs with meromorphic coefficients on $\mathbb P^1(\C)=\C\cup\{\infty\}$, namely 
\beq\label{eq4}
\frac{dY}{dz}=z^{r-1}A(z)Y,\quad A(z) \text{ analytic on }\mathbb P^1(\C)\setminus\{|z|>a\}, 
\eeq
where $r\in\N$ is the {\it Poincar\'e rank} of the irregular singularity at $z=\infty$. This equation admits formal fundamental systems of solutions of the form
\beq\label{eq5}
Y_{\rm for}(z)=F(z)G(z),
\eeq
where $G(z)$ is a well-defined analytic function of some root\footnote{The irregular singularity $z=\infty$ is said to be {\it ramified} if $p>1$, and $p$ is called  {\it index of ramification}. This is the {\it Fabry phenomenon} \cite{Fab85}.} $z^{1/p}$, and $F(z)$ is a formal power series of $z^{-1}$, see e.g.\,\,\cite{BJL79-2,MS16,Lod16}. Though the series $F(z)$ typically is not convergent, the purely formal solution $Y_{\rm for}(z)$ is well-known\footnote{In the monography \cite{Was65} this result is referred to as the {\it Main Asymptotic Existence Theorem}. This result marks the culmination of a very long sequence of investigations dating back to the 19th century. See the next section.} to prescribe the asymptotics (in the sense of Poincar\'e) of genuine fundamental systems of solutions of \eqref{eq4} in any sector of 
$\{z\in\Tilde{\C^*}\colon |z|>a\}$, with sufficiently small angular opening, see e.g.\,\,\cite{Sh62,Was65,Sh68,BJL79-3,Sib90}. On small sectors as such, however, the uniqueness of these fundamental systems of solutions is not guaranteed by their asymptotics \eqref{eq5} only. In the ramified case ($p>1$), it is a difficult problem to give an {\it a priori} estimate of the optimal angular opening guaranteeing both the existence and uniqueness of an asymptotical fundamental matrix solution.
\vskip1,5mm
In Section \ref{secmaetw},  
by replacing equation \eqref{eq4} with a gauge equivalent one (the gauge equivalence given by an arbitrary truncation of the series $F(z)$), we recover an equation of the form \eqref{eq1}. Under adequate conditions\footnote{In particular, the system \eqref{eq4} must admit {\it $r$-adequate} tuples, as defined in Definition \ref{radeq}.} on the coefficient $A$, we show that it is possible to characterize the maximal sectors on which the $L$-condition holds, and consequently Theorem \ref{introthm1} applies. This leads to a very short proof of the {\it Main Asymptotic Existence Theorem} of \cite{Was65},  in its optimal refinement (i.e.\,\,on maximal sectors), see Theorem \ref{5giugno2023-7}. Hence, for these adequate systems (including ramified cases), we prove the existence and uniqueness of a fundamental matrix solution with asymptotics \eqref{eq5} on optimal sectors.
\vskip2mm
\noindent{1.4.} {\bf Some historical remarks. }We would like to conclude this introduction by briefly contextualizing our results into a historical perspective\footnote{\,The interested reader can found a detailed historical analysis on the study of asymptotical solutions of ODEs, in the period 1817--1920, in the essay \cite{Sch77}.}. The study of asymptotic solutions for linear ODEs has its origins in pioneering works focused on ``{\it approximate}'' solutions. Although the concept of ``approximation'' was loosely defined, and the methods employed lacked rigor, the contributions of G.\,Carlini \cite{Car17}, J.\,Liouville \cite{Lio37}, and G.\,Green \cite{Gre37} represented the initial explorations in this field.
\vskip1,5mm
It was only with the works of G.\,Stokes \cite{Sto56,Sto64,Sto71} and H.\,Hankel \cite{Han68} that these methods were refined, leading to the introduction of series approximations for actual ODE solutions. Stokes, on one hand, considered formal power series solutions as approximations, in the sense that their arbitrary truncations almost satisfy the original ODE. Hankel, on the other hand, introduced formal power series solutions that were deemed approximate because they were {\it semi-convergent} (a concept previously defined by A.M.\,Legendre \cite[pag.\,13]{Leg98}) to true analytic solutions. Hankel also pioneered the use of exact analytic solutions to compute their semi-convergent expansions, establishing a direct connection between the two.
Additionally, it is worth noting that both Stokes and Hankel independently discovered the {\it Stokes phenomenon} during their research and provided explanations for it, although they did not delve further into this topic.
\vskip1,5mm
A significant milestone in this area of research was marked by the contributions of H.\,Poincar\'e. Building upon substantial advancements in the general theory of ordinary differential equations made between 1833 and 1886 by a multitude of mathematicians\footnote{Among them we mention: A.-L.\,Cauchy, K.\,Weierstrass, C.\,Briot and J.\,Bouquet, L.\,Fuchs, L.W.\,Thom\'e, F.\,G.\,Frobenius, M.E.\,Fabry.}, Poincar\'e achieved a pioneering feat. He replaced the vague concept of an ``approximate solution'' with the precise notion of an ``{\it asymptotic solution}'' by introducing a theory of asymptotic series. Focusing on $n^{\rm th}$-order scalar ODEs with polynomial coefficients and an irregular singularity at $z=\infty$ of rank 1, as detailed in \cite{Poi85, Poi86}, Poincar\'e accomplished the following:
\begin{enumerate}
\item He derived exact analytic solutions using a Laplace transform technique, resulting in integral representations applicable in the vicinity of $z=\infty$.
\item He introduced a rigorous concept of asymptotic series and computed the asymptotic expansions of the aforementioned Laplace integrals for a fixed argument $\arg z$.
\item He identified these asymptotic expansions as formal solutions of the given ODE, as previously described\footnote{In the theory of Thom\'e and Fabry, formal solutions, essentially always reducible to the form \eqref{eq5}, were referred to as {\it normal}, {\it logarithmic normal}, or (in the ramified case) {\it anormal} {\it series solutions} of the linear ODE.} by L.W. Thom\'e \cite{Tho77, Tho84} and M.E. Fabry \cite{Fab85}.
 \end{enumerate}

Poincar\'e's contributions marked a pivotal moment in the field, establishing the foundation for the study of asymptotic solutions in the context of ODEs, which quickly gained significant attention. 
From 1897 to 1910, J.\,Horn made substantial advancements in this field, drawing inspiration from Poincar\'e's ideas and producing a series of significant papers.  
Describing all of Horn's technical innovations and refinements concisely is a formidable task. However, it's worth noting that Horn extended Poincar\'e's findings to address higher rank irregular singularities, initially focusing on $2^{\rm nd}$-order linear ODEs \cite{Hor97a}, later expanding to arbitrary $n^{\rm th}$-order linear ODEs \cite{Hor00}. Most notably, he tackled the challenge of extending the validity of asymptotic solutions into {\it angular sectors} centered around irregular singularities, see e.g.\,\cite{Hor97b,Hor98}. Remarkably, Horn's work in \cite{Hor00} established -- for $n^{\rm th}$-order scalar ODEs (with coefficients known only asymptotically near an irregular singularity at $z=\infty$)-- the existence of a fundamental system of solutions whose asymptotics expansions are given by formal solutions along any ray in the $z$-plane, albeit with a finite number of exceptions. 
Horn's contributions were central in broadening Poincar\'e's ideas across a wide range of equations. His versatile techniques shed new light on the nature of asymptotic solutions. In the realm of ODE research spanning from the 1890s to the 1910s, Horn's work distinctly stands out as superior to that of his contemporaries.
\vskip1,5mm
All the works discussed thus far have addressed the case of scalar differential equations. It is only through the work of G.\,Birkhoff that the theory of asymptotic solutions for systems of linear ODEs achieved a clear formulation. By extending the concepts of formal series solutions and the rank of an irregular singularity to systems of linear ODEs, Birkhoff's primary interest shifted towards constructing asymptotic solutions in the vicinity of irregular singularities of arbitrary rank. He accomplished this through a suitable modification of Poincar\'e's and Horn's techniques based on Laplace transforms.

More precisely, in \cite{Bir09}, Birkhoff considered the class of systems of linear ODEs which are equivalent, for large $|z|$, to a {\it canonical system} of the form
\beq\label{eq6}
z\frac{dY}{dz}=P(z)Y,\quad P(z)\text{ matrix with polynomial entries},
\eeq
with an irregular singularity at $z=\infty$ of arbitrary rank.
Initially, he constructed formal solutions $Y_{\rm for}(z)$ for equation \eqref{eq6}, which can essentially be reduced to the form \eqref{eq5}. Subsequently, he demonstrated the existence of fundamental systems of solutions $Y(z)$, with asymptotic behavior $Y_{\rm for}(z)$, within any sector centered around $z=\infty$ and with a sufficiently small angular opening. Furthermore, Birkhoff provided a Laplace integral representation, similar to those used by Poincar\'e, for each entry of $Y(z)$. 
Birkhoff also claimed that {\it any} system of linear ODEs is holomorphically equivalent to a canonical one: this is false, as shown later by F.R.\,Gantmacher \cite{Gan59}, P.\,Masani \cite{Mas59} and H.L.\,Turritin \cite{Tur63}. This approximately represents the state of the art on asymptotic solutions as described in the monography \cite{Inc26}.
\vskip1,5mm
In a further series of papers \cite{Hor12,Hor16,Hor44a,Hor44b}, the already mentioned J.\,Horn showed the following result for linear systems of ODE: if the leading term of the coefficient matrix, at a singularity of second kind, has all distinct eigenvalues, then for sectors with sufficiently large opening one has {\it uniqueness} of the asymptotic fundamental system of solutions, which can be represented as a Laplace integral.  
These are, to the best of our knowledge, the first references in the literature of ODEs where the uniqueness problem for asymptotic solutions has been addressed.
\vskip1,5mm
Following Horn's and Birkhoff's contributions, the literature saw an abundance of results, with a progression of refinements and generalizations. Notable works in this regard include those by W.J.\,Trjitzinsky \cite{Tri33,Tri35}, J.A.\,Lappo-Danilevsky \cite{Lap34}, M.\,Hukuhara \cite{Huk37}, and H.L. Turritin \cite{Tur55}. 
The most sophisticated formulations of these results, with complete proofs, can be found in the subsequent references \cite{CL56} and \cite{Sh62,Was65,Sh68}. In all these works, the proof of the {\it existence} of asymptotic solutions (with Poincar\'e asymptotics given by a formal solution) is given on small sectorial domains. The details of the proofs are quite delicate, and to the best of our knowledge the references \cite{Sh62,Was65,Sh68} are essentially the only ones in which complete proofs are given.
\vskip1,5mm

On the one hand, in \cite{CL56}, E.A.\,Coddington and N.\,Levinson's proof   begins by establishing the result on the {\it positive real axis}, building upon  { N.\,Levinson's  seminal  work \cite{Lev48}. The latter   treats the perturbation to the diagonal part of the differential system as an inhomogeneous term, with a solution given by variation of parameters, leading to an integral equation where a fixed-point argument is applied. By showing that a solution of the integral equation is also a solution of the differential equation with prescribed asymptotics (but not vice versa), the existence result is proved. Subsequently, \cite{CL56} extends the result to the complex sectorial case (under genericity assumptions of the coefficients), applying the {\it real argument  radially}. It is  so shown that the asymptotic behaviour of a solution along a radial direction can be extended to a {\it small}  sector in the $z$-plane.} This extension relies on theorems of A.H.\,Phragm\'en and E.L.\,Lindel\"of, as described in \cite[Chapter 5, Sections 4 and 5]{CL56}. 

On the other hand, Y.\,Sibuya \cite{Sh62,Sh68} and W.\,Wasow \cite{Was65} used a different approach to  establish the existence result. The initial step involves formulating a nonlinear ODE for a gauge transformation, which can be formally solved by $F(z)$ in \eqref{eq5}. This gauge transforms  the original equation \eqref{eq4} into a simpler differential equation, solved by the function $G(z)$  in \eqref{eq5}. The subsequent step is to demonstrate the existence of a genuine solution $\Hat F(z)$, with asymptotic behavior $F(z)$, for the nonlinear equation. Such a solution $\Hat F(z)$ is constructed entry-wise, with an integral equation for each matrix entry. In particular, each integral equation is set on a carefully chosen $z$-dependent contour. In order to reach the final  $G(z)$, a finite sequence of intermediate such gauge transformations is generally required, each of which essentially reduces the sector of applicability of the existence result.  As a consequence, in the general ramified case, one can only assert that existence holds on a sufficiently small sector (see, for example, Theorem 19.1  in \cite{Was65}). The {\it minimal}  angular  opening  of the sector was  later established  in   \cite[Sect.\,4]{BJL79-3}. 

Our approach differs in the following way: after reducing \eqref{eq4} to the form \eqref{eq1} via a  gauge equivalence given by a truncation of $F$, we set an integral equation directly for the solution $Y(z)$ itself on $z$-dependent 
straight lines in a domain $U$ (on which both the $L$-condition and the good decay condition are satisfied). 
Contrary to the strategy in \cite{CL56}, these integration contours do not respect the radial symmetry of $U$ but instead  ``span''  it. This allows us to deduce not only the existence but also the uniqueness of $Y(z)$ on $U$. 
Indeed,  as in Levinson's approach, a fixed point argument of Banach--Caccioppoli type provides uniqueness for the solution of the integral equation. However, the spanning of the domain by integration contours is crucial to prove the {\it equivalence} between the solution of the integral 
equation and that of the differential equation {\it with asymptotic conditions}. Consequently, the uniqueness of solutions for the differential equation is established.  The above equivalence  cannot be proved by the real or radial argument in Levinson's approach (see the proof of Lemma \ref{lemdiffeqintegeq} and Remark \ref{30settembre2024-1} for further details).   

In conclusion, our approach offers a shorter proof of the Sibuya-Wasow asymptotic existence theorem and provides a stronger conclusion, namely the uniqueness of the asymptotic solutions. Moreover, it is more general, as it applies to a broader class of equations with non-meromorphic coefficients.

\vskip1,5mm 
 The approach introduced in the seminal paper by N. Levinson  \cite{Lev48} has served as a wellspring of inspiration for numerous subsequent works and developments, to which we direct the reader for further details and references in \cite{East89}. It is worth noting that the Russian literature has further elaborated upon this methodology in various specific instances, with ad hoc adaptations to specific cases, see for example \cite{Rap51,Fed66,Fed93}. In particular, in  \cite[Chapter 5]{Fed93}, it was applied to the  WKB analysis, which subsequently inspired a weaker and particular version of our results on subdominat solutions in the ODE/IM literature, see \cite{MRV}.

\vskip1,5mm
Let us conclude by mentioning that, for linear meromorphic systems of ODEs, the existence and uniqueness (on sufficiently large sectors) of asymptotic fundamental systems of solutions has been subsequently proved with techniques of $k$-{\it summability} and {\it multi-sommability} applied to formal power series solutions, see \cite{Ram78,Ram80,Eca81,Bra91,Bal99,MS16,Lod16} and references therein. We stress that for the wide class of differential equations considered in this paper the study of formal solutions is, at present, unapproachable. Already the case of coefficients with an essential singularity represents an interesting open problem: see the interesting paper \cite{BCJ15}, in which a first description of formal solutions is given for several classes of such systems.

\vskip2mm
\noindent{1.5.} {\bf Structure of the paper. } 
\vskip1,5mm
Sections \ref{secnot}-\ref{secprmth} -- In these sections, we establish our notations and introduce the $L$-decay and good decay conditions. The main result of our paper, Theorem \ref{mainth}, is proven here.
\vskip1,5mm
Sections \ref{20settembre2023-2}-\ref{secsubpar} -- In this part, we both refine the main result for subdominant solutions and extend it to the parametric case.
\vskip1,5mm
Sections \ref{secstok}-\ref{sec33} -- Here, we delve into a family of ODEs with coefficients that may not necessarily be meromorphic, specifically, generalized polynomials in $z$ with real exponents. We introduce the concepts of Stokes rays and adequate tuples and present and prove Theorem \ref{5giugno2023-3} as the first application of our main result. 
\vskip1,5mm
Subdominant and parametric cases are addressed in Sections \ref{sec34} and \ref{18settembre2023-6}, respectively. Additionally, in Section \ref{ODEIM}, we explore the applications of these results in the context of the ODE/IM correspondence.
\vskip1,5mm
Sections \ref{secforsol}-\ref{secmaetw} -- In these sections, we shift our focus to classical case of meromorphic linear systems on $\mathbb P^1$. We review existing results on formal solutions, including ramified cases, introduce the concept of $r$-adequateness of tuples, and establish the existence and uniqueness of asymptotic solutions on large sectors. This provides a refinement of the main asymptotic existence theorems of Y.\,Sibuya and W.\,Wasow.
\vskip1,5mm
Sections \ref{sec4sub}-\ref{sec4par} -- Finally, in these sections, we address the subdominant case and briefly touch on the parametric case.

\vskip2mm
\noindent{\bf Acknowledgements. } This research was supported by the FCT projects UIDB/00208/2020, 2021.01521.CEECIND, 2021.00091.CEECIND, and 2022.03702.PTDC (GENIDE). 
The authors are members of the  COST Action CA21109 CaLISTA.
D.\,Guzzetti  is a member of  the European Union's H2020 research and innovation programme under the Marie Sk\l{}odowska-Curie grant No. 778010 {\it IPaDEGAN}. He is  a member  of INdAM/GNFM, and of
 the Project ``Mathematical Methods in Non
Linear Physics'' (MMNLP), Commissione Scientifica Nazionale 4 - Fisica
Teorica (CNS4) of the Istituto Nazionale di Fisica Nucleare (INFN). 
He is member of the PRIN 2022 (2022TEB52W) - PE1 - ``The charm of integrability: from nonlinear waves to random matrices'', and of the EU project Caligola HORIZON-MSCA-2021-SE-01, Project ID: 101086123. 
Moreover, D.\,Guzzetti is grateful for hospitality to the Grupo de F\'isica Matem\'atica and to the Faculdade de Ci\^encias da Universidade de Lisboa. D.\,Guzzetti thanks prof. T.\,Mochizuki for suggesting Example \ref{17novembre2024-1}. Finally, the authors thank the anonymous referee for several suggestions, which significantly improved the exposition of this paper.
\vskip2mm
\noindent{\bf Conflict of interest.} On behalf of all authors, the corresponding author states that there is no conflict of interest.
\vskip2mm
\noindent{\bf Data availability statement. }No datasets were generated or analysed during the current study.

\section{A Levinson type theorem on the complex domain}\label{sec2}
\subsection{Notations and preliminary notions.}\label{secnot} Let $\Pi\colon\Tilde{\C^*}\to\C^*$ be the universal cover of $\C^*$. Points of $\Tilde{\C^*}$ can be uniquely represented in polar form $z=\rho e^{\ic \theta}$, where $\rho\in\R_{>0}$ is the modulus $|z|$ of $z$, and $\theta\in\R$ is the argument $\arg z$.
\vskip1,5mm
We call {\it lines} those curves on $\Tilde{\C^*}$ whose projection are straight lines on $\C^*$. These admit the
parametrisation
\[\rho=\frac{a_1}{a_2\cos\theta+a_3\sin\theta},\quad a_1,a_2,a_3\in\R,\quad a_1\neq 0,\,\,(a_2,a_3)\neq (0,0).
\]
Given $\phi\in\R$ and $a\in\R_{>0}$, we define the closed half-plane $\H_{\phi,a}$, and the  line $\ell_{\phi,a}$, by the formulas 
$$
\H_{\phi,a}:=\left\{z\in\Tilde{\C^*}\colon { \arg z\in \left]\phi-\frac{\pi}{2};\phi+\frac{\pi}{2}\right[},~|z|\geq \frac{a}{\cos(\phi-\arg z)}\right\},\quad  \ell_{\phi,a}:=\der\H_{\phi,a}.
$$
The half-plane $\H_{\phi,a}$ inherits the orientation of $\Tilde{\C^*}$, and consequently its boundary $\ell_{\phi,a}:=\der\H_{\phi,a}$ is oriented (with angular direction {$\tau:=\phi-\pi/2$}).  See Figure \ref{26giugno2023-1}. This allows to introduce a natural total order relation $\leq$ on points of $\ell_{\phi,a}$, and to distinguish two infinite points $\pm\infty$ on it.

\begin{figure}
\centerline{\includegraphics[width=0.5\textwidth]{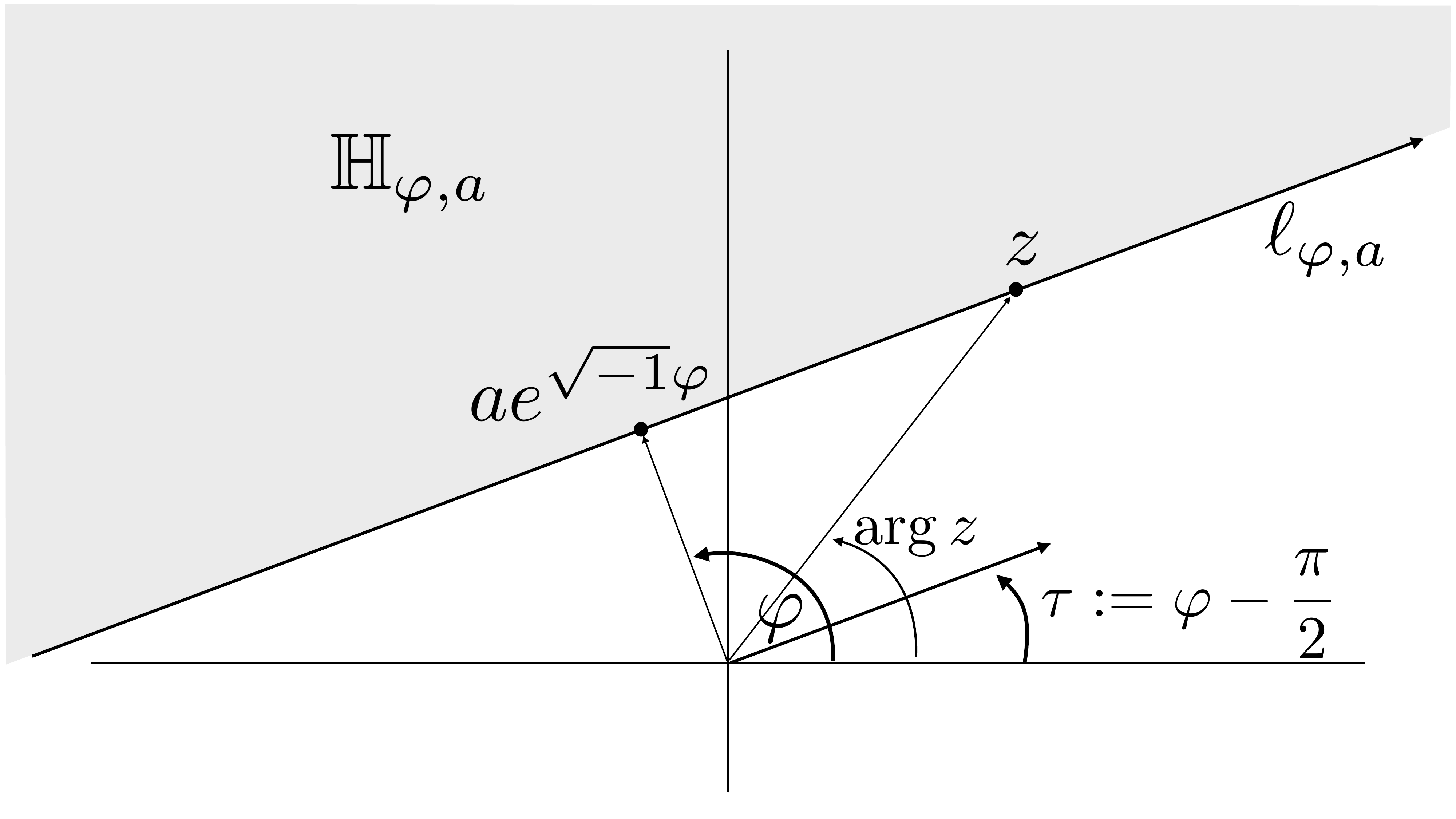}}
\caption{The half plane $\H_{\phi,a}$. The angles are represented mod $2\pi$. Equivalently, the figure may be intended as the representation of the projection  onto $\C$ of the sheet of $\Tilde{\C^*}$ where $\H_{\phi,a}$ lies.}
\label{26giugno2023-1}
\end{figure}

\vskip1,5mm
For any continuos function $f$ on $ \ell_{\phi,a}$ and any $p\geq 1$, we define the $L^p$-norm $\pnorm{p}{f}\in\R_{\geq 0}\cup\{+\infty\}$ as
\beq
\pnorm{p}{f}=\left(\int_{\left]\phi-\frac{\pi}{2};\phi+\frac{\pi}{2}\right[}  ~\left|f\left(\frac{a}{\cos(\phi-\theta)}e^{\ic \theta}\right)\right|^p\frac{a}{\cos^2(\phi-\theta)}~\d\theta\right)^{\frac{1}{p}}.
\eeq 
The closure of the space of continuous $L^p$-integrable (i.e.\,\,with finite $L^p$-norm) functions, with respect to the norm $\| \cdot \|_p$,
is the Banach space $L^p(\ell_{\phi,a})$, see e.g.\,\,\cite[Ch.\,1,\S1.2]{Heb00}. These definitions naturally extend to functions with values in a finite dimensional normed $\C$-vector space $(V,|\cdot|)$.
\vskip1,5mm
For any  interval $I\subseteq \R$, we define
\beq
\H_{I,a}:=\bigcup_{\phi\in I}\H_{\phi,a}.
\eeq
In particular, if $I$ is the open interval
$\left]\phi_{\min};\phi_{\max}\right[$,
\beq
\H_{I,a}=\left\{z\in\Tilde{\C^*}\colon \arg z\in\left]\phi_{\min}-\frac{\pi}{2};\phi_{\max}+\frac{\pi}{2}\right[,\quad |z|\geq \thi_{I,a}(\arg z)\right\},
\eeq
where $\thi_{I,a}\colon \left]\phi_{\min}-\frac{\pi}{2};\phi_{\max}+\frac{\pi}{2}\right[\to \R$ is defined by
\begin{empheq}[left =\thi_{I,a}(\phi){=}\empheqlbrace]{align*}
&\frac{a}{\cos\left(\phi_{\min}-\phi\right)},&\phi_{\min}-\frac{\pi}{2}<\phi\leq \phi_{\min},\\
&a,&\phi_{\min}\leq\phi\leq\phi_{\max},\\
&\frac{a}{\cos\left(\phi-\phi_{\max}\right)},&\phi_{\max}\leq\phi<\phi_{\max}+\frac{\pi}{2}.
\end{empheq}
\begin{figure}
\centerline{\includegraphics[width=0.5\textwidth]{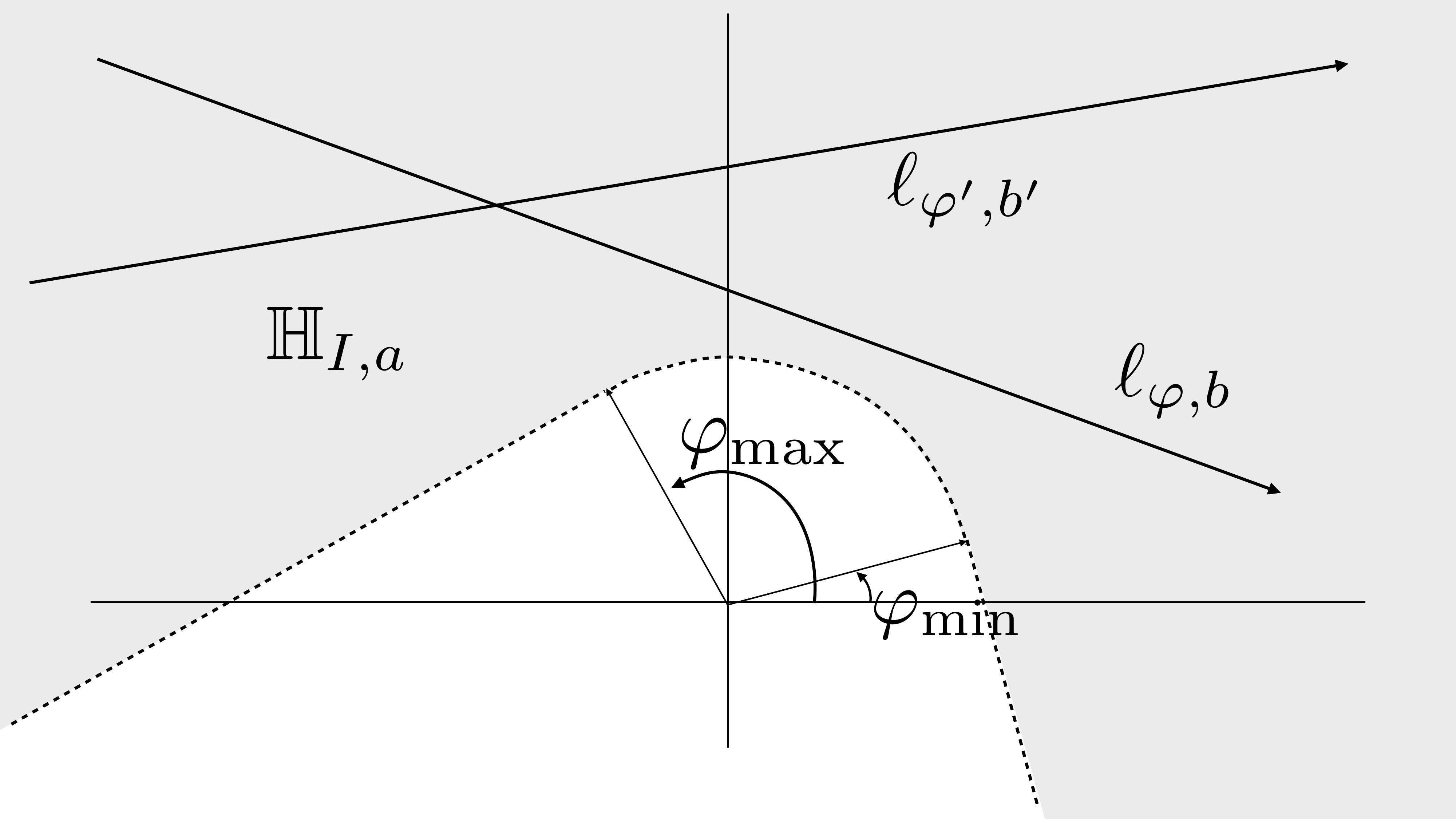}}
\caption{The domain $\H_{I,a}$. The angles are represented mod $2\pi$. Equivalently, the figure is the projection  onto $\C$.  Two lines $\ell_{\phi,b}$ and $\ell_{\phi^\prime,b^\prime}$ are also represented, with $\phi_{\mathrm{min}}<\phi<\phi^\prime<\phi_{\mathrm{max}}$
}
\label{26giugno2023-2}
\end{figure}
See Figure \ref{26giugno2023-2}. In the paper, $\hol$ will denote the sheaf of holomorphic functions on $\Tilde{\C^*}$. If $\Om\subseteq\Tilde{\C^*}$ is an open subset, and $V$ is a finite dimensional $\C$-vector space, we set
$\hol(\Om;V):=\hol(\Om)\ox_\C V$.  If $C$ is not open, we define $\hol(C;V)$ as inductive limit ${\varinjlim}\hol(\Om;V)$, with $\Om$ open sets containing $C$, see e.g.\,\,\cite[Cor.\,1,Th.\,3.3.1,pg.151]{God98}.

\subsection{The main theorem} \label{secmth}

Let $n\geq 1$. Denote by $\h(n,\C)$ the Lie algebra of diagonal matrices in $\gl(n,\C)$. We equip $\gl(n,\C)$ with an arbitrary norm $|\cdot|$. Let $I\subseteq\R$ an open interval, $J\subseteq I$ a closed interval, 
 and $a\in\R_{>0}$. For  $\La\in\hol(\H_{I,a};\h(n,\C))$ and $i,j\in\{1,\dots,n\}$, we set 
\beq
\label{14settembre2023-1}
\mathfrak{S}^{ij}_{J,a}:=\underset{\phi\in J}{\sup}\,\,\underset{b\geq a}{\sup}\,\,\underset{\substack{w\leq z\\(w,z)\in\ell_{\phi,b}^{\times 2}}}{\sup}\on{Re}\int_{w,\ell_{\phi,b}}^z\left(\La_{ii}(t)-\La_{jj}(t)\right){\rm d}t,
\eeq
\beq
\label{14settembre2023-2}
\mathfrak{I}^{ij}_{J,a}:=\underset{\phi\in J}{\inf}\,\,\underset{b\geq a}{\inf}\,\,\underset{\substack{w\leq z\\(w,z)\in\ell_{\phi,b}^{\times 2}}}{\inf}\on{Re}\int_{w,\ell_{\phi,b}}^z\left(\La_{ii}(t)-\La_{jj}(t)\right){\rm d}t.
\eeq

\vskip2mm
\begin{defn}
\label{9settembre2023-1}
A holomorphic function $\La\in\hol(\H_{I,a};\h(n,\C))$ satisfies the {\it $L$-condition on $I$} if, for any $i,j\in\{1,\dots,n\}$, and any $J\subseteq I$ closed interval, there exist $K_1,K_2\in\R$, depending on $J$,  such that one of the following mutually exclusive conditions holds: 
\begin{itemize}
\item either
\beq\tag{$L$-1}\label{l'1}
\mathfrak{S}^{ij}_{J,a}\leq K_1
\quad \text{ and }\quad 
\mathfrak{I}^{ij}_{J,a}=-\infty,
\eeq
\item or \beq\tag{$L$-2}\label{l'2}
\mathfrak{I}^{ij}_{J,a}\geq K_2.
\eeq
\end{itemize}
\end{defn}

\begin{defn}
\label{5giugno2023-1}
A holomorphic function $R\in\hol(\H_{I,a};\gl(n,\C))$ 
satisfies the {\it good decay condition} {\it on $I$} if for any closed interval $J\subseteq I$ and any  $b\geq a$, $R$ is $L^1$-integrable if restricted along any line $\ell_{\phi,b}$, with $\phi\in J$, and
\beq
\label{gdc}
\lim_{b\to+\infty}\underset{\phi\in J}{\sup}{\pnorm{1}{R|_{\ell_{\phi,b}}}}=0,
\eeq 
where $\|\cdot\|_1$ is the norm in $L^1(\ell_{\phi,b},dz)$. 
\end{defn}

\begin{example}
\label{13aprile2023-5}
The function $R\in\hol(\H_{I,a};\gl(n,\C))$ satisfies the good decay condition if there exists $\dl>0$ such that $R=O(z^{-1-\dl})$ on $\H_{J,a}$ for any closed interval $J\subseteq I$, that is
 \beq
 \underset{z\in \H_{J,a}}{\sup}|z^{1+\dl}R(z)|<\infty.
 \eeq

\end{example}
\begin{rem}
If $R\in\hol(\H_{I,a};\gl(n,\C))$ satisfies the good decay condition, then we have 
\beq
\label{9settembre2023-2}
M_{R,a,J}:=\underset{b\geq a}{\sup} \,\,\underset{\phi\in J}{\sup}\pnorm{1}{R|_{\ell_{b,\phi}}}<\infty.
\eeq
By \eqref{gdc}, $M_{R,a,J}$ can be made smaller by increasing $a$, and $M_{R,a,J}\to 0$ for $a\to +\infty$.

\end{rem}
\begin{rem}
Both the good decay condition and condition \eqref{9settembre2023-2} can be easily adapted to $\C$-valued functions defined on a single half-plane $\H_{\phi,a}$. The latter condition defines the so-called {\it Smirnov classes} $E^1(\H_{\phi,a})$. These functional spaces, and the more general spaces $E^p(\H_{\phi,a})$, are extensively studied in \cite[Ch.\,11]{Dur70}. The good decay condition, then, can be seen as defining a subclass of $E^1(\H_{\phi,a})$
\end{rem}

\begin{thm}\label{mainth}
Consider the differential equation
\beq
\label{diffeq}
     \frac{dY}{dz}
           =
       \left(\La(z)+R(z)\right)Y,
\eeq
 where the leading term $\La\in\mathscr O(\H_{I,a};\h(n,\C))$ satisfies the $L$-condition on $I$, and the perturbation term $R\in\mathscr O(\H_{I,a};\gl(n,\C))$ satisfies the good decay condition on $I$. There exists a unique holomorphic fundamental matrix solution $Y\colon \H_{I,a}\to GL(n,\C)$ with asymptotic behaviour,  for any closed interval  $J\subset I$ and   $z\to \infty$ in $\H_{J,a}$, given by 
$$
Y(z)=\left(I_n+o(1)\right)\exp\left(\int_{z_o}^z\La(w){\rm d}w\right),
$$
where $z_o$ is an arbitrary point of $\H_{I,a}$.
\end{thm}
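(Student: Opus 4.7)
The plan is to perform the gauge change $Y = Z\,Y_o$ with $Y_o(z) := \exp\int_{z_o}^z\Lambda(w)\,dw$ and look for a bounded holomorphic $Z\colon\H_{I,a}\to\gl(n,\C)$ with $Z(z)\to I_n$ as $z\to\infty$ in every sub-sector $\H_{J,a}$. Since $\Lambda$ is diagonal and holomorphic on $\H_{I,a}$, $Y_o$ is well-defined, and equation \eqref{diffeq} is equivalent, entrywise, to
\beq
\label{plan:entry}
Z_{ij}'(z) - (\Lambda_{ii}(z)-\Lambda_{jj}(z))\,Z_{ij}(z) = (R(z)Z(z))_{ij}.
\eeq
Multiplying by the integrating factor $\exp\bigl(-\int(\Lambda_{ii}-\Lambda_{jj})\bigr)$, integrating along the oriented line $\ell_{\phi,b}$ through $z$, and choosing the endpoint so that the homogeneous term is killed, one is led to the integral equation
\beq
\label{plan:int}
Z_{ij}(z) \;=\; \delta_{ij} \;+\; \int_{\alpha_{ij}(\ell)}^{z}\exp\Bigl(\int_w^z(\Lambda_{ii}-\Lambda_{jj})\Bigr)\,\bigl(R(w)Z(w)\bigr)_{ij}\,dw,
\eeq
where $\alpha_{ij}(\ell)=+\infty$ on $\ell$ for $i=j$ and for off-diagonal pairs of type $(L\text{-}2)$, and $\alpha_{ij}(\ell)=-\infty$ for off-diagonal pairs of type $(L\text{-}1)$. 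In all three cases the kernel $\exp\int_w^z(\Lambda_{ii}-\Lambda_{jj})$ is bounded on $\ell_{\phi,b}$ by a constant $C_J$ depending only on $J$, thanks to Definition \ref{9settembre2023-1}.

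Next I would run a Banach--Caccioppoli fixed point argument. On the Banach space $\mathcal B_{J,b}$ of bounded holomorphic matrix-valued functions on $\H_{J,b}$ endowed with the sup norm, the right-hand side of \eqref{plan:int} defines an operator $T$ that preserves holomorphicity, because line integrals of holomorphic integrands depend holomorphically on the free endpoint. Using the kernel bound together with $|(RZ)_{ij}|\le n\,|R|\,\pnorm{\infty}{Z}$, one obtains $\pnorm{\infty}{T Z_1 - T Z_2}\le n\,C_J\,M_{R,b,J}\,\pnorm{\infty}{Z_1 - Z_2}$, with $M_{R,b,J}$ as in \eqref{9settembre2023-2}. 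The good decay condition forces $M_{R,b,J}\to 0$ as $b\to\infty$, so for $b$ large enough $T$ is a contraction. Its unique fixed point $Z$ is bounded holomorphic on $\H_{J,b}$; differentiating \eqref{plan:int} recovers \eqref{plan:entry}, so $Y := Z Y_o$ solves \eqref{diffeq}, and standard linear-ODE theory extends $Y$ holomorphically to all of $\H_{I,a}$. The asymptotics $Z = I_n + o(1)$ on $\H_{J,a}$ follow by dominated convergence applied to \eqref{plan:int}: as $z\to\infty$ along any line in $\H_{J,a}$, the integrand is controlled by $|R|$, which is $L^1$ on the line, while $\pnorm{1}{R|_\ell}\to 0$ by the good decay condition.

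The delicate point --- the one the authors single out as the essential improvement over the radial Levinson/Coddington--Levinson approach --- is the equivalence between \eqref{plan:int} and \eqref{diffeq} \emph{with prescribed asymptotics}, from which uniqueness is extracted. Let $\tilde Y$ be any fundamental solution of \eqref{diffeq} with $\tilde Y Y_o^{-1}\to I_n$ on $\H_{J,a}$. The matrix $\tilde Z := \tilde Y Y_o^{-1}$ is bounded holomorphic on some $\H_{J,b'}\subseteq\H_{J,a}$ and solves \eqref{plan:entry}. Integrating the entrywise equation along each line $\ell_{\phi,b}\subset\H_{J,b'}$, I would show that the boundary term at the chosen endpoint $\alpha_{ij}(\ell)$ vanishes. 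This vanishing does not follow from the asymptotic along a single radial direction; it requires the joint asymptotic $\tilde Z\to I_n$ along \emph{all} parallel lines in the half-plane, together with the kernel bound supplied by the $L$-condition at the chosen endpoint. Once the boundary terms are shown to vanish (case-by-case according to whether $(i,j)$ is diagonal or of type $(L\text{-}1)$ or $(L\text{-}2)$), $\tilde Z$ satisfies \eqref{plan:int} and by the contraction argument must coincide with $Z$. This verification --- the honest form of the ``spanning of the domain by lines is crucial'' remark of the introduction --- is the main technical hurdle, and it also implies the advertised independence of the solution $Y$ from the choice of $J$ and $b$, yielding the unique global fundamental matrix on $\H_{I,a}$.
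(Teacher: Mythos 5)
Your proposal follows essentially the same route as the paper's own proof: the gauge $Y=ZY_o$, a Levinson-type integral equation along the lines $\ell_{\phi,b}$ with the endpoint pushed to $-\infty$ or $+\infty$ on the line according to the dichotomy ($L$-1)/($L$-2), a Banach--Caccioppoli contraction in the sup norm on $\H_{J,b}$ with $b$ large (the paper works column by column, with the operators $K_{\pm,j_o}$ and the partition $J_\pm(j_o)$; your entrywise matrix formulation is the same thing), analytic continuation to $\H_{I,a}$ by linear ODE theory, and uniqueness via the equivalence of the integral equation with the differential equation \emph{plus} asymptotics. Your boundary-term argument for that equivalence --- variation of parameters along each line, with the boundary contribution at the chosen infinite endpoint killed by the kernel bounds $\mathfrak{S}^{ij}_{J,a}\leq K_1$, resp. $\mathfrak{I}^{ij}_{J,a}\geq K_2$, together with $\tilde Z\to I_n$ along the line --- is correct and is a mild repackaging of the paper's Lemma \ref{lemdiffeqintegeq}, which instead writes the discrepancy as an explicit solution of the diagonal system and kills its coefficients using the fact that the relevant exponentials do not decay in every direction of $\H_{J,a}$.

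The one genuine gap is in the step asserting $Z=I_n+o(1)$, and in the well-definedness it silently relies on. As written, ``dominated convergence along the line, since $\pnorm{1}{R|_{\ell}}\to 0$'' is not a proof: if $z\to\infty$ along a \emph{fixed} line $\ell_{\phi,b}$, then $\pnorm{1}{R|_{\ell_{\phi,b}}}$ is a fixed constant --- the good decay condition \eqref{gdc} only makes it small as $b\to+\infty$ --- and for the term integrated from $-\infty$ to $z$ (type ($L$-1) entries) the kernel is merely bounded, not pointwise decaying, so the growing-range integral does not obviously tend to zero. The paper's fix (Lemma \ref{lemmak}, points (ii) and (iv)) is to first prove that the integral is independent of which line through $z$ is used --- a transversal-line argument in which the connecting segment is killed by \eqref{gdc} --- and then, for $z\to\infty$ along a ray, to re-route the integration along lines transversal to that ray, whose distance $b(z)$ from the origin tends to $+\infty$, so that \eqref{gdc} gives \eqref{18settembre2023-1}. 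Note also that without the line-independence statement your operator is not even obviously a single-valued holomorphic function of $z$ on $\H_{J,a}$, since each point lies on a one-parameter family of lines of the domain; your phrase ``line integrals of holomorphic integrands depend holomorphically on the free endpoint'' does not cover this. Once these two verifications are supplied, your argument coincides with the paper's proof.
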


\begin{cor}\label{maincor}
In the same assumptions of Theorem \ref{mainth}, if $R=O(z^{-1-\dl})$ on $\H_{I,a}$, then the unique solution $Y$  of Theorem \ref{mainth} is such that
$$
Y(z)=\left(I_n+O(z^{-\dl})\right)\exp\left(\int_{z_o}^z\La(w){\rm d}w\right),
$$
where $z_o$ is an arbitrary point of $\H_{I,a}$.
\end{cor}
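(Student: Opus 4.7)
The plan is to refine the quantitative output of the fixed-point construction underlying Theorem \ref{mainth}. Setting $Z(z):=Y(z)Y_o(z)^{-1}-I_n$, the goal is to upgrade the $o(1)$ statement to the pointwise estimate $|Z(z)|=O(|z|^{-\delta})$. The proof of Theorem \ref{mainth} produces $Y$ as the unique fixed point of a Volterra-type operator whose norm, on bounded holomorphic matrix functions over $\H_{J,b}$, is dominated by a constant (depending only on the Levinson constants $K_1,K_2$ for $J$) times $M_{R,b,J}$. From the geometric series for the resolvent of a contraction, one extracts the uniform pointwise bound
$$\sup_{z\in\H_{J,b}}|Z(z)|\,\leq\,C(J)\,M_{R,b,J}$$
for every $b\geq a$ at which $M_{R,b,J}$ lies below the contraction threshold; the uniqueness part of Theorem \ref{mainth} guarantees that the solution on each $\H_{I,b}$ coincides with the restriction of the solution on $\H_{I,a}$, so the same $Z$ obeys this estimate on every $\H_{J,b}$.

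Next, I would quantify $M_{R,b,J}$ under the new hypothesis. Parametrizing $\ell_{\phi,b}$ by $\theta\in(\phi-\pi/2,\phi+\pi/2)$ via $z(\theta)=(b/\cos(\phi-\theta))e^{\ic\theta}$, the line element is $(b/\cos^{2}(\phi-\theta))\,d\theta$. The hypothesis $|R(z)|\leq C_R|z|^{-1-\delta}$ combined with $|z(\theta)|=b/\cos(\phi-\theta)$ yields, after the substitution $\psi=\phi-\theta$,
$$\pnorm{1}{R|_{\ell_{\phi,b}}}\,\leq\,C_R\,b^{-\delta}\int_{-\pi/2}^{\pi/2}\cos^{\delta-1}\psi\,d\psi,$$
and the integral converges for $\delta>0$. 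Taking the supremum over $\phi\in J$ and $b\geq a$ therefore gives $M_{R,b,J}\leq C_{R,\delta}\,b^{-\delta}$.

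Finally, to localize at a given $z\in\H_{J,a}$ with $|z|$ large, I would pick a closed subinterval $J'\subset I$ with $J$ contained in its interior and choose $\phi\in J'$ nearest to $\arg z$, so that $\cos(\phi-\arg z)$ is bounded below by a constant $c=c(J,J')>0$ uniformly in such $z$. Then $z\in\ell_{\phi,b_z}\subset\H_{J',b_z}$ with $b_z:=|z|\cos(\phi-\arg z)\geq c\,|z|$, and combining the previous two steps on $\H_{J',b_z}$ gives
$$|Z(z)|\,\leq\,C(J')\,M_{R,b_z,J'}\,\leq\,C_{R,\delta}\,C(J')\,b_z^{-\delta}\,\leq\,C'(J,J',\delta)\,|z|^{-\delta},$$
which is the desired conclusion. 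The main obstacle is confirming that the proof of Theorem \ref{mainth} really does deliver the linear-in-$M_{R,b,J}$ bound of the first paragraph uniformly in $b\geq a$; this is not made fully explicit in the theorem's statement, but is automatic from the Banach--Caccioppoli setup once $b$ has been taken large enough that the contraction ratio is at most $1/2$.
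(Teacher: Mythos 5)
Your argument is correct, but it is packaged differently from the paper's. The paper disposes of the corollary in one line: the fixed point satisfies $Z-e_{j_o}=K_{+,j_o}[Z;z]-K_{-,j_o}[Z;z]$, and point (2) of Lemma \ref{lemmak} gives $|K_{\pm,j_o}[Z;z]|\leq C\snorm{Z}\,|z|^{-\dl}$ directly, so the decay is obtained by a single application of the integral operator to the already-constructed solution. That pointwise operator bound is proved exactly by the two facts you isolate: $\pnorm{1}{R|_{\ell_{\phi',b}}}=O(b^{-\dl})$ for $R=O(z^{-1-\dl})$, and the freedom (independence of the integration line) to run the integral through $z$ along a line $\ell_{\phi',b(z)}$ with $\phi'\in I$ and $b(z)=|z|\cos(\phi'-\arg z)$ bounded below by a fixed multiple of $|z|$ --- the same widening from $J$ to a larger closed $J'\subset I$ that you use in your third step, cf.\ \eqref{decayk}. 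What you do instead is rerun the Banach--Caccioppoli scheme on the family of shrunken domains $\H_{J',b_z}$, extract the linear-in-$M_{R,b_z,J'}$ sup-norm bound from the Neumann series (legitimate, by \eqref{normk}, once $2CM_{R,b_z,J'}\leq 1/2$), and transfer it to the point $z$ by identifying the fixed point on $\H_{J',b_z}$ with the restriction of the global solution. This identification is fine but is the one place needing a word of care: it uses that the hypotheses of Theorem \ref{mainth} (hence Lemma \ref{lemdiffeqintegeq}) persist on $\H_{I,b}$ for $b\geq a$, which holds because the suprema/infima in \eqref{14settembre2023-1}--\eqref{14settembre2023-2} are taken over a smaller family of segments (possibly switching some pairs from \eqref{l'1} to \eqref{l'2}, which is harmless), and it only covers $|z|$ large enough that the contraction threshold is met --- the bounded range of $|z|\geq a$ being trivial since $Z-e_{j_o}$ is bounded. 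In short: same two quantitative ingredients, but your route re-proves the contraction on a family of domains plus a uniqueness/restriction step, where the paper's one-step bootstrap through Lemma \ref{lemmak}(2) is shorter.
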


The proof of Theorem \ref{mainth} will be given in the next sections. Before, we need to introduce some preliminary tools.

\subsection{The space $\mc H_{J,a}$, and the integral operators $K_\pm$} \label{sechja}

Let  $I=\left]\phi_{\min};\phi_{\max}\right[$ be an open interval. Given a domain $\H_{J,a}$,
with $J \subset I$ a closed interval, we define $\mc H_{J,a}$ to be the space of continuous bounded function
on $\H_{J,a}$ with values in $\C^n$, whose restriction to $\on{int} \H_{J,a}$ is analytic. The space $\mc H_{J,a}$ is Banach  if
equipped with the  norm \,\,\cite[Th.\,1.2]{IlYa08}
\[
\snorm{f}:=\underset{z\in\H_{J,a}}{\sup}{|f(z)|}.
\]

Let $\La\in\hol(\H_{I,a};\h(n,\C))$ be a diagonal matrix  satisfying the $L$-condition. For any fixed $j_o\in\{1,\dots,n\}$ and an arbitrary $i\in\{1,\dots,n\}$, only one between conditions \eqref{l'1} and \eqref{l'2} hold for the pair $(i,j_o)$. Define a partition $J_+(j_o)\cup J_-(j_o)=\{1,\dots, n\}$ as follows:
\begin{itemize}
\item if \eqref{l'1} holds for $(i,j_o)$, then $i\in J_+(j_o)$,
\item if \eqref{l'2} holds for $(i,j_o)$, then $i\in J_-(j_o)$.
\end{itemize}
\vskip1,5mm
For any fixed $j_o\in\{1,\dots,n\}$, define the diagonal matrices $W_{\pm,j_o}\in\hol(\H_{I,a}^{\times 2};\h(n,\C))$ by
\beq
\label{15settembre2023-1}
\begin{aligned}
W_{+,j_o}(\zeta_1,\zeta_2)_{ii}&:= \exp\int_{\zeta_1}^{\zeta_2}(\La_{ii}(t)-\La_{j_oj_o}(t))\d t,&& i\in J_+(j_o),\\
W_{+,j_o}(\zeta_1,\zeta_2)_{ii}&:=0,&&i\in J_-(j_o),
\\
\\
W_{-,j_o}(\zeta_1,\zeta_2)_{ii}&:=0,&&i\in J_+(j_o),\\
W_{-,j_o}(\zeta_1,\zeta_2)_{ii}&:= \exp\int_{\zeta_1}^{\zeta_2}(\La_{ii}(t)-\La_{j_oj_o}(t))\d t,&& i\in J_-(j_o),
\end{aligned}
\eeq
where the integral can be taken along any curve in $\H_{I,a}$ joining $\zeta_1$ and $\zeta_2$.
\vskip2mm
Given a function $R\in\hol(\H_{I,a};\gl(n,\C))$ satisfying the good decay condition,  and $J \subset I$ a closed sub-interval, we  introduce two integral operators $K_{\pm,j_o}\colon \mc H_{J,a}\to \mc H_{J,a}$, defined by
\begin{align}
\label{k+}
K_{+,j_o}[f;z]&:=\int_{\infty}^zW_{+,j_o}(t,z) R(t) f(t) \d t,\\
\label{k-}
K_{-,j_o}[f;z]&:=\int_{z}^\infty W_{-,j_o}(t,z) R(t) f(t) \d t,
\end{align}where
\begin{itemize}
\item the integral \eqref{k+} is taken from $-\infty$ to $z$ along any line $\ell_{\phi,b}\subseteq\H_{J,a}$ passing through $z$, 
\item the integral \eqref{k-} is taken from $z$ to $+\infty$ along any line $\ell_{\phi,b}\subseteq\H_{J,a}$ passing through $z$.
\end{itemize}

\begin{lem}\label{lemmak} 
 For any closed interval   $J \subset I$ the following hold.
\begin{enumerate}
\item The operators $K_{\pm,j_o}$ are well-defined bounded operators on $\mc H_{J,a}$. In particular, there exists
a constant $C>0$, independent on $R$, such that 
\beq\label{normk}
\snorm{K_{\pm,j_o}[f;~\cdot~]}\leq C\,\snorm{f}\,M_{R,a,J},
\eeq
where $M_{R,a,J}$ is defined in \eqref{9settembre2023-2}. 
Moreover, for $z\to\infty$ in $\H_{J,a}$,  
  \beq
  \label{18settembre2023-1}
  \lim_{z\to\infty}K_{\pm, j_o}[f;z]=0.
  \eeq
   
\vskip 0.2 cm    
\item If $R=O(z^{-1-\dl})$ on $\H_{I,a}$, then there exists a $C>0$ such that for any $f\in \mc H_{J,a}$ we have
\beq
|K_{\pm,j_o}[f;z]|\leq C \snorm{f}|z^{-\dl}|.
\eeq
\end{enumerate}
\end{lem}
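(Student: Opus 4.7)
The plan is to leverage the $L$-condition to obtain a uniform pointwise bound on the weight matrices $W_{\pm,j_o}$, and then combine this with the $L^1$-integrability of $R$ along lines (via the good decay condition) to deduce the boundedness, continuity, and decay of $K_{\pm,j_o}$.

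First I would prove the uniform estimate $|W_{\pm,j_o}(t,z)|\le C_0$ for a constant $C_0=C_0(J)$: for $i\in J_+(j_o)$ and $t\le z$ along any line $\ell_{\phi,b}\subseteq\H_{J,a}$, condition \eqref{l'1} yields $\on{Re}\int_t^z(\La_{ii}-\La_{j_oj_o})\,{\rm d}t'\le K_1$, hence $|W_{+,j_o}(t,z)_{ii}|\le e^{K_1}$, and the symmetric argument using \eqref{l'2} gives $|W_{-,j_o}(t,z)_{ii}|\le e^{-K_2}$ for $i\in J_-(j_o)$ and $t\ge z$. Then, for $f\in\mc H_{J,a}$, $z\in\H_{J,a}$, and $\ell$ any admissible line through $z$ in $\H_{J,a}$, one obtains
\[
|K_{\pm,j_o}[f;z]|\le C_0\,\snorm{f}\,\pnorm{1}{R|_\ell}\le C_0\,\snorm{f}\,M_{R,a,J},
\]
establishing \eqref{normk}. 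Membership of $K_{\pm,j_o}[f;\cdot]$ in $\mc H_{J,a}$ will follow from standard arguments: differentiation under the integral sign for holomorphy on the interior, dominated convergence for continuity up to the boundary, and Cauchy's theorem on the quadrilateral spanned by two admissible lines through $z$ (with closing arcs at infinity whose contribution vanishes thanks to the $L^1$-tail decay of $R$) for the independence of the chosen contour.

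To prove the decay \eqref{18settembre2023-1}, I would fix $\epsilon>0$ and use the good decay condition \eqref{gdc} to pick $b_0\ge a$ with $\pnorm{1}{R|_{\ell_{\phi,b}}}<\epsilon/(C_0\snorm{f})$ for all $\phi\in J$ and $b\ge b_0$. For $z\in\H_{J,a}$ going to infinity, I would then select $\phi_\star\in J$ maximizing $b_\star=|z|\cos(\phi_\star-\arg z)$ subject to $|\phi_\star-\arg z|<\pi/2$, and use the line $\ell_{\phi_\star,b_\star}$ as the integration contour in $K_{\pm,j_o}[f;z]$; the pointwise estimate of the previous paragraph then gives $|K_{\pm,j_o}[f;z]|<\epsilon$ as soon as $b_\star\ge b_0$, which holds for $z$ far from the origin in $\H_{J,a}$.

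For part (2), under the stronger hypothesis $R(\zeta)=O(|\zeta|^{-1-\delta})$, writing the parametrization $\zeta(t)=be^{i\phi}+te^{i(\phi-\pi/2)}$ on $\ell_{\phi,b}$ gives $|\zeta(t)|^2=b^2+t^2$ and $|z|^2=b^2+t(z)^2$, so $|R(\zeta(t))|\le C(b^2+t^2)^{-(1+\delta)/2}$; the substitution $t=b\tau$ reduces both $\int_{-\infty}^{t(z)}$ and $\int_{t(z)}^{+\infty}$ of this majorant to elementary tail integrals of order $|z|^{-\delta}$, which together with $|W_{\pm,j_o}|\le C_0$ yields the claim. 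The principal technical hurdle throughout will be the decay step of part (1): one must verify that the optimal line-distance $b_\star(z)$ really does grow to infinity as $z\to\infty$ in $\H_{J,a}$, which calls for a careful geometric analysis of how $\H_{J,a}$ narrows near the extreme boundary lines $\ell_{\phi_0,a}$ and $\ell_{\phi_1,a}$ and for a judicious contour choice in those regions (possibly exploiting that $z$ is then far along one of those lines, so that the relevant one-sided tail integrals of $R$ still vanish).
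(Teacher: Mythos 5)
Your kernel bound and the derivation of \eqref{normk} are exactly the paper's: the $L$-condition gives $|W_{+,j_o}(t,z)_{ii}|\leq e^{K_1}$ for $t\leq z$, $i\in J_+(j_o)$, and $|W_{-,j_o}(t,z)_{ii}|\leq e^{-K_2}$ for $t\geq z$, $i\in J_-(j_o)$, and combined with $\pnorm{1}{R|_\ell}\leq M_{R,a,J}$ this yields the operator bound. One caveat on your ``standard arguments'': contour independence and holomorphy should be closed off with a \emph{segment of a transversal line} $\ell_{\phi',b'}$, letting $b'\to+\infty$ and invoking \eqref{gdc}; a ``closing arc at infinity'' cannot be estimated, because the hypotheses give no pointwise control of $R$, only $L^1$ control along lines. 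This is repairable and is how the paper argues.

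The genuine gap is the decay step, and it is precisely the point you flag as a ``hurdle'': with integration directions restricted to $\phi\in J=[\phi_0,\phi_1]$, the optimal line-distance $b_\star(z)=\max_{\phi\in J,\,|\phi-\arg z|<\pi/2}|z|\cos(\phi-\arg z)$ does \emph{not} tend to infinity as $z\to\infty$ in $\H_{J,a}$: for $z$ on the boundary line $\ell_{\phi_0,a}$ one has $\cos(\phi-\arg z)\leq\cos(\phi_0-\arg z)$ for all admissible $\phi\in J$, so $b_\star(z)=|z|\cos(\phi_0-\arg z)\equiv a$, and similarly near $\ell_{\phi_1,a}$. Your proposed patch via one-sided tails rescues only one of the two operators at each edge: e.g.\ as $z$ runs to the $+\infty$ end of $\ell_{\phi_0,a}$, the tail argument handles $K_{-,j_o}$ but not $K_{+,j_o}[f;z]=\int_{-\infty}^{z}W_{+,j_o}(t,z)R(t)f(t)\,\d t$, whose kernel is only \emph{bounded} (condition \eqref{l'1} gives $\on{Re}\int_t^z(\La_{ii}-\La_{j_oj_o})\leq K_1$, not $\to-\infty$), so the non-tail part of the integral has no reason to vanish. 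The missing idea, used in the paper, is that all hypotheses hold on \emph{every} closed subinterval of the open interval $I$: enlarge $J$ to $J'=[\phi_0-\eta,\phi_1+\eta]\subset I$ and integrate along a transversal $\ell_{\phi',b(z)}$ with $\phi'\in J'$. For $z$ near the lower edge take $\phi'=\phi_0-\eta$; then $b(z)=|z|\cos(\phi'-\arg z)\geq|z|\sin\eta\to\infty$, and the half-line $(-\infty,z]$ of $\ell_{\phi',b(z)}$ needed for $K_{+,j_o}$ stays inside $\H_{\phi_0,a}\subseteq\H_{J,a}$, so $f$ is defined on it (symmetrically, $\phi'=\phi_1+\eta$ handles $K_{-,j_o}$ at the other edge; the interior region and the remaining edge cases are covered by your $\phi_\star\in J$ choice and the tail argument). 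With this choice \eqref{gdc} applied to $J'$ gives \eqref{18settembre2023-1}. The same enlargement is needed for your part (2): along a line with bounded $b$ the integral of the majorant $(b^2+t^2)^{-(1+\delta)/2}$ up to $t(z)\to+\infty$ is of order $b^{-\delta}=O(1)$, not $O(|z|^{-\delta})$, so the claimed rate only follows once one integrates along a transversal with $b(z)\geq c\,|z|$, i.e.\ again with directions drawn from $J'\subset I$ rather than from $J$ alone.
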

\proof
To prove point (1), we need to prove that 
\begin{enumerate}
\item[(i)] the integrals \eqref{k+}, \eqref{k-} are finite for any $z\in \H_{J,a}$;
\item[(ii)] they are independent of the lines $\ell_{\phi,b}$, with $\phi\in J$,  passing through $z$;
\item[(iii)] $K_{\pm,j_o}[f;z]$ are  analytic functions of $z\in \on{int} \H_{J,a}$ and continuous at  $z\in \der\H_{J,a}$.
\item[(iv)]  $K_{\pm,j_o}[f;z]$ vanishes  as $z\to\infty$  in $\H_{J,a}$. 
\end{enumerate}
Point (i) and \eqref{normk} follow from conditions $L$ and the good decay condition on $R$, since
$$
\left|\int_{\infty,\ell_{\phi,b}}^zW_{+,j_o}(t,z) R(t) f(t) \d t\right|\leq \left(\underset{\substack{t\leq z\\(t,z)\in\ell_{\phi,b}^{\times 2}}}{\sup} |W_{+,j_o}(t,z)| \right) \pnorm{1}{R|_{\ell_{b,\phi}}} \snorm{f},
$$
$$
\left|\int_{z,\ell_{\phi,b}}^\infty W_{-,j_o}(t,z) R(t) f(t) \d t\right|\leq \left(\underset{\substack{z\leq t\\(t,z)\in\ell_{\phi,b}^{\times 2}}}{\sup} |W_{-,j_o}(t,z)| \right) \pnorm{1}{R|_{\ell_{b,\phi}}} \snorm{f}.
$$

As for point (ii),  let $\ell_{\phi_i,b_i}\subseteq\H_{J,a}$, with $i=1,2$, be two lines passing through $z$.  Since $J \subset  I$, there exists   $\phi^\prime\in I$ and a line $\ell_{\phi',b'}$ transversal to both $\ell_{\phi_i,b_i}$, $i=1,2$,  intersecting them respectively at the points $w_i\in\ell_{\phi_i,b_i}$, $i=1,2$, with $w_i < z$. We have
\begin{align*}
0=\left[\int_{w_1}^z-\int_{w_2}^z+\int_{w_2}^{w_1}\right]W_{+,j_o}(t,z) R(t) f(t) \d t.
\end{align*}
By taking the limit $b'\to+\infty$, the term $\int_{w_2}^{w_1}$ goes to zero, by \eqref{gdc}.  Hence, point (ii) is proved for $K_{+,j_o}[f;z]$. The case of $K_{-,j_o}[f;z]$ is similar.

To prove (iii), first let $z$ and $z_1$ be  poins of $\on{int} \H_{J,a}$.  Consider $K_{+,j_o}[f;z]$ and $ K_{+,j_o}[f;z_1]$, respectively along lines $\ell_{\phi,b}$ and $\ell_{\phi,b_1}$, with the same $\phi\in J$.  We claim that 
\begin{equation}
\label{13aprile2023-1}
K_{+,j_o}[f;z_1]-K_{+,j_o}[f;z]=\int_z^{z_1}W_{+,j_o}(t,z) R(t) f(t) \d t,
\end{equation}
where the integration in the r.h.s. is along any path joining $z$ to $z_1$. If the claim is true, then the following limit is well defined and analytic w.r.t.\,\,$z$:
$$ 
\frac{d K_{+,j_o}[f;z]}{dz}:=\lim_{z_1-z\to 0} \frac{K_{+,j_o}[f;z_1]-K_{+,j_o}[f;z]}{z_1-z}=W_{+,j_o}(z,z)R(z)f(z).
$$
 To prove the claim, as we did before, for suitable $\phi^\prime\in I$  we can take  an arbitrary $\ell_{\phi^\prime,b^\prime}$ transversal to $\ell_{\phi,b}$ and $\ell_{\phi,b_1}$, intersecting them in $w<z$ and $w_1<z_1$ respectively.  See figure \ref{13aprile2023-2}.  
Then 
\begin{align*}
0=\left[\int_{w,\ell_{\phi,b}}^z+\int_z^{z_1}- \int_{w_1,\ell_{\phi,b_1}}^{z_1}-\int_{w,\ell_{\phi^\prime,b^\prime}}^{w_1}\right]W_{+,j_o}(t,z) R(t) f(t) \d t.
\end{align*}
By taking the limit $b'\to+\infty$, the term $\int_{w}^{w_1}$ goes to zero, by \eqref{gdc}.  Successively, let $z\in \der \H_{J,a}$ and $z_1\in \on{int} \H_{J,a}$. The equality \eqref{13aprile2023-1} is proved in the same way, and implies continuity at $z$. 
The case of $K_{-,j_o}[f;z]$ is similar.

\begin{figure}
\centerline{\includegraphics[width=0.5\textwidth]{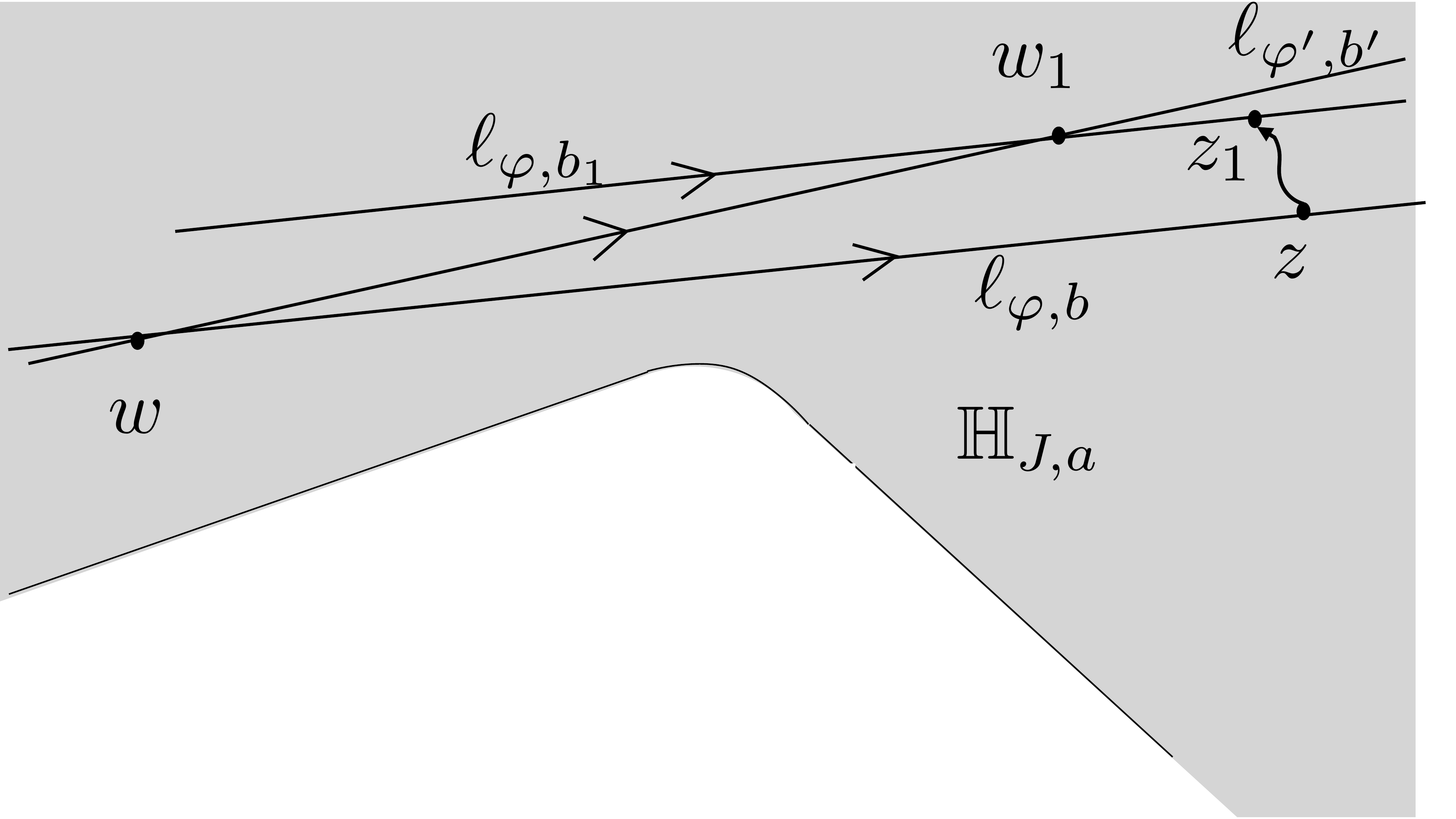}}
\caption{}
\label{13aprile2023-2}
\end{figure}

\begin{figure}
\centerline{\includegraphics[width=0.5\textwidth]{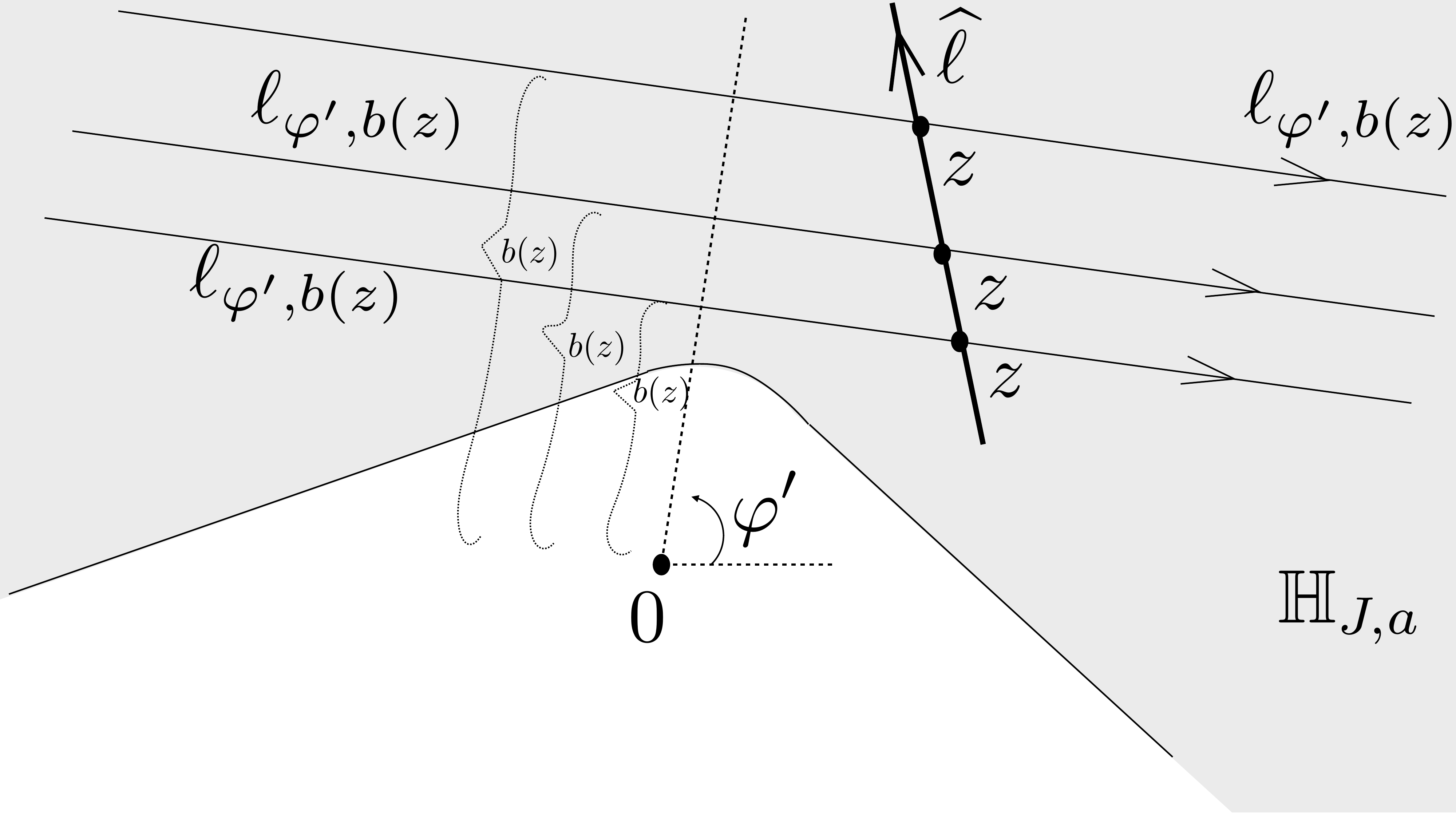}}
\caption{$z$ goes to infinity along $\widehat{\ell}$. }
\label{13aprile2023-3}
\end{figure}

\vskip1,5mm

As for point (iv), we show that $K_{+,j_o}[f;z]$ vanishes  for $z\to \infty$ along any line $\widehat{\ell}\subset \H_{J,a}$.
 By (ii),  \eqref{k+} depends only on $z$ and not on $\phi$, so we can take a line of integration $\ell_{\phi^\prime,b(z)}$ transverse to $\widehat{\ell}$, with $\phi^\prime\in I$ and $b(z)=|z|\cos(\phi^\prime-\arg z)$.   See figure \ref{13aprile2023-3}. 
By \eqref{normk},
\beq\label{decayk}
|K_{\pm,j_o}[f;z]|\leq C\,\snorm{f}\, \pnorm{1}{R|_{\ell_{\phi^\prime,b(z)}}},\quad C>0.
\eeq
Now, $b(z)\to +\infty$ when  $|z|\underset{\widehat{\ell}}\to\infty$, and   by \eqref{gdc}, the right-hand-side goes to zero. 
  The case of $K_{-,j_o}[f;z]$ is similar.

\vskip1,5mm
 Point (2) follows from inequality \eqref{decayk}.
\endproof

\subsection{Proof of Theorem \ref{mainth}} \label{secprmth}

Fix $j_o\in\{1,\dots,n\}$, and $z_o\in\H_{I,a}$. Consider the change of variables 
\beq
Y(z)=Z(z)\exp\int_{z_o}^z\La_{j_oj_o}(t)\d t,
\eeq
in equation \eqref{diffeq}. The function $Z(z)$ satisfies
\beq\label{modifieddiffeq}
\frac{dZ}{dz}=\left(\Tilde\La(z)+R(z)\right)Z(z),\quad \Tilde\La(z)=\La(z)-\La_{j_oj_o}(z)I.
\eeq
Denote by $(e_1,\dots, e_n)$ the standard basis of $\C^n$, where $e_i=(0,\dots,0,1_i,0,\dots,0)^{\rm T}$, for any $i=1,\dots,n$.
Theorem \ref{mainth} will be proved if we show that there exists a unique $\C^n$-valued solution $Z(z)$ of \eqref{modifieddiffeq} such that, when $z\to \infty$ in $ \H_{J,a}$, 
\beq\label{modifiedasym}
Z(z)=e_{j_o}+o(1)\quad \text{in }\H_{J,a},
\eeq for any closed interval $J\subseteq I$.
\vskip2mm
Consider the integral equation
\beq\label{integeq}
f(z)= e_{j_o}+K_{+,j_o}[f;z]-K_{-,j_o}[f;z],\quad f\in\mc H_{J,a}.
\eeq

\begin{lem}
\label{lemdiffeqintegeq}
The following conditions are equivalent:
\begin{enumerate}
\item a function $Z(z)$ is a solution of the integral equation \eqref{integeq};
\item a function $Z(z)$ is a solution of \eqref{modifieddiffeq} satisfying \eqref{modifiedasym}.
\end{enumerate}

\end{lem}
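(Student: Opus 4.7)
The easy direction $(1)\Rightarrow(2)$ is a differentiation of the integral equation. Assuming $Z$ satisfies \eqref{integeq}, differentiating both sides with respect to $z$ under the integral sign uses $\der_z W_{\pm,j_o}(t,z)=\Tilde\La(z)W_{\pm,j_o}(t,z)$ (immediate from the definitions \eqref{15settembre2023-1}) together with the boundary evaluations $W_{+,j_o}(z,z)=P_+$, $W_{-,j_o}(z,z)=P_-$, where $P_\pm$ are the coordinate projectors onto the index sets $J_\pm(j_o)$. Because $J_+(j_o)\sqcup J_-(j_o)=\{1,\dots,n\}$ we have $P_++P_-=I_n$, so Leibniz's rule gives
\[
\frac{d}{dz}\bigl(K_{+,j_o}[Z;z]-K_{-,j_o}[Z;z]\bigr)=\Tilde\La(z)\bigl(K_{+,j_o}[Z;z]-K_{-,j_o}[Z;z]\bigr)+R(z)Z(z).
\]
Substituting the integral equation in the form $K_{+,j_o}[Z;z]-K_{-,j_o}[Z;z]=Z(z)-e_{j_o}$ and using $\Tilde\La(z)e_{j_o}=0$ (since $\Tilde\La_{j_o j_o}\equiv 0$), one obtains $Z'=(\Tilde\La+R)Z$, i.e.\ \eqref{modifieddiffeq}. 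The asymptotics \eqref{modifiedasym} follows from $K_{\pm,j_o}[Z;z]\to 0$, granted by Lemma \ref{lemmak}(1).

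For the converse $(2)\Rightarrow(1)$, assume $Z$ solves \eqref{modifieddiffeq} with $Z(z)=e_{j_o}+o(1)$ in $\H_{J,a}$. Continuity of $Z$ plus the asymptotic condition imply $Z\in\mc H_{J,a}$, so that $K_{\pm,j_o}[Z;z]$ is well defined. Set
\[
g(z):=Z(z)-e_{j_o}-K_{+,j_o}[Z;z]+K_{-,j_o}[Z;z];
\]
the goal is to show $g\equiv 0$. Repeating the differentiation above, but now using the definition of $g$ as an identity in place of the integral equation, yields $g'(z)=\Tilde\La(z)g(z)$. Since $\Tilde\La$ is diagonal, each component integrates explicitly to
\[
g_i(z)=c_i\exp\!\int_{z_o}^z\bigl(\La_{ii}(s)-\La_{j_o j_o}(s)\bigr)\d s,\qquad c_i\in\C,
\]
and Lemma \ref{lemmak}(1) combined with $Z\to e_{j_o}$ gives $g(z)\to 0$ as $z\to\infty$ in $\H_{J,a}$.

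The conclusion $c_i=0$ for every $i$ is then extracted from the $L$-condition, line by line. For $i=j_o$, $\Tilde\La_{j_o j_o}\equiv 0$ makes $g_{j_o}\equiv c_{j_o}$ constant, and $g_{j_o}\to 0$ forces $c_{j_o}=0$. For $i\in J_+(j_o)$, condition \eqref{l'1} gives $|g_i(z)/g_i(w)|\leq e^{K_1}$ for every pair $w\leq z$ on any line $\ell_{\phi,b}\subseteq\H_{J,a}$ with $\phi\in J$; fixing $z$ and letting $w\to-\infty$ along $\ell_{\phi,b}$ (an asymptotic direction inside $\H_{J,a}$), $g_i(w)\to 0$ while the ratio stays bounded, hence $g_i(z)=0$. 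For $i\in J_-(j_o)\setminus\{j_o\}$, condition \eqref{l'2} gives $|g_i(z)|\geq e^{K_2}|g_i(w)|$ for $w\leq z$ on $\ell_{\phi,b}$; fixing $w$ and letting $z\to+\infty$ along $\ell_{\phi,b}$ makes the left-hand side vanish, forcing $g_i(w)=0$. In all cases $g_i\equiv 0$ on each line of $\H_{J,a}$ and hence on all of $\H_{J,a}$, so that $Z$ satisfies \eqref{integeq}. The only delicate point is recognizing that both infinite endpoints of every line $\ell_{\phi,b}\subseteq\H_{J,a}$, $\phi\in J$, are legitimate ``at infinity'' directions where the assumed decay of $g$ applies; this is built into the very construction of $\H_{J,a}$ as a union of such oriented lines.
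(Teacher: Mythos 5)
Your proposal is correct and follows essentially the same route as the paper: the forward direction is the same differentiation (the paper writes $W_{\pm,j_o}(t,z)=W_{1,2}(z)W(t)^{-1}$ and differentiates, which is exactly your Leibniz computation), and the converse considers the same auxiliary function (your $g$, the paper's $Z-W_1\int+W_2\int$ minus $e_{j_o}$), which solves the diagonal system, tends to $0$ by Lemma \ref{lemmak}, and is killed by the $L$-condition. Your explicit two-case analysis via \eqref{l'1} and \eqref{l'2} is just a more detailed rendering of the paper's brief appeal to the $L$-condition to force the constants $c_i$ to vanish, so no substantive difference.
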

\proof
Let us first re-write the integral equation \eqref{integeq} in a more convenient form. Set
\beq
W_1(z):=W_{+,j_o}(z_o,z),\quad W_2(z):=W_{-,j_o}(z_o,z),\quad W(z)=W_1(z)+W_2(z)=\exp\int_{z_o}^z\Tilde\La(t)\d t,
\eeq
so that\beq
W_{+,j_o}(t,z)=W_1(z)W(t)^{-1},\quad W_{-,j_o}(t,z)=W_2(z)W(t)^{-1}.
\eeq
The integral equation \eqref{integeq} is thus 
\beq\label{integeq2}
f(z)=e_{j_o}+W_1(z)\int_{\infty}^z W(t)^{-1}R(t)f(t)\d t-W_2(z)\int_{z}^\infty W(t)^{-1}R(t)f(t)\d t.
\eeq
Notice that both the constant vector $e_{j_o}$ and the matrices $W_1(z),W_2(z),W(z)$ are solutions of the diagonal system 
\beq\label{diagdiffeq}
\frac{dT}{dz}(z)=\Tilde\La(z)T(z).
\eeq 
If $Z(z)$ is a solution of the integral equation \eqref{integeq}, 
then we can derive both sides of equation \eqref{integeq2}, with $f=Z$. We obtain
\begin{multline*}
Z'(z)=W'_1(z)\int_{\infty}^zW(t)^{-1}R(t)Z(t)\d t+W_1(z)W(z)^{-1}R(z)Z(z)\\-W'_2(z)\int_{z}^\infty W(t)^{-1}R(t)Z(t)\d t+W_2(z)W(z)^{-1}R(z)Z(z)\\
=\left(\Tilde\La(z)+R(z)\right)Z(z).
\end{multline*}
So, $Z(z)$ is a solution of \eqref{modifieddiffeq}, and \eqref{modifiedasym} holds by \eqref{18settembre2023-1} of Lemma \ref{lemmak}. 

Conversely, 
if $Z(z)$ is a solution of \eqref{modifieddiffeq} satisfying \eqref{modifiedasym},
then the function
\[
Z(z)-W_1(z)\int_{\infty}^z W(t)^{-1}R(t)Z(t)\d t+W_2(z)\int_{z}^\infty W(t)^{-1}R(t)Z(t)\d t
\]is a solution of \eqref{diagdiffeq}, as follows by differentiation. By condition \eqref{modifiedasym}, it equals \begin{equation}
\label{16novembre2024-1}
e_{j_o}+o(1),
\end{equation}
 where 
$$o(1)=\sum_{i\neq j_o} c_i e_i \exp\Bigl\{\int_{z_o}^z  (\Lambda_{ii}(t)-\Lambda_{j_oj_o}(t))dt \Bigr\}, \quad \quad\hbox{ for some } c_i\in\mathbb{C}.
$$
Now,  the {\it L-condition on $I$ }implies that for every $i\neq j_o$  there is a direction in $\mathbb{H}_{J,a}$ such that 
$\exp\Bigl\{\int_{z_o}^z  (\Lambda_{ii}(t)-\Lambda_{j_oj_o}(t))dt \Bigr\}\to \infty$. Hence,  necessarily  $o(1)\equiv 0$. 
So $Z(z)$ it is a solution of \eqref{integeq}. 
\endproof

\begin{rem}
\label{30settembre2024-1}
The ``conversely'' statement in the above proof does not hold under the original conditions given by Levinson. They are similar to our $L$-condition, but hold only along a half-line (or along rays within a small sector), preventing the conclusion that $o(1)=0$ in \eqref{16novembre2024-1}. 
As a result, Levinson’s theorem does not guarantee the uniqueness of the solution to the differential equation with the prescribed asymptotics; it establishes only the existence of such a solution.
\end{rem}

Equation \eqref{integeq} is  $ ({\rm Id}-K_{j_o})[f]=e_{j_o}$, 
    with the operator
$$ 
 K_{j_o}:~\mathcal{H}_{J,a}\longrightarrow \mathcal{H}_{J,a},\quad \quad  K_{j_o}:=K_{+,j_o}-K_{-,j_o}.
 $$ 

For any $J\subset I$,  by equations \eqref{gdc} and \eqref{normk}, and by replacing $a$ with $a'(J)>a$ if necessary, the operator $K_{j_o}$ has norm less that one: 
 $$ 
\|K_{j_o}\|:= \underset{f\in\mathcal{H}_{J,a} }{\sup}\,\,\frac{\|K_{j_o}[f]\|_\infty}{\|f\|_\infty}\leq 2 C\,M_{R,a,J}<1,
 $$
  Hence,  \eqref{integeq}   has a unique solution, given by
 the Neumann series  \cite[Chapter VI]{RS80}
 $$ 
 Z=  ({\rm Id}-K_{j_o})^{-1}[e_{j_o}]=\sum_{m=0}^\infty K_{j_o}^m[e_{j_o}]
 ~ \in \mathcal{H}_{J,a}.
 $$
 Equivalently, $K_{j_o}$ is a contraction for any closed interval $J\subset I$. Then, by Banach--Caccioppoli fixed point theorem, the integral equation \eqref{integeq} admits a unique vector solution in $\mc H_{J,a}$. 
 
 Finally, \eqref{18settembre2023-1} implies the asymptotic behaviour \eqref{modifiedasym}. 
By Lemma \ref{lemdiffeqintegeq}, it is a solution of the differential equation \eqref{modifieddiffeq} satisfying \eqref{modifiedasym} for any $\H_{J,a}$. Being a solution of a linear differential equation, it analytically extends to $\H_{I,a}$. Theorem \ref{mainth} is proved.
\vskip1,5mm
Corollary \ref{maincor} follows from the same argument and point (2) of Lemma \ref{lemmak}.

\subsection{The case of subdominant solutions}
\label{20settembre2023-2}

In this subsection, we consider a particular vector solution to the
differential equation (\ref{diffeq})
corresponding to an index $j_o$ such that {\it the condition \eqref{l'2}
holds on $I$ for all $(i,j_o)$}.
Such a solution, that we denote by $y_{j_o}$, is called {\it
subdominant} since Theorem \ref{mainth}
and the condition \eqref{l'2} imply that
\begin{equation}
 \big|y_{j_o}(z)\big|=O\big( |y(z)| \big) \mbox{ as $z \to +\infty$
along $\ell_{\varphi,b}$ for every $\varphi \in I$
 and $b\geq a$},
\end{equation}
if $y$ is an arbitrary non-zero vector solution to the same differential equation.
\vskip1,5mm
We show that, in the case of a subdominant solution, the result of
Theorem \ref{mainth} can be improved. The validity of the asymptotic
behaviour of the subdominant solutions can be extended, under certain
conditions, to a larger domain than the one claimed by Theorem
\ref{mainth}, namely a domain twice as ample.

In fact, for the index $j_o$, the {`natural'} domain on which to prove
existence and uniqueness of solutions of the integral equation
\eqref{integeq} is the `cut-plane'
$$\H_{[\phi],a}:=\H_{[\varphi-\pi,\varphi],a},$$
 instead of
the half-plane $\H_{\varphi,a}$.
The domain  $\H_{[\phi],a}$  is foliated by (parametric) straight
lines and half-lines whose
tangent at any point is the unit vector of argument
$\varphi-\dfrac{\pi}{2}$, see Figure \ref{19settembre2023-2}.
We denote these curves by
$\iota^{\varphi}_{a,b}$ with $b \in \R$. Their support is given explicitly by
\begin{equation}
\label{22giugno2023-1}
 \iota^{\varphi}_{a,b}=
\left\{ \begin{aligned}
 & \lbrace z \in \H_{[\phi],a}, | z|= \frac{b}{\cos (\varphi- \arg
z)} , \varphi-\frac\pi2< \arg z< \varphi+\frac\pi2\rbrace , &\mbox{ if
} b\geq a
 \\
 & \lbrace z \in \H_{[\phi],a}, | z|= \frac{b}{\cos (\varphi- \arg z)},
  \varphi-\frac\pi2 <  \arg z\leq \varphi-\frac\pi2 +\sin^{-1}(b/a)
\rbrace,  &\mbox{ if } 0<b<a,
 \\
 & \lbrace z \in \H_{[\phi],a}, | z|\geq a, \arg z=
\varphi-\frac\pi2 \rbrace,  &\mbox{ if } b=0,
 \\
 & \lbrace  z \in \H_{[\phi],a}, | z|= \frac{b}{\cos (\varphi- \arg z)} ,
 \varphi-\frac\pi2+ \sin^{-1}(b/a)\leq \arg z<
\varphi-\frac\pi2\rbrace,  &\mbox{ if } -a<b<0,
 \\
 &  \lbrace  z \in \H_{[\phi],a}, | z|= \frac{b}{\cos (\varphi- \arg z)} ,
 \varphi-\frac{3\pi}{2}< \arg z< \varphi-\frac\pi2\rbrace, &\mbox{ if
} b\leq -a.
\end{aligned}
\right.
\end{equation}
   Before we state the theorem on subdominant solutions, we extend the
condition \eqref{l'2}
and good decay condition to the curves $\iota^{\varphi}_{a,b}$.

\begin{figure}
\centerline{\includegraphics[width=0.43\textwidth]{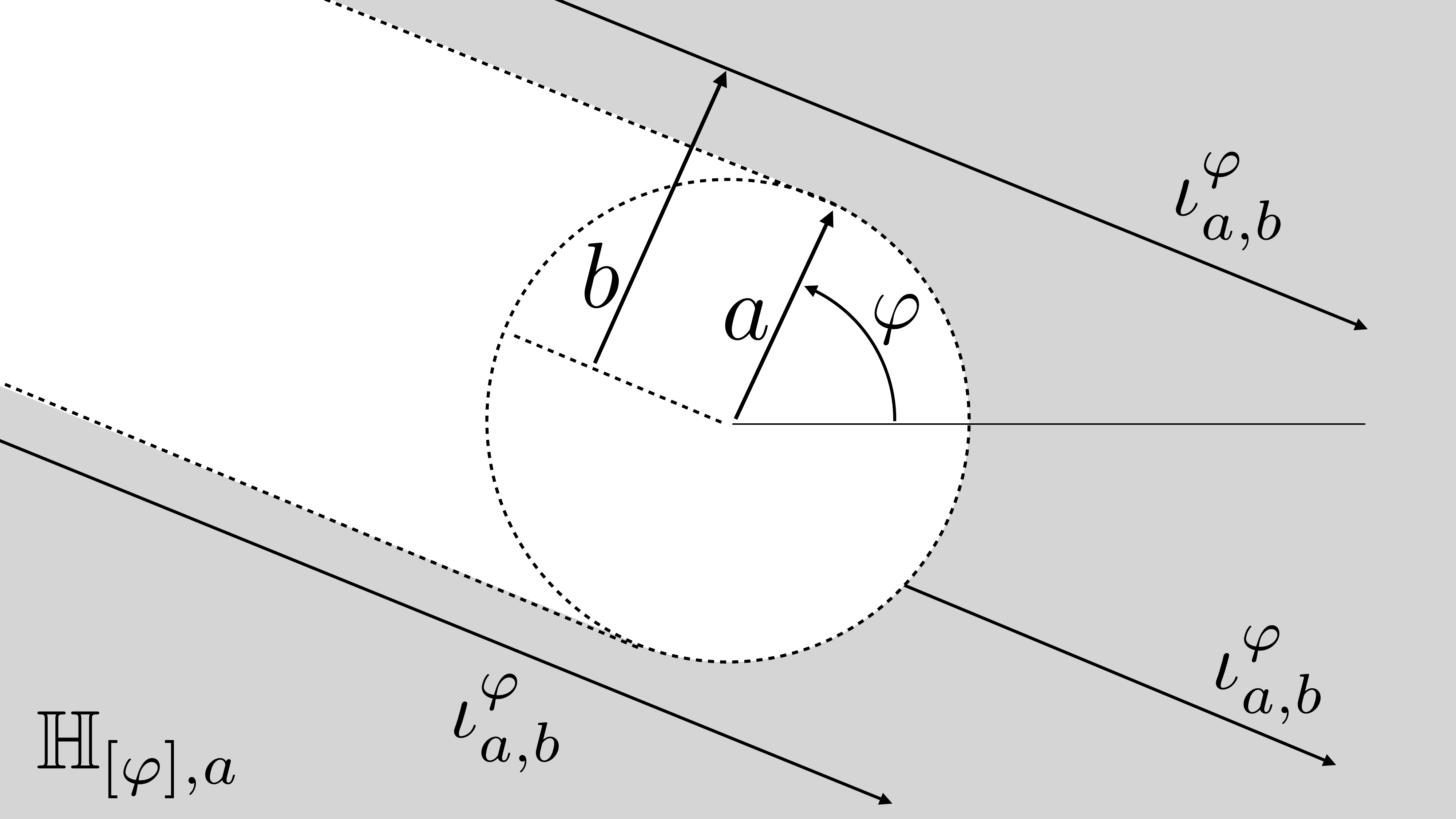}}
\caption{The domain $\H_{[\phi],a}$. The angle $\phi$ is represented modulo $2\pi$. 
}
\label{19settembre2023-2}
\end{figure}

\begin{defn}
\label{21giugno2023-1}
 Let $I=]\phi_\mathrm{min},\phi_\mathrm{max}[$ be an open interval,
and define $\widetilde{I}:= ]\phi_\mathrm{min}-\pi,\phi_\mathrm{max}[$
and $\H_{\widetilde{I},a}$ (see Figure \ref{26giugno2023-3}).
 The index $j_o \in \{1,\dots,n\}$ is said to be a {\it subdominant
index} {\it on $I$} for the holomorphic function
 $\La\in\hol(\H_{\widetilde{I},a};\h(n,\C))$
if for any  $J\subseteq I$ closed
interval, there exist a $K_2\in\R$ such that
\begin{equation}
\label{conditionS}
 \underset{\phi\in J}{\inf}\,\,\underset{b \in
\R}{\inf}\,\,\underset{\substack{w\leq z\\(w,z)
 \in(\iota^{\varphi}_{a,b})^{\times
2}}}{\inf}\on{Re}\int_{w,\iota^{\varphi}_{a,b}}^z\left(\La_{ii}(t)-\La_{j_oj_o}(t)\right){\rm
d}t\geq K_2,
  \qquad \forall i \in \lbrace 1 \dots n \rbrace.
\end{equation}
\end{defn}

\begin{figure}
\centerline{\includegraphics[width=0.5\textwidth]{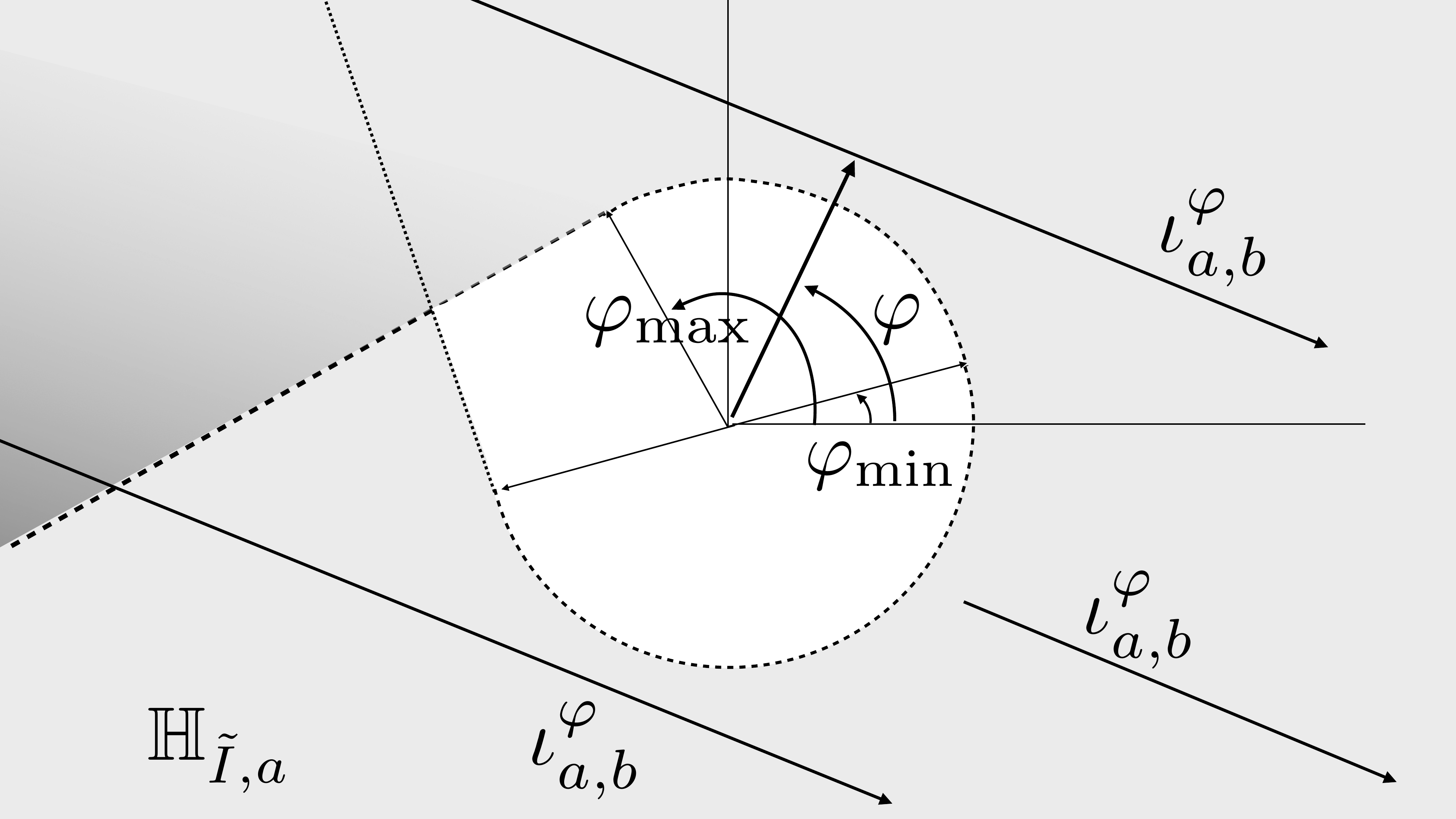}}
\caption{The domain $\H_{\widetilde{I},a}$. The angles are represented
mod $2\pi$. Equivalently, the figure is the projection  onto $\C$.
Three lines $\iota^{\varphi}_{a,b}$are also represented, with
$\phi_{\mathrm{min}}<\phi<\phi_{\mathrm{max}}$
}
\label{26giugno2023-3}
\end{figure}

\begin{defn}
\label{21giugno2023-1-bis}
A holomorphic function $R\in\hol(\H_{\widetilde{I},a};\gl(n,\C))$
satisfies the {\it S-decay condition} if for any closed interval $J\subseteq I$
\begin{equation}\label{eq:MRaJS}
 \widetilde{M}_{R,a,J}:= \underset{\phi\in J}{\sup} \; \underset{b \in
\R}{\sup}{\pnorm{1}{R|_{\iota^{\varphi}_{a,b}}}} < +\infty,
\end{equation}
and
\begin{equation}\label{eq:MRaJSinfty}
 \lim_{a\to +\infty} \widetilde{M}_{R,a,J}= 0.
\end{equation}
\end{defn}

\begin{thm}
\label{subth}
Consider the differential equation
\beq
\frac{dY}{dz}=\left(\La(z)+R(z)\right)Y(z),
\eeq with $\La\in\mathscr O(\H_{\widetilde{I},a};\h(n,\C))$  and
$R\in\mathscr O(\H_{\widetilde{I},a};\gl(n,\C))$.
Assume that $j_o$ is a subdominant index on $I$ for $\La$ and that $R$
satisfies the S-decay condition on $I$.
There exists a unique vector solution $y_{j_o}\in\mathscr
O(\H_{\widetilde{I},a};\C^n)$ with asymptotic behaviour,
\beq\label{subasym}
y_{j_o}(z)=\left(e_{j_o}+o(1)\right)\exp\left(\int_{z_o}^z\La_{j_oj_o}(w){\rm
d}w\right),
\eeq
for any closed interval $\widetilde{J}\subset \widetilde{I}$ and
$z\to \infty$ in $\H_{\widetilde{J},a}$, 
where $z_o$ is an arbitrary point of $\H_{\widetilde{I},a}$.
\end{thm}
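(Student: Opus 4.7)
\textbf{Proof plan for Theorem \ref{subth}.} The strategy is to mirror the proof of Theorem \ref{mainth} carried out in Sections \ref{sechja}--\ref{secprmth}, but working on the enlarged domain $\H_{\widetilde{I},a}$, with the straight lines $\ell_{\phi,b}$ replaced by the foliating curves $\iota^\varphi_{a,b}$ of \eqref{22giugno2023-1}. As a first step, I would apply the change of unknown $y_{j_o}(z)=z(z)\exp\int_{z_o}^z \La_{j_oj_o}(t)\,\d t$, reducing the claim to the existence and uniqueness of a bounded holomorphic $Z\in\hol(\H_{\widetilde{I},a};\C^n)$ satisfying the modified system \eqref{modifieddiffeq} with $Z(z)=e_{j_o}+o(1)$ in every closed subdomain $\H_{\widetilde{J},a}$, $\widetilde{J}\subset\widetilde{I}$.

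Since $j_o$ is a subdominant index, the partition associated to the data collapses to $J_+(j_o)=\emptyset$ and $J_-(j_o)=\{1,\dots,n\}$, so only the ``$-$'' integral operator survives. I would therefore introduce the Banach space $\widetilde{\mathcal H}_{\widetilde{J},a}$ of bounded continuous $\C^n$-valued functions on $\H_{\widetilde{J},a}$, analytic on the interior, endowed with the sup norm, and the integral operator
\[
\widetilde K_{-,j_o}[f;z]:=\int_z^\infty W_{-,j_o}(t,z)\,R(t)\,f(t)\,\d t,
\]
the integration being taken along any curve $\iota^\varphi_{a,b}$ through $z$ with $\varphi\in J$ such that $z\in\H_{[\phi],a}$. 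The corresponding integral equation would be $f(z)=e_{j_o}-\widetilde K_{-,j_o}[f;z]$.

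The analogue of Lemma \ref{lemmak} is the technical heart of the argument: I would need to show that (i) the integral is finite for every $z\in\H_{\widetilde{J},a}$, using the lower bound provided by the subdominant condition \eqref{conditionS} to control $|W_{-,j_o}(t,z)|$ together with $\widetilde M_{R,a,J}<\infty$ from \eqref{eq:MRaJS}; (ii) the value is independent of the admissible $\varphi\in J$, by closing a contour between two such curves through $z$ and letting the transverse bridge escape to infinity, so that its contribution is killed by \eqref{gdc}-type decay; (iii) $\widetilde K_{-,j_o}[f;\cdot]$ is analytic on the interior and continuous up to the boundary, proved by the same displacement formula used in point (iii) of Lemma \ref{lemmak}; and (iv) $\widetilde K_{-,j_o}[f;z]\to 0$ as $z\to\infty$ in $\H_{\widetilde{J},a}$, obtained by choosing the curve of integration transverse to the direction of $z$, so that its distance-to-the-origin parameter tends to infinity. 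Hypothesis \eqref{eq:MRaJSinfty} then permits to take $a$ large enough so that $\widetilde K_{-,j_o}$ is a contraction, and the Banach--Caccioppoli theorem delivers the unique fixed point $Z\in\widetilde{\mathcal H}_{\widetilde{J},a}$.

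Finally, I would prove the analogue of Lemma \ref{lemdiffeqintegeq}: differentiating the fixed point equation shows that $Z$ solves \eqref{modifieddiffeq} with the required asymptotics; conversely, any solution with that asymptotics differs from a fixed point by a solution of the diagonal system $T'=\widetilde\La T$, namely $\sum_{i\neq j_o}c_i e_i\exp\int_{z_o}^{\cdot}(\La_{ii}-\La_{j_oj_o})\,\d t$, and condition \eqref{conditionS} forces these exponentials to be bounded below along each $\iota^\varphi_{a,b}$, so that the $o(1)$ requirement on $\H_{\widetilde{J},a}$ forces every $c_i$ to vanish. Patching the uniqueness across all closed $\widetilde{J}\subset\widetilde{I}$ gives a single holomorphic $Z$ on $\H_{\widetilde{I},a}$, and undoing the diagonal gauge yields the desired $y_{j_o}$. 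The main obstacle I anticipate is precisely point (ii): the curves $\iota^\varphi_{a,b}$ are geometrically richer than straight lines (some are full lines, some half-lines truncated by the disk $|z|<a$, some are ``tangent type''), so the homotopy argument for independence on $\varphi$ must be set up with care to ensure that the bridging segments connecting two such curves through $z$ indeed escape to infinity inside $\H_{\widetilde{I},a}$ without leaving the domain or crossing the boundary of the foliation.
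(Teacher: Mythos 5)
Your proposal is essentially the paper's own proof: the paper likewise reduces, after the diagonal gauge $y_{j_o}=Z\exp\int_{z_o}^z\La_{j_oj_o}$, to a Volterra-type integral equation for $Z$ built from the single operator $\int_z^\infty W_{-,j_o}(t,z)R(t)f(t)\,\d t$ taken along the curves $\iota^{\varphi}_{a,b}$, proves the analogue of Lemma \ref{lemmak} on the spaces $\mc H_{\widetilde J,a}$ (Lemma \ref{lemmaks}), uses \eqref{eq:MRaJS}--\eqref{eq:MRaJSinfty} to make the operator a contraction for $a$ large and applies Banach--Caccioppoli/Neumann series, with the equivalence to the differential equation plus asymptotics handled exactly as in Lemma \ref{lemdiffeqintegeq}, the lower bound \eqref{conditionS} killing the homogeneous terms as you describe. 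The only cosmetic difference is the sign in the integral equation (you write $f=e_{j_o}-\widetilde K_{-,j_o}[f]$, consistent with the main-theorem convention, which is indeed the sign that makes the differentiation check work), so there is no gap to report.
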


\begin{cor}
\label{subcor}
In the same assumptions of Theorem \ref{subth}, if $R=O(z^{-1-\dl})$ on
$\H_{\widetilde{I},a}$, then the unique solution $y_{j_o}$  of Theorem
\ref{subth} has behaviour
 \beq
y_{j_o}(z)=\left(e_{j_o}+O(z^{-\dl})\right)\exp\left(\int_{z_o}^z\La_{j_oj_o}(w){\rm
d}w\right),
\eeq
for $z\to\infty$ in $\H_{\widetilde{J},a}$ and any closed
$\widetilde{J}\subset\widetilde{I}$.
\end{cor}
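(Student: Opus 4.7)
\noindent\emph{Proof plan.} The strategy is to mirror the derivation of Corollary \ref{maincor} from Theorem \ref{mainth}, now adapted to the subdominant setting of Theorem \ref{subth}.

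\emph{Step 1: Gauge reduction.} As in the proof of Theorem \ref{subth}, one first applies the change of variable $Y(z)=Z(z)\exp\!\int_{z_o}^z \La_{j_oj_o}(t)\,\d t$, reducing the problem to finding the unique vector solution $Z$ of $Z'=(\Tilde\La+R)Z$, with $\Tilde\La:=\La-\La_{j_oj_o}I_n$, satisfying $Z(z)=e_{j_o}+o(1)$ as $z\to\infty$ in $\H_{\Tilde J,a}$. The refined conclusion of the corollary is then equivalent to $Z(z)=e_{j_o}+O(|z|^{-\dl})$ on $\H_{\Tilde J,a}$.

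\emph{Step 2: Integral equation.} Since $j_o$ is a subdominant index on $I$, the partition employed in \eqref{15settembre2023-1} degenerates to $J_+(j_o)=\emptyset$, $J_-(j_o)=\{1,\dots,n\}$, so $W_{+,j_o}\equiv 0$ and the fixed-point equation reduces to
\[
Z(z)=e_{j_o}-K_{-,j_o}[Z;z],
\]
where $K_{-,j_o}$ is now defined by integration along the curves $\iota^{\varphi}_{a,b}$ of \eqref{22giugno2023-1}, rather than along full lines $\ell_{\phi,b}$. The Banach--Caccioppoli argument underlying Theorem \ref{subth} produces a unique fixed point in $\mc H_{\Tilde J,a}$ for every closed $\Tilde J\subset\Tilde I$.

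\emph{Step 3: Refined operator estimate.} The crux is establishing an analogue of point (2) of Lemma \ref{lemmak}: under the hypothesis $R=O(|z|^{-1-\dl})$ on $\H_{\Tilde I,a}$, one needs to show that for every closed $\Tilde J\subset\Tilde I$ there exists $C>0$ such that
\[
|K_{-,j_o}[f;z]|\le C\,\snorm{f}\,|z|^{-\dl},\qquad z\in\H_{\Tilde J,a},\ f\in\mc H_{\Tilde J,a}.
\]
This follows by combining the uniform upper bound on $|W_{-,j_o}(t,z)|$ for $t$ lying ahead of $z$ along $\iota^{\varphi}_{a,b}$ --- which is precisely the content of \eqref{conditionS} defining the subdominant condition --- with a length-parametrization $t=z+s e^{\ic(\varphi-\pi/2)}$, $s\ge 0$, yielding $\int_z^\infty|t|^{-1-\dl}\,|\d t|\le C'|z|^{-\dl}$, valid uniformly for $z\in\H_{\Tilde J,a}$ bounded away from the origin.

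\emph{Step 4: Conclusion.} Apply the estimate of Step 3 to the unique fixed point $Z$ produced in Step 2, obtaining $Z(z)-e_{j_o}=-K_{-,j_o}[Z;z]=O(|z|^{-\dl})$; undoing the gauge change of Step 1 gives the claimed refined asymptotic for $y_{j_o}$.

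The main technical point is Step 3, specifically verifying that the tail $[z,+\infty)$ of the contour $\iota^{\varphi}_{a,b}$ issuing from an arbitrary $z\in\H_{\Tilde J,a}$ still satisfies $|t|\ge c|z|$ along it --- including for the curves with $b<0$, which are merely half-lines lying in the ``rear'' portion of the cut-plane. Since $\Tilde J$ is a \emph{closed} sub-interval of $\Tilde I$, the angle between the curve direction $\varphi-\pi/2$ and $\arg z$ is bounded away from $\pi/2$ uniformly in $\varphi\in J$ and $z\in\H_{\Tilde J,a}$, which delivers the required uniform geometric bound and hence the constant $C$ in Step 3.
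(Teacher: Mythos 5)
Your Steps 1, 2 and 4 reproduce the paper's architecture (gauge reduction, the Volterra-type fixed-point equation along the curves $\iota^{\varphi}_{a,b}$, and then reading off the refined rate from the operator bound, exactly as the paper does by citing Lemma \ref{lemmaks}(2)). The gap is in Step 3, precisely at the point you single out as the main technical issue. The inequality $\int_{z}^{\infty}|t|^{-1-\dl}\,|\d t|\leq C'|z|^{-\dl}$ along \emph{the curve through $z$} is false for $z$ in the rear portion of the cut-plane: take $\varphi=\pi/2$ (so the curves are horizontal, oriented towards $+\infty$) and $z=-X+\ic\,a$ with $X\to+\infty$. This $z$ lies on $\der\H_{\pi/2,a}\subset\H_{\widetilde J,a}$, the tail of its curve is the half-line $\{\mathrm{Im}\,t=a,\ \mathrm{Re}\,t\geq -X\}$, which passes at distance $a$ from the origin, and $\int_z^\infty|t|^{-1-\dl}|\d t|\to C_\dl\,a^{-\dl}>0$; it does not decay in $|z|$ at all. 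Since under \eqref{conditionS} the factor $W_{-,j_o}(t,z)$ is only \emph{bounded}, your Step 3 then yields $O(1)$, not $O(z^{-\dl})$, exactly on the enlarged (rear) part of the domain which is the whole point of the subdominant refinement. Moreover the geometric criterion you invoke is not the right one: the dangerous configuration is $\arg z$ close to $\varphi+\pi/2$ or $\varphi-3\pi/2$, i.e.\ angle close to $\pi$ between $z$ and the direction of integration (angle $\pi$ is ``bounded away from $\pi/2$''), and in any case the angle is \emph{not} uniformly bounded away from $\pi/2$ on $\H_{\widetilde J,a}$ (points with $\arg z=\varphi$ are admissible).

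The repair is the same device the paper uses for Lemma \ref{lemmak}(2) via \eqref{decayk}, and which must be carried over to the $\iota$-curves: first establish that $\widetilde K_{j_o}[f;z]$ is independent of the admissible curve through $z$ (the analogue of point (ii) in the proof of Lemma \ref{lemmak}, using \eqref{eq:MRaJS} to kill the connecting segment at infinity), and then, for each $z\in\H_{\widetilde J,a}$, \emph{choose} the direction: since $\widetilde J$ is compactly contained in $\widetilde I$, there is $\eps_0>0$ and, for every $z$, some $\varphi(z)\in I$ with $z\in\H_{[\varphi(z)],a}$ and $\arg z$ at distance $\geq\eps_0$ from both $\varphi(z)+\pi/2$ and $\varphi(z)-3\pi/2$; along that curve either $z$ lies in the forward half or the curve's distance from the origin is $\geq|z|\sin\eps_0$, and only then does the length parametrization give $\int_z^\infty|t|^{-1-\dl}|\d t|\leq C(\eps_0)|z|^{-\dl}$, hence $|\widetilde K_{j_o}[Z;z]|\leq C\snorm{Z}\,|z|^{-\dl}$ and the corollary. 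As written, your proof fixes the contour and therefore does not cover the rear region; the curve-selection step (or an equivalent contour deformation) is the missing ingredient.
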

The proof of Theorem \ref{subth} closely follows the proof of Theorem
\ref{mainth}.
In this case the integral operator is of Volterra type \cite{Pog66,Cop88}.
This is the operator
$
\widetilde{K}_{j_o}\colon \mc H_{\widetilde{J},a}
       \to
           \mc H_{\widetilde{J},a}
              $,
 defined by
\begin{align}
\label{k-s}
\widetilde{K}_{j_o}[f;z]&:=\int_{z}^\infty W_{-,j_o}(t,z) R(t) f(t) \d t,
\end{align}
where
the integral \eqref{k-s} is taken from  $z$ to $+\infty$ along
any line $\iota^{\varphi}_{a,b}\subseteq\H_{\widetilde{J},a}$ passing
through $z$.

Repeating verbatim the proof of Lemma \ref{lemmak} points (1,2), we
deduce the following
analogous lemma.
\endproof
\begin{lem}\label{lemmaks}
 For any closed interval   $J \subset I$ the following hold.
\begin{enumerate}
\item The operator $\widetilde{K}_{j_o}$ is a well-defined bounded
operator on $\mc H_{\widetilde{J},a}$.
There exists a constant $C$, independent on $R$, such that
\begin{equation}
 \|\widetilde{K}_{j_o}[f;\cdot] \| \leq C \|f\|_{\infty} \widetilde{M}_{R,a,J}.
\end{equation}
Moeover,
  $$\lim_{z\to\infty}\widetilde{K}_{j_o}[f;z]=0\quad \hbox{in
$\H_{\widetilde{J},a}$.}$$

\vskip 0.2cm
\item If $R=O(z^{-1-\dl})$ on $\H_{\widetilde{J},a}$, then there
exists  $C>0$ such that
for any $f\in \mc H_{\widetilde{J},a}$ we have
\beq
|\widetilde{K}_{j_o}[f;z]|\leq C\snorm{f}|z^{-\dl}|.
\eeq
\end{enumerate}
\end{lem}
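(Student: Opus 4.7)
The plan is to follow, mutatis mutandis, the proof of Lemma \ref{lemmak}, replacing the half-plane lines $\ell_{\phi,b}$ with the curves $\iota^{\varphi}_{a,b}$, the condition \eqref{l'2} with the subdominant condition \eqref{conditionS}, and the quantity $M_{R,a,J}$ with $\widetilde{M}_{R,a,J}$. Only the operator $\widetilde{K}_{j_o}$ of Volterra type appears, so there is nothing to prove concerning a $K_{+,j_o}$-analogue.

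For point (1), I would first verify that the integral defining $\widetilde{K}_{j_o}[f;z]$ is absolutely convergent along any $\iota^{\varphi}_{a,b}\subseteq \H_{\widetilde{J},a}$ passing through $z$. By the subdominant condition \eqref{conditionS}, the diagonal entries of $W_{-,j_o}(t,z)$ for $t\geq z$ on $\iota^{\varphi}_{a,b}$ are bounded by $e^{-K_2}$ uniformly; entries labelled by $J_+(j_o)$ are zero. Combining this with the S-decay hypothesis \eqref{eq:MRaJS} immediately yields
\[
\snorm{\widetilde{K}_{j_o}[f;\cdot]}\leq C\,\snorm{f}\,\widetilde{M}_{R,a,J}
\]
for some $C>0$ depending only on $K_2$ and $n$. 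The key verification is the independence of the integral from the choice of $\iota^{\varphi}_{a,b}$ through $z$: given two such curves $\iota^{\varphi_1}_{a,b_1}$ and $\iota^{\varphi_2}_{a,b_2}$ with $\varphi_1,\varphi_2\in J$, I would close the contour at infinity with a transverse curve $\iota^{\varphi'}_{a,b'}$ with $\varphi'\in I$; the contribution of the closing arc vanishes in the limit $b'\to\pm\infty$ by \eqref{eq:MRaJSinfty}.

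Analyticity in the interior of $\H_{\widetilde{J},a}$ and continuity on its boundary then follow exactly as in Lemma \ref{lemmak}(iii): for $z$ and $z_1$ on a common curve $\iota^{\varphi}_{a,b}$ one writes
\[
\widetilde{K}_{j_o}[f;z_1]-\widetilde{K}_{j_o}[f;z]=-\int_{z}^{z_1}W_{-,j_o}(t,z)R(t)f(t)\,\d t
\]
and takes the difference quotient, getting $\tfrac{d}{dz}\widetilde{K}_{j_o}[f;z]=-R(z)f(z)$ plus a term from the $z$-dependence of $W_{-,j_o}(t,z)$, all of which are analytic in $z$. For the decay at infinity, for $z\in\H_{\widetilde{J},a}$ large I integrate along a curve $\iota^{\varphi'}_{a,b(z)}$ transverse to the direction of $z\to\infty$, where $b(z)\to+\infty$; the bound $|\widetilde{K}_{j_o}[f;z]|\leq C\snorm{f}\,\pnorm{1}{R|_{\iota^{\varphi'}_{a,b(z)}}}$ together with \eqref{eq:MRaJSinfty} yields the limit $0$.

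For point (2), the same transverse-line estimate produces the refined bound: since $|R(t)|\leq C'|t|^{-1-\delta}$ on $\H_{\widetilde{I},a}$ and the closest point of $\iota^{\varphi'}_{a,b(z)}$ to the origin has modulus comparable to $|z|$, a direct computation of $\pnorm{1}{R|_{\iota^{\varphi'}_{a,b(z)}}}$ gives an $O(|z|^{-\delta})$ bound, hence $|\widetilde{K}_{j_o}[f;z]|\leq C\snorm{f}|z|^{-\delta}$.

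The main subtlety, which I expect to require the most care, is the path-independence argument in the Volterra setting: one must ensure that, for every pair of curves $\iota^{\varphi_i}_{a,b_i}$ through $z$ in $\H_{\widetilde{J},a}$, a transverse curve $\iota^{\varphi'}_{a,b'}$ with $\varphi'\in I\setminus J$ can be chosen so that the closed region lies entirely in $\H_{\widetilde{I},a}$ (this is where the enlargement from $I$ to $\widetilde{I}$, and hence the choice of the cut-plane $\H_{[\phi],a}$ over a half-plane, becomes essential). Once this geometric point is settled, the rest of the proof is a mechanical adaptation of Lemma \ref{lemmak}.
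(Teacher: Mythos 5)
Your proposal is correct and follows essentially the same route as the paper, whose entire proof of this lemma is the remark that the proof of Lemma \ref{lemmak} is repeated verbatim with $\ell_{\phi,b}$, condition \eqref{l'2} and $M_{R,a,J}$ replaced by the curves $\iota^{\varphi}_{a,b}$, the subdominant condition \eqref{conditionS} and $\widetilde{M}_{R,a,J}$, only the Volterra operator $\widetilde{K}_{j_o}$ being present. The one compressed step in your write-up --- invoking \eqref{eq:MRaJSinfty}, which is a limit in $a$, to make the contribution of transverse curves $\iota^{\varphi'}_{a,b'}$ vanish as $|b'|\to\infty$ --- is harmless, because for $|b'|\geq a'\geq a$ one has $\iota^{\varphi'}_{a,b'}=\iota^{\varphi'}_{a',b'}$, so that $\pnorm{1}{R|_{\iota^{\varphi'}_{a,b'}}}\leq \widetilde{M}_{R,|b'|,J'}\to 0$ for any closed $J'\subseteq I$ containing $\varphi'$ (note that in the cut-plane part of $\H_{\widetilde{J},a}$ the relevant parameter tends to $-\infty$, so it is $|b'|$, not $b'$, that goes to $+\infty$).
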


\subsubsection{Proof of of Theorem \ref{subth} and of Corollary \ref{subcor}}
Reasoning as in the proof of Theorem \ref{mainth}, we deduce that
$y_{j_o}$ is a solution of the differential
equation (\ref{diffeq}) satisfying the asymptotics (\ref{subasym}) if and only if
(for each $J \subset I$ closed, the restriction to
$\H_{\widetilde{J},a}$ of) the vector-valued function
$Z:= \exp\left(-\int_{z_o}^z\La_{j_oj_o}(w){\rm d}w\right)Y_{j_o}$
satisfies the integral
equation
\begin{equation}\label{inteqs}
 Z= e_{j_o}+ \widetilde{K}_{j_o}[Z].
\end{equation}
Due to Lemma \ref{lemmaks}(1) and formula (\ref{eq:MRaJSinfty}),
$\widetilde{K}_{j_o}$ is of norm less than one if $a$ is large enough.
Therefore, the above integral equation admits the unique solution $Z=({\rm Id}-\widetilde{K}_{j_o})^{-1}[e_{j_0}]   \in \H_{\widetilde{J},a}$, given by a Neumann series \cite{RS80}. 
This proves Theorem \ref{subth}.

If $R=O(z^{-1-\dl})$ on $\H_{\widetilde{I},a}$, then, due to Lemma
\ref{lemmaks}(2),
$Z_{j_o}(z)-e_{j_o}=O(z^{-\dl})$. This proves Corollary \ref{subcor}. $\Box$

\subsection{The main theorem with parameters} 
\label{20settembre2023-1}

Let $\Omega$ be a connected open set in $\mathbb{C}^m$, with $m\in \mathbb{N}_{\geq 1}$. The analog of Theorem \ref{mainth} holds when the matrices  $\Lambda$ and $R$  in \eqref{diffeq} holomorphically depend  on a parameter $\omega\in \Omega$ and satisfy the conditions below. 

\begin{defn}
\label{9settembre2023-6}
A holomorphic function $\La\in\hol(\H_{I,a}\times \Omega;~\h(n,\C))$ satisfies the {\it uniform integrability condition} if there exist positive functions $g_i:\H_{I,a}\rightarrow \mathbb{R}_{\geq 0}$, for $i=1,\dots,n$, which are $L^1$-integrable along  the segments\footnote{\, For any $\omega\in\Omega$,  a $z$-holomorphic  $\Lambda_{ii}(z,\omega)$ is $L^1$-integrable along any curve joining $\zeta_1$ and $\zeta_2$, and the result does not depend on the curve.} joining any two points $\zeta_1$ and $\zeta_2\in\H_{I,a}$, and 
\beq
\label{9settembre2023-3}
|\Lambda_{ii}(z,\omega)|\leq g_i(z),\quad \hbox{ for any $\omega\in\Omega$}.
\eeq
\end{defn}

The integrals \eqref{14settembre2023-1}-\eqref{14settembre2023-2} are defined in the same way in terms of $\Lambda(z,\omega)$,  and depend on $\omega$. 
\begin{defn}
\label{9settembre2023-1-bis}
A holomorphic function $\La\in\hol(\H_{I,a}\times \Omega;~\h(n,\C))$ satisfies the {\it  uniform  $L$-condition} {\it on $I$} if for any $i,j\in\{1,\dots,n\}$, and any closed interval $J\subseteq I$, there exist $K_1,K_2\in\R$, depending on $J$,  such that one of the following  mutually exclusive conditions holds:
\begin{itemize}
\item either
\beq\tag{$L$-1}
\underset{\omega\in \Omega}{\sup}\,\,\mathfrak{S}^{ij}_{J,a}\leq K_1
\quad \text{ and }\quad 
\underset{\omega\in \Omega}{\sup}\,\,\mathfrak{I}^{ij}_{J,a}=-\infty,
\eeq
\item or \beq\tag{$L$-2}
\underset{\omega\in \Omega}{\inf}\,\,\mathfrak{I}^{ij}_{J,a}\geq K_2.
\eeq
\end{itemize}
\end{defn}

 \begin{defn}
\label{5giugno2023-1-bis}
A holomorphic function $R\in\hol(\H_{I,a}\times \Omega;~\gl(n,\C))$  
satisfies the  {\it  uniform  good decay condition} {\it on $I$} if there is a positive function $g$, which is  $L^1$-integrable  for any closed interval $J\subseteq I$ along any line $\ell_{\phi,b}$, for any $b\geq a$ and $\phi\in J$, such that\footnote{\, Therefore, $R$ is {\it uniformly integrable} with respect to $\Omega$. In particular,  for any $\omega\in\Omega$,  $R(\cdot,\omega)$ is $L^1$-integrable along any line $\ell_{\phi,b}$} 
\beq
\label{9settembre2023-5}
|R(z,\omega)|\leq g(z) \hbox{ almost eveywhere in $\ell_{\phi,b}$, for any $\omega\in\Omega$},
\eeq
and moreover
\beq\label{gdc-bis}
\lim_{b\to+\infty}\left(\underset{\omega\in \Omega}{\sup}\,\,\underset{\phi\in J}{\sup}{\pnorm{1}{R(\cdot,\omega) |_{\ell_{\phi,b}}}}\right)=0.
\eeq 
\end{defn}

\begin{example}
The function $R\in\hol(\H_{I,a}\times \Om;\gl(n,\C))$ satisfies the uniform good decay condition if there exists $\dl>0$ such that $R=O(z^{-1-\dl})$ on $\H_{J,a}$, for any closed interval $J\subseteq I$, and uniformly on $\Om$, that is
 \beq
\underset{\omega\in \Omega}{\sup}\,\, \underset{z\in \H_{J,a}}{\sup}|z^{1+\dl}R(z,\omega)|<\infty.
 \eeq
\end{example}

\begin{rem}
If $R\in\hol(\H_{I,a};\gl(n,\C))$ satisfies the good decay condition, then we have 
\beq
\label{9settembre2023-2-bis}
M_{R,a,J}:=\underset{\omega\in \Omega}{\sup}\,\, \underset{b\geq a}{\sup} \,\,\underset{\phi\in J}{\sup}\pnorm{1}{R(\cdot,\omega)|_{\ell_{b,\phi}}}<\infty.
\eeq

\end{rem}

An  analog of Theorem \ref{mainth} reads as follows. 

\begin{thm}
\label{mainth-bis}Assume that $\La\in\hol(\H_{I,a}\times \Omega;~\h(n,\C))$ satisfies both the uniform integrability condition and the {uniform  $L$-condition} {on $I$}, and that $R\in\hol(\H_{I,a}\times \Omega;~\gl(n,\C))$ satisfies the  {uniform  good decay condition} {on $I$}. Then
the differential equation
\beq
\label{diffeq-bis}
\frac{dY}{dz}=\Bigl(\La(z,\omega)+R(z,\omega)\Bigr)Y
\eeq 
admits a unique holomorphic fundamental matrix solution $Y\colon \H_{I,a}\times \Omega\to GL(n,\C)$ which,   for any closed $J\subset I$ and   $z\to \infty$ in $\H_{J,a}$, behaves as 
$$
Y(z,\omega)=\left(I_n+o(1)\right)\exp\left(\int_{z_o}^z\La(t,\omega){\rm d}t\right)\quad\hbox{uniformly in $\Omega$},
$$
where $z_o$ is an arbitrary point of $\H_{I,a}$.
\end{thm}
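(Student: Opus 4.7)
\medskip
\noindent\textbf{Proof proposal for Theorem \ref{mainth-bis}.} The plan is to run the proof of Theorem \ref{mainth} \emph{uniformly in} $\omega \in \Omega$, replacing the Banach space $\mc H_{J,a}$ of Section \ref{sechja} by its parametric analogue. Concretely, for each closed $J \subseteq I$, define $\mc H_{J,a}^{\Omega}$ to be the space of bounded continuous maps $f\colon \H_{J,a}\times\Omega\to\C^n$ that are holomorphic in $z$ on $\on{int}\H_{J,a}$ for every fixed $\omega$ and holomorphic in $\omega\in\Omega$ for every fixed $z$, equipped with the uniform norm $\snorm{f}:=\sup_{(z,\omega)}|f(z,\omega)|$. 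By the Weierstrass/Hartogs theorems, uniform limits preserve separate (hence joint) holomorphy, so $\mc H_{J,a}^{\Omega}$ is Banach.

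Fix $j_o\in\{1,\dots,n\}$ and make the same gauge change $Y(z,\omega)=Z(z,\omega)\exp\int_{z_o}^z\La_{j_oj_o}(t,\omega)\d t$ used in Section \ref{secprmth}. The uniform integrability condition (Definition \ref{9settembre2023-6}) ensures that the exponent is well-defined, holomorphic in both variables, and that the diagonal weights $W_{\pm,j_o}(\zeta_1,\zeta_2;\omega)$ are holomorphic in $\omega$; in particular, the uniform $L$-condition bounds $|W_{\pm,j_o}(t,z;\omega)|$ uniformly in $\omega$ on each closed $J\subseteq I$. Define the $\omega$-parametric operators $K_{\pm,j_o}^{\omega}$ by the formulas \eqref{k+}--\eqref{k-} applied pointwise in $\omega$, and set $K_{j_o}:=K_{+,j_o}^{\omega}-K_{-,j_o}^{\omega}$. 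The goal is to solve
\beq\label{plan-inteq}
Z(z,\omega)=e_{j_o}+K_{j_o}[Z](z,\omega)
\eeq
in $\mc H_{J,a}^{\Omega}$ by Banach--Caccioppoli, just as in Section \ref{secprmth}.

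The main technical step is a parametric analogue of Lemma \ref{lemmak}. Mapping into the space and the norm estimate
\[
\snorm{K_{\pm,j_o}^{\omega}[f;\cdot]}\leq C\snorm{f}\,\sup_{\omega\in\Omega}M_{R(\cdot,\omega),a,J}
\]
follow verbatim from the proof of Lemma \ref{lemmak}, since the uniform $L$-condition makes $C$ independent of $\omega$ and the uniform good decay condition (Definition \ref{5giugno2023-1-bis}) guarantees a uniform bound on the 1-norms of $R(\cdot,\omega)|_{\ell_{\phi,b}}$. Holomorphy of $K_{\pm,j_o}^{\omega}[f](z,\omega)$ in $z$ is obtained exactly as in Lemma \ref{lemmak}; the key new point is holomorphy in $\omega$. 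For this, the dominating function $g$ of \eqref{9settembre2023-5} together with the $g_i$ of \eqref{9settembre2023-3} provide an $\omega$-independent $L^1$-majorant of the integrand, so by dominated convergence one can either apply Morera's theorem on each complex line in $\Omega$, or differentiate under the integral sign; either route delivers $K_{\pm,j_o}^{\omega}[f]\in\mc H_{J,a}^{\Omega}$. The key vanishing \eqref{18settembre2023-1} is likewise upgraded to uniform-in-$\omega$ decay as $|z|\to\infty$, using \eqref{gdc-bis} in place of \eqref{gdc}.

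Once this parametric lemma is in hand, the rest is routine. Enlarging $a$ if necessary we may assume $\|K_{j_o}\|<1$ on $\mc H_{J,a}^{\Omega}$, and the Neumann series $Z=\sum_{m\ge 0}K_{j_o}^{m}[e_{j_o}]$ provides the unique fixed point in $\mc H_{J,a}^{\Omega}$; its asymptotics $Z(z,\omega)=e_{j_o}+o(1)$ as $z\to\infty$ is uniform in $\omega\in\Omega$ by the above. The equivalence with the differential equation proved in Lemma \ref{lemdiffeqintegeq} goes through pointwise in $\omega$, using the uniform $L$-condition to rule out the $o(1)$-term of \eqref{16novembre2024-1}; since the solution of the linear ODE extends analytically in $z$ from $\H_{J,a}$ to all of $\H_{I,a}$ and is holomorphic in $\omega$ on each piece $\H_{J,a}\times\Omega$, gluing over an exhaustion of $I$ by closed subintervals yields a unique fundamental matrix solution on $\H_{I,a}\times\Omega$. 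The expected main obstacle is precisely the joint-holomorphy step for the integral operators: one must carefully combine the uniform majorants from Definitions \ref{9settembre2023-6} and \ref{5giugno2023-1-bis} to legitimize an interchange of an $\omega$-derivative (or Morera contour integral) with an unbounded $z$-contour integral over lines $\ell_{\phi,b}$ that accumulate at infinity.
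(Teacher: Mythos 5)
Your proposal is correct and follows essentially the same route as the paper: the authors likewise introduce the parametric Banach space $\widehat{\mathcal H}_{J,a}$ of bounded functions on $\H_{J,a}\times\Omega$, prove holomorphy in $\omega$ of $W_{\pm,j_o}$ and of the operators $K_{\pm,j_o}$ via uniform $L^1$-majorants, Fubini, Morera and Hartogs (Lemmas \ref{9settembre2023-9} and \ref{lemmak-bis}), and then run the same contraction/Neumann-series argument with Lemma \ref{lemdiffeqintegeq} unchanged. No substantive differences or gaps.
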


\begin{cor}
In the same assumptions of Theorem \ref{mainth-bis}, if $R=O(z^{-1-\dl})$ on $\H_{I,a}$ uniformly in $\Omega$, then the above unique solution  
is such that
$$
Y(z,\omega)=\left(I_n+O(z^{-\dl})\right)\exp\left(\int_{z_o}^z\La(t,\omega){\rm d}t\right),\quad\hbox{uniformly in $\Omega$}.
$$
\end{cor}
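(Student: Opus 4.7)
The plan is to run the same gauge reduction and Neumann series argument used in the proofs of Theorem \ref{mainth-bis} and Corollary \ref{maincor}, and then upgrade the abstract $o(1)$-estimate to a quantitative $O(|z|^{-\delta})$-estimate by tracking uniformity in $\omega$ at each step. First, fix $j_o \in \{1,\dots,n\}$ and $z_o \in \H_{I,a}$. Theorem \ref{mainth-bis} already produces the unique fundamental solution $Y(z,\omega)$, and its $j_o$-th column satisfies, after the gauge transformation $y_{j_o}(z,\omega)=Z(z,\omega)\exp\int_{z_o}^z\La_{j_oj_o}(t,\omega)\,dt$, the $\omega$-parametric integral equation
\[
Z(z,\omega)=e_{j_o}+K_{j_o}[Z;z,\omega], \qquad K_{j_o}=K_{+,j_o}-K_{-,j_o},
\]
where the operators are the obvious $\omega$-parametric versions of those in Section \ref{sechja}, with $W_{\pm,j_o}$ and $R$ now depending on $\omega$.

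The core step is a parametric analog of Lemma \ref{lemmak}(2): under the hypothesis $R(z,\omega)=O(|z|^{-1-\delta})$ uniformly in $\omega$, there exists $C>0$, independent of $\omega$ and of $f$, such that
\[
|K_{\pm,j_o}[f;z,\omega]|\leq C\,\snorm{f}\,|z|^{-\delta},\qquad \forall f\in\mc H_{J,a},\ \forall \omega\in\Omega.
\]
To prove it, one repeats the argument for \eqref{decayk}: chose an integration contour $\ell_{\phi',b(z)}$ transverse to the direction at $z$, with $b(z)=|z|\cos(\phi'-\arg z)$. The uniform $L$-condition bounds $|W_{\pm,j_o}(t,z,\omega)|$ on that contour by a constant depending only on $J$ (not on $\omega$), so
\[
|K_{\pm,j_o}[f;z,\omega]|\leq C_1\,\snorm{f}\,\pnorm{1}{R(\cdot,\omega)|_{\ell_{\phi',b(z)}}}.
\]
The uniform decay hypothesis yields $|R(\zeta,\omega)|\leq C_2|\zeta|^{-1-\delta}$ for all $\omega\in\Omega$ and $\zeta\in\H_{I,a}$, and a direct computation on the line $\ell_{\phi',b(z)}$ (parametrizing $\zeta$ by $\theta\in\left]\phi'-\tfrac{\pi}{2};\phi'+\tfrac{\pi}{2}\right[$ as in the definition of the $L^p$-norms) gives $\pnorm{1}{R(\cdot,\omega)|_{\ell_{\phi',b(z)}}}\leq C_3\,b(z)^{-\delta}\leq C_4\,|z|^{-\delta}$, uniformly in $\omega$.

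With this uniform bound in hand, the conclusion is immediate. Inside the proof of Theorem \ref{mainth-bis}, the operator $K_{j_o}$ is a contraction on $\mc H_{J,a}$ with norm $\leq 2C\,M_{R,a,J}<1$ uniformly in $\omega$, so the Neumann series gives $Z(z,\omega)=\sum_{m\geq 0}K_{j_o}^m[e_{j_o};z,\omega]$, and in particular
\[
Z(z,\omega)-e_{j_o}=K_{j_o}[Z;z,\omega].
\]
Applying the parametric estimate above with $f=Z(\cdot,\omega)$ (whose sup-norm is bounded by $(1-\|K_{j_o}\|)^{-1}|e_{j_o}|$, independently of $\omega$) yields $|Z(z,\omega)-e_{j_o}|\leq C'|z|^{-\delta}$ uniformly in $\omega$. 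Undoing the gauge transformation produces the claimed asymptotic.

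The only real obstacle is the uniformity bookkeeping: one must ensure that (i) the bound on $|W_{\pm,j_o}(t,z,\omega)|$ is genuinely $\omega$-independent, which follows precisely from the uniform $L$-condition and the uniform integrability condition on $\La$, and (ii) the contraction constant of $K_{j_o}$ can be chosen independently of $\omega$, which is already established within the proof of Theorem \ref{mainth-bis} via \eqref{gdc-bis} and \eqref{9settembre2023-2-bis}. Beyond these verifications, the argument is a direct parametric transcription of Corollary \ref{maincor}.
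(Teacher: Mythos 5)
Your proposal is correct and follows essentially the same route as the paper: the paper deduces this corollary from point (2) of Lemma \ref{lemmak-bis} (the uniform-in-$\omega$ estimate $\sup_{\omega}|K_{\pm,j_o}[f;z,\omega]|\leq C\,\snorm{f}\,|z^{-\dl}|$, proved exactly by the transversal-line argument you describe, as in Lemma \ref{lemmak}(2)), applied to the identity $Z-e_{j_o}=K_{j_o}[Z]$ for the unique fixed point $Z$, whose sup-norm is controlled uniformly in $\omega$ by the Neumann series. No substantive difference from the paper's argument.
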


\vskip 0.3 cm 
\subsection{Proof of Theorem \ref{mainth-bis}} 
It is similar to that of  Theorem \ref{mainth}, with the modifications below.  

\begin{lem}
\label{9settembre2023-9}
 If $\Lambda$ satisfies the uniform integrability condition of Definition \ref{9settembre2023-6}, then 
$W_{\pm,j_o}\in\hol(\H_{I,a}^{\times 2}\times \Omega;\h(n,\C))$.
\end{lem}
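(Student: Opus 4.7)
The nonzero diagonal entries of $W_{\pm,j_o}$ are of the form $\exp F_i$, with
\begin{equation*}
F_i(\zeta_1,\zeta_2,\omega):=\int_{\zeta_1}^{\zeta_2}\bigl(\Lambda_{ii}(t,\omega)-\Lambda_{j_oj_o}(t,\omega)\bigr)\,dt,
\end{equation*}
the integration being along an arbitrary piecewise--smooth curve $\gamma\subset\H_{I,a}$ joining $\zeta_1$ to $\zeta_2$. Since the integrand is $t$-holomorphic on the simply connected domain $\H_{I,a}$, the value is independent of $\gamma$ by Cauchy. Thus the problem reduces to proving $F_i\in\hol(\H_{I,a}^{\times 2}\times\Omega;\C)$, after which composition with $\exp$ (and the trivial holomorphy of the vanishing entries) yields the result.

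First I would establish holomorphy of $F_i$ in $(\zeta_1,\zeta_2)$ at fixed $\omega$, which is immediate from the fundamental theorem of calculus applied to the $t$-holomorphic integrand. Next, for $(\zeta_1,\zeta_2)$ fixed, I would prove $\omega$-holomorphy by a Morera--Fubini argument: for any closed triangle $T$ contained in a coordinate polydisk of $\Omega$ and any component $\omega_k$ of $\omega$,
\begin{equation*}
\oint_{\partial T}F_i(\zeta_1,\zeta_2,\omega)\,d\omega_k=\int_\gamma\oint_{\partial T}\bigl(\Lambda_{ii}(t,\omega)-\Lambda_{j_oj_o}(t,\omega)\bigr)\,d\omega_k\,dt=0,
\end{equation*}
the exchange being justified by the uniform integrability condition (which supplies the $L^1(\gamma)$ dominant $g_i+g_{j_o}$, independent of $\omega$) and the inner integral vanishing by Cauchy applied to the $\omega_k$-holomorphic function $\Lambda(t,\cdot)$. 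Morera's theorem then produces separate holomorphy of $F_i$ in each complex variable $\zeta_1,\zeta_2,\omega_1,\dots,\omega_m$. Continuity of $F_i$ on $\H_{I,a}^{\times 2}\times\Omega$ follows by a further application of dominated convergence with the same $L^1$-dominant; joint holomorphy is then obtained from Osgood's theorem on separately holomorphic continuous functions (alternatively, Hartogs' theorem).

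The crux of the argument is the availability of an $\omega$-uniform $L^1$-dominant on the integrand, which is precisely what the uniform integrability condition of Definition \ref{9settembre2023-6} delivers; without it, neither the Fubini exchange above nor the continuity step can be carried out. Once these ingredients are secured, the remainder is routine bookkeeping, as the lemma reduces to the standard fact that compositions of holomorphic maps are holomorphic.
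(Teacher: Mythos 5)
Your proposal is correct and follows essentially the same route as the paper: reduce to the scalar integrals $F_i$, get $(\zeta_1,\zeta_2)$-holomorphy from the fundamental theorem of calculus, obtain $\omega_k$-holomorphy via a Morera--Fubini argument with the $\omega$-uniform $L^1$-dominant supplied by Definition \ref{9settembre2023-6}, and conclude joint holomorphy by Hartogs before composing with $\exp$. The only cosmetic differences (integrating the difference $\Lambda_{ii}-\Lambda_{j_oj_o}$ rather than each entry separately, and offering Osgood as an alternative to Hartogs) do not change the argument.
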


\proof
The functions 
 $W_{\pm,j_o}$ are defined as in \eqref{15settembre2023-1} in terms of the integrals $\exp\int_{\zeta_1}^{\zeta_2}(\La_{ii}(t,\omega)-\La_{j_oj_o}(t,\omega))\d t$.  
For any $i=1,\dots,n$, 
$$ 
F_i(\zeta_1,\zeta_2,\omega):=\int_{\zeta_1}^{\zeta_2}\La_{ii}(t,\omega) \d t$$
is analytic w.r.t.\,\,$(\zeta_1,\zeta_2)$, and by \eqref{9settembre2023-3} it is continuous\footnote{ If $f(x,y)$ defined on $\mathbb{R}^n\times \mathbb{R}^m$  is continuous in $y_o\in \mathbb{R}^m$ for almost every $x\in \mathbb{R}^n$, and $|f(x,y)|\leq g(x)$ for every $y$ and for almost every $x$, where $g$ is $L^1$-integrable, then $F(y):=\int f(x,y) dx$ exists, and is continuous in $y_o$.} w.r.t.\,\,$\omega=:(\omega_1,\dots,\omega_m)\in \Omega$. Take a section of $\Omega$ by fixing all the components except for $\omega_k$, and let $\gamma$ be a closed curve, in this section, contractible to a point. Then 
\beq
\label{9settembre2023-7}
\oint_\gamma F_i(\zeta_1,\zeta_2,\omega)d\omega_k=\oint_\gamma \left(\int_{\zeta_1}^{\zeta_2}\La_{ii}(t,\omega) \d t\right)\d\omega_k
=\int_{\zeta_1}^{\zeta_2} \left(\oint_\gamma \La_{ii}(t,\omega) \d\omega_k\right)\d t =0. 
\eeq 
The  last equality is a consequence of the simple-connectedness of the domain bounded by $\gamma$ and of the holomorphicity of $ \La_{ii}(t,\omega)$ on it. The exchange of order of integration follows from Fubini's theorem, because by \eqref{9settembre2023-3} we have
$$
\oint_\gamma \left(\int_{\zeta_1}^{\zeta_2}|\La_{ii}(t,\omega)| \d |t|\right)\d
|\omega_k| \leq \oint_\gamma \d |\omega_k| ~\int_{\zeta_1}^{\zeta_2} g_i(t) \d |t|<+\infty.
$$
By \eqref{9settembre2023-7} and  Morera's theorem, $F_i(\zeta_1,\zeta_2,\omega)$ is then analytic with respect to  $\omega_k$.
The lemma is proved by invoking Hartogs's theorem on separate holomorphicity. 
\endproof

Let  $\widehat{\mathcal{H}}_{J,a}$  be the space of {\it continuous bounded} function
on $\H_{J,a}\times\Omega $ with values in $\C^n$, whose restriction to $\on{int} \H_{J,a}\times\Omega$ is {\it analytic}. It is a Banach space with norm  $\snorm{f}:=\underset{\omega,\in\Omega~z\in\H_{J,a}}{\sup}{|f(z,\omega)|}$. 
 The following operators, analogous to  \eqref{k+}-\eqref{k-},  act  on  $\widehat{\mathcal{H}}_{J,a}$: \
\begin{align}
\label{k+bis}
K_{+,j_o}[f;z,\omega]&:=\int_{\infty}^zW_{+,j_o}(t,z,\omega) R(t,\omega) f(t,\omega) \d t,
\\
\label{k-bis}
K_{-,j_o}[f;z,\omega]&:=\int_{z}^\infty W_{-,j_o}(t,z,\omega) R(t,\omega) f(t,\omega) \d t.
\end{align}
Lemma \ref{lemmak} is then generalized by the following one.

\begin{lem}\label{lemmak-bis} 
Assume that $\La\in\hol(\H_{I,a}\times \Omega;~\h(n,\C))$ satisfies both the uniform integrability condition and the {uniform  $L$-condition} {on $I$}, and that $R\in\hol(\H_{I,a}\times \Omega;~\gl(n,\C))$ satisfies the  {uniform  good decay condition} {on $I$}. For any closed interval $J\subset I$ the following statements hold.
\begin{enumerate}
\item The operators $K_{\pm,j_o}$ are well-defined and bounded  on $\widehat{\mathcal{H}}_{J,a}$. In particular, there exists a constant $C>0$, independent of $R$ and $\om$, such that 
\beq\label{normk-bis}
\snorm{K_{\pm,j_o}[f;~\cdot,~\cdot]}\leq C\,\snorm{f}\,M_{R,a,J},
\eeq where $M_{R,a,J}$ is defined in \eqref{9settembre2023-2-bis}. Moreover, for $z\to \infty$ in $\H_{J,a}$, we have
  \beq
  \label{15settembre2023-3}
  \lim_{z\to\infty}\left( \underset{\omega\in \Omega}{\sup}\,\,K_{\pm, j_o}[f;z,\omega]\right)=0.
  \eeq
  \item If $R=O(z^{-1-\dl})$ on $\H_{I,a}$ uniformly on $\Om$, then there exists $C>0$ such that for any $f\in \widehat{\mc H}_{J,a}$ we have
$$
 \underset{\omega\in \Omega}{\sup}\,|K_{\pm,j_o}[f;z,\omega]|\leq C \snorm{f}|z^{-\dl}|.
 $$
\end{enumerate}
\end{lem}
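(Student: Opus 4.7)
The plan is to transpose the proof of Lemma \ref{lemmak} to the parametric setting, carefully tracking uniformity in $\om\in\Om$ at every step. For point (1), I would first exploit the uniform $L$-condition to obtain $\om$-independent upper bounds $|W_{+,j_o}(t,z,\om)_{ii}|\le e^{K_1}$ for $t\le z$ and $|W_{-,j_o}(t,z,\om)_{ii}|\le e^{-K_2}$ for $z\le t$ along any line $\ell_{\phi,b}\subseteq\H_{J,a}$. Combined with the pointwise domination $|R(\cdot,\om)|\le g$ coming from the uniform good decay condition, this yields at once
\[
|K_{\pm,j_o}[f;z,\om]|\le C\,\snorm f\,\pnorm1{R(\cdot,\om)|_{\ell_{\phi,b}}}\le C\,\snorm f\,M_{R,a,J},
\]
with $C$ independent of $\om,z,f$, i.e.\ \eqref{normk-bis}. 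Independence from the choice of integration line through $z$ is established for each fixed $\om$ by the very Cauchy-theorem argument of Lemma \ref{lemmak}: close the contour with a transversal $\ell_{\phi',b'}$ and let $b'\to+\infty$, using \eqref{gdc-bis} to kill the transversal contribution.

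Next I would establish joint holomorphicity of $K_{\pm,j_o}[f;\cdot,\cdot]$ on $\on{int}\H_{J,a}\times\Om$. Holomorphicity in $z$ at each fixed $\om$ carries over verbatim from Lemma \ref{lemmak}(iii). Holomorphicity in each coordinate $\om_k$ (with the remaining coordinates of $\om$ frozen) will be obtained via Morera's theorem: for any closed contractible curve $\gak$ in a coordinate slice of $\Om$, I would write
\[
\oint_\gak K_{\pm,j_o}[f;z,\om]\,\d\om_k=\int_{\ell_{\phi,b}}\!\left(\oint_\gak W_{\pm,j_o}(t,z,\om)R(t,\om)f(t,\om)\,\d\om_k\right)\d t=0,
\]
the inner integral vanishing by Cauchy's theorem in view of Lemma \ref{9settembre2023-9} together with the holomorphicity of $R$ and $f$ in $\om_k$. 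Separate holomorphicity in $z$ and in each $\om_k$ will then upgrade to joint holomorphicity by Hartogs' theorem on separate holomorphicity. Continuity on the closure $\H_{J,a}\times\Om$ follows from the standard continuity-under-the-integral-sign argument with the same majorants, and \eqref{normk-bis} shows that $K_{\pm,j_o}$ maps $\widehat{\mc H}_{J,a}$ into itself.

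For the uniform vanishing \eqref{15settembre2023-3} I would copy the strategy of Lemma \ref{lemmak}(iv): as $z\to\infty$ in $\H_{J,a}$, integrate along a line $\ell_{\phi',b(z)}$ transversal to the direction of escape, with $b(z)\to+\infty$, so that
\[
\sup_{\om\in\Om}|K_{\pm,j_o}[f;z,\om]|\le C\,\snorm f\,\sup_{\om\in\Om}\pnorm1{R(\cdot,\om)|_{\ell_{\phi',b(z)}}},
\]
and the right-hand side vanishes by the uniform good decay condition \eqref{gdc-bis}. Point (2) is then immediate: the bound $|R(z,\om)|\le C'|z|^{-1-\dl}$, uniform in $\om$, plugs into exactly the same line-integral estimate to give $|K_{\pm,j_o}[f;z,\om]|\le C\snorm f|z|^{-\dl}$ with $C$ independent of $\om$.

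The main obstacle I expect lies in the Fubini exchange inside the Morera argument: the line $\ell_{\phi,b}$ is non-compact, so a genuine $L^1$ majorant on the product $\gak\times\ell_{\phi,b}$ is required. This is furnished precisely by the combination of the uniform $L^\infty$ bound on $W_{\pm,j_o}$ (from the uniform $L$-condition), the $L^1$-majorant $g$ for $R$ (from the uniform good decay condition), and the uniform boundedness of $f\in\widehat{\mc H}_{J,a}$; so the difficulty is essentially one of bookkeeping the bounds already at hand, with every other step a straightforward parametric analogue of its non-parametric counterpart.
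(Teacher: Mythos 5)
Your proposal is correct and follows essentially the same route as the paper: the bounds, line-independence, $z$-holomorphicity and uniform decay are transposed verbatim from Lemma \ref{lemmak} using the uniform hypotheses, and the only genuinely new point, analyticity in $\om$, is handled exactly as in the paper's proof via continuity under the integral sign, Morera's theorem with a Fubini exchange justified by the majorant $C\snorm f\,g(t)\in L^1(\ell_{\phi,b})$, and Hartogs' theorem on separate holomorphicity. No gaps.
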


\proof The structure of the proof is similar to that of  Lemma \ref{lemmak}. In particular, for point (1),
the proof that the facts (i), (ii), (iii) and (iv) hold uniformly in $\omega\in \Omega$
is exactly the same as in Lemma \ref{lemmak}. It remains to prove that:
\begin{enumerate}
\item[(v)]
 $K_{+,j_o}[f;z,\omega]$ analytically depends on $\omega\in \Omega$.
 \end{enumerate}
   We study the case of $K_+$, the one for $K_-$ being analogous.  The function $K_{+,j_o}[f;z,\omega]$ continuously depends on $\omega$. Indeed, the $L$-condition 
   and \eqref{9settembre2023-5} imply the uniform integrability 
 $$ 
 \left|
 W_{+,j_o}(t,z,\omega) R(t,\omega) f(t,\omega)
 \right|
 \leq C \|f\|_\infty ~g(t)\in L^1(\ell_{\phi,b},dt).
 $$
   As in Lemma \ref{9settembre2023-9}, consider a closed curve $\gamma$, contractible to a point,  in a section of $\Omega$ where only  $\omega_k$ varies. Then, the analyticity  of $K_{+,j_o}[f;z,\omega]$ w.r.t.\,\,$\omega_k$ is a consequence of  Morera's theorem, since
  {\small $$ 
 \oint_\gamma \left(\int_{\infty,\ell_{\phi,b}}^z W_{+,j_o}(t,z,\omega) R(t,\omega) f(t,\omega) \d t \right)\d\omega_k =\int_{\infty,\ell_{\phi,b}}^z   \left(\oint_\gamma   W_{+,j_o}(t,z,\omega) R(t,\omega)f(t,\omega) \d \omega_k\right)  \d t =0.
 $$
}

\noindent
 The last equality follows from the simple-connectedness of the domain bounded by $\gamma$ and from   the holomorphicity of the integrand w.r.t. $\omega$ (see assumption on $R$ and Lemma \ref{9settembre2023-9}). The exchange of order of integration follows from Fubini's theorem, since 
{\small $$ 
 \oint_\gamma \left(\int_{\infty,\ell_{\phi,b}}^z \Bigl|W_{+,j_o}(t,z,\omega)\Bigr| ~|R(t,\omega)|~| f(t,\omega)| \d |t| \right)\d|\omega_k|
 \leq 
 C \|f\|_\infty \oint_\gamma \d|\omega_k| 
 \int_{\infty,\ell_{\phi,b}}^z |R(t,\omega)| ~\d|t|<+\infty.
 $$
}By Hartogs's theorem on separate holomorphicity, we deduce (v).

Point (2) is similar to  point (2) of Lemma \ref{lemmak}.
\endproof

 The change  of variables 
$
Y(z,\omega)=Z(z,\omega)\exp\int_{z_o}^z\La_{j_oj_o}(t,\omega)\d t,
$
for fixed $j_o\in\{1,\dots,n\}$, and $z_o\in\H_{I,a}$,  yields
$$
\frac{dZ}{dz}=\left(\Tilde\La(z,\omega)+R(z,\omega)\right)Z,\quad \Tilde\La(z,\omega)=\La(z,\omega)-\La_{j_oj_o}(z,\omega)I.
$$
The proof that the equation above has a unique holomorphic  vector solution $Z(z,\omega)$  such that, for any closed interval $J\subseteq I$ and $z\to \infty$ in $ \H_{J,a}$, one has
\beq\label{modifiedasym-bis}
Z(z,\omega)=e_{j_o}+o(1)\quad \text{uniformly in $\Omega$},
\eeq 
 is essentially unchanged compared to the parameter-less case. 
Indeed, for  the integral equation
\beq\label{integeq-bis}
f(z,\omega)= e_{j_o}+K_{+,j_o}[f;z,\omega]-K_{-,j_o}[f;z,\omega],
 \eeq
the proof of Lemma \ref{lemdiffeqintegeq} is unchanged (it does not involve $\omega$).  
Now, \eqref{integeq-bis} is the equation  $
 ({\rm Id}-K_{j_o})[f]=e_{j_o}$,
    with operator 
 $K_{j_o}:\widehat{\mathcal{H}}_{J,a}\longrightarrow \widehat{\mathcal{H}}_{J,a}
 $, where $
 K_{j_o}:=K_{+,j_o}-K_{-,j_o}$.    
 For any $J\subset I$,  it follows from  \eqref{gdc-bis} and \eqref{normk-bis} that, by replacing $a$ with $a^\prime(J)>a$ if necessary,  $K_{j_o}$ has norm less that one.   Hence, \eqref{integeq-bis}  has a unique solution, given by   $ 
 Z=  ({\rm Id}-K_{j_o})^{-1}[e_{j_o}] 
 ~ \in \widehat{\mathcal{H}}_{J,a}
 $, expandable in  Neumann series. 
 Finally, \eqref{15settembre2023-3} implies the  behaviour \eqref{modifiedasym-bis}.

\subsection{Subdominant solutions with parameters}\label{secsubpar}
We consider the case when $\La\in\mathscr O(\H_{\widetilde{I},a}\times \Omega;\h(n,\C))$  and
$R\in\mathscr O(\H_{\widetilde{I},a}\times \Omega;\gl(n,\C))$, where $\Omega$ is an open connected set as in Section \ref{20settembre2023-1}. 

\begin{defn} In the same notations of Definition    \ref{21giugno2023-1}, 
an index $j_o \in \{1,\dots,n\}$ is {\it subdominant} {\it on $I$} for the holomorphic function
 $\La\in\hol(\H_{\widetilde{I},a}\times \Omega;\h(n,\C))$
if for  any $J\subseteq I$ closed
interval, there exist a $K_2\in\R$ such that
\begin{equation}
\label{conditionS-bis}
\underset{\omega\in \Omega}{\inf}\,\,\left(  \underset{\phi\in J}{\inf}\,\,\underset{b \in
\R}{\inf}\,\,\underset{\substack{w\leq z\\(w,z)
 \in(\iota^{\varphi}_{a,b})^{\times
2}}}{\inf}\on{Re}\int_{w,\iota^{\varphi}_{a,b}}^z\left(\La_{ii}(t,\omega)-\La_{j_oj_o}(t,\omega)\right){\rm
d}t\right) \geq K_2,
  \qquad \forall i \in \lbrace 1 \dots n \rbrace.
\end{equation}
\end{defn}

\begin{defn}
\label{21giugno2023-1-tris}
A holomorphic function $R\in\hol(\H_{\widetilde{I},a};\gl(n,\C))$
satisfies the {\it uniform S-decay condition} if   there is a positive function $g$ which, for any closed interval $J\subseteq I$, is  $L^1$-integrable  along any line $\iota_{a,b}^\phi$, for any $b\in \mathbb{R}$ and $\phi\in J$, such that  for any $\omega\in\Omega$
\beq
\label{9settembre2023-5-bis}
|R(z,\omega)|\leq g(z) \hbox{ almost eveywhere in $\iota_{a,b}^\phi$}.
\eeq
Moreover, for any closed interval $J\subseteq I$,
\begin{equation}
\label{eq:MRaJS-bis}
 \widetilde{M}_{R,a,J}:= \underset{\omega\in \Omega}{\sup} \,\,\left(\underset{\phi\in J}{\sup} \; \underset{b \in
\R}{\sup}{\pnorm{1}{R|_{\iota^{\varphi}_{a,b}}}}\right) < +\infty,
\end{equation}
and
\begin{equation}
\label{eq:MRaJSinfty-bis}
 \lim_{a\to +\infty} \underset{\omega\in \Omega}{\sup} \,\,\widetilde{M}_{R,a,J}= 0.
\end{equation}
\end{defn}

The following results are proved as in Sections \ref{20settembre2023-2} and  \ref{20settembre2023-1}. 

\begin{thm}
\label{subth-bis}
Consider the differential equation \eqref{diffeq-bis} 
 with $\La\in\mathscr O(\H_{\widetilde{I},a}\times \Omega;\h(n,\C))$  and
$R\in\mathscr O(\H_{\widetilde{I},a}\times \Omega;\gl(n,\C))$.
Assume that $j_o$ is a subdominant index on $I$, that $\La$ satisfies the uniform integrability condition \eqref{9settembre2023-3}  on $\widetilde{I}$, and that  $R$
satisfies the uniform S-decay condition on $I$. 
There exists a unique vector solution $y_{j_o}\in\mathscr
O(\H_{\widetilde{I},a}\times \Omega;\C^n)$ with asymptotic behaviour,
\beq\label{subasym-bis}
y_{j_o}(z,\omega)=\left(e_{j_o}+o(1)\right)\exp\left(\int_{z_o}^z\La_{j_oj_o}(t,\omega){\rm
d}t\right)\quad\hbox{ uniformly in $\Omega$},
\eeq
for any closed interval $\widetilde{J}\subset \widetilde{I}$ and
$z\to \infty$ in $\H_{\widetilde{J},a}$, 
where $z_o$ is an arbitrary point of $\H_{\widetilde{I},a}$.
\end{thm}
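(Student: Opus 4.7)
The plan is to combine the two strategies already established in the paper: the Volterra-type integral operator approach used for subdominant solutions in the proof of Theorem \ref{subth}, and the uniform-in-parameter framework developed for Theorem \ref{mainth-bis}. I would first perform the scalar gauge transformation
\be
y_{j_o}(z,\omega) = Z(z,\omega)\exp\!\left(\int_{z_o}^z \La_{j_oj_o}(t,\omega)\,{\rm d}t\right),
\ee
which turns the asymptotic condition \eqref{subasym-bis} into the requirement that $Z(z,\omega) = e_{j_o} + o(1)$ uniformly in $\omega$ as $z\to\infty$ in $\H_{\widetilde{J},a}$. The transformed equation is $dZ/dz = (\Tilde\La + R)Z$ with $\Tilde\La = \La - \La_{j_oj_o}I$. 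The uniform integrability condition on $\La$ transfers to $\Tilde\La$ (up to a harmless shift by $2g_{j_o}$), so the matrices $W_{-,j_o}(t,z,\omega)$ remain well-defined and, by the argument of Lemma \ref{9settembre2023-9}, holomorphic in $\omega\in\Omega$.

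Next, on the Banach space $\widehat{\mathcal H}_{\widetilde{J},a}$ of bounded continuous $\C^n$-valued functions on $\H_{\widetilde{J},a}\times\Omega$ holomorphic on the interior, I would introduce the parametric Volterra operator
\be
\widetilde K_{j_o}[f;z,\omega] := \int_z^\infty W_{-,j_o}(t,z,\omega)\,R(t,\omega)\,f(t,\omega)\,{\rm d}t,
\ee
where integration is along the curve $\iota^\varphi_{a,b}\subseteq\H_{\widetilde{J},a}$ through $z$. The key step is a parametric analog of Lemma \ref{lemmaks}: using the subdominant condition \eqref{conditionS-bis} and the uniform S-decay condition \eqref{eq:MRaJS-bis}, one obtains a constant $C>0$, independent of $\omega$ and $R$, such that
\be
\snorm{\widetilde K_{j_o}[f;\cdot,\cdot]}\le C\,\snorm{f}\,\widetilde{M}_{R,a,\widetilde{J}},
\ee
and, by the argument used for \eqref{decayk}, $\widetilde K_{j_o}[f;z,\omega]\to 0$ as $z\to\infty$ uniformly in $\omega$. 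The holomorphic dependence on $\omega\in\Omega$ follows from the Morera/Fubini/Hartogs argument of Lemma \ref{lemmak-bis}: the uniform domination $|W_{-,j_o}(t,z,\omega)R(t,\omega)f(t,\omega)|\le C\snorm{f}\,g(t)$ provided by \eqref{conditionS-bis} and \eqref{9settembre2023-5-bis} justifies exchanging contour integrals in $t$ with loop integrals in each $\omega_k$.

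The integral equation to solve is $Z = e_{j_o} + \widetilde K_{j_o}[Z]$; the equivalence between solutions of this equation (with the uniform $o(1)$ asymptotics) and solutions of the transformed differential equation follows exactly as in the proof of Lemma \ref{lemdiffeqintegeq}, since the argument there does not depend on the parameter. Thanks to \eqref{eq:MRaJSinfty-bis}, by replacing $a$ by $a'(\widetilde{J})\ge a$ if necessary we obtain $\|\widetilde K_{j_o}\|<1$, so the Banach--Caccioppoli theorem yields the unique solution $Z=({\rm Id}-\widetilde K_{j_o})^{-1}[e_{j_o}]\in\widehat{\mathcal H}_{\widetilde{J},a}$, expandable as a uniformly convergent Neumann series. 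Undoing the gauge and invoking analytic continuation as a solution of a linear ODE extends $y_{j_o}$ from $\H_{\widetilde{J},a}\times\Omega$ to the full domain $\H_{\widetilde{I},a}\times\Omega$.

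The main obstacle I anticipate is purely bookkeeping: verifying that every estimate from Section \ref{20settembre2023-2} (where the straight-line foliation by $\iota^\varphi_{a,b}$ replaces $\ell_{\phi,b}$) can be taken uniformly in $\omega$, and simultaneously that the holomorphicity-in-$\omega$ argument of Section \ref{20settembre2023-1} goes through on this alternate foliation. The uniform integrability hypothesis on $\La$ and uniform $L^1$-majoration on $R$ are precisely designed to make both steps work in tandem, so no new analytical idea beyond Lemmas \ref{lemmaks} and \ref{lemmak-bis} is required.
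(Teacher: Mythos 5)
Your proposal is correct and follows essentially the same route as the paper, whose proof of Theorem \ref{subth-bis} is precisely the combination you describe: the Volterra operator $\widetilde K_{j_o}$ along the foliation $\iota^{\varphi}_{a,b}$ from the subdominant case (Theorem \ref{subth}, Lemma \ref{lemmaks}), merged with the uniform-in-$\omega$ machinery of Theorem \ref{mainth-bis} (uniform integrability, Morera/Fubini/Hartogs holomorphy, uniform contraction and Neumann series). No discrepancy to report.
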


\begin{cor}
\label{subcor-bis}
Under  the same assumptions as in Theorem \ref{subth-bis}, if $R=O(z^{-1-\dl})$ on
$\H_{\widetilde{I},a}$ uniformly with respect to $\Omega$, then the unique solution $y_{j_o}$  of Theorem
\ref{subth-bis} has behaviour
\beq
\label{subasym-tris}
y_{j_o}(z,\omega)=\left(e_{j_o}+O(z^{-\dl})\right)\exp\left(\int_{z_o}^z\La_{j_oj_o}(t,\omega){\rm
d}t\right),\quad\hbox{ uniformly in $\Omega$},
\eeq
for $z\to\infty$ in $\H_{\widetilde{J},a}$ and any closed interval
$\widetilde{J}\subset\widetilde{I}$.
\end{cor}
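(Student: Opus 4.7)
The plan is to mimic the proof of Corollary \ref{subcor}, but carrying along the parameter $\omega$ throughout, exactly as Theorem \ref{mainth-bis} and its corollary adapt Theorem \ref{mainth} and Corollary \ref{maincor} to the parametric setting. First I would introduce the change of variables
\[
y_{j_o}(z,\omega) = Z(z,\omega)\exp\!\left(\int_{z_o}^z\La_{j_oj_o}(t,\omega)\,{\rm d}t\right),
\]
already used in the proof of Theorem \ref{subth-bis}. The vector function $Z$ satisfies the parametric Volterra-type integral equation
\[
Z(z,\omega) = e_{j_o} + \widetilde{K}_{j_o}[Z;z,\omega],
\]
with the operator $\widetilde{K}_{j_o}$ defined exactly as in \eqref{k-s}, only now depending holomorphically on $\omega\in\Omega$ through $W_{-,j_o}(t,z,\omega)$ and $R(t,\omega)$. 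Under the hypotheses of Theorem \ref{subth-bis}, the parametric analog of Lemma \ref{lemmaks}(1) (proved exactly as Lemma \ref{lemmak-bis} is proved from Lemma \ref{lemmak}, using Fubini, Morera, and Hartogs as in Section \ref{20settembre2023-1}) shows that, after enlarging $a$ if necessary, $\widetilde{K}_{j_o}$ is a contraction on the Banach space $\widehat{\mathcal H}_{\widetilde{J},a}$ of bounded continuous $\C^n$-valued functions on $\H_{\widetilde{J},a}\times\Omega$, holomorphic in the interior, with norm
\[
\snorm{\widetilde{K}_{j_o}[f;\cdot,\cdot]}\leq C\,\snorm{f}\,\widetilde{M}_{R,a,J},
\]
uniformly in $\omega$. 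Hence $Z=({\rm Id}-\widetilde{K}_{j_o})^{-1}[e_{j_o}]\in\widehat{\mathcal H}_{\widetilde{J},a}$ is the unique solution given by Theorem \ref{subth-bis}.

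Next I would prove a uniform-in-$\omega$ version of Lemma \ref{lemmaks}(2): if $R=O(z^{-1-\dl})$ on $\H_{\widetilde{I},a}$ uniformly in $\Omega$, there exists $C>0$, independent of $\omega$, with
\[
\underset{\omega\in\Omega}{\sup}\bigl|\widetilde{K}_{j_o}[f;z,\omega]\bigr|\leq C\snorm{f}\,|z|^{-\dl},\quad f\in\widehat{\mathcal H}_{\widetilde{J},a}.
\]
This is obtained by integrating the bound $|W_{-,j_o}(t,z,\omega)R(t,\omega)f(t,\omega)|\leq C\snorm{f}\,|t|^{-1-\dl}$ along the curves $\iota^{\varphi}_{a,b}$ from $z$ to $\infty$, using the subdominant condition \eqref{conditionS-bis} (uniformly in $\omega$) to control $|W_{-,j_o}|$, and the uniform decay bound on $R$; the elementary estimate $\int_z^{\infty}|t|^{-1-\dl}|dt|\leq C'|z|^{-\dl}$ along $\iota^{\varphi}_{a,b}$ yields the claim.

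Applying this uniform estimate to the identity $Z(z,\omega)-e_{j_o}=\widetilde{K}_{j_o}[Z;z,\omega]$, and using that $\snorm{Z}$ is finite (as a Neumann-series element of $\widehat{\mathcal H}_{\widetilde{J},a}$), yields
\[
\underset{\omega\in\Omega}{\sup}\bigl|Z(z,\omega)-e_{j_o}\bigr|\leq C\snorm{Z}\,|z|^{-\dl},
\]
for $z\to\infty$ in $\H_{\widetilde{J},a}$, which, after multiplying by the exponential factor, is exactly \eqref{subasym-tris}. The main (and only) obstacle I expect is bookkeeping the uniformity in $\omega$ of the Volterra estimate for $\widetilde{K}_{j_o}$; but since this is a direct transcription of Lemma \ref{lemmaks}(2) with the uniform bounds \eqref{9settembre2023-5-bis}--\eqref{eq:MRaJSinfty-bis} replacing their parameter-free counterparts, no new analytic difficulty arises and the argument reduces to the routine integral estimate sketched above.
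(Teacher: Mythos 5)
Your proposal is correct and coincides with the paper's own (implicit) proof: the paper obtains Corollary \ref{subcor-bis} precisely by running the Volterra-operator argument of Section \ref{20settembre2023-2} uniformly in $\omega$ as in Section \ref{20settembre2023-1}, i.e.\ the parametric analog of Lemma \ref{lemmaks}(2) applied to the identity $Z-e_{j_o}=\widetilde{K}_{j_o}[Z]$, exactly as you do. The only minor point is that the final decay estimate is best phrased, as in the proof of Lemma \ref{lemmak}(2), through the $L^1$-norm of $R$ along an integration curve through $z$ whose distance from the origin is comparable to $|z|$, rather than a pointwise bound along an arbitrary curve through $z$, but this is the same computation the paper invokes.
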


\section{An ODE with not-necessarily meromorphic coefficients}

\subsection{Setup and Stokes rays}\label{secstok} Let $\C_a:=\{|z|>a>0\}$, and $\Tilde{\C}_a:=\Pi^{-1}(\C_a)$ (in the notations of Section \ref{secnot}). We apply Theorem \ref{mainth} to an important case of equation \eqref{diffeq},  with 
\beq
\label{2giugno2023-1}
             \Lambda(z)=\sum_{k=0}^h \Lambda^{(k)} z^{\sigma_k -1},
\quad
              \Lambda^{(k)} \in \h(n,\C),
 \quad 
              1=\sigma_0>\sigma_1>\dots>\sigma_h>0,\quad \si_i\in\R_{>0},
\eeq
and $R\in \hol(\Tilde{\C}_{a};\gl(n,\C))$, satisfying the good decay condition of Definition \ref{5giugno2023-1} on any open interval $I$ (for example, $R=O(|z|^{-1-\dl})$ on $\Tilde{\C}_{a}$).     This equation is the model to which a wide class of differential equations, such as those of Section \ref{16giugno2023-1}, can be reduced.    
We arrange a block-diagonal partition defined up to permutation:
\beq
\label{21giugno2023-2}
\Lambda(z)=\Lambda_1(z)\oplus \dots \oplus \Lambda_\ell(z),\qquad\quad
 \Lambda_i(z)=\sum_{k=0}^h \sigma_k\lambda_i^{(k)}  z^{\sigma_k -1} I_{s_i},\quad\quad {\lambda}_i^{(k)}\in\C,
 \eeq
where $ I_{s_i}$ is the identity matrix of dimension $s_i\in\N_{>0}$, with $s_1+\dots+s_\ell=n$, and
\beq
\label{18settembre2023-4}
\boldsymbol{\lambda}_i\neq \boldsymbol{\lambda}_j\hbox{ for } i\neq j,
\quad 
\hbox{ with }
\boldsymbol{\lambda}_i:=(\lambda^{(0)}_i,\lambda^{(1)}_i,\dots,\lambda^{(h)}_i), \quad i,j=1,\dots,\ell.
\eeq
 Let
\begin{equation}
\label{7settembre2023-2}
Q(z):= \int^z  \Lambda(\zeta)d\zeta =q_1(z)I_{s_1}\oplus\dots\oplus q_\ell(z)I_{s_\ell}
,
\quad 
\quad
q_i(z):=\sum_{k=0}^h \lambda_i^{(k)} z^{\sigma_k}.
\end{equation}
For any $i,j=1,\dots,\ell$, with $i\neq j$, set 
\beq
\label{19settembre2023-1}
k_{ij}:=\on{min}\{k~|~\lambda_i^{(k)}-\lambda_j^{(k)}\neq 0\},
\quad\quad
\lambda_{ij}:=\lambda_i^{(k_{ij})}-\lambda_j^{(k_{ij})}.
\eeq

\begin{defn}
\label{9giugno2023-2}
For each pair $(i,j)$, with $1\leq i\neq j \leq \ell$, the {\it Stokes rays of \eqref{2giugno2023-1} at $z=\infty$}   are the half lines in $\Tilde{\C^*}$ defined by 
$$ 
\on{Re} \lambda_{ij} z^{\sigma_{k_{ij}}}=0, \quad \on{Im} \lambda_{ij} z^{\sigma_{k_{ij}}}<0.
$$ 
\end{defn}
Hence, all the Stokes rays for $(i,j)$ have directions
\begin{equation}
\label{1ottobre2024-1}
\arg z= \tau_{ij}+\frac{2\pi m}{\sigma_{k_{ij}}}, \quad \hbox{ where  } m\in\mathbb{Z},\quad  \tau_{ij}:=\frac{1}{\sigma_{k_{ij}}}\left(\frac{3\pi}{2} -\arg\lambda_{ij}\right),
\end{equation}
and  $\arg\lambda_{ij}$ is a chosen determination of the argument. 

\begin{rem}
\label{9giugno2023-2-bis} 
For any $i,j=1,\dots,\ell$, with $i\neq j$, let $T_{ij}\subseteq\R$ be the complement of the discrete set of real numbers $\tau_{ij}+k\pi/\sigma_{k_{ij}}$, with $k\in\Z$.
We have
\begin{equation}
\label{26maggio2023-2} 
q_i(z)-q_j(z)= \lambda_{ij} z^{\sigma_{k_{ij}}}(1+o(1)),
\end{equation}
where $o(1)\to 0$ when $|z|\to \infty$, uniformly with respect to $\arg z$ varying in a closed interval in $T_{ij}$. 
\end{rem}

We enumerate the Stokes rays  as follows. Let $\eta\in\R$ be such that $\eta\neq \arg\lambda_{ij} \on{mod}\pi$ for any $i\neq j$. Consider all the distinct values taken by the determinations   $\arg\lambda_{ij}$, for $1\leq i\neq j \leq \ell$, lying  in the interval  $]\eta-\pi,\eta[$, and denote them by
\beq
\label{9giugno2023-4}
\eta_{\widetilde{\mu}-1}<\dots<\eta_2<\eta_1<\eta_0,\quad\quad \widetilde{\mu}\in\N_{\geq 1}.
\eeq
The directions $\tau_{ij}$, corresponding to the determinations $\arg\lambda_{ij}\in  \{\eta_0,\cdots,\eta_{\widetilde{\mu}-1}\}$
will be labeled  in increasing partial order
\beq
\label{sigmarho}
\tau_0\leq \tau_1\leq\cdots\leq \tau_{\mu-1},\quad\hbox{ where } \widetilde{\mu}\leq  \mu\in\mathbb{N}, 
\eeq 
according to the following rules. Let $(i,j)\neq (c,d)$, with $1\leq i\neq j \leq \ell$ and $1\leq c\neq  c\leq \ell$.
\begin{itemize}
\item[(1)] If $\tau_{ij}<\tau_{cd}$, we label them as $\tau_\rho$ and $\tau_{\rho+1}$ respectively, for some $\rho\in\{0,1,\cdots,\mu-2\}$.
\item[(2)]   If $\tau_{ij}=\tau_{cd}$, and $\sigma_{k_{ij}}=\sigma_{k_{cd}}$, we label both of them as  $\tau_\rho$ (i.e $\tau_\rho=\tau_{ij}=\tau_{cd}$),  for some $\rho\in\{0,1,\cdots,\mu-1\}$.
\item[(3)]   If $\tau_{ij}=\tau_{cd}$, and $\sigma_{k_{ij}}\neq \sigma_{k_{cd}}$, we label them as $\tau_\rho$ and $\tau_{\rho+1}$ respectively, or equivalently as $\tau_{\rho+1}$ and $\tau_{\rho}$, for some $\rho\in\{0,1,\cdots,\mu-2\}$. In this  case the  equality $\tau_{\rho}=\tau_{\rho+1}$ holds. 
\end{itemize}

Note that 
   $\mu= \widetilde{\mu}$ if and only if  there are  strict inequalities in  \eqref{sigmarho}, namely when    $\arg\lambda_{ij}=\arg\lambda_{cd}$ only if   $\sigma_{k_{ij}}=\sigma_{k_{cd}}$.

\begin{example}
If  $\arg \lambda_{ij}=\arg\lambda_{cd}=3\pi/2\in]\eta-\pi,\eta[$ for $(i,j)\neq(c,d)$, but  $\sigma_{k_{ij}}\neq \sigma_{k_{cd}}$, then $\tau_{ij}=\tau_{cd}=0$, and  $\tau_{\rho}=\tau_{\rho+1}=0$ in \eqref{sigmarho} for some $\rho\in\{0,\dots,\mu-2\}$.
\end{example}

\begin{defn}[of the exponent $\sigma^{(\rho)}$]
If  $\tau_{ij}$ is  labelled as $\tau_\rho$ in \eqref{sigmarho}, where $0\leq \rho\leq \mu-1$, then the corresponding $\sigma_{k_{ij}}$ will be denoted by  $\sigma^{(\rho)}$. 
\end{defn}

\begin{prop}
\label{16novembre2024-2}
  All directions of the Stokes rays are  given by 
\begin{equation}
\label{29maggio2023-2-1}
\arg z=\tau_\rho+\frac{k \pi}{\sigma^{(\rho)}},\quad \rho\in\{0,\dots,\mu-1\},\quad k \in\mathbb{Z}.
\end{equation}
\end{prop}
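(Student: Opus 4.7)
The plan is to establish the set equality between the directions of Stokes rays of \eqref{2giugno2023-1}, as given in Definition \ref{9giugno2023-2}, and the set $\{\tau_\rho+k\pi/\sigma^{(\rho)}\colon 0\le\rho\le\mu-1,\ k\in\Z\}$, by mutual inclusion.

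The crucial observation, on which both inclusions hinge, concerns the relation between the Stokes rays of an ordered pair $(i,j)$ and those of its reverse $(j,i)$. Since $\lambda_{ji}=-\lambda_{ij}$ and $\sigma_{k_{ji}}=\sigma_{k_{ij}}$, the formula \eqref{1ottobre2024-1} yields $\tau_{ji}=\tau_{ij}+\varepsilon\,\pi/\sigma_{k_{ij}}$ for a suitable $\varepsilon\in\{\pm 1\}$. Consequently, the union of the two single-pair progressions,
\[
\{\tau_{ij}+2\pi m/\sigma_{k_{ij}}\colon m\in\Z\}\;\cup\;\{\tau_{ji}+2\pi m/\sigma_{k_{ji}}\colon m\in\Z\},
\]
equals the denser arithmetic progression $\{\tau_{ij}+k\pi/\sigma_{k_{ij}}\colon k\in\Z\}$, with even $k$ arising from $(i,j)$ and odd $k$ from $(j,i)$.

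For the inclusion ``$\supseteq$'' I would fix $\rho\in\{0,\dots,\mu-1\}$ and pick a pair $(i,j)$ whose $\tau_{ij}$ is labelled $\tau_\rho$; by the conventions around \eqref{sigmarho} one then has $\sigma_{k_{ij}}=\sigma^{(\rho)}$. The preceding observation shows that $\tau_\rho+k\pi/\sigma^{(\rho)}$ is a Stokes ray direction of either $(i,j)$ (for $k$ even) or $(j,i)$ (for $k$ odd). For the inclusion ``$\subseteq$'', any Stokes ray direction is of the form $\tau_{cd}+2\pi m/\sigma_{k_{cd}}$ for some ordered pair $(c,d)$ with $c\neq d$ and some $m\in\Z$. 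The choice of $\eta$, together with the hypothesis that $\eta\neq\arg\lambda_{ab}$ mod $\pi$ for all pairs $a\neq b$, guarantees that exactly one of the determinations $\arg\lambda_{cd}$, $\arg\lambda_{dc}$ lies in $\,]\eta-\pi,\eta[\,$; hence exactly one of $\tau_{cd}$, $\tau_{dc}$ appears in the labelled list \eqref{sigmarho} as some $\tau_\rho$, with $\sigma^{(\rho)}=\sigma_{k_{cd}}$. Using the identity $\tau_{dc}=\tau_{cd}\pm\pi/\sigma_{k_{cd}}$ to convert if necessary, the given direction is written in the form $\tau_\rho+k\pi/\sigma^{(\rho)}$.

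The only subtlety, which I expect to reduce to pure bookkeeping, concerns the labelling rules (1)--(3). Under rule (2), two distinct ordered pairs sharing the same $(\tau,\sigma_{k_{ij}})$ are collapsed to a single label and both contribute the same progression; under rule (3), two pairs with coinciding $\tau$ but different $\sigma_{k_{ij}}$ are assigned adjacent labels $\rho,\rho+1$ with $\tau_\rho=\tau_{\rho+1}$ and $\sigma^{(\rho)}\neq\sigma^{(\rho+1)}$, so that the two genuinely distinct arithmetic progressions both appear on the right-hand side of \eqref{29maggio2023-2-1}. Since $\sigma^{(\rho)}$ is well defined as the common value of $\sigma_{k_{ij}}$ over all $(i,j)$ sharing label $\rho$, the correspondence between labels and single-pair progressions is unambiguous and no additional argument is needed.
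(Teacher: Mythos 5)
Your proposal is correct and follows essentially the same route as the paper: the paper's one-line proof also rests on the observation that, since $\lambda_{ji}=-\lambda_{ij}$, the determinations of all the $\arg\lambda_{ij}$ are exactly the numbers $\eta_\alpha+m\pi$, so that formula \eqref{1ottobre2024-1} together with the labelling of \eqref{sigmarho} yields the $\pi/\sigma^{(\rho)}$-progressions of \eqref{29maggio2023-2-1}. Your version merely spells this out as a two-sided inclusion, with the $(i,j)$/$(j,i)$ merging of progressions and the bookkeeping on rules (1)--(3) made explicit.
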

\begin{proof}
Since $\lambda_{ji}=-\lambda_{ij}$,  all the possible determinations of the $\arg\lambda_{ij}$, for $i\neq j\in \{1,\dots,\ell\}$,  are the numbers   $\eta_\alpha+m\pi$, for $\alpha=0,...,\widetilde{\mu}-1$ and  $m\in\mathbb{Z}$.  Then, the  statement follows straightforwardly from \eqref{1ottobre2024-1} and the labelling previously introduced.
\end{proof}
\begin{rem}
It is clear now why in  the rule (3) above we must  label differently $\tau_{ij}$ and $\tau_{cd}$ even though  $\tau_{ij}=\tau_{cd}$. This is  because  if  $\tau_\rho=\tau_{\rho+1}$ with $\sigma^{(\rho)}\neq \sigma^{(\rho+1)}$, then $\tau_\rho$  and $\tau_{\rho+1}$ generate different Stokes directions \eqref{29maggio2023-2-1}. See also Example \ref{17novembre2024-1} below. 

\end{rem}

The simplest  to describe is the following generic case.
\begin{defn}
\label{19giugno2023-1}
$\Lambda$ in \eqref{2giugno2023-1}-\eqref{21giugno2023-2} is  {\it generic} if  $\lambda_i^{(0)}\neq \lambda_j^{(0)}$ for all $1\leq i\neq j\leq \ell$. 
\end{defn}

\begin{example}
If in \eqref{2giugno2023-1} we have $\Lambda^{(k)}=c_k\Lambda^{(0)}$, with $c_k\in\C$,   for all $k=1,\dots,h$, then $\Lambda$ is generic. 
\end{example}

For generic $\Lambda$, we have $k_{ij}=0$, $\sigma_{k_{ij}}=\sigma_0=1$, and  $\lambda_{ij}=\lambda_i^{(0)}-\lambda_j^{(0)}$, for all $i\neq j$. 
In this case, $\sigma^{(0)}=\dots=\sigma^{(\mu-1)}=1$ and  
\begin{equation}
\label{29maggio2023-1-1}
\tau_\rho=\frac{3\pi}{2}-\eta_\rho,\quad \rho=0,1,\cdots,\mu-1 ~(=\widetilde{\mu}-1).
\end{equation}
In this case, all the directions \eqref{29maggio2023-2-1}  can be conveniently  labelled as 
 \begin{equation}
 \label{29maggio2023-2}
 \tau_\nu=\tau_\rho+k\pi,\quad \rho\in\{0,\dots,\mu-1\},\quad \nu:=\rho+k\mu, \quad k\in\Z.
 \end{equation}
They satisfy the strict inequality
\begin{equation}
\label{7settembre2023-1}
\tau_{\nu}<\tau_{\nu+1}\quad \forall~\nu\in \Z.
\end{equation}


\subsection{Adequate tuples, and first application of the main theorem}\label{secadeqtripl1}

\begin{defn}
\label{18settembre2023-3}
We say that $\bm k=(k_0,\cdots,k_{\mu-1})\in \Z^\mu$ is {\it adequate with respect to $(\si^{(\rho)};\tau_\rho)_{\rho=0}^{\mu-1}$} if the open interval 
\beq
\label{26maggio2023-10}
\Si_{\bm k}:=\bigcap_{\rho=0}^{\mu-1}\,\left]\tau_\rho+\frac{k_\rho \pi}{\si^{(\rho)}}-\pi;\,\tau_\rho+\frac{k_\rho \pi}{\si^{(\rho)}}\right[
\eeq is non-empty.
\end{defn}
Geometrically, adequateness means that the Stokes rays with directions $\arg z=\tau_\rho+\frac{k_\rho \pi}{\si^{(\rho)}}$, for $\rho=0,\dots,\mu-1$, are in an open interval of angular amplitude {\it less than} $\pi$. 
Applying Theorem \ref{mainth},  we will prove  the following theorem. 

\begin{thm}
\label{5giugno2023-3}
Consider the differential equation \eqref{diffeq}
with  $\Lambda(z)$ as in \eqref{2giugno2023-1}, and a perturbation term $R\in \hol(\Tilde{\C}_{a};\gl(n,\C))$ satisfying the good decay condition on any open interval $I$.
Let $\bm k\in\Z^\mu$ be adequate with respect to $(\si^{(\rho)};\tau_\rho)_{\rho=0}^{\mu-1}$. 
Then, there exists a
 unique fundamental matrix solution $Y_{\boldsymbol{k}}\colon \H_{I,a}\to GL(n,\C)$, with $I=\Si_{\bm k}+\frac{\pi}{2}$,  with  behaviour
\beq
\label{9giugno2023-1}
Y_{\boldsymbol{k}}(z)=\left(I_n+o(1)\right) \exp\bigl\{Q(z)\bigr\},
\eeq 
for  $z\to \infty$    in $\H_{J,a}$, in any closed interval  $J\subset I$. Here, $Q$ is defined as in \eqref{7settembre2023-2}. Equivalently,  the behaviour \eqref{9giugno2023-1} holds for $z\to \infty$ in any closed subsector of 
$$ 
S_{\boldsymbol{k}}:=\{z\in \Tilde{\C^*}~|~a_{\boldsymbol{k}}<\arg z <b_{\boldsymbol{k}}+\pi \}, \quad \text{where }\Si_{\bm k}=\left]a_{\bm k};b_{\bm k}\right[.
$$
\end{thm}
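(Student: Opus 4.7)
The plan is to deduce the theorem from Theorem \ref{mainth}, applied on the open interval $I:=\Sigma_{\boldsymbol{k}}+\pi/2$. The good decay condition for $R$ is assumed on every open interval, so the only substantive task is to verify the $L$-condition for $\La$ on $I$. Since $\La$ is block-scalar per \eqref{21giugno2023-2}, the entry $\La_{aa}-\La_{bb}$ vanishes identically whenever $a$ and $b$ belong to the same block, in which case the $L$-condition is trivial. One may therefore restrict to pairs $(a,b)$ with $a$ in block $i$ and $b$ in block $j$, $i\neq j$, for which
\begin{equation*}
\int_w^z(\La_{aa}(t)-\La_{bb}(t))\,dt\;=\;(q_i(z)-q_j(z))-(q_i(w)-q_j(w)),
\end{equation*}
with leading behaviour recorded in \eqref{26maggio2023-2}.

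Fix a closed sub-interval $J\subset I$ and a line $\ell_{\phi,b}$ with $\phi\in J$, $b\geq a$; along $\ell_{\phi,b}$ the argument of $z$ sweeps the open interval $(\phi-\pi/2,\phi+\pi/2)$ of length exactly $\pi$. Two geometric observations are decisive. First, since $\sigma_{k_{ij}}\leq\sigma_0=1$, the zeros of $\cos(\arg\la_{ij}+\sigma_{k_{ij}}\arg z)$ are spaced by $\pi/\sigma_{k_{ij}}\geq\pi$, so any argument window of the above form contains at most one such zero. Second, if $\rho$ denotes the label attached to the unordered pair $\{i,j\}$ in \eqref{sigmarho}, the adequateness hypothesis \eqref{26maggio2023-10} rewrites as
\begin{equation*}
I\;\subseteq\;\Bigl(\tau_\rho+\tfrac{k_\rho\pi}{\sigma^{(\rho)}}-\tfrac{\pi}{2},\;\tau_\rho+\tfrac{k_\rho\pi}{\sigma^{(\rho)}}+\tfrac{\pi}{2}\Bigr),
\end{equation*}
so that the Stokes direction $\tau^\ast:=\tau_\rho+k_\rho\pi/\sigma^{(\rho)}$ lies strictly inside the window for every $\phi\in I$. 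Every such line therefore crosses exactly one zero of the leading real part of $q_i-q_j$.

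It follows that $f(z):=\on{Re}(q_i(z)-q_j(z))$, asymptotic to $|\la_{ij}||z|^{\sigma^{(\rho)}}\cos(\arg\la_{ij}+\sigma^{(\rho)}\arg z)$, tends to $+\infty$ at one infinite end of $\ell_{\phi,b}$ and to $-\infty$ at the other, the choice of end being dictated by the sign of the derivative of the cosine at $\tau^\ast$ (i.e.\ by $\sigma^{(\rho)}(-1)^{k_\rho}$) and thus uniform in $\phi\in J$ and $b\geq a$. An identical spacing argument applied to $f'$, whose leading coefficient involves $z^{\sigma_{k_{ij}}-1}$ with exponent $\leq 0$, shows that $f$ admits at most one asymptotic critical point along the line; combined with the above boundary behaviour, $f$ is eventually monotonic in the natural parameter. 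This yields, uniformly in $\phi\in J$ and $b\geq a$ (after possibly enlarging $a$ to $a'=a'(J)$), either case \eqref{l'1}, with $\mathfrak{S}^{ab}_{J,a}\leq K_1$ and $\mathfrak{I}^{ab}_{J,a}=-\infty$, or case \eqref{l'2}, with $\mathfrak{I}^{ab}_{J,a}\geq K_2$. The finite constants arise from the sub-leading contribution $\sum_{k>k_{ij}}(\la_i^{(k)}-\la_j^{(k)})z^{\sigma_k}$, of strictly smaller order; replacing $a$ by $a'$ makes $f$ monotonic on every line of $\H_{I,a'}$, and linearity of the ODE allows the resulting solution to be analytically continued from $\H_{I,a'}$ back to $\H_{I,a}$.

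Once the $L$-condition is verified, Theorem \ref{mainth} produces a unique fundamental matrix solution $Y\colon\H_{I,a}\to GL(n,\C)$ satisfying $Y(z)=(I_n+o(1))\exp\int_{z_o}^z\La(\zeta)\,d\zeta$ on every $\H_{J,a}$, $J\subset I$ closed. Setting $Y_{\boldsymbol{k}}(z):=Y(z)\exp\bigl\{Q(z_o)\bigr\}$, which is right-multiplication by a constant diagonal matrix, converts this into the asymptotic \eqref{9giugno2023-1}; and the shift $I=\Sigma_{\boldsymbol{k}}+\pi/2$ translates closed sub-intervals $J\subset I$ into closed subsectors of $S_{\boldsymbol{k}}=(a_{\boldsymbol{k}},b_{\boldsymbol{k}}+\pi)$, recovering the sectorial formulation. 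Uniqueness of $Y_{\boldsymbol{k}}$ is inherited from that of $Y$. The principal technical obstacle is the simultaneous uniform control of the sub-leading terms in $\phi\in J$ and in $b$ near the crossed Stokes ray $\tau^\ast$; adequateness is precisely what ensures that a single sign change of $f$ occurs on each line, so that the monotonicity forced by the leading term at infinity is not spoiled.
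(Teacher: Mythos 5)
Your proposal is correct and takes essentially the same route as the paper: it verifies that adequateness forces, for every line $\ell_{\phi,b}$ with $\phi\in J\subset I$, exactly one zero of the leading term $\on{Re}(\la_{ij}z^{\sigma_{k_{ij}}})$ and no sign change of its derivative in the argument window (the spacing $\geq\pi$ coming from $\sigma_{k_{ij}}\leq 1$), hence monotonicity and the dichotomy \eqref{l'1}/\eqref{l'2}, and then invokes Theorem \ref{mainth}. This is the content of the paper's Lemmas \ref{26maggio2023-9}, \ref{26maggio2023-12} and the Proposition in Section \ref{sec33}, merely repackaged as a zero-spacing argument, with the same level of care about the sub-leading terms and the final normalization by $\exp\{Q(z_o)\}$ being a routine addition.
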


\begin{cor}
\label{7giugno2023-2} 
Consider the differential equation \eqref{diffeq} as in Theorem \ref{5giugno2023-3}. If $\Lambda$ is generic, then for every $\nu\in \mathbb{Z}$ there exists a unique fundamental matrix solution $Y_\nu \colon\H_{I,a}\to GL(n,\C)$, with $I=]\tau_{\nu-1},\tau_\nu[+\frac{\pi}{2}$,   
 such that
 $$
Y_\nu(z)=\left(I_n+o(1)\right) \exp\bigl\{Q(z)\bigr\},
$$
  $z\to \infty$    in $\H_{J,a}$, in any closed interval $J\subset I$. Equivalently,  the above behaviour  holds for $z\to \infty$ in any closed subsector of 
$$ 
S_\nu:=\{z\in \Tilde{\C^*}~|~\tau_{\nu-1}<\arg z <\tau_\nu +\pi\}.$$ 
Here, the direction $\tau_\nu$ of a Stokes ray is defined in \eqref{29maggio2023-2}.
\end{cor}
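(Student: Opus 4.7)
The plan is to deduce the corollary as a direct specialization of Theorem \ref{5giugno2023-3} by exhibiting, for each $\nu\in\Z$, an adequate tuple $\boldsymbol{k}(\nu)$ whose associated sector $S_{\boldsymbol{k}(\nu)}$ is precisely $S_\nu$.

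I would first collect the simplifications granted by genericity. Since $\lambda_i^{(0)}\neq\lambda_j^{(0)}$ for all $i\neq j$, the integer $k_{ij}$ of \eqref{19settembre2023-1} vanishes for every pair, hence $\sigma^{(\rho)}=1$ for every $\rho\in\{0,\dots,\mu-1\}$. The complete list of Stokes directions \eqref{29maggio2023-2-1} then reduces to the strictly monotone sequence $\{\tau_m\}_{m\in\Z}$ of \eqref{29maggio2023-2}--\eqref{7settembre2023-1}, which is $\pi$-periodic up to shift of index: $\tau_{m+\mu}=\tau_m+\pi$. The decisive bookkeeping observation is that, for any $\nu\in\Z$, the half-open window $[\tau_\nu,\tau_\nu+\pi)$ contains exactly the $\mu$ consecutive Stokes directions $\tau_\nu,\tau_{\nu+1},\dots,\tau_{\nu+\mu-1}$, and the residue of $m$ modulo $\mu$ takes each value in $\{0,\dots,\mu-1\}$ exactly once as $m$ runs through these indices.

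Fix $\nu\in\Z$. For each $\rho\in\{0,\dots,\mu-1\}$ let $k_\rho$ be the unique integer for which $\tau_\rho+k_\rho\pi\in[\tau_\nu,\tau_\nu+\pi)$, and set $\boldsymbol{k}(\nu):=(k_0,\dots,k_{\mu-1})$. Because $\sigma^{(\rho)}=1$, definition \eqref{26maggio2023-10} becomes
\begin{equation*}
\Sigma_{\boldsymbol{k}(\nu)}=\bigcap_{\rho=0}^{\mu-1}\left]\tau_\rho+k_\rho\pi-\pi,\;\tau_\rho+k_\rho\pi\right[.
\end{equation*}
By construction the multiset $\{\tau_\rho+k_\rho\pi\}_{\rho=0}^{\mu-1}$ is exactly $\{\tau_\nu,\tau_{\nu+1},\dots,\tau_{\nu+\mu-1}\}$, so its minimum is $\tau_\nu$ and its maximum is $\tau_{\nu+\mu-1}=\tau_{\nu-1}+\pi$. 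Taking the intersection of the $\mu$ intervals above therefore yields $\Sigma_{\boldsymbol{k}(\nu)}=\,]\tau_{\nu-1},\tau_\nu[$, which is nonempty, so $\boldsymbol{k}(\nu)$ is adequate in the sense of Definition \ref{18settembre2023-3}.

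Applying Theorem \ref{5giugno2023-3} to $\boldsymbol{k}(\nu)$ produces a unique fundamental matrix solution on $\H_{I,a}$ with $I=\Sigma_{\boldsymbol{k}(\nu)}+\pi/2=\,]\tau_{\nu-1},\tau_\nu[\,+\pi/2$, enjoying the asymptotics $(I_n+o(1))\exp\{Q(z)\}$ on any closed subsector of $S_{\boldsymbol{k}(\nu)}=\{\tau_{\nu-1}<\arg z<\tau_\nu+\pi\}=S_\nu$. Setting $Y_\nu:=Y_{\boldsymbol{k}(\nu)}$ finishes the argument. The only non-mechanical step is the combinatorial identification of the two endpoints of $\Sigma_{\boldsymbol{k}(\nu)}$; once the window-based choice of $k_\rho$ is in place, everything else is a verbatim quotation of Theorem \ref{5giugno2023-3}.
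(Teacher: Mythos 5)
Your proposal is correct and follows essentially the same route as the paper: both proofs reduce the corollary to exhibiting, for each $\nu$, an adequate tuple with $\Si_{\bm k}=\left]\tau_{\nu-1};\tau_\nu\right[$ (using that genericity forces $\sigma^{(\rho)}=1$) and then quote Theorem \ref{5giugno2023-3}. The only difference is presentational: you select $k_\rho$ by the half-open window $[\tau_\nu,\tau_\nu+\pi)$ and compute the endpoints of the intersection explicitly, whereas the paper lists the admissible ``staircase'' tuples $(k,\dots,k),(k,\dots,k,k-1),\dots,(k-1,\dots,k-1)$ directly; the two constructions produce the same tuples.
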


\proof[Proof of Corollary \ref{7giugno2023-2}]
For generic $\Lambda$,  $\sigma^{(\rho)}=1$ for any $\rho$. Thus, \eqref{26maggio2023-10} becomes
\begin{equation}
\label{26maggio2023-10-bis}
\bigcap_{\rho=0}^{\mu-1}\,\left]\tau_\rho+(k_\rho-1) \pi;\,\tau_\rho+k_\rho \pi\right[.
\end{equation}
  It is always possible to find  $\boldsymbol{k}$ such that \eqref{26maggio2023-10-bis} is non-empty. 
Indeed, for any arbitrary $k\in\Z$, we have the following choices 
$$
\boldsymbol{k}=(k_0,\dots,k_{\mu-1})=
\left\{
\begin{aligned}
&(k,\dots,k,k),
\\&(k,\dots,k,k-1),
\\&(k,\dots,k-1,k-1),
\\
&\vdots
\\
&(k-1,\dots,k-1,k-1).
\end{aligned}
\right.
$$
The above, together with  definition \eqref{29maggio2023-2}, proves that the non-empty \eqref{26maggio2023-10-bis} are  the intervals $]\tau_{\nu-1},\tau_\nu[$, with $\nu\in \mathbb{Z}$. Then, Corollary \ref{7giugno2023-2} follows form Theorem \ref{5giugno2023-3}. 
\endproof

\begin{rem}
  Adequate $\bm k$ may not exist. Consider a case with  $\mu=\widetilde{\mu}=2$  and  $\eta_0=2\pi$, $\eta_1=3\pi/2$. Then 
$\sigma^{(0)} \tau_0=-\pi/2$, $ \sigma^{(1)} \tau_1=0$. 
 If $\sigma^{(0)}=\sigma^{(1)}=1/2$, the Stokes rays have directions $ 
\tau_0+2k_0\pi$ and $ \tau_1+2k_1\pi=2k_1\pi$. 
Then \eqref{26maggio2023-10}  defines the intervals  
$$ 
\left](2k_0-2)\pi;(2k_0-1)\pi\right[\,\,\cap\,\,\left](2k_1-1)\pi;2k_1\pi\right[=\emptyset.
$$
\end{rem}

\subsection{Proof of Theorem \ref{5giugno2023-3}}\label{sec33}
Take $\lambda\in\C$ and $\sigma\in \R$. 
An oriented line $\ell_{b,\phi}$ has equation  $b=|z|\cos(\phi-\arg z)$ and direction
$$\tau:=\phi-\pi/2.
$$ 
Hence, 
$$
\on{Re}(\lambda z^\sigma)\Bigr|_{\ell_{b,\phi}}=|\lambda||b|^\sigma \frac{\cos(\sigma\theta+\arg\lambda)}{\sin^\sigma(\theta-\tau)},\quad 
\quad
\theta:=\arg z.
$$ 
\begin{lem}
\label{26maggio2023-9}
 Take $\tau_*, \tau\in \R$,  $0<\sigma^{(*)}\leq 1$ and define $\eta_*:=
\dfrac{3\pi}{2}-\sigma^{(*)}\tau_*$. 
 The function
 \begin{equation}
\label{26maggio2023-4}
f_*(\theta):= \frac{\cos(\sigma^{(*)}\theta+\eta_*)}{\sin^{\sigma^{(*)}}(\theta-\tau)}
\equiv
\frac{\sin(\sigma^{(*)}(\theta-\tau_*))}{\sin^{\sigma^{(*)}}(\theta-\tau)},
\end{equation}
 is monotonic w.r.t.  $\theta\in]\tau,\tau+\pi[$ if and only if there is $k\in\Z$ such that 
 \begin{equation}
 \label{26maggio2023-7}
 \tau<\tau_*+\frac{k\pi}{\sigma^{(*)}}<\tau+\pi.
\end{equation}
\end{lem}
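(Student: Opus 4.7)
The natural strategy is a direct sign analysis of $f_*'$. As a preliminary step, I verify the equivalence of the two expressions for $f_*$: using $\eta_*=\tfrac{3\pi}{2}-\sigma^{(*)}\tau_*$, one has $\cos(\sigma^{(*)}\theta+\eta_*)=\cos(\sigma^{(*)}(\theta-\tau_*)+\tfrac{3\pi}{2})=\sin(\sigma^{(*)}(\theta-\tau_*))$, so the second form is the one I will work with.

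The plan is to differentiate and use the addition formula $\cos A\sin B-\sin A\cos B=\sin(B-A)$ with $A=\sigma^{(*)}(\theta-\tau_*)$ and $B=\theta-\tau$. A routine computation gives
\[
f_*'(\theta)\;=\;\sigma^{(*)}\,\frac{\sin\bigl((1-\sigma^{(*)})(\theta-\tau)+\sigma^{(*)}(\tau_*-\tau)\bigr)}{\sin^{\sigma^{(*)}+1}(\theta-\tau)}.
\]
On $\theta\in\,]\tau,\tau+\pi[$ the denominator is strictly positive, so the sign of $f_*'$ coincides with the sign of $\sin(h(\theta))$, where $h(\theta):=(1-\sigma^{(*)})(\theta-\tau)+\sigma^{(*)}(\tau_*-\tau)$. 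Monotonicity of $f_*$ is thus equivalent to $\sin\circ h$ not changing sign on $]\tau,\tau+\pi[$.

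Since $h$ is affine and non-decreasing with slope $1-\sigma^{(*)}\in[0,1)$, it maps $]\tau,\tau+\pi[$ onto the open interval $J:=\bigl]\sigma^{(*)}(\tau_*-\tau),\;\sigma^{(*)}(\tau_*-\tau)+(1-\sigma^{(*)})\pi\bigr[$, whose length is $(1-\sigma^{(*)})\pi\le\pi$. I split into two cases. When $\sigma^{(*)}=1$, $h$ is the constant $\tau_*-\tau$ and $\sin(h)\neq0$ iff $\tau_*-\tau\notin\pi\Z$, which (the interval length being exactly $\pi$) is equivalent to the existence of $k\in\Z$ with $\tau<\tau_*+k\pi<\tau+\pi$. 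When $0<\sigma^{(*)}<1$, the fact that $J$ has length $<\pi$ implies that $\sin\circ h$ has constant sign iff $J$ is contained in some $[m\pi,(m+1)\pi]$; unwinding this containment gives $m\pi\le\sigma^{(*)}(\tau_*-\tau)\le(m+\sigma^{(*)})\pi$, which, after the change of index $k=-m$ and a short manipulation, is the condition $\tau<\tau_*+\tfrac{k\pi}{\sigma^{(*)}}<\tau+\pi$ stated in the lemma.

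The main technical point I expect to be the only nuisance is bookkeeping the strict/weak boundary cases: the condition that $J$ avoids the zeros of $\sin$ in its \emph{interior} corresponds exactly to the claimed strict inequalities, and one has to verify that when $\tau_*+k\pi/\sigma^{(*)}$ coincides with an endpoint of $]\tau,\tau+\pi[$ the derivative $\sin(h(\theta))$ is forced to change sign (the translation between the strict inequalities on $\tau_*+k\pi/\sigma^{(*)}$ and the interior avoidance of $\pi\Z$ by $J$ is the arithmetic heart of the argument). Once this has been checked, both implications of the equivalence follow immediately from the derivative formula.
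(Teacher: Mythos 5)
Your argument is essentially the paper's own proof: you compute the same derivative (the paper writes the numerator as $-\sigma^{(*)}\cos\bigl((\sigma^{(*)}-1)\theta'+\sigma^{(*)}\tau+\eta_*\bigr)$ with $\theta'=\theta-\tau$, which is exactly your $\sigma^{(*)}\sin\bigl((1-\sigma^{(*)})(\theta-\tau)+\sigma^{(*)}(\tau_*-\tau)\bigr)$), note that the denominator is positive and that the argument of the trigonometric numerator sweeps an interval of length $(1-\sigma^{(*)})\pi<\pi$ (a point when $\sigma^{(*)}=1$), and translate constancy of sign into condition \eqref{26maggio2023-7}; your separate treatment of $\sigma^{(*)}=1$ is a harmless refinement.

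The one step you defer, however, cannot be carried out as you state it. When $\tau_*+k\pi/\sigma^{(*)}$ coincides with an endpoint of $]\tau,\tau+\pi[$, the numerator $\sin(h(\theta))$ is \emph{not} forced to change sign: take $\sigma^{(*)}=\tfrac12$, $\tau=\tau_*=0$, so that $f_*(\theta)=\sin(\theta/2)/\sin^{1/2}\theta=\tfrac{1}{\sqrt2}\sqrt{\tan(\theta/2)}$ is strictly increasing on $]0,\pi[$, while no integer $k$ satisfies $0<2k\pi<\pi$, so \eqref{26maggio2023-7} fails. Your containment argument correctly yields the closed condition $m\pi\le\sigma^{(*)}(\tau_*-\tau)\le(m+\sigma^{(*)})\pi$, and the discrepancy with the open condition \eqref{26maggio2023-7} occurs precisely when $\tau$ or $\tau+\pi$ equals $\tau_*+k\pi/\sigma^{(*)}$; in these boundary cases the ``only if'' direction of the lemma, read literally, fails. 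This is an imprecision of the statement rather than of your method: the paper's proof glosses over exactly the same point (it equates ``constant sign'' with strict inequalities without comment), and in all subsequent applications the lemma is used under condition \eqref{26maggio2023-3}, which excludes these boundary directions, so nothing downstream is affected. Apart from making this exclusion explicit instead of promising a verification that is in fact false, your proof is complete and coincides with the paper's.
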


\begin{proof}
Take $\theta^\prime:=\theta-\tau$, $0<\theta^\prime<\pi$. Then
\begin{equation} 
\label{22giugno2023-2}
\frac{df_*}{d\theta}
=
-\sigma^{(*)}\frac{\cos((\sigma^{(*)}-1)\theta^\prime+\sigma^{(*)}\tau+\eta_*)}{\sin^{\sigma^{(*)}+1}\theta^\prime}
\end{equation}
The denominator in \eqref{22giugno2023-2} is positive. The argument in the numerator satisfies the inequalities
$$ 
\sigma^{(*)}\tau+\eta_*+(\sigma^{(*)}-1)\pi~<~ (\sigma^{(*)}-1)\theta^\prime+\sigma^{(*)}\tau+\eta_*~<~\sigma^{(*)}\tau+\eta_*.
$$
Hence, the numerator has constant sign if and only if there is $k\in \Z$ such that 
$$
 \sigma^{(*)}\tau+\eta_*+(\sigma^{(*)}-1)\pi>\frac{\pi}{2}+k\pi     
 \quad\hbox{and} \quad
 \sigma^{(*)}\tau+\eta_*<\frac{3\pi}{2}+k\pi 
.
$$
This is exactly \eqref{26maggio2023-7}. 
\end{proof}

\begin{lem} 
\label{26maggio2023-12}
In the notations of Lemma \ref{26maggio2023-9},  let $\tau$ satisfy 
\begin{equation}
\label{26maggio2023-3}
\tau\neq \tau_* +\frac{k\pi}{\sigma^{(*)}}\quad \hbox{ and }\quad  \tau+\pi\neq \tau_* +\frac{k \pi }{\sigma^{(*)}},\quad \text{for any }k\in\Z.
\end{equation}
Only one of the following mutually excluding cases occurs. 

1) The  function \eqref{26maggio2023-4} is   monotonic,  $\lim_{\theta\to \tau_+}f_*(\theta)$ and $\lim_{\theta\to \tau_-+\pi} f_*(\theta)$  are infinite with  opposite signs.

 2)  The function \eqref{26maggio2023-4} is not monotonic  and both limits are infinite with the same sign. 
\end{lem}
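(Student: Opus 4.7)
\textbf{Proof plan for Lemma \ref{26maggio2023-12}.} The strategy is: first determine that both boundary limits are infinite by analyzing numerator and denominator separately; second, read off the signs of these infinities as signs of two sine values; third, translate ``same sign versus opposite sign'' into the arithmetic condition characterizing monotonicity in Lemma \ref{26maggio2023-9}.

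As $\theta \to \tau^+$ (respectively $\theta \to (\tau+\pi)^-$), set $\theta' := \theta-\tau \in (0,\pi)$, so $\sin\theta' \to 0^+$ and hence $\sin^{\sigma^{(*)}}(\theta-\tau)\to 0^+$. The numerator of $f_*$ in the form \eqref{26maggio2023-4} tends to $\sin(\sigma^{(*)}(\tau-\tau_*))$ at the left endpoint and to $\sin(\sigma^{(*)}(\tau+\pi-\tau_*))$ at the right endpoint. Hypothesis \eqref{26maggio2023-3} reads precisely $\sigma^{(*)}(\tau-\tau_*) \notin \pi\Z$ and $\sigma^{(*)}(\tau+\pi-\tau_*)\notin \pi\Z$, so both sines are nonzero. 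This proves that \emph{both} limits are infinite, with respective signs
\[
s_- := \on{sign}\sin\bigl(\sigma^{(*)}(\tau-\tau_*)\bigr), \qquad s_+ := \on{sign}\sin\bigl(\sigma^{(*)}(\tau+\pi-\tau_*)\bigr).
\]

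Next I would reduce ``$s_-\neq s_+$'' to a counting problem. Writing $\alpha := \sigma^{(*)}(\tau-\tau_*)$, the two signs differ if and only if the continuous function $t\mapsto \sin t$ changes sign an odd number of times on the open interval $(\alpha,\,\alpha+\sigma^{(*)}\pi)$; since its zeros are at multiples of $\pi$ and neither endpoint is a zero, this is equivalent to the interval $(\alpha,\alpha+\sigma^{(*)}\pi)$ containing an odd number of integer multiples of $\pi$. Because $0<\sigma^{(*)}\le 1$, the length of this interval is at most $\pi$, so it can contain at most one such multiple. Hence $s_-\neq s_+$ if and only if there exists $k\in\Z$ with $\alpha<k\pi<\alpha+\sigma^{(*)}\pi$, which after dividing by $\sigma^{(*)}$ and adding $\tau_*$ is exactly the inequality \eqref{26maggio2023-7} from Lemma \ref{26maggio2023-9}.

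Combining these observations with Lemma \ref{26maggio2023-9} finishes the proof: $f_*$ is monotonic on $(\tau,\tau+\pi)$ iff \eqref{26maggio2023-7} holds iff $s_-\neq s_+$, which is case (1); otherwise $f_*$ is not monotonic and $s_-=s_+$, which is case (2). The two cases are mutually exclusive and exhaustive under hypothesis \eqref{26maggio2023-3}. The only mildly delicate point is the boundary value $\sigma^{(*)}=1$, where the interval $(\alpha,\alpha+\pi)$ has length exactly $\pi$; but since $\alpha\notin\pi\Z$ by assumption, it still contains exactly one multiple of $\pi$, and the argument goes through unchanged. I expect the main (minor) obstacle to be exactly this careful bookkeeping of sign changes and the endpoint $\sigma^{(*)}=1$; the rest is direct evaluation of limits.
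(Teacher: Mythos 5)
Your proof is correct and follows essentially the same route as the paper: both argue that the limits are infinite because hypothesis \eqref{26maggio2023-3} keeps the numerator from vanishing at the endpoints while the denominator tends to $0^+$, that the numerator $\sin(\sigma^{(*)}(\theta-\tau_*))$ can change sign at most once on $]\tau,\tau+\pi[$ since $0<\sigma^{(*)}\leq 1$ (your count of multiples of $\pi$ in $(\alpha,\alpha+\sigma^{(*)}\pi)$ is just the paper's ``at most one zero of the numerator'' after the change of variable), and that the existence of such a sign change is exactly condition \eqref{26maggio2023-7}, hence equivalent to monotonicity by Lemma \ref{26maggio2023-9}. Your explicit parity bookkeeping and the remark on the endpoint case $\sigma^{(*)}=1$ are fine but do not alter the argument.
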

\begin{proof}
The denominator of \eqref{26maggio2023-4} 
is positive and vanishes as $\theta\to \tau,\tau+\pi$. By \eqref{26maggio2023-3}, the numerator does not vanish for  $\theta\to \tau,\tau+\pi$. Therefore, 
$$ 
\lim_{\theta\to \tau_+}|f_*(\theta)|=\lim_{\theta\to \tau_-+\pi} |f_*(\theta)|=+\infty.
$$
Since $0<\sigma^{(*)}\leq 1$, there can only be   one zero  $\theta=\tau_* +k \pi/\sigma^{(*)}$   of the numerator  lying in $]\tau,\tau+\pi[$,  for a suitable $k\in\mathbb\Z$. If follows form Lemma \ref{26maggio2023-9} that there exists  such a zero if and only if $f_*(\theta)$ is monotonic. In this case $ \lim_{\theta\to \tau_+}f_*(\theta)$ and $\lim_{\theta\to \tau_-+\pi} f_*(\theta)$  have different signs. Otherwise, 
 $f_*$ has no zero. By Lemma \ref{26maggio2023-9} it is  not monotonic, and therefore both limits have the same sign. 
\end{proof}

Theorem \ref{5giugno2023-3} now follows now from Theorem \ref{mainth} and the following result.
\begin{prop} 
  The  conditions  ($L$-1), ($L$-2) for  \eqref{diffeq} with matrix \eqref{2giugno2023-1} hold on an open interval $I$  if and only if there exists an adequate\footnote{The term is used here in the sense  of Definition \ref{18settembre2023-3}} $\bm k\in\Z^\mu$ such that $\Si_{\bm k}=I-\pi/2$.    
  
\end{prop}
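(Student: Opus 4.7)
The plan is to reduce the $L$-condition to the monotonicity analysis of the trigonometric functions appearing in Lemmas~\ref{26maggio2023-9} and \ref{26maggio2023-12}, pair by pair.

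First, if $i,j$ belong to the same block in the decomposition \eqref{21giugno2023-2}, then $\Lambda_{ii}\equiv\Lambda_{jj}$ and the integrand of \eqref{14settembre2023-1}--\eqref{14settembre2023-2} vanishes, so $(L$-2$)$ holds with $K_2=0$. Hence it suffices to fix a pair of distinct block indices $I\neq J$ and analyze $g_{IJ}(z):=\mathrm{Re}\bigl(q_I(z)-q_J(z)\bigr)$, since the integrand telescopes to $(q_I(z)-q_J(z))-(q_I(w)-q_J(w))$. On a line $\ell_{\phi,b}$ parametrized by $\theta=\arg z\in(\tau,\tau+\pi)$ with $\tau=\phi-\pi/2$, the identity recalled at the start of Section~\ref{sec33} gives
\[
g_{IJ}\bigl|_{\ell_{\phi,b}}(\theta) \;=\; \sum_{k\,:\,\lambda_I^{(k)}\neq\lambda_J^{(k)}} \bigl|\lambda_I^{(k)}-\lambda_J^{(k)}\bigr|\,b^{\sigma_k}\,f_{\sigma_k,\eta_k}(\theta),
\]
with $f_{\sigma,\eta}(\theta)=\cos(\sigma\theta+\eta)/\sin^\sigma(\theta-\tau)$ and $\eta_k=\arg(\lambda_I^{(k)}-\lambda_J^{(k)})$. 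The index $k=k_{IJ}$ contributes the dominant term both as $b\to+\infty$ and near the angular endpoints $\theta\to\tau^+,\,(\tau+\pi)^-$, since the factor $(\theta-\tau)^{-\sigma}$ is larger for larger $\sigma$.

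Next, I would establish that the $L$-dichotomy is controlled by the endpoint behavior. The quantities \eqref{14settembre2023-1}--\eqref{14settembre2023-2} satisfy $\mathfrak{S}^{ij}_{J,a}<+\infty$ precisely when $g_{IJ}|_{\ell_{\phi,b}}$ does not blow up to $+\infty$ at both endpoints of $(\tau,\tau+\pi)$ for some $(\phi,b)\in J\times[a,+\infty)$ (otherwise take $z,w$ near the two endpoints to get an unbounded positive increment), and symmetrically $\mathfrak{I}^{ij}_{J,a}>-\infty$ forbids a double $-\infty$ limit. Thanks to the leading-term dominance, the signs of the endpoint limits of $g_{IJ}$ coincide with those of $f_{\sigma_{k_{IJ}},\arg\lambda_{IJ}}$; the lower-order terms merely perturb the function on bounded $(b,\theta)$-regions away from the endpoints, affecting only the constants $K_1,K_2$. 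By Lemma~\ref{26maggio2023-12}, the endpoint limits of $f_{\sigma_{k_{IJ}},\arg\lambda_{IJ}}$ have opposite signs exactly when this function is monotonic on $(\tau,\tau+\pi)$; in that case one of $(L$-1$)$, $(L$-2$)$ holds uniformly for $\phi\in J$, $b\geq a$, according to whether the monotonicity is in agreement or in opposition to the natural order on $\ell_{\phi,b}$, while in the non-monotonic case both conditions fail.

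Finally, by Lemma~\ref{26maggio2023-9}, monotonicity of $f_{\sigma_{k_{IJ}},\arg\lambda_{IJ}}$ on $(\tau,\tau+\pi)$ is equivalent to the existence of $k\in\mathbb{Z}$ with $\tau<\tau_{IJ}+k\pi/\sigma_{k_{IJ}}<\tau+\pi$. Through the labeling \eqref{sigmarho}, each unordered pair of distinct block indices $\{I,J\}$ corresponds to a unique label $\rho\in\{0,\ldots,\mu-1\}$ with $(\tau_{IJ},\sigma_{k_{IJ}})=(\tau_\rho,\sigma^{(\rho)})$, the pair $(J,I)$ producing the same condition after shifting $k$ by one, since $\tau_{JI}=\tau_{IJ}-\pi/\sigma_{k_{IJ}}$. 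Imposing the monotonicity condition for every $\rho$ simultaneously, and uniformly for $\tau$ in a closed subinterval of $I-\pi/2$, forces each integer $k_\rho$ to be locally (hence globally) constant on the connected set $I-\pi/2$ and yields
\[
I-\tfrac{\pi}{2}\;\subseteq\;\bigcap_{\rho=0}^{\mu-1}\Bigl(\tau_\rho+\tfrac{k_\rho\pi}{\sigma^{(\rho)}}-\pi,\;\tau_\rho+\tfrac{k_\rho\pi}{\sigma^{(\rho)}}\Bigr)=\Sigma_{\boldsymbol{k}}.
\]
Conversely, any $\boldsymbol{k}$ satisfying the above inclusion makes all the monotonicity conditions hold on $I$, so the $L$-condition is verified. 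Maximality of $I$ with respect to the $L$-condition then gives the equality $I-\pi/2=\Sigma_{\boldsymbol{k}}$, proving both implications.

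The main obstacle I anticipate is the uniform control of the lower-order correction terms $b^{\sigma_k}$ with $\sigma_k<\sigma_{k_{IJ}}$ when $b$ ranges over all of $[a,+\infty)$ rather than only $b$ large. One must justify that although such corrections shift $g_{IJ}$ on any compact $(b,\theta)$-region, they can neither create nor destroy an infinite endpoint limit (which is governed solely by the highest power of $1/\sin(\theta-\tau)$ present in the sum), so that the finite/infinite dichotomy of the sup/inf in \eqref{14settembre2023-1}--\eqref{14settembre2023-2} rests entirely on the leading trigonometric factor $f_{\sigma_{k_{IJ}},\arg\lambda_{IJ}}$, as used throughout the argument.
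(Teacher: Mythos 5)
Your overall route is the paper's: discard same-block pairs, restrict the increments to the lines $\ell_{\phi,b}$, isolate the leading trigonometric factor, and invoke Lemmas \ref{26maggio2023-9} and \ref{26maggio2023-12} to convert simultaneous monotonicity for all $\rho$ into non-emptiness of $\Si_{\bm k}$, with the labelling \eqref{sigmarho} and a connectedness argument giving $\Si_{\bm k}=I-\pi/2$. The genuine problem is the step where you tie the $L$-dichotomy to endpoint behaviour, which is exactly the bridge the proof needs. Your biconditional ``$\mathfrak{S}^{ij}_{J,a}<+\infty$ precisely when $g_{IJ}|_{\ell_{\phi,b}}$ does not blow up to $+\infty$ at both endpoints'' (and its mirror for $\mathfrak{I}^{ij}_{J,a}$) is false. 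With the paper's orientation, $w\leq z$ on $\ell_{\phi,b}$ means $\arg w\geq \arg z$, so $\mathfrak{S}^{ij}_{J,a}=+\infty$ as soon as $g_{IJ}\to+\infty$ at the forward end ($\arg z\to\tau^+$) or $g_{IJ}\to-\infty$ at the backward end; this happens in one of the two monotone cases, namely the one in which \eqref{l'2} holds (there the increments taken along the order are bounded below while their supremum is infinite). Dually, in the other monotone case \eqref{l'1} holds precisely because $\mathfrak{I}^{ij}_{J,a}=-\infty$ even though there is no ``double $-\infty$'' limit; under your criterion $\mathfrak{I}^{ij}_{J,a}=-\infty$ could never occur with opposite-sign endpoint limits, so \eqref{l'1} could never be verified, contradicting your own next sentence. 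The correct bridge -- the one the paper uses -- is not the sign pattern of the two endpoint limits by itself, but the sign of the \emph{ordered} increments: when $f_\rho$ is monotonic, $f_\rho(\theta)-f_\rho(\tilde\theta)$ has constant sign for $\theta\leq\tilde\theta$, hence is bounded on one side and unbounded on the other (giving \eqref{l'1} or \eqref{l'2} according to the direction of monotonicity relative to the orientation), whereas same-sign infinite limits force both the supremum and the infimum to be infinite, so neither condition holds.

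The same defect propagates to your closing paragraph: what must be controlled uniformly in $b\geq a$, $\phi\in J$ in the monotone case is a finite one-sided bound $K_1$ or $K_2$ on the increments of the full sum $g_{IJ}$ (leading term plus corrections), not merely the statement that the infinite endpoint limits are governed by the leading term. Knowing which endpoint limits are infinite does not by itself yield such a uniform one-sided bound, since the lower-order terms carry the unbounded factors $b^{\sigma_k}$; the paper absorbs them through the uniform asymptotics \eqref{26maggio2023-2} before passing to $f_\rho(\theta)-f_\rho(\tilde\theta)$, and your argument needs an analogous absorption (e.g.\ mean-value control when $z,w$ are close and domination by the leading increment when they are far) rather than the endpoint-limit criterion. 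The remaining ingredients of your proposal -- the same-block reduction, the $(I,J)$ versus $(J,I)$ shift of $k$ by one, constancy of $k_\rho$ on the connected set $I-\pi/2$, and the maximality remark giving equality $\Si_{\bm k}=I-\pi/2$ -- are correct and in places more explicit than the paper.
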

\begin{proof} 
The $L$-condition involves integrals 
$ \on{Re}\int_w^z (\Lambda_{\alpha\alpha}(\zeta)-\Lambda_{\beta\beta}(\zeta))d\zeta$, $1\leq \alpha,\beta\leq n$ and $\al\neq \bt$, 
whose computation reduces to 
$$ 
\on{Re}(q_i(z)-q_j(z))-\on{Re}(q_i(w)-q_j(w))=(\on{Re}(\lambda_{ij} z^{\sigma_{k_{ij}}})-\on{Re}(\lambda_{ij} w^{\sigma_{k_{ij}}}))(1+o(1)).
$$
along a line $\ell_{b,\phi}$, where $o(1)$ vanishes for $|z|$ and $|w|\to \infty$, when $\tau=\phi-\pi/2$ satisfies \eqref{26maggio2023-3} with $*=0,1,\dots,\mu-1$. 
The study of the above expressions reduces to that of 
$$ 
 f_\rho(\theta)-f_\rho(\widetilde{\theta}),\quad \rho=0,\dots,\mu-1,
$$
where 
$$
f_\rho:=
\frac{\sin(\sigma^{(\rho)}(\theta-\tau_\rho))}{\sin^{\sigma^{(\rho)}}(\theta-\tau)}
$$
 is as in Lemma \ref{26maggio2023-9}  for $*=\rho$, and $\theta=\arg z$, $\widetilde{\theta}=\arg w$ along  $\ell_{b,\phi}$.  
Only two cases may occur by Lemma \ref{26maggio2023-12}. Namely, when $f_\rho$ is monotonic,  then $f_\rho(\theta)-f_\rho(\widetilde{\theta})$  has constant sign and   is either  bounded from above with $\on{inf}=-\infty$, or  bounded from below with $\on{sup}=+\infty$.  
When $f_\rho$ is not monotonic,  and has infinite limits at $\tau$ and $\tau+\pi$ with the same sign,  then  $f_\rho(\theta)-f_\rho(\widetilde{\theta})$ is unbounded and $\on{inf}=-\infty$, $\on{sup}=+\infty$.  Thus, by Lemma \ref{26maggio2023-9}, the  $L$-condition holds only if and only if  the conditions \eqref{26maggio2023-7} hold for all $\rho=0,\dots,\mu-1$.  \end{proof}

\subsection{Subdominant solutions} \label{sec34}
In the notations of Section \ref{secstok}, the block diagonal partition \eqref{21giugno2023-2} of $\La(z)$ induces a partition of the set of indexes 
\[\{1,\dots,n\}=\bigcup_{j=1}^\ell A_j,\quad A_j=\{s_1+\dots+s_{j-1}+1,\,\,\dots,\,\, s_1+\dots+s_{j-1}+s_j\}.
\]
If an index $i\in\{1,\dots, n\}$ is subdominant on $I$, as in Definition \ref{21giugno2023-1}, and if $i\in A_{j_o}$, then all the elements of $A_{j_o}$ are subdominant on $I$. In total we have $s_{j_o}$ subdominant vector solutions.
 
 \begin{defn}
  We say that the block index $j_o\in\{1,\dots,\ell\}$ is subdominant on $I$ if  the elements of the set $A_{j_o}$ are subdominant on $I$.   \end{defn}
  
Fix a $j_o\in\{1,\dots,\ell\}$. Then, for all $i\in\{1,\dots,\ell\}\setminus\{j_o\} $ and $ \arg\lambda_{ij_o}\in]\eta-2\pi,\eta[$, 
we consider the directions of Stokes rays given by 
  $$
  \tau_{ij_0}=\frac{1}{\sigma_{k_{ij_o}}}\left(\frac{3\pi}{2}-\arg\lambda_{ij_o}\right).
    $$
  We label them as
  $$ 
  \tau^{[0]}\leq \dots \leq \tau^{[\mu_o-1]},
  $$
  with the same rules as for \eqref{sigmarho}.  It is important to notice that here,  differently form  \eqref{sigmarho}, the lower bound for the determinations of $\arg\lambda_{ij_o}$ is $\eta-2\pi$.   We denote as  $\sigma^{[\beta]}$ the exponent $\sigma_{k_{ij_o}}$ appearing in  $\tau^{[\beta]}$.
  
  The analogous of Proposition \ref{16novembre2024-2}  holds. 
  
  \begin{prop}
  \label{16novembre2024-3}
The directions of all  Stokes rays associated with the pairs $(i,j_o)$, with $i\in\{1,\dots,\ell\}\setminus\{j_o\}$, are equal to 
$$
\arg z= \tau^{[\beta]}+\frac{2k\pi}{\si^{[\beta]}},\quad k\in\Z,\quad \beta=0,\dots,\mu_o-1.
$$
\end{prop}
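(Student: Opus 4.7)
The proof would closely mirror that of Proposition \ref{16novembre2024-2}. The plan is to start from the explicit formula \eqref{1ottobre2024-1} (together with Definition \ref{9giugno2023-2}) which says that, for a \emph{single} pair $(i,j_o)$ with $i\in\{1,\dots,\ell\}\setminus\{j_o\}$, the associated Stokes rays have directions
\[
\arg z \;=\; \tau_{ij_o} + \frac{2\pi m}{\sigma_{k_{ij_o}}},\qquad m\in\mathbb{Z},
\]
where $\tau_{ij_o} = \sigma_{k_{ij_o}}^{-1}\bigl(3\pi/2 - \arg\lambda_{ij_o}\bigr)$ depends on the chosen determination of $\arg\lambda_{ij_o}$. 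The whole content of the proposition is then to re-express the union of these countable families, as $i$ ranges over $\{1,\dots,\ell\}\setminus\{j_o\}$, in the normalized form stated in the proposition.

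The key difference with Proposition \ref{16novembre2024-2} is that, there, one treats simultaneously pairs $(i,j)$ and $(j,i)$, and the identity $\lambda_{ji}=-\lambda_{ij}$ effectively enlarges the set of determinations by $\pi\mathbb{Z}$, producing the step $\pi/\sigma^{(\rho)}$ in \eqref{29maggio2023-2-1}. Here we only consider pairs $(i,j_o)$, so this $\pi$-shift symmetry is absent: each $\lambda_{ij_o}$ (for fixed $j_o$ and $i\neq j_o$) admits a unique determination of its argument in the half-open interval $]\eta-2\pi,\eta[$ of length $2\pi$. I would point out that shifting the chosen determination by $+2\pi$ shifts $\tau_{ij_o}$ by $-2\pi/\sigma_{k_{ij_o}}$, which coincides with the intrinsic period of the Stokes directions for the pair $(i,j_o)$; hence selecting $\arg\lambda_{ij_o}\in\,]\eta-2\pi,\eta[$ merely picks one canonical representative of $\tau_{ij_o}$ modulo $2\pi/\sigma_{k_{ij_o}}$, without losing any ray.

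Next I would collect the representatives $\{\tau_{ij_o}\}_{i\neq j_o}$ and sort them in non-decreasing order $\tau^{[0]}\leq\dots\leq\tau^{[\mu_o-1]}$ exactly as in the labelling rules (1)--(3) above \eqref{sigmarho}, remembering to duplicate labels in the manner of rule (3) whenever two distinct pairs $(i,j_o)$ and $(c,j_o)$ satisfy $\tau_{ij_o}=\tau_{cj_o}$ but $\sigma_{k_{ij_o}}\neq\sigma_{k_{cj_o}}$ — this is the subtle point that also required rule (3) in the main text, and is the only place where careful bookkeeping is necessary. Once the labelling is fixed, I would set $\sigma^{[\beta]}:=\sigma_{k_{ij_o}}$ whenever $\tau^{[\beta]}=\tau_{ij_o}$, and combine the periodic families indexed by the various pairs into
\[
\bigcup_{i\neq j_o}\Bigl\{\tau_{ij_o}+\tfrac{2\pi m}{\sigma_{k_{ij_o}}}\Bigr\}_{m\in\mathbb{Z}}
\;=\;
\bigcup_{\beta=0}^{\mu_o-1}\Bigl\{\tau^{[\beta]}+\tfrac{2\pi k}{\sigma^{[\beta]}}\Bigr\}_{k\in\mathbb{Z}},
\]
which is the asserted formula. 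The main potential obstacle is exactly the duplicate-labelling subtlety (rule (3)); everything else is a straightforward book-keeping argument identical in structure to the proof of Proposition \ref{16novembre2024-2}, with $\pi/\sigma^{(\rho)}$ systematically replaced by $2\pi/\sigma^{[\beta]}$ because we now range over $]\eta-2\pi,\eta[$ rather than $]\eta-\pi,\eta[$.
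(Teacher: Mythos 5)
Your proposal is correct and follows essentially the same route as the paper, which proves the statement by direct analogy with Proposition \ref{16novembre2024-2}: since only the pairs $(i,j_o)$ (and not $(j_o,i)$) occur, the determinations of $\arg\lambda_{ij_o}$ are fixed modulo $2\pi$, a $2\pi$-shift of the determination shifts $\tau_{ij_o}$ by exactly the period $2\pi/\sigma_{k_{ij_o}}$ of \eqref{1ottobre2024-1}, and the labelling over $]\eta-2\pi,\eta[$ then yields the stated union. Your remark about rule (3) and the duplicate labels is the same bookkeeping point the paper relies on; the only trivial slip is calling the open interval $]\eta-2\pi,\eta[$ ``half-open'', which is harmless since $\eta$ is chosen with $\eta\neq\arg\lambda_{ij}\bmod\pi$.
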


In the generic case (see Definition \ref{19giugno2023-1}), all $\sigma_{k_{ij_0}}=1$ and the above description simplifies as follows.  The Stokes rays have directions labelled as in \eqref{29maggio2023-2}. In particular, we consider the consecutive  directions
    $$
    \tau_0<\dots<\tau_{\mu-1}<\tau_\mu<\dots<\tau_{2\mu-1},
    $$
    which  correspond to  all the determinations $\arg \lambda_{ij} \in ]\eta-2\pi,\eta[$, for   $1\leq i\neq j\leq \ell$.\footnote{If we denote the determinations $\arg \lambda_{ij} \in ]\eta-2\pi,\eta[$  as  $\eta_0,\dots,\eta_{2\mu-1}$ with ordering 
$$ 
\eta-2\pi<~\eta_{2\mu-1}<\dots<\eta_\mu ~<\eta-\pi<~\eta_{\mu-1}<\dots<\eta_0 ~<\eta,
$$
then 
$ 
\tau_\rho=\frac{3\pi}{2}-\eta_\rho$, with $\rho=0,1,\dots,2\mu-1$.
}
    The directions $\tau^{[\beta]}$ defined above form  a subset $\{ \tau_{\rho_0},\dots,\tau_{\rho_{\mu_o-1}}\}$ of $  \{\tau_0,\dots,\tau_{\mu-1},\tau_\mu,\dots,\tau_{2\mu-1}\}$, with ordering 
 $$ \tau_{\rho_0}<\dots<\tau_{\rho_{\mu_o-1}}
.
$$
 Proposition \ref{16novembre2024-3} has an immediate corollary.
  \begin{cor}
    The directions of all the Stokes rays associated with the pairs  $(i,j_o)$, where $i\in\{1,\dots,\ell\}\setminus\{j_o\}$, are equal to 
\beq
\arg z= \tau_{\rho_\gm}+2k\pi,\quad k\in\Z,\quad \gm=0,\dots,\mu_o-1.
\eeq
\end{cor}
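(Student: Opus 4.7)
The plan is to derive this corollary as an immediate specialization of Proposition \ref{16novembre2024-3} to the generic situation. First, I would unpack the genericity hypothesis (Definition \ref{19giugno2023-1}): it asserts $\lambda_i^{(0)} \neq \lambda_j^{(0)}$ for all $i \neq j$ in $\{1,\dots,\ell\}$. Consequently, for every pair $(i, j_o)$ with $i \neq j_o$ one has $k_{ij_o} = 0$, and therefore $\sigma_{k_{ij_o}} = \sigma_0 = 1$. Feeding this into the construction that produces $\sigma^{[\beta]}$ from the exponents $\sigma_{k_{ij_o}}$ yields $\sigma^{[\beta]} = 1$ for all $\beta = 0, \dots, \mu_o - 1$.

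Second, I would substitute $\sigma^{[\beta]} = 1$ into the formula furnished by Proposition \ref{16novembre2024-3}, so that the Stokes directions associated to pairs $(i, j_o)$ become $\arg z = \tau^{[\beta]} + 2k\pi$, $k \in \mathbb{Z}$. It then remains to rewrite the values $\tau^{[\beta]}$ in terms of the $\tau_\nu$-notation introduced in \eqref{sigmarho}--\eqref{29maggio2023-2}. This is precisely the identification made in the paragraph preceding the corollary: because all $\sigma$-exponents equal $1$, the full set of directions $\tau_{ij}$ coming from determinations $\arg\lambda_{ij} \in \,]\eta - 2\pi, \eta[$ is exactly $\{\tau_0, \dots, \tau_{2\mu-1}\}$ (the $2\pi$-wide window in $\arg\lambda$ doubles the count of a $\pi$-wide window, matching the $\pi$-periodicity of \eqref{29maggio2023-2}). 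Restricting to those directions with second index equal to $j_o$ picks out the subset $\{\tau_{\rho_0}, \dots, \tau_{\rho_{\mu_o-1}}\}$, giving the stated conclusion.

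I do not anticipate any substantive obstacle here: the corollary is essentially notational, repackaging the general formula of Proposition \ref{16novembre2024-3} under the simplification $\sigma^{[\beta]} \equiv 1$. The only point worth a careful check is that the window $]\eta-2\pi, \eta[$ used to enumerate $\tau^{[\beta]}$ is compatible with the $]\eta-\pi, \eta[$ window used in \eqref{9giugno2023-4} for the $\tau_\rho$ themselves; this compatibility is what produces precisely $2\mu$ consecutive directions $\tau_0 < \dots < \tau_{2\mu-1}$ and pins down the indices $\rho_0, \dots, \rho_{\mu_o-1}$ unambiguously.
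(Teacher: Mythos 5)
Your proposal is correct and follows exactly the route the paper intends: the corollary is presented as an immediate consequence of Proposition \ref{16novembre2024-3}, obtained by setting $\sigma^{[\beta]}=1$ (since genericity forces $k_{ij_o}=0$) and identifying the $\tau^{[\beta]}$ with the subset $\{\tau_{\rho_0},\dots,\tau_{\rho_{\mu_o-1}}\}$ via the labelling discussion preceding the statement. Your extra check on the compatibility of the $]\eta-2\pi,\eta[$ window with the $]\eta-\pi,\eta[$ window matches the paper's footnote and raises no issue.
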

   
   For simplicity, we state a theorem on subdominat solutions only in the generic case
  \begin{thm}
  \label{22giugno2023-5}
  Consider  the  differential equation \eqref{diffeq}
with generic $\Lambda(z)$,   with partition \eqref{21giugno2023-2}, and with a perturbation term $R\in \hol(\Tilde{\C}_{a};\gl(n,\C))$ satisfying the good decay condition on any open interval $I$.  Fix $j_o\in \{1,\dots,\ell\}$, and let $\tau_{\rho_0}<\tau_{\rho_1}<\dots<\tau_{\rho_{\mu_o-1}}$, be  $\mu_o$ consecutive directions of the Stokes rays associated with  the pairs $(i,j_o)$ for any  $i\in \{1,\dots,\ell\}$,  $i\neq j_o$.  
If 
$$\tau_{\rho_{\mu_o-1}}-\tau_{\rho_0}<\pi,
$$
 then, 
for any $k\in \Z$, there exist $s_{j_o}$ %
vector solutions 
$$y^{(k)}_{\alpha}\in\mathscr O(\Tilde{\C}_a;\C^n), \quad\quad
\alpha=s_1+\dots+s_{j_o-1}+m,\quad m=1,\dots,s_{j_o},
$$
 each of which is uniquely characterized by the asymptotic behaviour 
  $$
y^{(k)}_\alpha(z)=  (e_\alpha+o(1))\exp\left(\sum_{q=1}^h \lambda_{j_o}^{(q)}z^{\sigma_q}\right),
  $$
  for $z\to \infty$ in any closed subsector of the sector %
  $$ 
{S}_k^{(j_o)}:= \Bigl\{z\in \Tilde{\C}_a~\Bigl|~     \tau_{\rho_{\mu_o-1}}-\pi<\arg z-2k\pi< \tau_{\rho_0}+2\pi \Bigr\}.
  $$
  \end{thm}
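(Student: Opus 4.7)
The plan is to deduce Theorem \ref{22giugno2023-5} from Theorem \ref{subth}, applied separately to each index $\alpha\in A_{j_o}:=\{s_1+\cdots+s_{j_o-1}+1,\dots,s_1+\cdots+s_{j_o}\}$. For fixed $k\in\Z$ I will take $\phi_{\min}:=\tau_{\rho_{\mu_o-1}}+\tfrac{\pi}{2}+2k\pi$ and $\phi_{\max}:=\tau_{\rho_0}+\tfrac{3\pi}{2}+2k\pi$; the hypothesis $\tau_{\rho_{\mu_o-1}}-\tau_{\rho_0}<\pi$ makes the interval $I:=\,]\phi_{\min},\phi_{\max}[$ non-empty, and setting $\widetilde{I}:=\,]\phi_{\min}-\pi,\phi_{\max}[$ a direct check shows that the domain $\H_{\widetilde{I},a}$ has $\arg z$ ranging precisely over $]\phi_{\min}-\tfrac{3\pi}{2},\phi_{\max}+\tfrac{\pi}{2}[=\,]\tau_{\rho_{\mu_o-1}}-\pi+2k\pi,\tau_{\rho_0}+2\pi+2k\pi[$, which is the sector $S_k^{(j_o)}$.

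The key step is verifying that every $\alpha\in A_{j_o}$ is a subdominant index on $I$ for $\Lambda$ in the sense of Definition \ref{21giugno2023-1}. For $i\in\{1,\dots,n\}$ with block index $c(i)\neq j_o$, the generic hypothesis gives $\lambda_{c(i),j_o}=\lambda_{c(i)}^{(0)}-\lambda_{j_o}^{(0)}\neq 0$ and
\[
\Lambda_{ii}(t)-\Lambda_{\alpha\alpha}(t)=\lambda_{c(i),j_o}+\sum_{q\geq 1}\sigma_q(\lambda_{c(i)}^{(q)}-\lambda_{j_o}^{(q)})\,t^{\sigma_q-1}.
\]
Parameterizing $\iota^{\phi}_{a,b}$ by arclength $s$ in the tangent direction $\phi-\tfrac{\pi}{2}$ and using $\arg\lambda_{c(i),j_o}=\tfrac{3\pi}{2}-\tau_{c(i),j_o}$, the leading contribution to $\on{Re}\int_w^z\lambda_{c(i),j_o}\,dt$ equals $-|\lambda_{c(i),j_o}|(s_z-s_w)\cos(\phi-\tau_{c(i),j_o})$; the $\sigma_q<1$ tail produces a uniformly bounded remainder, since each $t^{\sigma_q}$ grows strictly slower than $t$ along $\iota^{\phi}_{a,b}$. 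The crucial algebraic point is that $\tau_{\rho_0},\dots,\tau_{\rho_{\mu_o-1}}$ exhaust one full $2\pi$-period of the Stokes directions associated with the pairs $(\cdot,j_o)$, so for every $i$ with $c(i)\neq j_o$ there is a unique $m\in\Z$ with $\tau_{c(i),j_o}+2m\pi\in[\tau_{\rho_0},\tau_{\rho_{\mu_o-1}}]$; combined with $\phi\in I$, this forces $\phi-(\tau_{c(i),j_o}+2(m+k)\pi)\in\,]\tfrac{\pi}{2},\tfrac{3\pi}{2}[$, hence $\cos(\phi-\tau_{c(i),j_o})<0$, and the required lower bound follows.

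The S-decay condition of Definition \ref{21giugno2023-1-bis} for $R$ follows from its good decay condition via an explicit parameterization of $\iota^{\phi}_{a,b}$ from \eqref{22giugno2023-1}, whereby its $L^1$-norm is reduced to an $L^1$-norm along some auxiliary line $\ell_{\phi',b'}$ inside $\H_{I,a}$, analogously to the reduction used in Section \ref{sechja}. Applying Theorem \ref{subth} (and, for the decay estimate, Corollary \ref{subcor} when $R=O(z^{-1-\delta})$) to each $\alpha\in A_{j_o}$ then yields the unique vector solution $y_\alpha^{(k)}$ on $\H_{\widetilde{I},a}=S_k^{(j_o)}$ with asymptotics $(e_\alpha+o(1))\exp\bigl(\int_{z_o}^z\Lambda_{\alpha\alpha}(w)\,dw\bigr)$, which matches the stated form once one observes that $\Lambda_{\alpha\alpha}(w)=\sum_{q=0}^h\sigma_q\lambda_{j_o}^{(q)}w^{\sigma_q-1}$ integrates to $\sum_q\lambda_{j_o}^{(q)}z^{\sigma_q}$ up to constants absorbed in the normalization of $e_\alpha$.

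The main obstacle I expect is the bookkeeping linking the "$\mu_o$ consecutive Stokes directions" structure to a representative-in-interval property for each individual pair $(i,j_o)$: this correspondence is exactly what converts the hypothesis $\tau_{\rho_{\mu_o-1}}-\tau_{\rho_0}<\pi$ into the cosine-sign control that powers the subdominance estimate, and one must be careful about which determinations of $\arg\lambda_{c(i),j_o}$ are selected on each sheet of $\widetilde{\C^*}$ indexed by $k$.
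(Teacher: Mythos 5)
Your proposal is correct and follows essentially the same route as the paper: it reduces the statement to Theorem \ref{subth} by checking that the hypothesis $\tau_{\rho_{\mu_o-1}}-\tau_{\rho_0}<\pi$ makes $j_o$ a subdominant index on the intervals $I_k$ (this is precisely the content of Lemmas \ref{lemm1} and \ref{21giugno2023-6}), and your cosine-sign computation along the curves $\iota^{\phi}_{a,b}$ is exactly the generic-case ($\sigma_{k_{ij_o}}=1$) instance of the paper's monotonicity argument. The only imprecision is the claim that the $\sigma_q<1$ tail gives a \emph{uniformly bounded} remainder: it is unbounded but of lower order, and is absorbed as in \eqref{26maggio2023-2} (uniformly on closed subintervals of directions, possibly after enlarging $a$), which is also how the paper treats it.
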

  
  The proof is based on Theorem \ref{subth} and the following lemmas. 
  
  \begin{lem}\label{lemm1}
  The index $j_o\in\{1,\dots,\ell\}$ is subdominant on an interval $I$ %
  if and only if there exists $\boldsymbol{k}:=(k_0,\dots,k_{\mu_o-1})\in \Z^{\mu_o}$ such that 
  \begin{multline} \label{21giugno2023-5}
  \left(
  \bigcap_{\beta=0}^{\mu_o-1}\,\,\left]
  \tau^{[\beta]}+\frac{2k_\beta\pi}{\sigma^{[\beta]}};\,\,\tau^{[\beta]}+\frac{2k_\beta\pi}{\sigma^{[\beta]}}+\pi\right[\right)	\,\,\cap\,\, \left( \bigcap_{i\neq j_o}\,\,\left] \tau^{[\beta]}+\frac{(2k_\beta+1)\pi}{\sigma^{[\beta]}}-\pi;\,\, \tau^{[\beta]}+\frac{(2k_\beta+1)\pi}{\sigma^{[\beta]}}\right[\right)\\=I-\frac{\pi}{2}.
  \end{multline} 
  \end{lem}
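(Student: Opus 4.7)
The plan is to translate Definition \ref{21giugno2023-1} into asymptotic sign conditions on $\on{Re}(\lambda_{ij_o} z^{\sigma^{[\beta]}})$ along the curves $\iota^\varphi_{a,b}$ that foliate the cut-plane $\H_{[\varphi], a}$, and then to rewrite these conditions in the interval form \eqref{21giugno2023-5}. The strategy closely mirrors the proof of Theorem \ref{5giugno2023-3} in Section \ref{sec33}, but the analysis is now carried out on the $2\pi$-angular range of $\H_{[\varphi], a}$ instead of the $\pi$-angular range of a single half-plane, which forces one extra sign constraint (hence the two intervals in \eqref{21giugno2023-5}).

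As in Section \ref{sec33}, subdominance of $j_o$ reduces, via $q_i(z) - q_{j_o}(z) = \lambda_{ij_o} z^{\sigma^{[\beta]}}(1 + o(1))$ from Remark \ref{9giugno2023-2-bis} (with $\beta$ the label of the pair $(i,j_o)$), to the boundedness from below of $\on{Re}\bigl(\lambda_{ij_o}(z^{\sigma^{[\beta]}} - w^{\sigma^{[\beta]}})\bigr)$ for $w \leq z$ along each $\iota^\varphi_{a,b}$, uniformly in $\varphi \in J$ and $b \in \R$. Writing $\tau := \varphi - \pi/2$ and using $\arg\lambda_{ij_o} = 3\pi/2 - \sigma^{[\beta]}\tau^{[\beta]}$, a direct parametrisation gives
\[
\on{Re}(\lambda_{ij_o} z^{\sigma^{[\beta]}}) \;=\; |\lambda_{ij_o}|\,|b|^{\sigma^{[\beta]}}\,\frac{\sin\bigl(\sigma^{[\beta]}(\arg z - \tau^{[\beta]})\bigr)}{|\sin(\arg z - \tau)|^{\sigma^{[\beta]}}}.
\]
The full lines $|b| \geq a$ reach infinity in four distinct directions: $\arg z \to \tau^+$ and $\arg z \to (\tau + \pi)^-$ on the $b > 0$ side, and $\arg z \to \tau^-$ and $\arg z \to (\tau - \pi)^+$ on the $b < 0$ side (``future'' and ``past'' in the orientation direction $\tau$). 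Since the infimum over $w \leq z$ diverges to $-\infty$ whenever the past value $\on{Re}(\lambda_{ij_o} w^{\sigma^{[\beta]}}) \to +\infty$ or the future value $\on{Re}(\lambda_{ij_o} z^{\sigma^{[\beta]}}) \to -\infty$, subdominance forces the three sign conditions: (C1) $\sin(\sigma^{[\beta]}(\tau - \tau^{[\beta]})) > 0$, (C2) $\sin(\sigma^{[\beta]}(\tau + \pi - \tau^{[\beta]})) < 0$, and (C3) $\sin(\sigma^{[\beta]}(\tau - \pi - \tau^{[\beta]})) < 0$. Half-lines with $|b| < a$ reach infinity at only one end and impose only (C1), which is already required by the full lines. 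Conversely, since $\sigma^{[\beta]} \in (0, 1]$, Lemma \ref{26maggio2023-9} together with (C1)+(C2) (respectively (C1)+(C3)) forces the function $\theta \mapsto \sin(\sigma^{[\beta]}(\theta - \tau^{[\beta]}))/\sin^{\sigma^{[\beta]}}(\theta - \tau)$ to be monotonic on $(\tau, \tau + \pi)$ (respectively $(\tau - \pi, \tau)$), so $\on{Re}(\lambda_{ij_o} z^{\sigma^{[\beta]}})$ is monotonic along each oriented curve and subdominance genuinely holds.

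The last step is algebraic. For each fixed $\beta$, a direct intersection computation (exploiting $\pi/\sigma^{[\beta]} \geq \pi$) shows that (C1)$\cap$(C3), with the shift $k = k_\beta - 1$ in (C3), coincides with the first open interval $\bigl(\tau^{[\beta]} + 2k_\beta\pi/\sigma^{[\beta]},\, \tau^{[\beta]} + 2k_\beta\pi/\sigma^{[\beta]} + \pi\bigr)$ of \eqref{21giugno2023-5}, while (C1)$\cap$(C2), with the shift $l = k_\beta$ in (C2), coincides with the second open interval $\bigl(\tau^{[\beta]} + (2k_\beta+1)\pi/\sigma^{[\beta]} - \pi,\, \tau^{[\beta]} + (2k_\beta+1)\pi/\sigma^{[\beta]}\bigr)$; crucially, the same integer $k_\beta$ satisfies both. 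Intersecting over $\beta = 0, \dots, \mu_o - 1$ and imposing $\tau = \varphi - \pi/2 \in I - \pi/2$ then produces exactly the identity \eqref{21giugno2023-5}. The main obstacle is the sign bookkeeping at the four infinite ends, coupled with the consistency requirement that a single tuple $\boldsymbol{k} = (k_0, \dots, k_{\mu_o-1})$ fits both interval constraints uniformly in $\beta$; the monotonicity argument itself is straightforward, since $\sigma_k \leq 1$ places us squarely within the range of applicability of Lemma \ref{26maggio2023-9}.
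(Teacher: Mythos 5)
Your proposal is correct and takes essentially the same route as the paper: reduce to the sign behaviour of $\on{Re}\bigl(\lambda_{ij_o}z^{\sigma^{[\beta]}}\bigr)$ along the leaves $\iota^{\varphi}_{a,b}$, use Lemma \ref{26maggio2023-9} (and its mirror version on $\left]\tau-\pi,\tau\right[$, as in the paper's $g_*$ analysis) to get monotonicity along each leaf, and then carry out the interval bookkeeping that yields \eqref{21giugno2023-5}. The only cosmetic difference is that you first extract the endpoint-sign conditions (C1)--(C3) at the four infinite directions and upgrade them to monotonicity via the dichotomy of Lemma \ref{26maggio2023-12}, whereas the paper characterizes monotone decrease/increase directly through the sign of the derivative's numerator, arriving at \eqref{22giugno2023-6}, \eqref{22giugno2023-7}, \eqref{23giugno2023-1}; the resulting intersections, including the forced common $k_\beta$ (a consequence of $\pi/\sigma^{[\beta]}\geq\pi$), coincide.
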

  
  \begin{proof} The quantity $\int_w^z (\Lambda_{\alpha\alpha}-\Lambda_{\beta\beta}) dt$, with $\alpha,\beta\in\{1,\dots,n\}$,  in Definition   \ref{21giugno2023-1},  is replaced by 
\[
{\rm Re}(q_i(z)-q_{j_o}(z))-{\rm Re}(q_i(w)-q_{j_o}(w)),\quad 
\hbox{ where } z\geq w \hbox{ along }\iota_{b,\phi},\quad i\in\{1,\dots,\ell\}\backslash \{j_o\}.
\]
Suppose that  $\iota_{b,\phi}$ is not in the direction of a Stokes ray of either $(i,j_o)$ or $(j_o,i)$, so that \eqref{26maggio2023-2} holds. 
If $\phi-\frac{\pi}{2}<\arg z<\phi+\frac{\pi}{2}$ in \eqref{22giugno2023-1}, then  Lemmas \ref{26maggio2023-9} and \ref{26maggio2023-12} apply, and  
\beq
\label{21giugno2023-3}
{\rm Re}(\lambda_{ij_o} z^{\sigma_{ij_o}})-{\rm Re}(\lambda_{ij_o} w^{\sigma_{ij_o}})>0, \quad z>w,
\eeq
if and only if $f_{ij_o}$, defined as in \eqref{26maggio2023-4} by the relation
$$
f_{ij_o}(\theta):= \frac{\cos(\sigma_{ij_o}\theta+\eta_{ij_o})}{\sin^{\sigma_{ij_o}}(\theta-\tau)},\qquad \eta_{ij_o}:=\arg\la_{ij_o}\in]\eta-2\pi,\eta[,
$$
  is monotonically decreasing for $\tau<\theta<\tau+\pi$. This holds  because \eqref{21giugno2023-3} reduces to  $ 
 f_{ij_o}(\theta)-f_{ij_o}(\widetilde{\theta})>0$, with $ \theta<\widetilde{\theta}$ along  $\iota_{b,\phi}$, with  $\theta=\arg z$, $\widetilde{\theta}=\arg w$ and $w<z$. 
   Monotonic decrease of $f_{ij_o}(\theta)$ occurs  if and only if the cosine in the numerator of the analog of \eqref{22giugno2023-2} for $df_{ij_o}/d\theta$ is positive. In other words,  
 $k\pi$ is replaced by $(2k+1)\pi$ in \eqref{26maggio2023-7}, namely if and only if there exists $k\in\Z$ such that  
\beq
\label{22giugno2023-6}
  \tau<\tau_{ij_o}+\frac{(2k+1)\pi}{\sigma_{ij_o}}<\tau +\pi,\qquad \tau_{ij_o}:=
  \frac{1}{\sigma_{ij_o}}\left(\dfrac{3\pi}{2}-\eta_{ij_o}\right).
\eeq

Using as before  $ \tau=\phi-\pi/2$, if $\phi-\frac{3\pi}{2}<\arg z<\phi-\frac{\pi}{2}$ in  \eqref{22giugno2023-1}, we get that 
$b=|z|\cos(\phi-\arg z-\pi)$ along a line $\iota_{b,\phi}$. Hence, 
$$
\on{Re}(\lambda z^\sigma)=|\lambda||b|^\sigma \frac{\cos(\sigma\theta+\arg\lambda)}{\sin^\sigma(\tau-\theta)},\quad \quad  \tau-\pi<\theta=\arg z<\tau.
$$ 
The analogs of Lemmas \ref{26maggio2023-9} and \ref{26maggio2023-12} are proved by replacing \eqref{26maggio2023-4}  and \eqref{22giugno2023-2} respectively with 
\begin{align}
& g_*(\theta):= \frac{\cos(\sigma^{(*)}\theta+\eta_*)}{\sin^{\sigma^{(*)}}(\tau-\theta)},
\\
\label{22giugno2023-4}
& \frac{dg_*}{d\theta}
=
\sigma^{(*)}\frac{\cos((1-\sigma^{(*)})\theta^\prime+\sigma^{(*)}\tau+\eta_*)}{\sin^{\sigma^{(*)}+1}\theta^\prime},\quad \theta^\prime:=\tau-\theta,\quad 0<\theta^\prime<\pi.
\end{align}
Again, \eqref{21giugno2023-3} reduces to $ 
 g_{ij_o}(\theta)-g_{ij_o}(\widetilde{\theta})>0$, with $ \theta>\widetilde{\theta}$, where 
 $$
g_{ij_o}(\theta):= \frac{\cos(\sigma_{ij_o}\theta+\eta_{ij_o})}{\sin^{\sigma_{ij_o}}(\tau-\theta)}.
$$
Hence, $ g_{ij_o}$ must be monotonically increasing w.r.t. $\tau-\pi<\theta<\tau$. Namely, the cosine in the numerator  of $\dfrac{dg_{ij_o}}{d\theta}$ must be positive. The numerator of \eqref{22giugno2023-4} is positive if and only if for some $k\in\Z$,
$$ 
\frac{\pi}{2}+(2k+1)\pi<\sigma^{(*)}\tau+\eta^{(*)}<(1-\sigma^{(*)})\theta^\prime+\sigma^{(*)}\tau+\eta^{(*)}<(1-\sigma^{(*)})\pi+\sigma^{(*)}\tau+\eta^{(*)}<\frac{3\pi}{2}+(2k+1)\pi,
$$
that is 
$
\tau-\pi<\tau^{(*)}+\dfrac{2k\pi}{\sigma^{(*)}}<\tau
$. 
Hence,  $ g_{ij_o}$ is monotonically increasing in the interval $]\tau-\pi,\tau[$  if and only if there exists $k$ such that  
\beq
\label{22giugno2023-7}
\tau-\pi<\tau_{ij_o}+\frac{2k\pi}{\sigma_{ij_o}}<\tau.
\eeq

The last case is $\arg z=\tau$ ($=\phi-\pi/2$) in \eqref{22giugno2023-1}. In this case ${\rm Re}(\lambda z^\sigma)=|\lambda||z|^\sigma \cos(\sigma\tau+\arg \lambda)$. Then \eqref{21giugno2023-3} is 
$$ 
|\lambda_{ij_o}| \Bigl(|z|^{\sigma_{ij_o}}-|w|^{\sigma_{ij_o}}\Bigr)\cos(\sigma_{ij_o}\tau+\eta_{ij_o})>0, \quad |z|>|w|.
$$
The above is true if and only if for some $k\in \Z$,
\beq
\label{23giugno2023-1}
\tau_{ij_o}+\frac{2k\pi}{\sigma_{ijo}}<\tau<\tau_{ij_o}+\frac{(2k+1)\pi}{\sigma_{ijo}}.
\eeq
Now, apply \eqref{22giugno2023-6}, \eqref{22giugno2023-7} and \eqref{23giugno2023-1} to all pairs $(i,j_o)$.
  \end{proof}

  \begin{lem}
  \label{21giugno2023-6}
  If $\Lambda$ is generic as in  Definition \ref{19giugno2023-1}, then  $j_o$ is a subdominant index if and only if $\tau_{\rho_{\mu_o}-1}-\tau_{\rho_0}<\pi$.   In this case, $j_o$ is subdominant on the intervals
  $$ 
  I_{k}:=\Bigl\{\phi=\tau+\pi/2~\Bigr|~\tau_{\rho_{\mu_o-1}} <\tau-2k\pi< \tau_{\rho_0}+\pi\Bigr\}, \quad k\in \Z.
  $$ 
 
  \end{lem}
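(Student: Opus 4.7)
The plan is to deduce Lemma \ref{21giugno2023-6} directly from Lemma \ref{lemm1} by exploiting the simplifications that occur in the generic case. Since $\Lambda$ is generic, every $\sigma^{[\beta]}=1$, so the two intersections appearing in Lemma \ref{lemm1} collapse into a single condition, and subdominance of $j_o$ on an interval $I$ becomes equivalent to the existence of $\bm k=(k_0,\dots,k_{\mu_o-1})\in\Z^{\mu_o}$ such that
$$
\bigcap_{\beta=0}^{\mu_o-1}\,\bigl]\tau^{[\beta]}+2k_\beta\pi;\ \tau^{[\beta]}+2k_\beta\pi+\pi\bigr[\,=\,I-\pi/2.
$$

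For sufficiency, assume $\tau_{\rho_{\mu_o-1}}-\tau_{\rho_0}<\pi$ and fix $k\in\Z$. I would set $k_\beta=k$ uniformly for every $\beta$. Using the ordering $\tau_{\rho_0}\leq\cdots\leq\tau_{\rho_{\mu_o-1}}$, the intersection of $\mu_o$ open intervals of length $\pi$ simplifies to $]\tau_{\rho_{\mu_o-1}}+2k\pi;\ \tau_{\rho_0}+(2k+1)\pi[$, which is nonempty by hypothesis. Translating $\tau=\phi-\pi/2$ back to $\phi$ identifies this set with $I_k-\pi/2$, confirming that $j_o$ is subdominant on $I_k$.

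For necessity, if $j_o$ is subdominant then some $\bm k$ makes the above intersection nonempty, forcing the shifted values $\tilde\tau_\beta:=\tau^{[\beta]}+2k_\beta\pi$ to satisfy $\max_\beta\tilde\tau_\beta<\min_\beta\tilde\tau_\beta+\pi$. I would project to the circle $\R/2\pi\Z$ and observe that this is equivalent to the $\mu_o$ residues $\{\tau_{\rho_\beta}\bmod 2\pi\}$ fitting inside an open arc of length less than $\pi$, which in turn is equivalent to some cyclic gap between consecutive residues exceeding $\pi$. The paper's convention that the $\tau_{\rho_\beta}$ are taken as representatives of $\arg\lambda_{ij_o}$ in the window $]\eta-2\pi,\eta[$, with $\eta$ generic, places the wrap-around cyclic gap between $\tau_{\rho_{\mu_o-1}}$ and $\tau_{\rho_0}+2\pi$, of length $2\pi-(\tau_{\rho_{\mu_o-1}}-\tau_{\rho_0})$. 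This gap exceeds $\pi$ precisely when $\tau_{\rho_{\mu_o-1}}-\tau_{\rho_0}<\pi$.

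The main obstacle is the necessity direction, because a priori one might choose different $k_\beta$ for different $\beta$ to "reshuffle" the points cyclically and shrink the spread below $\pi$ even when the within-window spread $\tau_{\rho_{\mu_o-1}}-\tau_{\rho_0}$ is at least $\pi$. Resolving this requires identifying the wrap-around gap of the chosen window as the largest cyclic gap of $\{\tau_{\rho_\beta}\bmod 2\pi\}$; under the standing convention on $\eta$ and the enumeration of the relevant Stokes directions fixed in Section \ref{secstok}, this identification holds, so the two characterizations coincide and the stated equivalence follows.
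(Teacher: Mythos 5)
Your route is essentially the paper's: you specialize Lemma \ref{lemm1} to the generic case, where every $\sigma^{[\beta]}=1$, so \eqref{21giugno2023-5} collapses to $\bigcap_{\beta}\left]\tau_{\rho_\beta}+2k_\beta\pi;\,\tau_{\rho_\beta}+(2k_\beta+1)\pi\right[=I-\pi/2$, and your sufficiency argument (take all $k_\beta=k$, obtain $\left]\tau_{\rho_{\mu_o-1}}+2k\pi;\,\tau_{\rho_0}+(2k+1)\pi\right[=I_k-\pi/2$, non-empty precisely when $\tau_{\rho_{\mu_o-1}}-\tau_{\rho_0}<\pi$) is exactly the computation in the paper's proof.

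The necessity direction is where your argument has a gap, and it is precisely the point the paper itself passes over (its proof simply asserts that non-emptiness of the intersection forces $k_0=\dots=k_{\mu_o-1}$). Your reduction to the circle is correct: non-emptiness for some $\bm k\in\Z^{\mu_o}$ is equivalent to the residues $\tau_{\rho_\beta}\bmod 2\pi$ lying in an arc of length $<\pi$, equivalently to the existence of a cyclic gap exceeding $\pi$ (unique, since the gaps sum to $2\pi$). What is not justified is your closing claim that, ``under the standing convention on $\eta$'', the wrap-around gap of the window $\left]\eta-2\pi;\eta\right[$ is that largest gap. In the paper $\eta$ is fixed once and arbitrarily (only avoiding $\arg\lambda_{ij}\bmod\pi$), independently of $j_o$, so the cut point $\tfrac{3\pi}{2}-\eta$ of the window may well fall inside a small gap; in that case the within-window spread $\tau_{\rho_{\mu_o-1}}-\tau_{\rho_0}$ can exceed $\pi$ even though $j_o$ is subdominant (non-equal $k_\beta$'s then produce a non-empty intersection), so the identification you invoke is not a consequence of the paper's conventions. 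The robust formulation is the one actually used in Theorem \ref{22giugno2023-5}: take $\tau_{\rho_0}<\dots<\tau_{\rho_{\mu_o-1}}$ to be $\mu_o$ \emph{consecutive} directions of the full set $\{\tau_{\rho_\gamma}+2k\pi\}_{k\in\Z}$ (equivalently, re-choose the window adapted to $j_o$); then the wrap-around gap is by construction $2\pi$ minus their span, it is the only gap that can exceed $\pi$, and your circle argument closes. So keep the mod-$2\pi$ analysis, but replace the appeal to the global $\eta$ by this adapted choice of representatives; as written, that step is the missing justification --- the same one missing from the paper's own two-line proof, which you have at least correctly isolated.
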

  
 \begin{proof}
The l.h.s. of \eqref{21giugno2023-5} equals %
  \[\bigcap_{\gm=0}^{\mu_o-1}\left] \tau_{\rho_\gamma}+2k_\gamma\pi;\, \tau_{\rho_\gamma}+(2k_\gamma+1)\pi\right[.
  \]
  This set is non-empty if and only if $k_0=k_1=\dots=k_{\mu_o-1}=k\in \Z$, and the resulting intersection equals
  \[
  \bigcap_{\gm=0}^{\mu_o-1}\left] \tau_{\rho_\gamma}+2k\pi;\, \tau_{\rho_\gamma}+(2k+1)\pi\right[=\left]\tau_{\rho_{\mu_o}-1}+2k\pi;\,\tau_{\rho_0}+(2k+1)\pi\right[.
  \]
  \end{proof}

  \begin{proof}[Proof of Theorem \ref{22giugno2023-5}]
  The conditions on the Stokes rays are given in Lemma \ref{21giugno2023-6}. The existence and uniqueness of   vector solutions  follow from Theorem \ref{subth}, the sectors follow from Lemma \ref{21giugno2023-6}. 
  \end{proof}

\subsection{Parametric case}
\label{18settembre2023-6}
We consider equation \eqref{diffeq} with $\Lambda$ as in \eqref{2giugno2023-1},  depending on the parameter
$$\omega:=(\boldsymbol{\lambda}_1,\dots,\boldsymbol{\lambda}_\ell)\in \Omega\subset \mathbb{C}^m,\quad m= \ell \times(h+1),
$$
where  $\boldsymbol{\lambda}_i$ is defined in \eqref{18settembre2023-4}.   
In order to apply Theorem \ref{mainth-bis}, we assume that:
\begin{itemize}
\item[a)] the open connected domain  $\Omega $ is  {\it bounded};
\item[b)]   $\boldsymbol{\lambda}_i\neq \boldsymbol{\lambda}_j$ for any $i\neq j$, and moreover $\lambda_{ij}\neq 0$  on the closure $\overline{\Omega}$, see \eqref{19settembre2023-1};
\item[c)]  
  $\sigma_0,\dots,\sigma_h$ are {\it independent} of the parameters.
  \item[d)] $R=R(z,\omega)$ satisfies Definition  \ref{5giugno2023-1-bis}  on any open interval $I$.
\end{itemize}
Under assumption a),  the uniform integrability condition of Definition \ref{9settembre2023-6}  is  satisfied.\footnote{ \, For any $i$, take $g_i(z)= C \sum_{k=0}^h |z|^{\sigma_k-1}$, where $ C:=\max_{\overline{\Omega}}(\max_{i=1,...,\ell} \sum_k \lambda_i^{(k)})$.}  
In b), the condition  $\lambda_{ij}\neq 0$, for all $i\neq j$,  allows to use Definition \ref{9giugno2023-2} of  the Stokes  rays.  By \eqref{26maggio2023-2}, this is equivalent to the requirement that {\it the  degrees of $q_i(z,\omega)-q_j(z,\omega)$  are constant on $\Omega$}. For example, if $\Lambda$ is generic on $\overline{\Omega}$,  this  is satisfied. 
Notice that $\arg\lambda_{ij}$ depends continuously on $\omega$. In order to label  the Stokes rays we also assume that:
\begin{itemize}
\item[e)] 
 the numeration \eqref{9giugno2023-4} is preserved on $\overline{\Omega}$, for a {\it fixed $\eta$}. 
\end{itemize}
The directions of Stokes rays continuously depend on $\omega$, and e) above implies that  if $\tau_\rho<\tau_\sigma$ (or $\tau_\rho=\tau_\sigma$) at $\omega_o\in \Omega$, then $\tau_\rho(\omega)<\tau_\sigma(\omega)$  (or $\tau_\rho(\omega)=\tau_\sigma(\omega)$) for every $\omega\in \overline{\Omega}$.  
Notice that condition e) is always satisfied if $\Om$ is sufficiently small.
\vskip1,5mm

Definition \ref{18settembre2023-3} is substituted by the following
\begin{defn}
\label{18settembre2023-3-bis}
 A vector $\bm k\in \Z^\mu$ is called {\it adequate with respect to $(\si^{(\rho)};\tau_\rho)_{\rho=0}^{\mu-1}$ on $\Omega$} if the open interval 
$$
\Si_{\bm k}:=\bigcap_{\omega\in\overline{\Omega}}\left(\bigcap_{\rho=0}^{\mu-1}\,
\left]
\tau_\rho(\omega)+\frac{k_\rho \pi}{\si^{(\rho)}}-\pi;
\,
\tau_\rho(\omega)+\frac{k_\rho \pi}{\si^{(\rho)}}
\right[
\right)
$$ 
is non-empty.
\end{defn}

The uniform $L$-condition of Definition \ref{9settembre2023-1-bis} is satisfied if and only if there is an adequate  $\boldsymbol{k}$. 
Under the above assumptions,
Theorem \ref{mainth-bis} implies the following  generalizations of 
Theorem \ref{5giugno2023-3}, Corollary \ref{7giugno2023-2} and Theorem \ref{22giugno2023-5}.

\begin{thm}
\label{23ottobre2023-1}
Consider the differential equation \eqref{diffeq-bis}
with  $\Lambda(z,\omega)$ as in \eqref{2giugno2023-1} satisfying a), b), c) and e) above,  and a perturbation term $R\in \hol(\Tilde{\C}_{a}\times \Omega;\gl(n,\C))$ with  uniform good decay conditions  on $\Tilde{\C}_{a}$ (Definition  \ref{5giugno2023-1-bis}). 
Let $\bm k\in\Z^\mu$ be adequate with respect to $(\si^{(\rho)};\tau_\rho)_{\rho=0}^{\mu-1}$ on $\Omega$.
Then, there exists a
 unique holomorphic fundamental matrix  solution $Y_{\boldsymbol{k}}\colon \H_{I,a}\times\Omega\to GL(n,\C)$, where $I=\Si_{\bm k}+\frac{\pi}{2}$,  with asymptotic behaviour  
\beq
\label{9giugno2023-1-bis}
Y_{\boldsymbol{k}}(z,\omega)=\left(I_n+o(1)\right) \exp\bigl\{Q(z,\omega)\bigr\},\quad \hbox{\it uniformly in $\Omega$}, 
\eeq 
for  $z\to \infty$  in any closed subsector of 
$$ 
S_{\boldsymbol{k}}:=\{z\in \Tilde{\C^*}~|~a_{\boldsymbol{k}}<\arg z <b_{\boldsymbol{k}}+\pi \}, \quad \text{where }\Si_{\bm k}=\left]a_{\bm k};b_{\bm k}\right[.
$$
\end{thm}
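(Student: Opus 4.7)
The plan is to deduce Theorem \ref{23ottobre2023-1} from the parametric main result Theorem \ref{mainth-bis}, by verifying that its three hypotheses (uniform integrability, uniform $L$-condition, uniform good decay) hold under assumptions a)--e) and the adequateness of $\bm k$ on $\Omega$.

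Two of these hypotheses are essentially immediate. The uniform good decay of $R$ is exactly assumption d). For the uniform integrability of $\La$ (Definition \ref{9settembre2023-6}), I use that $\overline{\Omega}$ is compact by assumption a) and the coefficients $\lambda_i^{(k)}(\omega)$ are continuous in $\omega$; hence they are uniformly bounded, and the estimate
$$
|\La_{ii}(z,\omega)| \le C\sum_{k=0}^h |z|^{\sigma_k - 1} =: g_i(z)
$$
holds for a constant $C$ independent of $\omega$, with $g_i$ integrable along any segment joining two points of $\H_{I,a}$.

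The bulk of the argument concerns the uniform $L$-condition, and here I will adapt Section \ref{sec33}. For each pair $(i,j)$ with $i\neq j$, the integral $\on{Re}\int_w^z (\La_{ii}(t,\omega) - \La_{jj}(t,\omega))\d t$ along a line $\ell_{\phi,b}$ reduces, via the asymptotic equivalence \eqref{26maggio2023-2}, to the comparison at $\arg z$ and $\arg w$ of
$$
f_\rho(\theta;\omega) := \frac{\sin\bigl(\sigma^{(\rho)}(\theta - \tau_\rho(\omega))\bigr)}{\sin^{\sigma^{(\rho)}}(\theta - \tau)}, \qquad \tau = \phi - \tfrac{\pi}{2},
$$
where $\rho$ is the label attached to the pair of blocks containing $i$ and $j$. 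By Lemmas \ref{26maggio2023-9}--\ref{26maggio2023-12}, one falls into case \eqref{l'1} or \eqref{l'2} precisely when $f_\rho(\,\cdot\,;\omega)$ is monotonic on $]\tau,\tau+\pi[$, which in turn is equivalent to the existence of an integer $k_\rho$ with $\tau_\rho(\omega) + k_\rho \pi/\sigma^{(\rho)} \in \,]\tau,\tau+\pi[$. Since $I = \Si_{\bm k} + \pi/2$ and $\Si_{\bm k}$ is defined in Definition \ref{18settembre2023-3-bis} as an intersection over $\omega \in \overline{\Omega}$, this monotonicity holds simultaneously for every $\phi \in I$ and every $\omega \in \overline{\Omega}$.

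The main obstacle, and final step, is upgrading this pointwise-in-$\omega$ monotonicity to uniform upper/lower bounds $K_1, K_2$ in \eqref{l'1}--\eqref{l'2}, i.e. bounding $\sup_{\omega\in\Omega}\mathfrak{S}^{ij}_{J,a}$ or $\inf_{\omega\in\Omega}\mathfrak{I}^{ij}_{J,a}$ by a finite constant for each closed $J \subseteq I$. Continuity of $\tau_\rho(\omega)$ on the compact $\overline{\Omega}$, the constancy of $\sigma^{(\rho)}$ in $\omega$ (assumption c)), the non-vanishing of $\lambda_{ij}(\omega)$ on $\overline{\Omega}$ (assumption b)), and the preservation of the Stokes ordering (assumption e)) together imply that the extremal values of $f_\rho(\theta;\omega)$ on the relevant closed arc depend continuously on $\omega$ and hence are uniformly bounded on $\overline{\Omega}$. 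With the uniform $L$-condition established, Theorem \ref{mainth-bis} produces the unique holomorphic fundamental matrix $Y_{\bm k}$ on $\H_{I,a}\times\Omega$ with uniform asymptotics \eqref{9giugno2023-1-bis}, and the characterisation of the sector $S_{\bm k}$ follows exactly as in the proof of Theorem \ref{5giugno2023-3}, using $I = \,]a_{\bm k}+\tfrac{\pi}{2};\,b_{\bm k}+\tfrac{\pi}{2}[$ and the angular span of $\H_{I,a}$.
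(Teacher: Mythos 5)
Your argument is correct and follows essentially the paper's own route: the paper obtains Theorem \ref{23ottobre2023-1} directly from Theorem \ref{mainth-bis} by exactly this verification --- assumption a) yields the uniform integrability condition (with the same choice of $g_i$), assumption d) is the uniform good decay, and adequateness of $\bm k$ on $\Omega$ is equivalent to the uniform $L$-condition through the monotonicity analysis of Lemmas \ref{26maggio2023-9}--\ref{26maggio2023-12} applied for every $\omega\in\overline\Omega$. One small caveat on wording: the uniform constants $K_1,K_2$ come from the $\omega$-uniform sign of the differences $f_\rho(\theta;\omega)-f_\rho(\tilde\theta;\omega)$ (monotonicity) together with the uniform validity of \eqref{26maggio2023-2} guaranteed by b) and the compactness of $\overline\Omega$, not from bounds on the ``extremal values'' of $f_\rho$ itself, which diverges at the endpoints of the arc $]\tau,\tau+\pi[$.
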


\begin{cor}
\label{23ottobre2023-3}
Under  the assumptions of Theorem \ref{23ottobre2023-1}, if $\Lambda$ is generic, then for every $\nu\in \mathbb{Z}$,  there exists a unique  holomorphic fundamental matrix  solution $Y_\nu: S_\nu\times \Omega \to GL(n,\C)$, where
$$ 
S_\nu:=\bigcap_{\omega\in \overline{\Omega}}\{z\in \Tilde{\C^*}~|~\tau_{\nu-1}(\omega)<\arg z <\tau_\nu(\omega) +\pi\},$$ 
with asymptotic   behaviour
 $$
Y_\nu(z)=\left(I_n+o(1)\right) \exp\bigl\{Q(z)\bigr\},\quad \hbox{\it uniformly in $\Omega$}, 
$$
  for $z\to \infty$ in any closed subsector of 
$ 
S_\nu$.  

\end{cor}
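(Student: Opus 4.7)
The plan is to mimic the derivation of Corollary \ref{7giugno2023-2} from Theorem \ref{5giugno2023-3}, now invoking the parametric Theorem \ref{23ottobre2023-1} in place of the non-parametric one. The only thing to verify is that, for each $\nu\in\Z$, an adequate vector $\boldsymbol{k}\in\Z^\mu$ (in the sense of Definition \ref{18settembre2023-3-bis}) can be chosen so that the corresponding sector $S_{\boldsymbol{k}}$ furnished by Theorem \ref{23ottobre2023-1} coincides with $S_\nu$; the unique fundamental matrix solution thus produced will then be the desired $Y_\nu$.

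Since $\La$ is generic, $k_{ij}=0$ for every $i\neq j$, whence $\si^{(\rho)}=1$ for every $\rho=0,\dots,\mu-1$, and Definition \ref{18settembre2023-3-bis} reduces to the non-emptiness of
$$
\Si_{\boldsymbol{k}}=\bigcap_{\omega\in\overline{\Omega}}\bigcap_{\rho=0}^{\mu-1}\,\left]\tau_\rho(\omega)+(k_\rho-1)\pi;\,\tau_\rho(\omega)+k_\rho\pi\right[.
$$
Writing $\nu=k\mu-j$ uniquely with $k\in\Z$ and $j\in\{0,\dots,\mu-1\}$, and copying the list of choices from the proof of Corollary \ref{7giugno2023-2}, I take
$$
\boldsymbol{k}:=(\underbrace{k,\dots,k}_{\mu-j},\,\underbrace{k-1,\dots,k-1}_{j}).
$$
The strict ordering $\tau_0(\omega)<\cdots<\tau_{\mu-1}(\omega)$, preserved throughout $\overline{\Omega}$ by hypothesis (e) and \eqref{7settembre2023-1} (and implying $\tau_{\mu-1}(\omega)-\tau_0(\omega)<\pi$), together with the periodicity $\tau_{\nu+\mu}(\omega)=\tau_\nu(\omega)+\pi$ from \eqref{29maggio2023-2}, collapses the inner intersection over $\rho$, for each fixed $\omega$, to $]\tau_{\nu-1}(\omega);\tau_\nu(\omega)[$. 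Hence $\Si_{\boldsymbol{k}}=\bigcap_{\omega\in\overline{\Omega}}\,]\tau_{\nu-1}(\omega);\tau_\nu(\omega)[$, and as $(k,j)$ vary every $\nu\in\Z$ is realised exactly once.

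If $\Si_{\boldsymbol{k}}$ is empty then $S_\nu$ is likewise empty and the statement is vacuous. Otherwise $\boldsymbol{k}$ is adequate on $\Omega$, Theorem \ref{23ottobre2023-1} supplies a unique holomorphic fundamental matrix solution $Y_{\boldsymbol{k}}\colon\H_{\Si_{\boldsymbol{k}}+\pi/2,a}\times\Omega\to GL(n,\C)$ with the prescribed uniform asymptotics on every closed subsector of $S_{\boldsymbol{k}}$, and writing $\Si_{\boldsymbol{k}}=\,]a_{\boldsymbol{k}};b_{\boldsymbol{k}}[\,$ one checks directly that
$$
S_{\boldsymbol{k}}=\,]a_{\boldsymbol{k}};b_{\boldsymbol{k}}+\pi[\,=\bigcap_{\omega\in\overline{\Omega}}\,]\tau_{\nu-1}(\omega);\tau_\nu(\omega)+\pi[\,=S_\nu.
$$
Relabelling $Y_{\boldsymbol{k}}$ as $Y_\nu$ completes the proof. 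The argument is essentially combinatorial bookkeeping; the only point worth flagging is that hypothesis (e) guarantees pointwise preservation of the Stokes ordering on $\overline{\Omega}$ but not automatically its uniform separation $\sup_\omega\tau_{\nu-1}(\omega)<\inf_\omega\tau_\nu(\omega)$. Consequently, the corollary is substantive precisely when the intersection over $\omega$ is non-empty, which is exactly the non-emptiness built into Definition \ref{18settembre2023-3-bis} triggering the applicability of Theorem \ref{23ottobre2023-1}.
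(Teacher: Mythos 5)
Your main construction is the paper's intended derivation: in the generic case $\sigma^{(\rho)}=1$, the candidate tuples are the ones listed in the proof of Corollary \ref{7giugno2023-2}, for each fixed $\omega$ the inner intersection over $\rho$ collapses to $\left]\tau_{\nu-1}(\omega);\tau_\nu(\omega)\right[$ with $\nu=k\mu-j$, and Theorem \ref{23ottobre2023-1} then produces $Y_\nu$ on $S_{\boldsymbol{k}}=S_\nu$. That part is correct and is essentially how the paper deduces the corollary.

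The flaw is in how you dispose of the degenerate case. From $\Sigma_{\boldsymbol{k}}=\bigcap_{\omega\in\overline{\Omega}}\left]\tau_{\nu-1}(\omega);\tau_\nu(\omega)\right[=\emptyset$ you conclude that $S_\nu$ is empty and the statement vacuous; this is false. Emptiness of $\Sigma_{\boldsymbol{k}}$ means $\sup_{\omega}\tau_{\nu-1}(\omega)\geq\inf_{\omega}\tau_\nu(\omega)$, whereas $S_\nu=\{z:\ \sup_{\omega}\tau_{\nu-1}(\omega)<\arg z<\inf_{\omega}\tau_\nu(\omega)+\pi\}$ is empty only if $\sup_{\omega}\tau_{\nu-1}\geq\inf_{\omega}\tau_\nu+\pi$, and this never happens: under hypothesis e) all base directions $\tau_0(\omega),\dots,\tau_{\mu-1}(\omega)$ lie in the fixed open window $\left]\tfrac{3\pi}{2}-\eta;\tfrac{3\pi}{2}-\eta+\pi\right[$ of width $\pi$, and the general $\tau_\nu(\omega)$ are shifts of these by multiples of $\pi$, so $\tau_{\nu-1}(\omega_1)<\tau_\nu(\omega_2)+\pi$ for all $\omega_1,\omega_2\in\overline{\Omega}$. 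Hence when $\inf_{\omega}\tau_\nu\leq\sup_{\omega}\tau_{\nu-1}$ the sector $S_\nu$ is a genuine nonempty sector, but of opening at most $\pi$; your $\boldsymbol{k}$ is then not adequate, the uniform $L$-condition fails on the corresponding interval $I$, and your argument yields neither existence with uniform asymptotics nor (especially) uniqueness on $S_\nu$ — the uniqueness mechanism of Lemma \ref{lemdiffeqintegeq} needs the sector to exceed opening $\pi$ for every pair of exponents and every $\omega$. The correct treatment is not to declare the case vacuous, but to note that the corollary is applied under the uniform separation $\sup_{\omega}\tau_{\nu-1}(\omega)<\inf_{\omega}\tau_\nu(\omega)$ for the relevant $\nu$, i.e.\ exactly the adequateness of $\boldsymbol{k}$ on $\Omega$ (equivalently the uniform $L$-condition of Definition \ref{9settembre2023-1-bis}) that Theorem \ref{23ottobre2023-1} requires; this holds, for instance, once $\Omega$ is taken sufficiently small, as the paper remarks for condition e).
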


\begin{cor}
\label{25ottobre2023-1} Assume that $\La$ is generic, and fix $j_o\in \{1,\dots,\ell\}$.
In the same notations of Theorem 
\ref{22giugno2023-5}, if $\underset{\omega\in \overline{\Omega}}{\sup}(\tau_{\rho_{\mu_o-1}}-\tau_{\rho_0})<\pi$, then for any $k\in \Z$, there exist $s_{j_o}$ %
vector solutions 
$$y^{(k)}_{\alpha}\in\mathscr O(\Tilde{\C}_a\times\Omega;\C^n), \quad\quad
\alpha=s_1+\dots+s_{j_o-1}+m,\quad m=1,\dots,s_{j_o},
$$
  characterized by the uniform asymptotic behaviour 
  $
y^{(k)}_\alpha(z,\omega)=  (e_\alpha+o(1))\exp\left(\sum_{q=1}^h \lambda_{j_o}^{(q)}z^{\sigma_q}\right)$,
  for $z\to \infty$ in any closed subsector of   $$ 
{S}_k^{(j_o)}:= \bigcap_{\omega\in \overline{\Omega}}\Bigl\{z\in \Tilde{\C}_a~\Bigl|~     \tau_{\rho_{\mu_o-1}}-\pi<\arg z-2k\pi< \tau_{\rho_0}+2\pi \Bigr\}.
  $$
\end{cor}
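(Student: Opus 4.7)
The plan is to obtain Corollary \ref{25ottobre2023-1} as the parametric analog of Theorem \ref{22giugno2023-5}, by invoking Theorem \ref{subth-bis} (subdominant solutions with parameters) in place of Theorem \ref{subth}. The overall strategy is to translate the Stokes-direction hypothesis $\sup_{\omega\in\overline\Om}(\tau_{\rho_{\mu_o-1}}-\tau_{\rho_0})<\pi$ into the uniform subdominance condition \eqref{conditionS-bis}, verify the hypotheses of Theorem \ref{subth-bis}, and then identify the sector $S_k^{(j_o)}$.

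First, I would lift Lemma \ref{21giugno2023-6} to the parametric setting. By assumption e) in Section \ref{18settembre2023-6}, the ordering of the directions $\tau_{\rho_0}(\om)<\tau_{\rho_1}(\om)<\dots<\tau_{\rho_{\mu_o-1}}(\om)$ is preserved on $\overline\Om$, and by continuity these directions depend continuously on $\om$. Arguing as in Lemma \ref{21giugno2023-6} pointwise in $\om$, the block index $j_o$ is uniformly subdominant on an open interval $I\subseteq\R$ if and only if there is a single $k\in\Z$ such that
\[
\bigcap_{\om\in\overline\Om}\bigcap_{\gm=0}^{\mu_o-1}\left]\tau_{\rho_\gm}(\om)+2k\pi;\,\tau_{\rho_\gm}(\om)+(2k+1)\pi\right[=I-\tfrac{\pi}{2}.
\]
Under the assumption $\sup_{\om\in\overline\Om}(\tau_{\rho_{\mu_o-1}}-\tau_{\rho_0})<\pi$, this intersection is the non-empty interval
\[
\left]\sup_{\om\in\overline\Om}\tau_{\rho_{\mu_o-1}}(\om)+2k\pi;\,\inf_{\om\in\overline\Om}\tau_{\rho_0}(\om)+(2k+1)\pi\right[,
\]
and its closure $\overline I_k$ together with the shift by $\pi/2$ determines exactly the sector $S_k^{(j_o)}$ stated in the corollary.

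Next, I would verify the hypotheses of Theorem \ref{subth-bis} for each closed subinterval $\widetilde J\subset\widetilde I_k$. The uniform integrability condition \eqref{9settembre2023-3} follows from the boundedness of $\Om$ (assumption a)): one takes $g_i(z)=C\sum_{k=0}^h|z|^{\sigma_k-1}$ with $C:=\max_{\overline\Om}\max_i\sum_k|\la_i^{(k)}|$, which is integrable along any line $\iota^{\phi}_{a,b}$ because $\sigma_k-1<0$ for $k\geq 1$ and $\sigma_0=1$ cancels in the differences $\La_{ii}-\La_{j_oj_o}$. The uniform $S$-decay of $R$ is hypothesized directly. The uniform subdominance \eqref{conditionS-bis} follows by the same reduction as in the proof of Theorem \ref{22giugno2023-5}: along $\iota^{\phi}_{a,b}$ the integral $\mathrm{Re}\int_w^z(\La_{ii}(t,\om)-\La_{j_oj_o}(t,\om))\,dt$ reduces, via \eqref{26maggio2023-2}, to a difference $f_{ij_o}(\theta,\om)-f_{ij_o}(\widetilde\theta,\om)$ of the functions studied in Lemmas \ref{26maggio2023-9}--\ref{26maggio2023-12}, where now $\eta_{ij_o}=\arg\la_{ij_o}(\om)$ depends continuously on $\om$. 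Compactness of $\overline\Om$ and continuity of the $\tau_{\rho_\gm}(\om)$ then turn the pointwise lower bounds into uniform ones, yielding \eqref{conditionS-bis}.

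Finally, Theorem \ref{subth-bis} applied with the interval $I_k$ produces, for each $\alpha\in A_{j_o}$, a unique holomorphic vector solution $y^{(k)}_\alpha(z,\om)$ on $\H_{\widetilde I_k,a}\times\Om$ with the prescribed uniform asymptotics, where in the generic case the leading exponential reduces to $\exp\bigl(\sum_{q=1}^h\la_{j_o}^{(q)}z^{\sigma_q}\bigr)$ after absorbing the $z^{\sigma_0}=z$ contribution that is common to all blocks. The main obstacle I expect is precisely the uniformity step: the infima over $\om\in\overline\Om$ in \eqref{conditionS-bis} must remain finite, and this is where compactness of $\overline\Om$ together with preservation of the Stokes-ray ordering (assumption e)) and the strict inequality $\sup_\om(\tau_{\rho_{\mu_o-1}}-\tau_{\rho_0})<\pi$ are used together; without the strict inequality, the common interval would degenerate and the uniform monotonicity of the functions $f_{ij_o}(\cdot,\om)$ would fail at the boundary of $\overline\Om$.
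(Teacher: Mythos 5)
Your proposal follows essentially the same route as the paper, which states this corollary without a separate proof, presenting it as the parametric analog of Theorem \ref{22giugno2023-5} obtained by applying Theorem \ref{subth-bis} under assumptions a)--e) of Section \ref{18settembre2023-6}, with the boundedness of $\Omega$ giving uniform integrability and compactness of $\overline{\Omega}$ turning the pointwise subdominance bounds of Lemma \ref{21giugno2023-6} into the uniform condition \eqref{conditionS-bis} --- exactly the steps you spell out. The only caveat (inherited from the paper's own formulation) is your claim that $\sup_{\omega\in\overline{\Omega}}(\tau_{\rho_{\mu_o-1}}-\tau_{\rho_0})<\pi$ makes the interval $\,]\sup_{\omega}\tau_{\rho_{\mu_o-1}}+2k\pi;\,\inf_{\omega}\tau_{\rho_0}+(2k+1)\pi[\,$ non-empty: strictly this needs $\sup_{\omega}\tau_{\rho_{\mu_o-1}}-\inf_{\omega}\tau_{\rho_0}<\pi$, which is the tacit reading (small drift of the Stokes directions on $\overline{\Omega}$, consistent with assumption e)) under which both the corollary and your argument go through.
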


\subsection{Applications to the ODE/IM correspondence}\label{ODEIM} The following result is a special case of the results developed in Sections \ref{secstok}--\ref{18settembre2023-6}.

\begin{thm}\label{thm:matrixodeim}
For $M>0$, let
$\widetilde{\mathbb{C}}_M=
 \lbrace z \in \widetilde{\mathbb{C}^*} , |z| > M \rbrace.$
 Consider the differential equation
 \begin{equation}\label{eq:irregularnormal}
  \frac{d\psi(z)}{dz}= \Bigl(- A \, p(z;\omega) + R(z;\omega) \Bigr) \psi,
   \end{equation}
 where
  \begin{itemize}
  \item  $A \in \gl(n,\C)$ is a constant diagonalisable matrix with eigenvectors $\psi_j$
 and eigenvalues $\nu_j$, $j=1,\dots, n$, which are not necessarily pairwise distinct.
 \item 
$p(z;\omega)=  z^{\beta_0}+\sum_{k=1}^{H} c_k(\omega)  z^{\beta_k}$,
where the exponents $\beta_k$'s are real and  ordered so that
$$
\beta_0>\beta_1> \cdots>\beta_H= -1.
$$ 
 Moreover,
the coefficients $c_k(\omega)$ are  bounded  analytic functions of the
parameter $\omega$, which belongs to a domain $\Omega \subset \mathbb{C}$.
Define the primitive
 \begin{equation}\label{eq:Qzla}
  P(z;\omega):= \frac{z^{\beta_0+1}}{\beta_0+1} +\sum_{k=1}^{ { H-1}} c_k(\omega)
\frac{z^{\beta_k+1}}{\beta_k+1} { + c_H(\omega)\ln z}  .
 \end{equation}

 \item  $R\in\hol(\widetilde{\mathbb{C}}_M\times \Omega;~\gl(n,\C))$, with behaviour 
$|R(z;\omega)|=O(z^{-1-\delta})$  at infinity, uniformly with respect to $\omega \in \Omega$, in any arbitrary
closed sector of $\widetilde{\mathbb{C}}_M$.
\end{itemize}

\vskip 0.15 cm
\noindent
\begin{enumerate}
\item  Assume that for an interval $[a,b]$  the following condition holds:
 \begin{equation}\label{eq:nolines}
  \forall \tau \in [a,b], 
  \quad
  {\rm Re}\Bigl( (\nu_j -\nu_{j'} )e^{\sqrt{-1}\tau }\Bigr)=0 
  \mbox{ if and only if } \nu_j=\nu_{j'}.
 \end{equation}
Then, there exists a unique basis of solutions $\psi_j(z;\omega)$, analytic in
$\widetilde{\mathbb{C}}_M \times \Omega$, with the  asymptotic behaviour
\begin{align} \nonumber
 \psi_j(z,\omega) & = \left(\psi_j + O(z^{-\delta}) \right) e^{-\nu_j
P(z;\omega)},\\ \label{eq:basesinfinity}
 & \mbox{when } z \to \infty \mbox{ in the sector }
\frac{a}{\beta_0+1}\leq \arg z \leq\frac{b+\pi}{\beta_0+1},
\end{align}
uniformly with respect to $\omega \in \Omega$.

\vskip 0.15 cm
\noindent
\item Assume that $j_0$ is a subdominant index for an interval $[a,b]$, namely
\begin{equation}\label{eq:nosubdominantlines}
  \forall \tau  \in [a,b] \mbox{ and } j \neq j_0, \quad
  {\rm Re}\Bigl( (\nu_{j_0} -\nu_j)e^{\sqrt{-1}\tau }\Bigr) > 0 .
 \end{equation}
Then, there exists a unique solution $\Psi(z;\omega)$, called subdominant, such that
\begin{align}\nonumber
 \Psi(z;\omega) & =   \left(\psi_{j_0} + O(z^{-\delta}) \right)
e^{-\nu_{j_0} P(z;\omega)}, \\ \label{eq:subdominantgeneral}
 & \mbox{as } z \to \infty, \mbox{ in the sector }
 \frac{a-\pi}{\beta_0+1} \leq \arg z
 \leq \frac{b + \pi}{\beta_0+1},
\end{align}
uniformly with respect to $\omega \in \Omega$. Moreover, the function
$\Psi(\cdot,\cdot)$ is
analytic in $\widetilde{\mathbb{C}}_M \times \Omega$.
\end{enumerate}
\end{thm}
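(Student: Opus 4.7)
The plan is to reduce \eqref{eq:irregularnormal} to a system amenable to the parametric Theorem \ref{mainth-bis}. Let $G\in GL(n,\C)$ be the matrix whose columns are the eigenvectors $\psi_1,\dots,\psi_n$, so that $G^{-1}AG = D:=\diag(\nu_1,\dots,\nu_n)$, and set $\psi=G\chi$. The transformed equation reads
\[
\chi' = \bigl(\Lambda(z;\omega)+R_1(z;\omega)\bigr)\chi, \qquad \Lambda(z;\omega):=-D\,p(z;\omega), \qquad R_1:=G^{-1}RG,
\]
where $\Lambda$ is diagonal with primitive $-D\,P(z;\omega)$ (including the logarithmic term $-D\,c_H(\omega)\ln z$), and $R_1$ inherits $|R_1|=O(z^{-1-\delta})$ uniformly in $\omega\in\Omega$.

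I would next verify the three hypotheses of Theorem \ref{mainth-bis}. Uniform integrability of $\Lambda$ (Definition \ref{9settembre2023-6}) follows from boundedness of the $c_k(\omega)$ on $\Omega$ and the $L^1$-integrability of $|z|^{\beta_k}$ along compact segments in $\widetilde{\mathbb{C}}_M$ (the exponent $\beta_H=-1$ poses no issue since such segments avoid $0$). Uniform good decay of $R_1$ (Definition \ref{5giugno2023-1-bis}) is immediate. The uniform $L$-condition (Definition \ref{9settembre2023-1-bis}) amounts, for each pair $j\neq j'$ with $\nu_j\neq\nu_{j'}$, to analyzing $\on{Re}\bigl(-(\nu_j-\nu_{j'})[P(z_2;\omega)-P(z_1;\omega)]\bigr)$ along lines $\ell_{\phi,b}\subset\H_{J,a}$; its dominant contribution $-(\nu_j-\nu_{j'})(z_2^{\beta_0+1}-z_1^{\beta_0+1})/(\beta_0+1)$ is treated verbatim by Lemmas \ref{26maggio2023-9}--\ref{26maggio2023-12}, yielding condition \eqref{l'1} or \eqref{l'2} iff the line direction $\phi-\pi/2$ avoids the set $\{\theta\in\R : \on{Re}((\nu_j-\nu_{j'})e^{\sqrt{-1}(\beta_0+1)\theta})=0\}$; the remaining polynomial ($\beta_k+1<\beta_0+1$) and logarithmic contributions to $P$ are strictly subdominant for $|z|\to\infty$, uniformly over the bounded parameter domain $\Omega$, and do not alter the boundedness structure.

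For part (1), setting $\tau:=(\beta_0+1)\arg z$ shows that condition \eqref{eq:nolines} is exactly the statement that the $\arg z$-interval $[a/(\beta_0+1),\,b/(\beta_0+1)]$ avoids all the Stokes directions identified above, uniformly in $\omega\in\Omega$. The uniform $L$-condition then holds on the open interval $I=(a/(\beta_0+1),\,b/(\beta_0+1))+\pi/2$, for which $\H_{I,a}$ corresponds precisely to $\frac{a}{\beta_0+1}\leq\arg z\leq\frac{b+\pi}{\beta_0+1}$. Applying Theorem \ref{mainth-bis}, together with the $O(z^{-\delta})$-refinement inherited from the assumption $|R_1|=O(z^{-1-\delta})$ (compare Corollary \ref{maincor}), produces a unique holomorphic fundamental matrix $\chi(z;\omega)=(I_n+O(z^{-\delta}))\exp(-DP(z;\omega))$ uniformly in $\omega$; setting $\psi_j:=G\chi_j$ yields \eqref{eq:basesinfinity}. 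For part (2), condition \eqref{eq:nosubdominantlines} analogously identifies $j_0$ as a subdominant index for $\Lambda$ in the sense of the parametric analogue of Definition \ref{21giugno2023-1} on the relevant interval, and Theorem \ref{subth-bis} delivers the unique subdominant vector solution on the enlarged sector $\frac{a-\pi}{\beta_0+1}\leq\arg z\leq\frac{b+\pi}{\beta_0+1}$; undoing the diagonalization gives \eqref{eq:subdominantgeneral}.

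The main subtlety lies in the logarithmic term $c_H(\omega)\ln z$ in the primitive $P$, which formally puts us outside the framework of Section \ref{secstok} (which requires $\sigma_h>0$), so that Theorem \ref{23ottobre2023-1} cannot be invoked directly. I would bypass this by appealing to the more general Theorem \ref{mainth-bis}, exploiting the fact that $|\ln z|$ along any line is strictly dominated by $|z|^{\beta_0+1}$ as $|z|\to\infty$, uniformly over the bounded $\Omega$ by the boundedness of $c_H$; hence the monotonicity/boundedness analysis of Lemmas \ref{26maggio2023-9}--\ref{26maggio2023-12}, performed in Section \ref{sec33} for purely polynomial primitives, carries through to the integral including the subdominant logarithmic correction.
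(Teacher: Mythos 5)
Your reduction via the diagonalization $\psi=G\chi$ and your instinct to treat the logarithmic term $c_H(\omega)\ln z$ as a subdominant correction are both fine (the paper does essentially this, by allowing an extra $\Lambda^{(H)}/z$ term in $\Lambda$). However, there is a genuine gap: you apply Theorem \ref{mainth-bis} \emph{in the original variable $z$}, whereas the paper's proof first performs the change of variables $x=z^{\beta_0+1}$ (equivalently, rescales all exponents by $\sigma_k=(\beta_k+1)/(\beta_0+1)$) so that the leading exponent of the primitive becomes $1$, and only then invokes the parametric results of Section \ref{secstok}--\ref{18settembre2023-6} (Theorem \ref{23ottobre2023-1}, Corollaries \ref{23ottobre2023-3} and \ref{25ottobre2023-1}), together with two extra observations: the $z^{-1}$ term is admissible because $k_{ij}\leq h$ in the generic case, and the structure $\Lambda^{(k)}=c_k(\omega)\Lambda^{(0)}$ with bounded $c_k$ guarantees the uniform conditions on $\Omega$. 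This normalization is not cosmetic. Lemmas \ref{26maggio2023-9}--\ref{26maggio2023-12}, which you propose to use ``verbatim'' for the dominant contribution $-(\nu_j-\nu_{j'})(z_2^{\beta_0+1}-z_1^{\beta_0+1})/(\beta_0+1)$, are stated and proved only for exponents $0<\sigma^{(*)}\leq 1$; with $\sigma^{(*)}=\beta_0+1>1$ (the typical ODE/IM situation) the monotonicity argument breaks down, and in fact the $L$-condition genuinely fails along straight lines of the $z$-plane: each line of the foliation of $\H_{\phi,a}$ sweeps an argument range of length $\pi$, hence crosses several Stokes directions of $e^{-(\nu_j-\nu_{j'})P}$ (these are spaced $\pi/(\beta_0+1)<\pi$ apart), so that $\mathfrak{S}^{jj'}_{J,a}=+\infty$ and $\mathfrak{I}^{jj'}_{J,a}=-\infty$ simultaneously, and neither \eqref{l'1} nor \eqref{l'2} can hold.

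The same confusion shows up in your sector bookkeeping: with $I=\bigl(\tfrac{a}{\beta_0+1},\tfrac{b}{\beta_0+1}\bigr)+\tfrac{\pi}{2}$, the domain $\H_{I,a}$ covers $\arg z\in\bigl(\tfrac{a}{\beta_0+1},\,\tfrac{b}{\beta_0+1}+\pi\bigr)$, which coincides with the theorem's sector $\tfrac{a}{\beta_0+1}\leq\arg z\leq\tfrac{b+\pi}{\beta_0+1}$ only when $\beta_0=0$. If your argument went through as written, it would produce existence and uniqueness of the asymptotic basis on a sector wider (by roughly $\pi-\pi/(\beta_0+1)$) than the maximal one dictated by the Stokes rays, which is false in general (cf.\ Example \ref{17novembre2024-1}, where uniqueness genuinely fails on overly wide sectors); the analogous mismatch occurs for the subdominant sector in part (2), where the extension beyond the dominance interval must be $\pi/(\beta_0+1)$ in $\arg z$, not $\pi$. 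The fix is exactly the step you skipped: pass to the variable in which the top exponent equals $1$, so that the half-plane/line geometry of Sections \ref{sec2}--\ref{sec33} (and the cut-plane geometry for Theorem \ref{subth-bis}) applies, and the amplitudes $\pi$ and $2\pi$ in the new variable translate into the claimed apertures $\pi/(\beta_0+1)$ and $2\pi/(\beta_0+1)$ in $\arg z$.
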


\begin{proof}

The proof follows from two observations. 
The first one is that  equation \eqref{eq:irregularnormal} is a particular case of the more general equation
$$
\frac{dY}{dx}=\Bigl(\sum_{k=0}^H \widehat{\Lambda}^{(k)}(\omega) x^{\beta_k} +\widehat{R}(x,\omega)\Bigr)Y, 
$$
where
$ \widehat{\Lambda}^{(k)} \in \mathcal{O}(\Omega,\h(n,\C))$, with exponents $\beta_0>\beta_1> \cdots>\beta_H= -1$,  
and $R=O(x^{-1-\widehat{\delta}})$ uniformly in $\omega$,  with $\widehat{\delta}>0$.  By the change of variables $z=x^{\beta_0+1}$, the above is 
  transformed to an equation  \eqref{diffeq-bis}. In partcular,  $\Lambda$ has form \eqref{2giugno2023-1} (with $h=H-1$),  but for an additional
 term with power $ z^{-1}$. More  explicitly, we get
$$
  \Lambda=\sum_{k=0}^{H-1} \Lambda^{(k)} z^{\sigma_k -1}+\frac{\Lambda^{(H)}}{z},
  $$
where
$$
\begin{aligned} 
& \sigma_k:= \frac{\beta_k+1}{\beta_0+1} \hbox{ (for $1\leq k\leq H-1$)},
            \quad \quad
              1=\sigma_0>\sigma_1>\dots>\sigma_{H-1}>0,
\\
\noalign{\medskip}
&  \Lambda^{(k)}:= \frac{\widehat{\Lambda}^{(k)}}{\beta_0+1},\quad 1\leq k\leq H,
\quad \quad 
R(z,\omega):= \frac{\widehat{R}(z^{1/(\beta_0+1)},\omega)}{z^{\beta_0/(\beta_0+1)}}.
\end{aligned}
$$
Notice that $R=O(|z|^{-1-\delta})$, with $\delta= \widehat{\delta}/(\beta_0+1)$. 
 The theory developed above for equation \eqref{2giugno2023-1}, in 
 the parametric case,   immediately applies to the above case, provided that $k_{ij}$ defined 
 in \eqref{19settembre2023-1} satisfies\footnote{Here $\Lambda^{(H)}= \lambda_1^{(H)}I_{s_1}\oplus\cdots\oplus  \lambda_\ell^{(H)}I_{s_\ell}$, and the definition of $k_{ij}$ is obviously extended.}
\beq
\label{23ottobre2023-4}
k_{ij} \leq h.
\eeq
 In this case Theorem \ref{23ottobre2023-1} applies, with 
$$
Q(z,\omega)=q_1(z,\omega)I_{s_1}\oplus\dots\oplus q_\ell(z,\omega)I_{s_\ell}+ \Lambda^{(h+1)}\ln z
,
\quad 
\quad
q_i(z,\omega):=\sum_{k=0}^h \lambda_i^{(k)}z^{\sigma_k}.
$$
  and Corollary \ref{23ottobre2023-3} holds if  $\Lambda$ is generic.   Notice that \eqref{23ottobre2023-4} holds  for generic  $\Lambda$.

The second observation is that Corollary \ref{23ottobre2023-3} also holds for a generic $\Lambda$ with  the particular structure
$$ 
\Lambda_0 \hbox{ constant},\quad  \Lambda_k=c_k(\omega) \Lambda_0,\quad 1\leq k \leq h, 
$$
where $c_1,\dots,c_h$ are bounded holomorphic functions on a domain $\Omega$.  Indeed, in this case $\lambda_{ij}=\lambda_i^{(0)}-\lambda_j^{(0)}\neq 0$ are constant and the numeration \eqref{9giugno2023-4} is preserved. 

The two observations above apply in case of equation \eqref{eq:irregularnormal}. As a consequence, Corollary \ref{23ottobre2023-3} holds and proves point (1). Point (2) follows from Corollary \ref{25ottobre2023-1}.
\end{proof}

We apply the above theorem to the most studied instance of the ODE/IM correspondence, the duality between
quantum $\mathfrak{g}^{(1)}$-Drinfeld--Sokolov hierarchy \cite{BLZ2,FF0} -- also known
as Quantum $\mathfrak{g}$-KdV -- and ${}^L\mathfrak{g}^{(1)}$-opers on $\C^*$, where
 $\mathfrak{g}^{(1)}$ is the untwisted affinisation of a simple Lie algebra $\mathfrak{g}$ and ${}^L\mathfrak{g}^{(1)}$ its Langlands dual algebra.
Here for sake of simplicity, we restrict to the case of simply-laced $\mathfrak{g}$, so that
${}^L\mathfrak{g}^{(1)}=\mathfrak{g}^{(1)}$.
\vskip1,5mm
To provide some further context, we recall that, on one side of the correspondence, Quantum $\mathfrak{g}$-KdV is a family of theories, depending on $\rank \mathfrak{g}$ complex moduli, which parameterise the boundary conditions of the theory, and one real modulus, the ``central charge''.
On the other side, $\mathfrak{g}^{(1)}$-\textit{opers} are gauge-equivalence classes of linear $\mathfrak{g}^{(1)}$-connections (on a smooth algebraic curve), which generalize the notion of \textit{scalar linear differential operator}, see \cite{BeDr05}.
The opers that we consider are not meromorphic because, the exponent of one of their coefficients (the parameter $k$ in \eqref{eq:ffoper} below) is identified with the central charge. Hence, it takes real values.

\subsubsection*{Lie algebra preliminaries}
Here we collect well-known facts about simple Lie algebras that we need in the sequel. We refer to \cite{K} for all the details.
\vskip1,5mm
Let $\left(\mathfrak{g},[-,-]\right)$ be a simply-laced simple Lie algebra of rank $m$ over $\C$.
We have a 
Cartan decomposition
\begin{equation}
 \mathfrak{g}= \mathfrak{n}_- \oplus \mathfrak{h} \oplus \mathfrak{n}_+ ,
\end{equation}
where $\mathfrak{h}$ is a Cartan (i.e.\,\,a maximal commutative) subalgebra and $\mathfrak{n}_{\pm}$ are maximal positive/negative nilpotent subalgebras.
The algebra $\frak g$ has Chevalley generators
$\lbrace e_i,f_i,h_i \rbrace_{i=1,\dots,m}$, with $e_i\in \mathfrak{n}_{+}$, $
f_i \in \mathfrak{n}_{-}$, $h_i \in \mathfrak{h}$.
\vskip1,5mm
We denote by $\rho^\vee \in \mathfrak{h}$ the dual Weyl vector,
which is uniquely defined by the relations
\begin{equation}
[\rho^\vee,f_i]=-f_i,\quad i=1,\dots,m.
\end{equation}
The element $\rho^\vee$ induces the principal grading
\begin{equation}
 \mathfrak{g}= \bigoplus_{l=-h+1}^{h-1}\mathfrak{g}^{(l)},\qquad \mathfrak{g}^{(l)}= \lbrace g \in \mathfrak{g}, [\rho^\vee,g]=l \, g \rbrace,
\end{equation}
where $h$ is the Coxeter number of the algebra (an integer greater or equal than $2$).
\vskip1,5mm
We have $\mathfrak{h}=\mathfrak{g}^{(0)}$ and $\mathfrak{n}_{\pm}=\oplus_{l=1}^{h-1}\mathfrak{g}^{(\pm l)}$.
The top-graded subspace $\mathfrak{g}^{(h-1)}$ is a one-dimensional subspace and it is spanned
by the highest weight vector $e_{\theta}$ of the Lie algebra, which is the unique (up to a scalar multiple) element annihilated by all $e_i$'s, that is
\begin{equation}
 [e_i,e_{\theta}]=0, \quad i=1,\dots,m.
\end{equation}

\subsubsection*{Feigin-Frenkel Connections}
According to \cite{FF,MRV,MR1}, the family of connections corresponding to the $\mathfrak{g}^{(1)}$-Drinfeld--Sokolov hierarchy -- sometimes called F-F (Feigin--Frenkel) connections --  %
are defined by the following differential operators
\begin{equation}\label{eq:ffoper}
 \mathcal{L}=\partial_z+ \frac{f+\ell}{z}+ \left(1+ \omega \, z^{-{k}} \right) e_{\theta}+ \sum_{j \in J}
 \frac{-\theta^{\vee} + X(j)}{z-w_j},\qquad f=\sum_{i=1}^m f_i,
\end{equation}
where $J$ is a possibly empty finite set, $z$ is a global coordinate on $\widetilde{\C}_M$ with $M > \max \lbrace |w_j|, j \in J \rbrace$, and
the parameter $\omega$ takes value in  $\C$. The differential operator $\mc L$ acts on sections of the trivial bundle $\Tilde{\C}_M\times V$, with $V$ any finite dimensional $\mathfrak{g}$-module.
\vskip1,5mm
The parameters $\ell \in \mathfrak{h}$ and $0<k<1$ are free parameters, corresponding to the moduli
of Quantum $\mathfrak{g}$-KdV theories. In particular, $k$ is identified with the central charge.
On the contrary, the parameters  $\theta^\vee \in \mathfrak{h}$, $w_j \in \mathbb{C}^*$ and $X(j)$ are constrained by the requirement that the monodromy at $w_j$'s is trivial
\footnote{For example, $\theta^\vee$ is the co-root dual to the highest root of the algebra.}. However,
for the sake of our discussion they can be considered as additional free parameters since the asymptotic
properties of solutions are independent of their values.

\subsubsection*{Reduction to the normal form}

We notice that the dominant term of  the F-F connection  at $z=\infty$ is the nilpotent element $e_{\theta} \in \mathfrak{n}_+$. Therefore, in order to apply Theorem \ref{thm:matrixodeim}, we need to transform $\mc{L}$
 into an equivalent connection with a semi-simple dominant term.
 We achieve this by means of the gauge transform
\begin{equation}\label{eq:Gp}
 G=\exp\left( p(z;\omega)^{-1} \tilde{n}\right) \circ \big(p(z;\omega)\big)^{- \rho^\vee},
\end{equation}
where
\begin{equation}\label{eq:pzla}
 p(z;\omega)= z^{\frac{1}{h}} \left( 1 + \sum_{l=1}^{\lfloor  \frac{1}{(1-k)h} \rfloor}
 c_l \, \omega^l \, z^{l(1-k)}\right) , \quad c_l = \frac{1}{l!} \left(\frac{\partial^l}{\partial w^l} (1-w)^{\frac{1}{h}}\right)_{|w=0},
\end{equation}
and
$\tilde{n} $ is the unique element in $\C \lbrace e_1,\dots,e_m \rbrace$ such that
$[\tilde{n},f]=\ell+ \frac{\rho^\vee}{h^\vee} \rho^\vee- |J| \theta^\vee$.
The action of the gauge $G$ on the F-F connection $\mathcal{L}$ can be computed using the following rules, see
\cite[Section 2]{MRV}:
\begin{itemize}
 \item If $g \in \mathfrak{g}^{(l)}$ and $q$ is a meromorphic function, not identically zero, then 
 \begin{align}\label{eq:Gauge1}
   & (q(z))^{\rho^\vee}  \, \partial_z= \partial_z-\frac{q'(z)}{q(z)} \rho^\vee, \quad  (q(z))^{\rho^\vee} \, g = (q(z))^{l} g,
   \end{align}
   \item If $g\in \mathfrak{g}, n \in \mathfrak{n}_+$ and $p$ is a meromorphic function, then 
\begin{align}\label{eq:Gauge2}
& \exp\lbrace p(z) n\rbrace \, \partial_z= \partial_z-
 p'(z) \, n , \quad \exp\lbrace b(z) n\rbrace \, g= \sum_{j\geq 0} \frac{b^j(z)}{j!}
 (\,\mbox{ad}_n)^j g,
\end{align}
where $\mbox{ad}_n g=[n,g]$; we stress that the latter series terminates,  since $n$ is nilpotent.
\end{itemize}
Taking into account \eqref{eq:Gauge1}-\eqref{eq:Gauge2},
we deduce that there exists  $\delta>0$ such that
\begin{align}\label{eq:GL}
&\mathcal{L'}:=G. \mathcal{L}= \partial_z+ z^{-1}\, p(z;\omega) \Lambda+ R(z;\omega), \quad \Lambda= f+ e_{\theta}.
\end{align}
where $R(z;\omega)$ is analytic in $\widetilde{\C_M} \times \C$ and $R(z,\omega)=O(z^{-1-\delta})$, uniformly with respect to $\omega$ %
in any compact subset of $\C$; see \cite{MRV} for details.
\vskip1,5mm
The element $\Lambda= f+ e_{\theta}$ is called the cyclic element of $\mathfrak{g}$, see \cite{K}.
It is well-known that $\Lambda$ is a regular semi-simple element of the Lie algebra and it is
therefore diagonalisable in every finite dimensional
$\mathfrak{g}$-module $V$ \footnote{While the exact nature of the spectrum of $\Lambda$, which was computed in \cite{MRV,MRV2}, is important for the ODE/IM correspondence,
for what concerns the mere existence of distinguished solutions at $z=\infty$ it is sufficient
to know that $\Lambda$ is semi-simple.}.

The following theorem is a direct corollary of
Theorem \ref{thm:matrixodeim} and  formula \eqref{eq:GL}.
\begin{thm}\label{cor:odeim}
Given a F-F connection $\mathcal{L}$ as in \eqref{eq:ffoper} and a
finite dimensional $\mathfrak{g}$-module $V$ of dimension $n$,
we consider the linear ordinary differential equation
\begin{equation}
 \mc{L}\psi=0,\quad \psi: \widetilde{\mathbb{C}}_M \to V.
\end{equation}
We denote by $\psi_1,\dots,\psi_n$ an eigenbasis,
for $\Lambda$, by $\nu_1, \dots \nu_n$ the corresponding eigenvalues, and we let
 $P$ be the primitive of $z^{-1}p(z,\omega)$, as defined in \eqref{eq:pzla}, given by
\begin{equation}
\label{eq:PAction}
P(z;\omega)=
\left\{
\begin{aligned}
 & h \, z^{\frac{1}{h}} \left( 1 + \sum_{l=1}^{\lfloor  \frac{1}{(1-k)h} \rfloor}
 \frac{c_l \omega^l z^{-l(k-1)} }{1- \frac{l}{h (1-k)}} \right) , \quad  \frac{1}{h(1-k)} \notin \mathbb{N},
 \\
 \noalign{\medskip}
 & h \, z^{\frac{1}{h}} \left( 1 + \sum_{l=1}^{q-1}
 \frac{c_l\omega^l z^{-l(k-1)}}{1- l\,q}  \right) +
  c_{q}\log z
  , \quad   q:=\frac{1}{h(1-k)} \in \mathbb{N}.
\end{aligned} \right.
\end{equation}

\begin{enumerate}
 \item Assume that an interval $[a,b]$ is such that the following condition holds:
 \begin{equation*}
  \forall \tau  \in [a,b], \quad
  {\rm Re}\Bigl( (\nu_j-\nu_{j'}) e^{\sqrt{-1}\tau }\Bigr)=0 \mbox{ if and
only if } \nu_j=\nu_{j'}.
 \end{equation*}
Then, there exist  $\delta>0$ and a unique basis of solutions $\psi_j(z;\omega), j=1,\dots,n$, analytic in
$\widetilde{\mathbb{C}}_M \times \C$, with the  asymptotic behaviour
\begin{equation}
\label{eq:psibasis}
\begin{aligned}
  \psi_{j}(z;\omega)=  &z^{\frac{1}{h}\rho^\vee} \left( \psi_{j} + O\big(z^{-\delta}\big) \right)
  e^{-\nu_{j} P(z;\omega)},
  \\
  & 
  \mbox{ as } z \to  \infty, \;  h a \leq \arg z \leq h (b+\pi).
  \end{aligned}
 \end{equation}
Moreover, the above estimate holds uniformly with respect to $\omega$, as $\omega$ varies in any compact of $\C$.

\vskip 0.15 cm
\noindent
\item Assume that $j_0$ is a subdominant index for an interval $[a,b]$, namely
\begin{equation*}
  \forall \tau  \in [a,b] \mbox{ and } j \neq j_0, \quad
  {\rm Re}\Bigl( (\nu_{j_0}-\nu_j ) e^{\sqrt{-1}\tau }\Bigr) >0 .
 \end{equation*}
Then, there exist $\delta>0$ and a unique (subdominant) solution $\psi_{j_0}(z;\omega)$, analytic in 
$\widetilde{\mathbb{C}}_M \times \C$,
such that
\begin{equation}
\label{eq:psisub}
\begin{aligned}  \Psi(z;\omega)=&  z^{\frac{1}{h}\rho^\vee} \left( \psi_{j_0} + O\big(z^{-\delta}\big) \right) e^{-\mu_{j_0} P(z;\omega)},
\\
&\mbox{ as } z \to  \infty, \; h (a-\pi)\leq \arg z \leq h (b+\pi).
\end{aligned}
 \end{equation}
 Moreover, the above estimate holds uniformly with respect to $\omega$, as $\omega$ varies in any compact of $\C$.
 \end{enumerate}
\end{thm}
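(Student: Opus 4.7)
The plan is to derive Theorem \ref{cor:odeim} as a direct corollary of Theorem \ref{thm:matrixodeim}, applied to the gauge-equivalent connection $\mc L'$ of \eqref{eq:GL}, and then to pull the resulting asymptotics back through $G^{-1}$.

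First I would carry out the gauge reduction. Setting $\widetilde\psi := G\psi$ with $G$ as in \eqref{eq:Gp}, the equation $\mc L \psi = 0$ becomes
\[
\partial_z \widetilde\psi + \bigl( z^{-1} p(z;\omega)\, \Lambda + R(z;\omega) \bigr) \widetilde\psi = 0,
\]
where $\Lambda = f + e_\theta$ is the regular semi-simple cyclic element of $\mathfrak g$ and $R \in \hol(\widetilde{\C}_M \times \C; \gl(n,\C))$ satisfies $R = O(z^{-1-\delta})$ uniformly on compact subsets of $\C$. This is exactly \eqref{eq:GL}; the specific $\tilde n$ chosen in \eqref{eq:Gp} is dictated, via the gauge rules \eqref{eq:Gauge1}--\eqref{eq:Gauge2}, by the requirement that all contributions of principal degree $\geq 0$ collapse onto the diagonal term $z^{-1}p(z;\omega)\Lambda$, the rest being of the required decay order.

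Next I would verify that the transformed equation matches the hypotheses of Theorem \ref{thm:matrixodeim} on any finite-dimensional $\mathfrak g$-module $V$. Since $\Lambda$ is regular semi-simple, it diagonalises on $V$ with $\omega$-independent eigenbasis $\psi_j$ and eigenvalues $\nu_j$. The scalar factor decomposes as
\[
z^{-1} p(z;\omega) = z^{1/h - 1} + \sum_{l=1}^{\lfloor 1/((1-k)h)\rfloor} c_l\, \omega^l\, z^{1/h - 1 + l(1-k)},
\]
so one identifies $\beta_0 = 1/h - 1$, whence $\beta_0 + 1 = 1/h$; in the resonant case $q := 1/(h(1-k)) \in \N$ the $l = q$ summand has exponent $-1$, playing the role of $\beta_H$ in \eqref{eq:Qzla}, while in the non-resonant case no such summand appears. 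Termwise integration then reproduces precisely the primitive $P(z;\omega)$ of \eqref{eq:PAction}, the $c_q \log z$ term arising from the resonant summand. Applying Theorem \ref{thm:matrixodeim}(1) under \eqref{eq:nolines} (respectively, part (2) under \eqref{eq:nosubdominantlines}) with $A = \Lambda$ yields a unique analytic basis $\widetilde\psi_j$ (respectively, a unique subdominant solution $\widetilde\Psi$) of $\mc L'\widetilde\psi = 0$ on $\widetilde{\C}_M \times \C$, satisfying \eqref{eq:basesinfinity} (respectively, \eqref{eq:subdominantgeneral}) in the sector $ha \leq \arg z \leq h(b+\pi)$ (respectively, $h(a-\pi) \leq \arg z \leq h(b+\pi)$)—the scaling factor $h = 1/(\beta_0+1)$ being exactly the one prescribed by Theorem \ref{thm:matrixodeim}—and uniformly in $\omega$ on compact subsets of $\C$.

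Finally I would invert the gauge, setting $\psi_j := G^{-1}\widetilde\psi_j$ and $\Psi := G^{-1}\widetilde\Psi$, where
\[
G^{-1} = \bigl(p(z;\omega)\bigr)^{\rho^\vee} \circ \exp\bigl(-p(z;\omega)^{-1}\, \tilde n\bigr).
\]
Decomposing $V$ into $\rho^\vee$-weight components and using the explicit form of $p$ in \eqref{eq:pzla}, the factor $(p(z;\omega))^{\rho^\vee}$ splits as $z^{\rho^\vee/h}$ times a correction whose growing part gets reabsorbed into the exponential $e^{-\nu_j P}$ (because $P$ already incorporates, by \eqref{eq:PAction}, all the corrections of $p$ to its leading power $z^{1/h}$), leaving a remainder of the form $I + O(z^{-\sigma})$ for some $\sigma > 0$; meanwhile $\exp(-p^{-1}\tilde n)$ is a finite sum by nilpotency of $\tilde n$ and equals $I + O(z^{-1/h})$. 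Absorbing these into the error term, possibly reducing $\delta$ but keeping it positive, yields the asymptotics \eqref{eq:psibasis} and \eqref{eq:psisub}. The hardest point will be precisely this last bookkeeping: one must carefully track how the subleading (and potentially growing in $z$) contributions of $(p)^{\rho^\vee}$ recombine with $e^{-\nu_j P}$ to produce a clean prefactor $z^{\rho^\vee/h}$ together with a decaying error $O(z^{-\delta})$, and check that uniformity in $\omega$ on compact subsets of $\C$ is preserved throughout. This reorganisation is routine, if somewhat delicate in the resonant case, thanks to the explicit polynomial-in-$z^{1-k}$ shape of $p$ and the compatibility of $\log p$ with $P$ guaranteed by $P' = z^{-1}p$.
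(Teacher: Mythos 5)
Your route is the same as the paper's: gauge by $G$ of \eqref{eq:Gp} to reach \eqref{eq:GL}, apply Theorem \ref{thm:matrixodeim} with $A=\Lambda$, $\beta_0+1=1/h$ (whence the sectors rescaled by $h$), and return via $G^{-1}=(p)^{\rho^\vee}\exp(-p^{-1}\tilde n)$. The gap is in the last step, which you yourself flag as the hardest point but resolve by a mechanism that does not work. You claim the ``growing part'' of $(p(z;\omega))^{\rho^\vee}=z^{\rho^\vee/h}\,(p\,z^{-1/h})^{\rho^\vee}$ ``gets reabsorbed into the exponential $e^{-\nu_jP}$ because $P$ already incorporates the corrections of $p$''. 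This is not meaningful: $e^{-\nu_jP(z;\omega)}$ is a \emph{scalar} factor common to all components of the solution, while $(p\,z^{-1/h})^{\rho^\vee}$ is a \emph{matrix} factor acting with different powers on the different $\rho^\vee$-weight subspaces of $V$; a genuinely growing correction of this type cannot be traded against a scalar exponential, and if it were present the clean prefactor $z^{\rho^\vee/h}$ in \eqref{eq:psibasis}, \eqref{eq:psisub} would be unattainable. (The same confusion shows in your identification of the exponents: you expand $z^{-1}p$ with exponents $1/h-1+l(1-k)$ and yet assert that the $l=q$ term has exponent $-1$, which is only consistent if the correction exponents are negative.)

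The correct resolution is simpler and is exactly what makes the theorem true: the corrections of $p$ to its leading term $z^{1/h}$ are \emph{negative} powers of $z$, uniformly for $\omega$ in compacts (the exponent printed in \eqref{eq:pzla} is a misprint; that it must be negative is already forced by the resonant $\log z$ term in \eqref{eq:PAction}, which requires the $l=q$ term of $z^{-1}p$ to have exponent $-1$). Hence $(p)^{\pm\rho^\vee}=z^{\pm\rho^\vee/h}\bigl(I_n+O(z^{-\epsilon})\bigr)$ for some $\epsilon>0$, and since $\tilde n$ is nilpotent, $\exp(\mp p^{-1}\tilde n)=I_n+O(z^{-1/h})$, so $G^{-1}=z^{\rho^\vee/h}\bigl(I_n+O(z^{-\delta'})\bigr)$ with $\delta'>0$, uniformly on compacts in $\omega$. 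Left-multiplying the asymptotics of $G\psi$ furnished by Theorem \ref{thm:matrixodeim} by this expression gives \eqref{eq:psibasis} and \eqref{eq:psisub} at once, with $\delta$ replaced by $\min(\delta,\delta')$: no rebalancing against $e^{-\nu_jP}$ is needed, because $z^{\rho^\vee/h}$ sits outermost on the left. With this replacement your argument coincides with the paper's proof; the remaining steps (reduction to \eqref{eq:GL}, identification of $P$, application of parts (1) and (2), uniqueness, uniformity on compacts) are fine.
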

 \begin{proof}
 After \eqref{eq:GL}, we have that
 \begin{equation}\label{eq:Lppsi}
  \mathcal{L}' \phi(z)=0, \quad \phi:= G \psi,
 \end{equation}
where $\mathcal{L}'$ and $G$ are as in \eqref{eq:GL} and \eqref{eq:Gp}.
We notice that
\begin{equation}\label{eq:Gasymp}
 G = z^{-\frac{\rho^\vee}{h}} \left( I_n + O(z^{-\delta'})\right),\quad \mbox{ for some } \delta'>0,
\end{equation}
where $I_n$ is the identity matrix.
In fact, on the one side $\big(p(z;\omega)\big)^{- \rho^\vee} = z^{-\frac{\rho^\vee}{h}} \left( I_n + O(z^{-1+k})\right)$; on the other side,
since $\tilde{n}$ is nilpotent,
$\exp\left( p(z;\omega)^{-1} \tilde{n}\right)$ is a finite sum, whence
$\exp\left( p(z;\omega)^{-1} \tilde{n}\right)=( I_n +O (z^{-\frac{1}{h}}))$.
Applying Theorem \ref{thm:matrixodeim} to \eqref{eq:Lppsi} and using \eqref{eq:Gasymp}, we obtain the required result.
 \end{proof}

\begin{rem}
While  item (1) of Theorem \ref{cor:odeim} is completely new,
a weaker version of item (2) 
is already available, see \cite[Theorem 3.4]{MRV}. In {\it loc.\,cit.},
it is shown that if $\tau _0$ is a subdominant direction for the index $j_0$, the subdominant
solution exists with asymptotic \eqref{eq:psisub} is the small sector
 $|\arg z-\tau _0 |\leq \frac{h \pi}{2}$.
\end{rem}

\begin{rem}
For the sake of definiteness, we have concentrated here on the study of F-F connections for $\mathfrak{g}$ simply-laced.
However, Theorem \ref{thm:matrixodeim} can be applied to study the case $\mathfrak{g}$ not-simply-laced (in which case
they are called F-F-H connections, see \cite{FH,MRV2,MR2}) as well as other families of opers that have been proposed
in the ODE/IM literature, as for example in \cite{LZ,GLVW,MMR}.
\end{rem}

\section{The  case of  ODEs with meromorphic coefficients}
\label{16giugno2023-1}
\subsection{Preliminaries on formal solutions}\label{secforsol}
We consider the classical asymptotic problem for the differential equation
\beq
\label{30maggio2023-1}
\frac{dY}{dz}=z^{r-1}A(z) Y,\quad \quad A\in \hol({\C}_{a};\gl(n,\C)),
\eeq
where $r\in\N$ is  called {\it Poincar\'e rank} at infinity  and $A$ is analytic at infinity.  Theorem \ref{mainth}  can be applied  to obtain the existence and uniqueness result of fundamental matrix solutions, with asymptotic behaviour in a ``wide'' sector. The uniqueness issue and the characterization of the sector are the main achievements  of our approach. 
\vskip1,5mm
Let $\C[\![z^{-1}]\!]$ be the algebra of formal power series in $z^{-1}$. It is well known \cite[Th. I]{BJL79-2} that equation \eqref{30maggio2023-1} admits {\it formal} fundamental matrix solutions 
\beq
\label{19giugno2023-2} 
Y_F(z)=F(z) G(z), 
\eeq
where 
$$
F(z)=z^N\sum_{k=0}^\infty F_kz^{-k}~\in~ z^N\cdot \C[\![z^{-1}]\!]\otimes \gl(n,\C),\quad  N\in \Z,
$$
 with $\det F(z)\neq 0$ and   $ F_0\neq 0$ (though $\det F_0=0$, in general), 
and 
\begin{equation} 
\label{6giugno2023-1}
G(z)= z^J U e^{Q(z)}.
\end{equation}
Here,  $J$ is a  matrix in Jordan form, $U$ is a matrix constructed using integer powers of $\exp(2\pi \sqrt{-1} /p)$, and 
$$ Q(z)
=\sum_{k=0}^{p r-1} Q_k z^{\alpha_k}=Q_0z^r+\dots + Q_{pr-1}z^{1/p}
$$
is a diagonal matrix whose entries are polynomials in $z^{1/p}$, without constant 
term  and maximal degree $r$, being 
$$
p\in\N_{>0},\quad \alpha_k:=r-\frac{k}{p}\in \Q_{>0}
.
$$
 A concrete algorithm for computing $Q(z)$  is described  in \cite{Wag89}.   In particular \cite[pag. 398]{Wag89}, the eigenvalues of $r Q_0$  are the eigenvalues of $ 
A_0=\lim_{z\to \infty} A(z)
$, with the same multiplicity. 
   The matrices $Q, J, U$  do not mutually commute.  
We can modify $N$ and  $F$ by  substituting  $J$ with  $J-\Tilde{K}$ and  $F$ with $  F \cdot z^{\Tilde{K}}$, where $\Tilde{K}$ is a diagonal matrix with integer entries. In this way, without loss of generality, we may assume that %
$0\leq \on{Re}(\hbox{eigenvls. of $J$})<1$.  Although $N$ can be fixed in such a way, $F(z)$ may not be unique. %
On the other hand,  $G$ is determined by $A(z)$, and it is a formal invariant of the 
differential equation.\footnote{Having fixed $N$, two differential equations  are formally meromorphically equivalent if and only if they have - up to permutation -  the same $Q$ and $J$ ($U$  always has the predetermined structure) \cite[Sect\, 3]{BJL79-2}.} 

Each $F(z)$ has a unique factorization \cite[Sect.\,4]{BJL79-2} 
$$ 
F(z)=F_a(z)P(z) z^K,\quad\quad F_a(z)=\sum_{k=0}^\infty F^{(a)}_k z^{-k},\quad \det F_0^{(a)}\neq 0,
$$ 
with a formal %
 series $F_a(z)$,    a lower triangular matrix $P(z)$ polynomial in $z$,  $P(0)=\on{diag}P(z)=I_n$, and 
$ 
K$
 is a diagonal matrix with integer entries. 

\subsection{$r$-adequate tuples, and second application of the main theorem}\label{secmaetw}
 Given a formal series $F(z)$, the standard approach in the literature \cite{Sh62,Was65,Sh68}   is to prove the existence of a fundamental matrix solution of  \eqref{30maggio2023-1} holomorphic  at infinity on a small sector, where it has asymptotic behaviour   $F(z)$.  This approach requires the repeated application of shearing transformations and block diagonal reductions a finite number of times.  At each step, the Sibuya-Wasow asymptotic existence theorem applies to a precisely specified sector, but the sector varies at each step.  As a result, the final angular opening cannot be determined, and it can only be shown that a fundamental asymptotic solution corresponding to $F$ exists on every  sufficiently narrow sector (see for example theorem 19.1 in \cite{Was65}). Furthermore, this solution is not unique.  
The {\it minimal}  angular  opening  is provided by the normal sectors defined in   \cite[Sect.\,4]{BJL79-3}.   
\vskip1,5mm
Using our approach based on Theorem \ref{mainth}, we will establish the existence and, most importantly, the uniqueness of  fundamental matrix solutions with a prescribed asymptotic behaviour $F(z)$ on  {\it explicitly determined sectors} with {\it optimal} angular opening. This will be done in Theorem \ref{5giugno2023-7},  under the $r$-{\it adequate} condition (Definition \ref{radeq} below).

 \vskip1,5mm
 Up to a permutation, $Q(z)$ is partitioned into blocks
$$ 
Q(z)= q_1(z)I_{s_1}\oplus \cdots\oplus q_\ell(z)I_{s_\ell},
\quad\quad 
q_i(z):= \sum_{k=0}^{pr-1} \lambda_i^{(k)} z^{\alpha_k},\quad \lambda_i^{(k)}\in\C,\quad  i=1,\dots,\ell,
$$
where $ I_{s_i}$ is the identity matrix of dimension $s_i$, with $s_1+\dots+s_\ell=n$, and 
$$
\boldsymbol{\lambda}_i\neq \boldsymbol{\lambda}_j
,\quad \forall ~1\leq i\neq j \leq \ell
,
$$
 where $\boldsymbol{\lambda}_i:=(\lambda^{(0)}_i,\lambda^{(1)}_i,\dots,\lambda^{(h)}_i)$, and $h:=pr-1$. 
 
Let  $ \lambda_{ij}:=\lambda_i^{(k_{ij})}-\lambda_j^{(k_{ij})}$, with 
$k_{ij}:=\on{min}\{k~|~\lambda_i^{(k)}-\lambda_j^{(k)}\neq 0\}$, and label the distinct values of $\arg \lambda_{ij}\in ]\eta-\pi,\eta[$, for $1\leq i\neq j \leq \ell$ as in \eqref{9giugno2023-4}. 
In complete analogy with Definition \ref{9giugno2023-2},  the {\it Stokes rays} of $Q(z)$ for the pair $(i,j)$, are  the half lines in $\Tilde{\C^*}$ defined by $ 
\on{Re} \lambda_{ij} z^{\alpha_{k_{ij}}}=0$ and $ \on{Im}  \lambda_{ij} z^{\alpha_{k_{ij}}}<0
$. 
The Stokes rays for the pair  $(i,j)$ have directions
$$
\arg z= \tau_{ij}~ \on{mod} \frac{2\pi}{\alpha_{k_{ij}}}, \quad \hbox{ with }\quad \tau_{ij}:=\frac{1}{\alpha_{k_{ij}}}\left(\frac{3\pi}{2} -\arg\lambda_{ij}\right).
$$
  The directions 
$\tau_{ij}$  for all   $\arg\lambda_{ij}\in ]\eta-\pi,\eta[$ are label as in \eqref{sigmarho}, with the same rules (with symbol $\sigma$ replaced by $\alpha$).

Let $\alpha^{(\rho)}$  be the $\alpha_{k_{ij}}$ appearing in the $\tau_{ij}$ labelled as $\tau_\rho$.  We obtain the analog of Proposition \ref{16novembre2024-2}.

\begin{prop} The directions of all  Stokes rays are given by 
\begin{equation}
\label{29maggio2023-2-bis}
\arg z=\tau_\rho+\frac{k \pi}{\alpha^{(\rho)}},\quad \rho\in\{0,\dots,\mu-1\},\quad k\in\mathbb{Z}.
\end{equation}
\end{prop}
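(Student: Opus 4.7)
The statement is the direct analogue of Proposition \ref{16novembre2024-2} from Section \ref{secstok}, with the exponents $\sigma_k$ replaced by $\alpha_k = r - k/p$ coming from the formal invariant $Q(z)$. The proof carries over essentially verbatim.

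My plan is to argue in three short steps. First, I would recall from the construction that the Stokes rays for a single pair $(i,j)$, with $i\neq j$, have directions
\[
\arg z = \tau_{ij}+\frac{2k\pi}{\alpha_{k_{ij}}},\qquad k\in\mathbb{Z},
\]
which is exactly the analogue of \eqref{1ottobre2024-1} with $\sigma_{k_{ij}}$ replaced by $\alpha_{k_{ij}}$. This is immediate from the defining equations $\mathrm{Re}\,\lambda_{ij}z^{\alpha_{k_{ij}}}=0$ and $\mathrm{Im}\,\lambda_{ij}z^{\alpha_{k_{ij}}}<0$ of a Stokes ray.

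Second, I would exploit the relation $\lambda_{ji}=-\lambda_{ij}$, which gives $\arg\lambda_{ji}=\arg\lambda_{ij}+\pi\pmod{2\pi}$ and $\alpha_{k_{ji}}=\alpha_{k_{ij}}$. Consequently, letting the pair range over both $(i,j)$ and $(j,i)$ and taking all branches of the argument, the full set of determinations of $\arg\lambda_{ij}$ (for $i\neq j$) is $\{\eta_\alpha+m\pi : \alpha=0,\dots,\widetilde\mu-1,\ m\in\mathbb{Z}\}$, where $\eta_0,\dots,\eta_{\widetilde\mu-1}$ are as in \eqref{9giugno2023-4}. A shift of $\arg\lambda_{ij}$ by $\pi$ translates, via the formula $\tau_{ij}=\alpha_{k_{ij}}^{-1}(3\pi/2-\arg\lambda_{ij})$, into a shift of $\tau_{ij}$ by $-\pi/\alpha_{k_{ij}}$; combined with the $2\pi/\alpha_{k_{ij}}$-periodicity from step one, this yields all Stokes directions for $(i,j)$ in the form $\tau_{ij}+k\pi/\alpha_{k_{ij}}$ with $k\in\mathbb{Z}$.

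Third, I would invoke the labelling rules (1)--(3) set out after \eqref{sigmarho} and reused before the statement: these ensure precisely that $\{\tau_\rho\}_{\rho=0}^{\mu-1}$ together with the associated exponents $\{\alpha^{(\rho)}\}_{\rho=0}^{\mu-1}$ exhaust all pairs $(\tau_{ij},\alpha_{k_{ij}})$ that arise as $\arg\lambda_{ij}$ runs through $]\eta-\pi,\eta[$. In particular rule (3) explains why two coincident directions with distinct exponents must be listed as two distinct $\tau_\rho$'s, since they then generate different sequences under the $\pi/\alpha^{(\rho)}$-shift. Combining steps two and three, the union of all Stokes directions, over all pairs and all branches, is exactly
\[
\bigl\{\tau_\rho+k\pi/\alpha^{(\rho)}:\rho\in\{0,\dots,\mu-1\},\ k\in\mathbb{Z}\bigr\},
\]
as claimed.

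The only subtlety, and the one place I would pay attention, is the bookkeeping in step three: one has to check that no Stokes direction is missed and none is counted as coming from a ``wrong'' exponent $\alpha^{(\rho)}$; but this is guaranteed by the very definition of the labelling, and there is no analytic content beyond the observations in steps one and two. No new ideas are required beyond those already used in the proof of Proposition \ref{16novembre2024-2}.
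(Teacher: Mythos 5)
Your argument is correct and follows the same route as the paper: the paper proves the $\sigma$-version (Proposition \ref{16novembre2024-2}) exactly by noting that $\lambda_{ji}=-\lambda_{ij}$ makes all determinations of $\arg\lambda_{ij}$ equal to $\eta_\alpha+m\pi$ and then invoking the formula for $\tau_{ij}$ together with the labelling rules, and it states the $\alpha$-version as the verbatim analogue. Your three steps just spell out that same argument in slightly more detail, so no difference in substance.
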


\begin{defn}\label{radeq}
We say that $\bm k\in \Z^\mu$ is {\it $r$-adequate with respect to $(\al^{(\rho)};\tau_\rho)_{\rho=0}^{\mu-1}$} if the open interval 
\beq\label{26maggio2023-10-tris}
\Ups_{\bm k}:=\bigcap_{\rho=0}^{\mu-1}\,\left]\tau_\rho+\frac{k_\rho \pi}{\al^{(\rho)}}-\frac{\pi}{r};\,\tau_\rho+\frac{k_\rho \pi}{\al^{(\rho)}}\right[
\eeq is non-empty. 
\end{defn}
Adequateness means that all Stokes rays with directions $\tau_\rho+{k_\rho \pi}/{\al^{(\rho)}}$, for $0\leq \rho
\leq \mu-1$, lie within a sector of angular opening less than $\pi/r$.
\begin{thm}
\label{5giugno2023-7}
Let     $F(z)=F_a(z)P(z) z^K $ be the formal series associated to a formal solution \eqref{19giugno2023-2}  of  equation  \eqref{30maggio2023-1}. Consider the following truncations of $F(z)$ and $F_a(z)$ respectively:
$$ 
F^{(M)}(z):=z^N\sum_{k=0}^M F_kz^{-k}
,\quad \quad F_a^{(M)}(z):=\sum_{k=0}^{M} F_k^{(a)}z^{-k},\quad M\in\N_{>0}.
$$
Let $\bm k\in\Z^\mu$ be $r$-adequate with respect to $(\al^{(\rho)};\tau_\rho)_{\rho=0}^{\mu-1}$.
There exists a
 unique fundamental matrix solution $Y_{\boldsymbol{k}}\colon \Tilde{\C}_{a}\to GL(n,\C)$ of \eqref{30maggio2023-1}, with asymptotic behaviour
\begin{align}
\label{12giugno2023-2}
Y_{\boldsymbol{k}}(z)&=\Bigl(F^{(M)}(z)+O(z^{N-M-1})\Bigr)z^JUe^{Q(z)}
\\
\label{12giugno2023-2-bis}
&=\Bigl(F_a^{(M)}(z)+O(z^{-M-1})\Bigr)P(z) z^Kz^JUe^{Q(z)},
\end{align}
for  $z\to\infty $ 
 in any closed sub-sector of the sector 
$$ 
\mathscr{S}_{\boldsymbol{k}}:=\{z\in \Tilde{\C^*}~|~a_{\boldsymbol{k}}<\arg z <b_{\boldsymbol{k}}+\pi/r\},\qquad \text{where }\Ups_{\bm k}=\left]a_{\bm k}, b_{\bm k}\right[.
$$
\end{thm}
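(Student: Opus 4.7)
The plan is to reduce Theorem~\ref{5giugno2023-7} to the already-established Theorem~\ref{5giugno2023-3} via a gauge transformation combined with the change of variable $w=z^r$. First, I would use the factorisation $F=F_aPz^K$ (which is needed because the leading coefficient $F_0$ may be rank-deficient, whereas $F_0^{(a)}$ is invertible, so that $F_a^{(M)}$ is invertible for $|z|$ large) and perform the gauge transformation $Y(z)=F_a^{(M)}(z)\,P(z)\,z^K\cdot z^J U\cdot\hat Y(z)$ in \eqref{30maggio2023-1}, for $M\in\N$ sufficiently large. Since $F(z)\,z^JUe^{Q(z)}$ is a formal fundamental matrix solution of \eqref{30maggio2023-1}, one has the formal identity
\[
(F_aPz^Kz^JU)^{-1}\bigl(z^{r-1}A\,F_aPz^Kz^JU-(F_aPz^Kz^JU)'\bigr)=Q'(z),
\]
so that truncating $F_a$ to $F_a^{(M)}$ produces an equation
\[
\hat Y'(z)=\bigl(Q'(z)+R(z)\bigr)\,\hat Y(z),
\]
where $Q'(z)=\sum_k\alpha_kQ_kz^{\alpha_k-1}$ is diagonal and the residual $R(z)$, arising from the truncation together with the polynomial (and logarithmic) factors produced by conjugation by $z^JU$, is $O(z^{-1-\delta})$ for some $\delta>0$ that can be taken arbitrarily large by increasing $M$; in particular $R$ satisfies the good decay condition by Example~\ref{13aprile2023-5}.

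Next, I would apply the change of variable $w=z^r$, under which $(1/(rz^{r-1}))\,Q'(z)$ becomes $\sum_k\sigma_kQ_kw^{\sigma_k-1}$ with $\sigma_k:=\alpha_k/r=1-k/(pr)\in(0,1]$. The transformed equation thus fits the form \eqref{diffeq} with diagonal leading part of the form \eqref{2giugno2023-1} and a perturbation still decaying polynomially. Under $\arg w=r\,\arg z$, the sector $\mathscr S_{\bm k}$---of angular width at most $2\pi/r$ in $z$---is mapped to a sector of width at most $2\pi$ in $w$, which fits the $\H_{I,a}$ framework; moreover the $r$-adequateness of $\bm k$ with respect to $(\alpha^{(\rho)},\tau_\rho)$ in the sense of Definition~\ref{radeq} translates, under this scaling, into the ordinary adequateness of $\bm k$ with respect to $(\sigma^{(\rho)},r\tau_\rho)$ in the sense of Definition~\ref{18settembre2023-3}. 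Theorem~\ref{5giugno2023-3} therefore applies in the $w$-variable and produces a unique $\hat Y_{\bm k}$ with $\hat Y_{\bm k}(z)=(I_n+o(1))\exp Q(z)$ on any closed subsector of $\mathscr S_{\bm k}$. Undoing the gauge gives $Y_{\bm k}(z)=F_a^{(M)}(z)\,P(z)\,z^K\,z^J U\,\hat Y_{\bm k}(z)$, from which \eqref{12giugno2023-2-bis} is immediate; then \eqref{12giugno2023-2} follows by collecting the truncations, and uniqueness transfers directly from the uniqueness assertion for $\hat Y_{\bm k}$.

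The main obstacle will be the precise error estimate for $R$: after the truncation of $F_a$ and the conjugation by $z^JU$---whose Jordan structure (via $z^J=e^{J\log z}$) and the permutation-like structure of $U$ introduce polynomial and logarithmic factors---one must verify that $R(z)=O(z^{-1-\delta})$ uniformly on $\mathscr S_{\bm k}$, which in turn requires choosing $M$ sufficiently large in a quantitative way depending on the spectrum of $J$. This is the analog, in the present ramified meromorphic setting, of the delicate asymptotic estimate that underlies the classical Sibuya--Wasow existence theorem; the novelty is that, once this estimate is in place, the maximal angular opening of the asymptotic sector follows immediately from the characterisation of $r$-adequateness outlined above, and the uniqueness is automatic from Theorem~\ref{mainth}.
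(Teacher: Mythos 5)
Your proposal is correct and follows essentially the same route as the paper's proof: gauge the equation by a truncation of the formal series composed with $z^JU$ (the paper's Lemma \ref{6giugno2023-4}, which truncates $F$ itself rather than $F_a P z^K$ --- a cosmetic difference), reduce to $\frac{dQ}{dz}$ plus a remainder with good decay, pass to $x=z^r$ so that $r$-adequateness becomes adequateness in the sense of Definition \ref{18settembre2023-3}, apply Theorem \ref{5giugno2023-3}, and undo the gauge to get \eqref{12giugno2023-2}--\eqref{12giugno2023-2-bis}. The residual estimate you single out as the main obstacle is precisely what Lemma \ref{6giugno2023-4} supplies, via the normal form \eqref{6giugno2023-2} and the observation that conjugation by $z^JU$ costs only a bounded power of $z$ (the eigenvalues of $J$ being normalized to have real part in $[0,1)$), so taking the truncation order $M^*$ large enough closes the argument exactly as you sketch.
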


\begin{rem}
\label{5ottobre2024-2}
If $\arg z=a_{\bm k}$ and $\arg z= b_{\bm k}+\pi/r$ are not directions of Stokes rays, then the sector $\mathscr{S}_{\boldsymbol{k}}$  can be extended up to the closest Stokes rays by a standard argument. 
\end{rem}

Before proving the theorem, we state some consequences for the generic case. 
\begin{defn}
\label{6giugno2023-5}
$A$  in \eqref{30maggio2023-1} is {\it generic}  if    $\lambda_i^{(0)}\neq \lambda_j^{(0)}$ for all $i\neq j$. 
\end{defn}
The definition makes sense, because  the eigenvalues $\lambda_i^{(0)}$  are the eigenvalues of $ 
A_0/r=\lim_{z\to \infty} A(z)/r
$ \cite[pag. 398]{Wag89}.
 In the generic case,  $\lambda_{ij}=\lambda_i^{(0)}-\lambda_j^{(0)}\neq 0$, and  $\alpha_{k_{ij}}=\alpha^{(0)}=\dots=\alpha^{(\mu-1)}=r$. It follows form \eqref{29maggio2023-2-bis} that all the possible directions of Stokes rays  can be labelled as 
 $$
 \tau_\nu=\tau_{\rho}+\frac{k\pi}{r},\quad \rho\in\{0,\dots,\mu-1\},\quad \nu:=\rho+k\mu,\quad k\in\mathbb{Z}.
 $$
 They satisfy the strict inequalities  $$ 
\tau_{\nu}<\tau_{\nu+1}\quad \forall~\nu\in \Z.
$$
\begin{rem}
In the general case, a labelling of the Stokes rays was introduced in  \cite{BJL79-3}. Thought the notation $\tau_\nu$ appears in  \cite{BJL79-3}, it coincides with ours only in the generic case.
\end{rem}

\begin{cor}
\label{6giugno2023-7}
If $A$ is generic, then for any $\nu\in \Z$ system \eqref{30maggio2023-1} admits    a unique fundamental matrix solution $Y_\nu\colon \Tilde{\C}_{a}\to GL(n,\C)$, with behaviour \eqref{12giugno2023-2} 
for  $z\to\infty $ 
 in any closed sub-sector of the sector 
\beq
\label{15giugno2023-1} 
\mathscr{S}_\nu:=\Bigl\{z\in \Tilde{\C^*}~|~\tau_{\nu-1}<\arg z<\tau_{\nu}+\frac{\pi}{r}
\Bigr\}.
\eeq
 \end{cor}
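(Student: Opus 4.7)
\proof[Proof sketch of Corollary \ref{6giugno2023-7}]
The strategy is to reduce Corollary \ref{6giugno2023-7} to Theorem \ref{5giugno2023-7} by exhibiting, for each $\nu\in\Z$, an explicit $r$-adequate tuple $\bm k\in\Z^\mu$ whose associated sector $\mathscr{S}_{\bm k}$ coincides with $\mathscr{S}_\nu$. In the generic case, $\alpha^{(\rho)}=r$ for every $\rho\in\{0,\dots,\mu-1\}$, so formula \eqref{26maggio2023-10-tris} specializes to
\[
\Ups_{\bm k}=\bigcap_{\rho=0}^{\mu-1}\left]\tau_\rho+\frac{(k_\rho-1)\pi}{r};\,\tau_\rho+\frac{k_\rho\pi}{r}\right[,
\]
so that $a_{\bm k}=\max_\rho\bigl(\tau_\rho+(k_\rho-1)\pi/r\bigr)$ and $b_{\bm k}=\min_\rho\bigl(\tau_\rho+k_\rho\pi/r\bigr)$, and both endpoints lie in the set of Stokes directions. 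The task is therefore to choose $\bm k$ so that these two endpoints are exactly $\tau_{\nu-1}$ and $\tau_\nu$.

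Given $\nu\in\Z$, I would first write $\nu=\rho_o+k\mu$ with $\rho_o\in\{0,\dots,\mu-1\}$ and $k\in\Z$ uniquely determined, so that $\tau_\nu=\tau_{\rho_o}+k\pi/r$. For the case $\rho_o\geq 1$, one has $\tau_{\nu-1}=\tau_{\rho_o-1}+k\pi/r$, and I would define
\[
k_\rho:=\begin{cases} k+1, & 0\leq \rho\leq \rho_o-1,\\ k, & \rho_o\leq \rho\leq \mu-1,\end{cases}
\]
so that the interval indexed by $\rho_o$ supplies the upper endpoint $\tau_{\rho_o}+k\pi/r=\tau_\nu$, and the interval indexed by $\rho_o-1$ supplies the lower endpoint $\tau_{\rho_o-1}+k\pi/r=\tau_{\nu-1}$. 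For the case $\rho_o=0$, one has $\tau_{\nu-1}=\tau_{\mu-1}+(k-1)\pi/r$, and I would simply take $k_\rho=k$ for all $\rho$, so that $\rho=0$ supplies the upper endpoint $\tau_0+k\pi/r=\tau_\nu$ and $\rho=\mu-1$ supplies the lower endpoint $\tau_{\mu-1}+(k-1)\pi/r=\tau_{\nu-1}$.

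The step that requires the most care is to verify that the remaining intervals in the intersection do not further shrink the endpoints. This is where the consecutivity of $\tau_{\nu-1}$ and $\tau_\nu$ enters crucially: for every $\rho\in\{0,\dots,\mu-1\}$, both $\tau_\rho+(k_\rho-1)\pi/r$ and $\tau_\rho+k_\rho\pi/r$ are among the Stokes directions in the labelling \eqref{29maggio2023-2-bis}, and if either of them fell strictly inside the open interval $]\tau_{\nu-1},\tau_\nu[$ this would contradict the fact that $\tau_{\nu-1}$ and $\tau_\nu$ are consecutive Stokes directions. Hence each interval in the intersection contains $]\tau_{\nu-1},\tau_\nu[$, and the intersection itself equals $]\tau_{\nu-1},\tau_\nu[$, which is non-empty since $\tau_{\nu-1}<\tau_\nu$ strictly in the generic case. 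Thus $\bm k$ is $r$-adequate and $\Ups_{\bm k}=]\tau_{\nu-1},\tau_\nu[$.

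Finally, Theorem \ref{5giugno2023-7} applied to this $\bm k$ produces a unique fundamental matrix solution $Y_\nu:=Y_{\bm k}\colon\Tilde{\C}_a\to GL(n,\C)$ with the asymptotic behaviour \eqref{12giugno2023-2} on every closed subsector of
\[
\mathscr{S}_{\bm k}=\left\{z\in\Tilde{\C^*}\,:\,\tau_{\nu-1}<\arg z<\tau_\nu+\pi/r\right\}=\mathscr{S}_\nu,
\]
which is precisely the claim. The only potential obstacle, namely the possible emptiness of $\Ups_{\bm k}$, is excluded in the generic case by the strict inequalities $\tau_\nu<\tau_{\nu+1}$ recalled just before the statement of the corollary.
\endproof
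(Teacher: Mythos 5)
Your proof is correct and follows essentially the same route as the paper: for each $\nu$ you exhibit an explicit $r$-adequate staircase tuple with $\Ups_{\bm k}=\left]\tau_{\nu-1},\tau_\nu\right[$ and then invoke Theorem \ref{5giugno2023-7} (the paper identifies the nonempty $\Ups_{\bm k}$ by counting the $\mu$ Stokes rays in a sector of opening $\pi/r$, and writes down exactly your staircase tuples in the proof of the analogous Corollary \ref{7giugno2023-2}). One small point: the consecutivity argument only shows that an interval whose endpoints are Stokes directions either contains $\left]\tau_{\nu-1},\tau_\nu\right[$ or is disjoint from it, so you should also remark that each interval in your intersection actually meets $\left]\tau_{\nu-1},\tau_\nu\right[$ --- which is immediate from the endpoint bounds you already established when identifying which intervals supply $\tau_{\nu-1}$ and $\tau_\nu$.
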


\begin{proof} We claim that $\Ups_{\bm k}=\left]\tau_{\nu-1};\tau_\nu\right[$ for some $\nu\in\Z$. Indeed, any sector $\tau<\arg z<\tau+\pi/r$ (with $\arg z=\tau$ not being a Stokes ray) contains exactly $\mu$ Stokes rays with directions $\tau_{\nu}<\tau_{\nu+1}<\cdots<\tau_{\nu+\mu-1}$, for some $\nu\in \Z$. Notice that $\tau_{\nu+\mu}=\tau_{\nu}+\pi/r$, for any $\nu\in\Z$.
The largest sector which contains $\tau_{\nu},\dots,\tau_{\nu+\mu-1}$  is $ 
\mathscr{S}_\nu$. %
\end{proof}

As a corollary of Theorems \ref{mainth} and \ref{5giugno2023-7}, we also recover the following  classical result \cite{BJL79-1}:
\begin{cor}
\label{6giugno2023-6}
Assume that $A_0:=\lim_{z\to \infty} A(z)$ has pairwise distinct eigenvalues. Fix $F_0^{(a)}$  such  that $r Q_0:=(F_0^{(a)})^{-1}A_0 F_0^{(a)}$ is diagonal. 
For any $\nu\in \Z$, equation  \eqref{30maggio2023-1} admits   a unique fundamental matrix solution $Y_\nu\colon \Tilde{\C}_{a}\to GL(n,\C)$ with asymptotic behaviour
$$ 
Y_\nu(z)=\Bigl(F_a^{(M)}(z)+O(z^{-M-1})\Bigr) z^Je^{Q(z)},\quad J\in \h(n,\C),
$$ 
for  $z\to\infty $ 
 in any closed sub-sector of the sector $ 
\mathscr{S}_\nu$ in \eqref{15giugno2023-1}, where  $F_a^{(M)}(z)=\sum_{k=0}^{M} F_k^{(a)}z^{-k} $ is 
 uniquely determined by $A(z)$ and $F_0^{(a)}$. 
 Moreover, 
$A(z)$
  uniquely determines  $Q(z)$ and $J$ by 
\beq
\label{19giugno2023-3}
F_a^{(r)}(z)^{-1}A(z)F_a^{(r)}(z)= \sum_{k=0}^{r-1} (r-k)Q_k z^{-k} +J z^{-r}+O(z^{-r-1}),\quad z\to \infty.
\eeq
\end{cor}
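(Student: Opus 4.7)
The strategy is to specialize Corollary \ref{6giugno2023-7} to the non-ramified generic case forced by the distinct-eigenvalue hypothesis, and then to extract \eqref{19giugno2023-3} from the formal identity obtained by substituting the formal fundamental solution into the equation. Since the eigenvalues of $A_0$ coincide with those of $rQ_0$ (cf.\,\cite[p.\,398]{Wag89}) and are pairwise distinct, we have $\ell=n$, $s_i=1$ for every $i$, and $A$ is generic in the sense of Definition \ref{6giugno2023-5}. Classical formal-diagonalization (see e.g.\,\cite{Was65,BJL79-2}) shows moreover that no ramification occurs in this situation: one may take $p=1$, $U=I_n$, $P(z)=I_n$, $K=0$ and $J\in\h(n,\C)$, so that the formal fundamental solution simplifies to $Y_F(z)=F_a(z)\,z^Je^{Q(z)}$; once $F_0^{(a)}$ diagonalizing $A_0$ is fixed, the higher-order coefficients $F_k^{(a)}$ are determined uniquely by the recursion obtained by substituting into the ODE, solvable at each step because the commutator with $rQ_0$ is invertible on the off-diagonal part of $\gl(n,\C)$. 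Corollary \ref{6giugno2023-7} then yields directly, for every $\nu\in\Z$, the required unique fundamental matrix solution $Y_\nu\colon\Tilde{\C}_a\to GL(n,\C)$ with the asserted asymptotic behaviour on every closed subsector of $\mathscr{S}_\nu$.

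To obtain \eqref{19giugno2023-3}, I would substitute $Y=F_a(z)\,z^Je^{Q(z)}$ into \eqref{30maggio2023-1}: using commutativity of the diagonal matrices $J$, $Q(z)$, $Q'(z)$, $z^J$, one gets the formal relation
\be
F_a(z)^{-1}A(z)F_a(z)=z^{1-r}F_a(z)^{-1}F_a'(z)+z^{-r}J+z^{1-r}Q'(z).
\ee
The first summand is $O(z^{-r-1})$ since $F_a'(z)=O(z^{-2})$, while the explicit form of $Q$ gives $z^{1-r}Q'(z)=\sum_{k=0}^{r-1}(r-k)Q_kz^{-k}$. Replacing $F_a$ by the truncation $F_a^{(r)}$ introduces an additional $O(z^{-r-1})$ error, because $F_a-F_a^{(r)}=O(z^{-r-1})$ and $A$ is analytic at infinity; this gives exactly \eqref{19giugno2023-3}. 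The diagonal Laurent coefficients at $z^0,z^{-1},\dots,z^{-r}$ of the meromorphic function $F_a^{(r)}(z)^{-1}A(z)F_a^{(r)}(z)$ are determined by $A$ and the chosen $F_0^{(a)}$, whence $Q_0,\dots,Q_{r-1}$ and $J$ are uniquely determined.

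The only step requiring some care is the non-ramification reduction ($p=1$ and the attendant simplifications of $U$, $P$, $K$, $J$), but this is a classical consequence of the distinct-eigenvalue hypothesis at infinity and can be imported directly from \cite{Was65,BJL79-2}. All remaining steps are either direct appeals to Corollary \ref{6giugno2023-7} or routine bookkeeping with asymptotic error terms, so I expect no serious obstacle beyond this reduction.
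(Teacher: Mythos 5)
Your proposal is correct and takes essentially the same route as the paper: reduce to the generic, unramified situation created by the distinct eigenvalues of $A_0$, invoke Corollary \ref{6giugno2023-7} for existence and uniqueness on $\mathscr{S}_\nu$, and obtain \eqref{19giugno2023-3} from the identity satisfied by the formal series, with the recursive determination of the $F_k^{(a)}$ from $F_0^{(a)}$ playing the same role as the paper's footnote. The only organizational difference is that the paper constructs the truncated gauge $F_a^{(M)}$ (with $M\geq r$) directly, so that \eqref{6giugno2023-9} simultaneously yields $p=1$, $U=I$, $Q_k=\Lambda_k/(r-k)$, $J=\Lambda_r$, and (setting $M=r$) the relation \eqref{19giugno2023-3}, whereas you import the unramified formal structure from \cite{Was65,BJL79-2} and then truncate the full formal solution.
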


In order to prove  Theorem \ref{5giugno2023-7}, we need the following

\begin{lem}
\label{6giugno2023-4}
For sufficiently large  $M^*\in \N$,  the gauge transformation $Y=F^{(M^*)}(z)z^J U 
\widehat{Y}$  transforms \eqref{30maggio2023-1} into the equation 
\beq
\label{30maggio2023-2}
\frac{d\widehat{Y}}{dz}= \left(\frac{dQ(z)}{dz}+R^\prime(z)\right)\widehat{Y},
\eeq
where  $R^\prime\in \hol(\Tilde{{\C}}_{a};\gl(n,\C))$ has behaviour $R^\prime(z)=O(z^{-1-\delta^\prime})$, $\delta^\prime>0$, for $|z|\to \infty$  and bounded $\arg z$.
 \end{lem}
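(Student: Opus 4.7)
The plan is to exploit the formal identity characterising $F(z)$ as the ``power-series part'' of a formal fundamental solution, and to control the error produced by truncating it at a suitably high order $M^*$. Since $Y_F(z)=F(z)\,z^J U e^{Q(z)}$ is, by assumption, a formal fundamental solution of \eqref{30maggio2023-1}, a direct differentiation of $\tilde Y(z):=z^J U e^{Q(z)}$ yields $\tilde Y'(z)=\hat A(z)\tilde Y(z)$ with
\[
\hat A(z):=Jz^{-1}+z^J U Q'(z)U^{-1}z^{-J},
\]
and therefore, matching $Y_F'=z^{r-1}AY_F$, the series $F(z)$ must satisfy
\[
z^{r-1}A(z)F(z)-F'(z)=F(z)\,\hat A(z)
\]
as a formal Laurent-series identity.

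Next I would compute the transformed coefficient $B(z):=T^{-1}\bigl(z^{r-1}AT-T'\bigr)$ with $T(z):=F^{(M^*)}(z)z^J U$. Writing $F=F^{(M^*)}+F^{\mathrm{rest}}$, where $F^{\mathrm{rest}}(z)=O(z^{N-M^*-1})$, the formal identity applied to the truncation becomes
\[
z^{r-1}AF^{(M^*)}-(F^{(M^*)})'=F^{(M^*)}\hat A+E_0(z),
\]
with $E_0(z):=F^{\mathrm{rest}}\hat A-z^{r-1}AF^{\mathrm{rest}}+(F^{\mathrm{rest}})'$ analytic on $\Tilde{\C}_a$ and of order $O(z^{r+N-M^*-2})$ uniformly on sectors of bounded argument (since $\hat A=O(z^{r-1})$). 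Using the key cancellation $(\hat A-Jz^{-1})z^J U=z^J U Q'(z)$, a direct substitution then yields
\[
B(z)=Q'(z)+U^{-1}z^{-J}\bigl(F^{(M^*)}(z)\bigr)^{-1}E_0(z)\,z^J U=\frac{dQ}{dz}+R'(z).
\]

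It then remains to bound $R'$. To make the inverse $\bigl(F^{(M^*)}\bigr)^{-1}$ well-defined and of order $O(|z|^{-N})$ at infinity, I would appeal to the canonical factorisation $F=F_aPz^K$ (with $\det F_0^{(a)}\neq 0$) and, if $\det F_0=0$, replace $T$ by $F_a^{(M^*)}Pz^{K+J}U$; since $F^{(M^*)}$ and $F_a^{(M^*)}Pz^K$ differ only by a remainder of the same size, this substitution affects neither the structure of the argument nor the final estimate. The conjugation $U^{-1}z^{-J}(\,\cdot\,)z^JU$ alters $|z|$-exponents only by quantities bounded in absolute value by the spread of $\Re\,\on{Spec}(J)$ (hence strictly less than $1$ by the normalisation of $J$) and introduces at most polynomial factors in $|\log z|$ from possible Jordan blocks. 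Altogether $R'(z)=O\bigl(|z|^{r-M^*-2+\gamma}(\log|z|)^{n-1}\bigr)$ for some $\gamma<1$, so choosing $M^*\geq r$ gives $R'(z)=O(|z|^{-1-\delta'})$ for any $0<\delta'<1-\gamma$, as required. The main obstacle will be precisely the invertibility of the truncated gauge at infinity, which forces the use of the canonical factorisation of $F$ whenever $\det F_0=0$; once this technical point is dealt with, the rest is bookkeeping based on the explicit formula for $\hat A$ and on the controlled size of the conjugation by $z^J$.
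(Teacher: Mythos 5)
Your proposal is correct and follows essentially the same route as the paper: the paper performs the gauge in two steps ($Y=F^{(M^*)}\widetilde Y$, then $\widetilde Y=z^J U\widehat Y$), citing the Balser--Jurkat--Lutz normal form for the identity $G'G^{-1}=z^JU\,Q'(z)\,U^{-1}z^{-J}+Jz^{-1}$ that you derive directly, and then bounds the conjugated truncation error for $M^*$ large exactly as you do. Your explicit attention to the invertibility of $F^{(M^*)}$ is a point the paper leaves implicit; note only that polynomial boundedness of $\bigl(F^{(M^*)}\bigr)^{-1}$ for large $|z|$ --- which holds for $M^*$ large because $\det F(z)\neq 0$ formally --- already suffices for the bookkeeping, so the replacement of $F^{(M^*)}$ by $F_a^{(M^*)}Pz^{K}$ when $\det F_0=0$ is unnecessary (and, taken literally, would prove the statement for a slightly different gauge than the one in the lemma).
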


\begin{proof}  
The formal gauge transformation 
$
Y=F(z)G
$
takes \eqref {30maggio2023-1} to the normal form \cite[Sect. 3 d.]{BJL79-2}
\begin{equation} 
\label{6giugno2023-2}
\frac{d G}{dz}= z^{r-1}\mathcal{P}(z) G,\quad \quad \mathcal{P}(z)=\mathcal{P}_0+\frac{\mathcal{P}_1}{z}+\dots +
\frac{\mathcal{P}_{r-1}}{z^{r-1}}+\frac{J}{z^r}.
\end{equation}
 From  \eqref{6giugno2023-1} it follows that  
$$ 
z^{r-1}\mathcal{P}(z)~=z^J U~ \frac{dQ(z)}{dz}~U^{-1} z^{-J}~+\frac{J}{z} .
$$
Hence, we can take $M^*\in \N$ sufficiently big  to ensure  that the gauge transformation 
$ 
Y=F^{(M^*)}(z)\widetilde{Y}$ satisfies 
$$ 
\frac{d\widetilde{Y}}{dz}=\Bigl( z^{r-1}\mathcal{P}(z) +\widetilde{R}(z)\Bigr)\widetilde{Y}
\equiv \left(z^J U~ \frac{dQ(z)}{dz}~U^{-1} z^{-J}~+\frac{J}{z}+\widetilde{R}(z)\right)  \widetilde{Y},
$$ 
with  $\widetilde{R}\in \hol(\Tilde{\C}_{a};\gl(n,\C)) $ of order $O(z^{-\Delta-1})$ for some  $\Delta\geq 1$.
Another gauge transformation $ 
\widetilde{Y}=z^J U 
\widehat{Y}
$  
yields
$$ 
\frac{d\widehat{Y}}{dz}= \left(\frac{dQ(z)}{dz}+R^\prime(z)\right)\widehat{Y},\quad \quad R^\prime(z):=U^{-1} z^{-J}\widetilde{R}(z) z^J U.
$$
 If $M^*$ is sufficiently big, we can arrange so that $R^\prime(z)=O(z^{-1-\delta^\prime})$, $\delta^\prime>0$.
  \end{proof}

\begin{proof}[Proof of Theorem \ref{5giugno2023-7}] 
With the change of variables $ 
x=z^{r}
$, $r\arg z=\arg x$, equation  \eqref{30maggio2023-2}  becomes  
\begin{equation}
\label{6giugno2023-3}
 \begin{aligned}
&  \frac{d\widehat{Y}}{dx} = 
\left(\Lambda(x)+R(x)
\right) \widehat{Y},&
\\
 & \Lambda(x):=\sum_{k=0}^{pr-1}\sigma_kQ_k x^{\sigma_k-1},\quad \sigma_k:=\frac{\alpha_k}{r}=1-\frac{k}{pr}, & R(x):=\frac{1}{r}x^{1/r-1}R^\prime(x^{1/r}).
\end{aligned}
\end{equation}
Since 
$R(z)=O(x^{-1-\hat{\delta}})$, $\hat\delta:=\delta^\prime/r$, 
Theorem  \ref{5giugno2023-3} can be applied to equation  \eqref{6giugno2023-3} and,  using the  above change of variables, we conclude that   \eqref{30maggio2023-2} has a unique fundamental matrix solution with behaviour
$$ 
\widehat{Y}(z)=(I_n+O(z^{-\delta^\prime}))e^{Q(z)},\quad z\to \infty, \quad z\in \mathscr{S}_{\boldsymbol{k}}.
$$
Recalling the proof of Lemma \ref{6giugno2023-4}, we see that $
\widetilde{Y}=z^J U\widehat{Y}
$ 
 has the  behaviour 
$$ 
\widetilde{Y}(z)=~\Bigl(I_n+z^J UO(z^{-\delta^\prime})U^{-1}z^{-J} \Bigr)~z^J Ue^{Q(z)}~= (I_n+O(z^{-\Delta}))z^J Ue^{Q(z)}.
$$
Then, 
$$ 
Y(z)=F^{(M^*)}(z)\widetilde{Y}(z)=~ \Bigl(F^{(M^*)}(z)+F^{(M^*)}(z)O(z^{-\Delta})\Bigr)~z^J Ue^{Q(z)}.
$$
Now, $F^{(M^*)}(z)\cdot O(z^{-\Delta})~=O(z^{N-\Delta})$. 
 Consider an additional truncation $F^{(M)}$  of $F^{(M^*)}$, with $0<M\leq M^*$. The smallest exponent in $F^{(M)}$  is $z^{N-M}$. Then, we take $M^*$ sufficiently big so that  $\Delta>M$. For such  $M^*$, \eqref{12giugno2023-2} holds. 
In an analogous way, we  prove \eqref{12giugno2023-2-bis}.
\end{proof}

\begin{proof}[Proof of Corollary \ref{6giugno2023-6}]
Fix $F_0^{(a)}$ such that  $ 
\Lambda_0:=(F_0^{(a)})^{-1} A_0F_0^{(a)}$ is diagonal.\footnote{There is a freedom $F_0^{(a)}\mapsto 
F_0^{(a)}C$, $C\in\h(n,\C)$.} Then    $F_a^{(M)}(z)=\sum_{k=0}^{M} F_k^{(a)}z^{-k}$ can be  uniquely  determined\footnote{It is a standard formal computation. Writing the Taylor 
expansion $A(z)=\sum_{k=0}^\infty A_k z^{-k}$,  the coefficients $F_k^{(a)}$ of 
$F_a^{(M)}(z)$  are uniquely determined step-by-step in terms of  $F_0^{(a)}$ and 
the $A_k$. Uniqueness depends on the fact that  $A_0$ has distinct eigenvalues.} 
  with $M\geq r$, such that the gauge transformation $Y=F_a^{(M)}(z) \widehat{Y}$ gives  
\begin{equation}
\label{6giugno2023-9}
\begin{aligned} 
\frac{d\widehat{Y}}{dz}&= \left[z^{r-1}F_a^{(M)}(z)^{-1}A(z)F_a^{(M)}(z)
-F_a^{(M)}(z)^{-1}\frac{dF_a^{(M)}(z)}{dz}
\right]
\widehat{Y}
\\
\noalign{\medskip}
&= \left[z^{r-1}\Bigl(\Lambda_0+\sum_{k=1}^r \Lambda_k z^{-k}\Bigr) +\widetilde{R}(z)\right]\widehat{Y},
\end{aligned}
\end{equation}
with uniquely determined diagonal 
matrices $\Lambda_k$ and  remainder  $\widetilde{R}(z)=O(z^{r-M-2})$ analytic at infinity.  System \eqref{6giugno2023-9} is in the form \eqref{30maggio2023-2}, with $p=1$, 
 $U=I$,  $ Q_k=\Lambda_k/(r-k)$ for $k=0,\cdots, r-1$, and $J=\Lambda_r$. 
Then,  the first part of Corollary \ref{6giugno2023-6} follows from  Corollary \ref{6giugno2023-7}. Moreover, letting  $M=r$, \eqref{6giugno2023-9}  implies \eqref{19giugno2023-3}.

\end{proof}

\begin{example}
\label{17novembre2024-1}
 We apply Theorem \ref{5giugno2023-7} to the equation  
 $$ 
 \frac{dY}{dz}=\left(z^3\begin{pmatrix}
 \sqrt{-1} & 0 & 0 
 \\
 0 & \sqrt{-1} & 0 
 \\
 0 & 0 & -\sqrt{-1}
 \end{pmatrix}+ \begin{pmatrix}
 \sqrt{-1} & 0 & 0 
 \\
 0 & 0& 0 
 \\
 0 & 0 & 0
 \end{pmatrix}+R(z)\right)Y.
 $$
 It is already in the form \eqref{30maggio2023-2}, with 
 $$
 Q(z)= \frac{z^4}{4}\begin{pmatrix}\sqrt{-1} & 0 & 0 
 \\
 0 & \sqrt{-1} & 0 
 \\
 0 & 0 & -\sqrt{-1}
 \end{pmatrix}
 +z\begin{pmatrix}
 \sqrt{-1} & 0 & 0 
 \\
 0 & 0& 0 
 \\
 0 & 0 & 0
 \end{pmatrix},
$$
so that 
$$
  \alpha_0=4,\quad \alpha_3=1,\quad \alpha_1=\alpha_2=0,\quad p=1,
 $$
and
 $$
 \boldsymbol{\lambda}_1=(\sqrt{-1}/4,0,0,\sqrt{-1}),
 $$
  $$
 \boldsymbol{\lambda}_2=(\sqrt{-1}/4,0,0,0),
 $$ 
 $$
 \boldsymbol{\lambda}_3=(-\sqrt{-1}/4,0,0,0).
 $$
Hence, $$k_{12}=3, \quad k_{13}=k_{23}=0.
$$ 
Le us take $\eta=2\pi$. The determinations of $\arg\lambda_{ij}$  in $]\eta-\pi,\eta[$ are only the determination of  $\arg(-\sqrt{-1})$  equal to $3\pi/2$. It follows that
$$ 
\tau_0=\frac{1}{4}\left(\frac{3\pi}{2}-\frac{3\pi}{2}\right)=0,\quad \hbox{ with $4=\alpha_0=:\alpha^{(0)}$},
$$
$$ 
\tau_1=\frac{1}{1}\left(\frac{3\pi}{2}-\frac{3\pi}{2}\right)=0,\quad \hbox{ with $1=\alpha_3=:\alpha^{(1)}$}
$$
All the Stokes rays have directions 
$$
\tau_0+\frac{k\pi}{\alpha^{(0)}}\equiv \frac{k\pi}{4},\quad \tau_1+{k\pi}{\alpha^{(1)}}\equiv k\pi,\quad 
\quad
k\in\mathbb{Z}.
$$
To check adequateness of $\boldsymbol{k}=(k_0,k_1)$, it sufficies to check if there are  sectors  of amplitude less than  $\pi/4$  containing two rays $\arg z= {k_0\pi}/{4}$ and $\arg z=  k_1\pi$. This happens only for the rays whose projection onto $\C^*$ is  either the positive or the negative real axis.  
Hence, Theorem \ref{5giugno2023-7} and Remark \ref{5ottobre2024-2} imply that there is a unique fundamental matrix solution in each sector 
$$ \{z\in \Tilde{\C^*}~|~-\frac{\pi}{4}<\arg z+m\pi  <\frac{\pi}{4}\},\quad m\in\mathbb{Z},
$$
with asymptotic behaviour for $z\to \infty$ given by 
$$ 
Y_F(z)=(I+\sum_{j=1}^\infty F_j z^{-j}) e^{Q(z)}.
$$
On the other hand, Theorem \ref{5giugno2023-7} does not hold  on the sectors
 $$ \{z\in \Tilde{\C^*}~|~0<\arg z +\frac{m\pi}{2} <\frac{\pi}{2}\},\quad \{z\in \Tilde{\C^*}~|~\frac{\pi}{4}<\arg z +m\pi  <\frac{3\pi}{4}\}.
 $$
Uniqueness genuinely fails in these sectors. If   $Y(z)$  is a fundamental matrix solution asymptotic to $Y_F(z)$ in $\{0<\arg z +\frac{m\pi}{2} <\frac{\pi}{2}\}$, or in $ \{\frac{\pi}{4}<\arg z +m\pi  <\frac{3\pi}{4}\}$, then any matrix solution of the form $Y(z)C$ has the same asymptotics, for every
$$
C=\begin{pmatrix}
1 & c & 0 
\\
0 & 1 &0 
\\
0 & 0 &1 
\end{pmatrix}
\quad \hbox{or}
\quad
C=\begin{pmatrix}
1 & 0 & 0 
\\
c & 1 &0 
\\
0 & 0 &1 
\end{pmatrix},\quad\quad c\in\mathbb{C}.
$$ 
D. Guzzetti thanks prof. T. Mochizuki for suggesting this example.\end{example}

\subsection{Subdominant solutions}\label{sec4sub}
We state a result on subdominant vector solutions only in case of distinct eigenvalues. The indices $i$ and $ j_o$ below refer to matrix entries.  The following corollary is a direct application of Theorem \ref{22giugno2023-5}, the proofs of  Lemmas \ref{6giugno2023-4}, of Theorem \ref{5giugno2023-7} and of Corollary \ref{6giugno2023-6}.

\begin{cor}
In the assumptions and notation of Corollary \ref{6giugno2023-6}, fix an index $j_o$ and  let 
 $f_{j_o}^{(M)}(z)$ be the  $j_o$-th column of $F_a^{(M)}(z)$. Moreover, let $\tau_{\rho_o}<\dots<\tau_{\rho_{\mu_o-1}}$ be  consecutive directions of the Stokes rays of the pairs $(i,j_o)$,  $i\in\{1,\dots,n\}$, $i\neq j_o$. If $\tau_{\rho_{\mu_o}-1}-\tau_{\rho_0}<\frac{\pi}{r}$, %
  then there exist unique subdominant vector solutions $y_{j_o}^{(k)}$, $k\in \Z$, with behaviour 
$$ 
y_{j_o}^{(k)}(z)= \bigl(f_{j_o}^{(M)}(z)+ O(z^{-M-1})\bigr) \exp\left\{\sum_{q=0}^{pr-1}\lambda_{j_o}^{(q)} z^{\alpha_q} + J_{j_oj_o} \ln z\right\},\quad z\to \infty
$$
in every closed subsector of the sector %
$$ 
{S}_k^{(j_o)}:= \Bigl\{z\in \Tilde{\C}_a~\Bigl|~     \tau_{\rho_{\mu_o-1}}+(2k-1)\frac{\pi}{r}<\arg z< \tau_{\rho_0}+(2k+2)\frac{\pi}{r} \Bigr\}.
  $$
 \end{cor}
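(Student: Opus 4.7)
The plan is to reduce the assertion to the not-necessarily meromorphic setting of Theorem \ref{22giugno2023-5} by combining the gauge transformation used in Corollary \ref{6giugno2023-6} with the change of variable $x=z^{r}$ which already appeared in the proof of Theorem \ref{5giugno2023-7}.

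\textbf{Step 1 (Reduction to the form of Section \ref{secstok}).} Following verbatim the proof of Corollary \ref{6giugno2023-6}, the gauge transformation $Y=F_a^{(M)}(z)\widehat Y$, for $M$ sufficiently large, converts \eqref{30maggio2023-1} into a system
\[
\frac{d\widehat Y}{dz}=\left[z^{r-1}\Bigl(\Lambda_0+\sum_{k=1}^{r}\Lambda_k z^{-k}\Bigr)+\widetilde R(z)\right]\widehat Y,
\]
with $\Lambda_k\in\h(n,\C)$ uniquely determined, $Q_k=\Lambda_k/(r-k)$ for $0\le k\le r-1$, $J=\Lambda_r$, and $\widetilde R(z)=O(z^{r-M-2})$. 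Then the change of variables $x=z^r$ brings the above to an equation of the form \eqref{6giugno2023-3}, whose diagonal leading term is precisely the derivative of $Q(x^{1/r})$ viewed as a generalized polynomial in $x$, and whose perturbation $R(x)=\frac{1}{r}x^{1/r-1}\widetilde R(x^{1/r})$ satisfies $R(x)=O(x^{-1-\hat\delta})$ for some $\hat\delta>0$ if $M$ is chosen large enough; in particular $R$ fulfils the good decay condition of Section \ref{secstok}.

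\textbf{Step 2 (Application of Theorem \ref{22giugno2023-5}).} In the $x$-plane the generic condition of Definition \ref{19giugno2023-1} holds because the leading exponents $\lambda_i^{(0)}$ are pairwise distinct (they are the eigenvalues of $A_0/r$). The Stokes directions for $(i,j_o)$ in the $x$-variable are precisely $r\,\tau_{ij_o}$ modulo $2\pi$, so the $\mu_o$ consecutive directions of the corollary become, in the $x$-plane, consecutive directions $r\tau_{\rho_0}<\dots<r\tau_{\rho_{\mu_o-1}}$. The hypothesis $\tau_{\rho_{\mu_o-1}}-\tau_{\rho_0}<\pi/r$ becomes exactly $r\tau_{\rho_{\mu_o-1}}-r\tau_{\rho_0}<\pi$, which is the hypothesis of Theorem \ref{22giugno2023-5}. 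Hence, for every $k\in\Z$, there exist $s_{j_o}=1$ vector solution $\widehat y_{j_o}^{(k)}(x)$ of the $x$-equation, uniquely characterised by
\[
\widehat y_{j_o}^{(k)}(x)=\bigl(e_{j_o}+o(1)\bigr)\exp\Bigl(\sum_{q=0}^{pr-1}\lambda_{j_o}^{(q)}x^{\sigma_q}\Bigr),
\]
as $x\to\infty$ in any closed subsector of the sector described by $r\tau_{\rho_{\mu_o-1}}-\pi<\arg x-2k\pi<r\tau_{\rho_0}+2\pi$.

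\textbf{Step 3 (Pulling back to $z$).} Setting $\widehat y_{j_o}^{(k)}(z):=\widehat y_{j_o}^{(k)}(z^r)$ and dividing $\arg x$ by $r$, the sector becomes
\[
\tau_{\rho_{\mu_o-1}}+(2k-1)\frac{\pi}{r}<\arg z<\tau_{\rho_0}+(2k+2)\frac{\pi}{r},
\]
which is the sector $S_k^{(j_o)}$ of the statement. The gauge transformation $Y=F_a^{(M)}(z)\widehat Y$ now produces a vector solution of the original equation whose $j_o$-th column asymptotics is obtained by multiplying the column $f_{j_o}^{(M)}(z)+O(z^{-M-1})$ against the exponential factor; the refined $O(z^{-M-1})$ estimate for the remainder follows exactly as in the last lines of the proof of Theorem \ref{5giugno2023-7}, by choosing $M^{*}$ large enough and then truncating to a given $M$. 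Uniqueness is inherited from the uniqueness in Theorem \ref{22giugno2023-5}, since the gauge $F_a^{(M)}(z)$ is invertible for $|z|$ large.

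\textbf{Main obstacle.} The only delicate point is the identification of the maximal sector in the $x$-plane coming from Theorem \ref{22giugno2023-5} with the sector $S_k^{(j_o)}$ in the $z$-plane, and the verification that the hypothesis $\tau_{\rho_{\mu_o-1}}-\tau_{\rho_0}<\pi/r$ is both necessary and sufficient to trigger the subdominant version of the $L$-condition after the scaling $x=z^r$; once this bookkeeping is carried out, the remaining steps are routine consequences of Section \ref{sec34} and of the gauge reduction already employed for Corollary \ref{6giugno2023-6}.
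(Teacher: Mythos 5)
Your overall strategy --- gauge by $F_a^{(M)}$ as in Corollary \ref{6giugno2023-6}, pass to $x=z^{r}$ as in the proof of Theorem \ref{5giugno2023-7}, apply the subdominant Theorem \ref{22giugno2023-5} (together with Corollary \ref{subcor}) in the $x$-plane, and pull the sector back --- is exactly the route the paper indicates for this corollary, and your bookkeeping is right: the Stokes directions scale by $r$, the hypothesis $\tau_{\rho_{\mu_o-1}}-\tau_{\rho_0}<\pi/r$ becomes precisely the hypothesis of Theorem \ref{22giugno2023-5}, the genericity in the $x$-variable follows from the simple spectrum of $A_0$, and the rescaled sector is $S_k^{(j_o)}$.

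There is, however, one concrete omission in Step 1. The system \eqref{6giugno2023-9} contains the term $z^{r-1}\Lambda_r z^{-r}=\Lambda_r/z$, i.e.\ $J/z$ with $J=\Lambda_r$. After $x=z^{r}$ this becomes $\Lambda_r/(rx)$, which is neither of the admissible form $\Lambda^{(k)}x^{\sigma_k-1}$ with $\sigma_k>0$ required in \eqref{6giugno2023-3}, nor can it be dumped into the perturbation: $|x|^{-1}$ is not $L^1$-integrable along the lines $\ell_{\phi,b}$, so the good decay condition fails for it. As written, your reduction to the form \eqref{6giugno2023-3} is therefore not achieved, and --- symptomatically --- your Step 3 never accounts for the factor $\exp\{J_{j_oj_o}\ln z\}$ that appears in the asymptotics you are proving. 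The missing (routine) step is the further diagonal gauge by $z^{J}$, as in Lemma \ref{6giugno2023-4} and in the passage from \eqref{6giugno2023-9} to the form \eqref{30maggio2023-2} (here $U=I$ and $J=\Lambda_r$ is diagonal because $A_0$ has simple spectrum): it removes $J/z$ before the change of variables, the conjugation $z^{-J}\widetilde R\,z^{J}$ only shifts the decay rate by a bounded power of $z$, so taking the working truncation order large enough keeps the perturbation $O(x^{-1-\hat\delta})$, and undoing this gauge is exactly what supplies the factor $z^{J_{j_oj_o}}$ in the final asymptotics. With this inserted, the rest of your argument (uniqueness via invertibility of the gauge for large $|z|$, and the upgrade of $o(1)$ to $O(z^{-M-1})$ by truncating at an order larger than $M$, as at the end of the proof of Theorem \ref{5giugno2023-7}) goes through and coincides with the paper's intended proof.
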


The above has a  practical consequence, which improves \cite[pag. 86-87]{Was65}.

\begin{prop}
\label{12nov2021-1}
 In the case of Corollary \ref{6giugno2023-6}, let $ y(z)$  be a column vector solution with   behaviour 
$$
  y(z)=\left(\sum_{k=0}^M f_k z^{-k}+O(z^{-M-1})\right)~\exp\left\{\lambda_{j}^{(0)} z^{r}+ \sum_{q=1}^{pr-1}\lambda_{j}^{(q)} z^{\alpha_q} + J_{jj} \ln z\right\},\quad z\to \infty \hbox{ in } S,
 $$
$f_k\in \C^n$, $f_0
 \neq 0$, in a sector $S$ not containing Stokes rays,  such that   ${\rm Re}((\lambda_i^{(0)}-\lambda_{j}^{(0)})z^r)>0$, $i\in\{1,...,n\}\backslash\{j\}$ and  $z\in S$.  
   Then, $ y(z)$ is unique and the behaviour holds on a wider sector characterized as follows. If $\tau_{\nu-1}$ and $\tau_{\nu}$ are the nearest Stokes rays outside $S$, the wider  sector contains 
   $$
   \{z\in \Tilde{\C^*} ~|~\tau_{\nu-1}-\pi/r<\arg z <\tau_{\nu}+\pi/r\}.$$
\end{prop}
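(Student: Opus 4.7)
The plan is to recognize $y(z)$ as the unique subdominant vector solution of index $j$ furnished by the subdominant corollary in Section~\ref{sec4sub} (which is itself derived from Theorem~\ref{22giugno2023-5} via the gauge reduction of Lemma~\ref{6giugno2023-4}), and then to verify that the extended sector produced by that corollary contains the wider sector claimed. The hypothesis $\operatorname{Re}((\lambda_i^{(0)}-\lambda_j^{(0)})z^r)>0$ for every $i\neq j$ and every $z\in S$ is precisely the subdominance condition for the index $j$ on $S$ (translated to the variable $x=z^r$ used in the proof of Theorem~\ref{5giugno2023-7}). Since $S$ contains no Stokes rays, this subdominance persists on the whole open sector between two consecutive Stokes rays of pairs involving $j$ that brackets $S$. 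Letting $\tau_{\rho_0}<\dots<\tau_{\rho_{\mu_o-1}}$ be the consecutive Stokes rays of the pairs $(i,j)$, $i\neq j$, inside that $\pi/r$-window, one has $\tau_{\rho_{\mu_o-1}}-\tau_{\rho_0}<\pi/r$, so the subdominant corollary yields, for the suitable $k\in\mathbb Z$, an analytic solution $y_j^{(k)}(z)$ on $\Tilde{\C}_a$ whose asymptotic behaviour matches the one stated for $y(z)$ on every closed subsector of
$$S_k^{(j)}=\bigl\{z\in\Tilde{\C}_a :\tau_{\rho_{\mu_o-1}}+(2k-1)\pi/r<\arg z<\tau_{\rho_0}+(2k+2)\pi/r\bigr\}.$$

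Next I would prove $y\equiv y_j^{(k)}$ globally on $\Tilde{\C}_a$. Take the fundamental matrix $Y_\nu$ from Corollary~\ref{6giugno2023-6} for the $\nu$ such that $S\subset\mathscr S_\nu$; its columns $\psi_1,\dots,\psi_n$ have leading exponential $e^{\lambda_i^{(0)}z^r}$ on $S$. Writing the difference $\delta:=y-y_j^{(k)}=\sum_{i=1}^n c_i\psi_i$, $\delta$ is itself a solution of the linear system, and since both $y$ and $y_j^{(k)}$ share the same asymptotic on $S$, one has $\delta(z)=o(e^{\lambda_j^{(0)}z^r})$ uniformly on closed subsectors of $S$. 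The hypothesis makes $e^{\lambda_i^{(0)}z^r}$ strictly dominate $e^{\lambda_j^{(0)}z^r}$ on $S$ for every $i\neq j$, forcing $c_i=0$ for each such $i$; comparing subleading factors then forces $c_j=0$ as well, so $\delta\equiv 0$. This simultaneously establishes uniqueness of $y$ and its global identification with $y_j^{(k)}$.

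Finally I would read off the wider sector. The endpoints of the maximal subdominance window for $j$ are Stokes rays of pairs involving $j$ lying outside $S$; they satisfy $\tau_{\rho_{\mu_o-1}}\leq\tau_{\nu-1}$ and $\tau_\nu\leq\tau_{\rho_0}+\pi/r$, with equality when the nearest overall Stokes rays $\tau_{\nu-1},\tau_\nu$ happen to belong to pairs containing $j$. Consequently the claimed extended sector $\{\tau_{\nu-1}-\pi/r<\arg z<\tau_\nu+\pi/r\}$ is contained in $S_k^{(j)}$, and the asymptotic behaviour of $y=y_j^{(k)}$ holds on every closed subsector of this wider sector. The main obstacle is precisely this sector bookkeeping: the rays $\tau_{\nu-1},\tau_\nu$ of the full equation may come from pairs $(i_1,i_2)$ with $i_1,i_2\neq j$, and one must carefully argue that they correctly nest with the Stokes rays of pairs containing $j$ so that the $\pi/r$-extension on each side delivered by the subdominant corollary reaches at least $\tau_{\nu-1}-\pi/r$ and $\tau_\nu+\pi/r$.
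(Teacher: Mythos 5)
Your route is the one the paper intends: Proposition \ref{12nov2021-1} is stated there with no written proof, as a ``practical consequence'' of the unlabeled corollary of Section \ref{sec4sub} (itself Theorem \ref{22giugno2023-5} transported through Lemma \ref{6giugno2023-4} and the change of variable $x=z^r$), and your sector bookkeeping is exactly the missing argument. Indeed, for each $i\neq j$ the hypothesis ${\rm Re}((\lambda_i^{(0)}-\lambda_j^{(0)})z^r)>0$ on $S$ places $S$ inside a positivity sector of width $\pi/r$ whose lower boundary is a Stokes ray of the ordered pair $(i,j)$ and whose upper boundary is one of $(j,i)$; hence, modulo $2\pi/r$, all Stokes directions of pairs $(\cdot,j)$ cluster in an arc of width $<\pi/r$ just below $S$, the corollary applies, and since the cluster's endpoint $\tau_{\rho_{\mu_o-1}}$ and the ray $\tau_{\rho_0}+\pi/r$ are themselves Stokes rays of the full system lying outside $S$, the nearest rays satisfy $\tau_{\rho_{\mu_o-1}}\leq\tau_{\nu-1}$ and $\tau_\nu\leq\tau_{\rho_0}+\pi/r$ (up to the $2k\pi/r$ translate), which gives the claimed inclusion of $\{\tau_{\nu-1}-\pi/r<\arg z<\tau_\nu+\pi/r\}$ in $S_k^{(j)}$.

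One step, as written, does not quite work: you start the identification from ``$y$ and $y_j^{(k)}$ share the same asymptotic on $S$'', but the proposition only prescribes some coefficients $f_k$ with $f_0\neq0$, not the canonical $j$-th column of $F_a^{(M)}$, so a priori $\delta=y-y_j^{(k)}$ is only $O\bigl(e^{\lambda_j^{(0)}z^r}\cdot|z|^{c}\bigr)$ and not $o$ of it, and the first step of your dominance argument has nothing to bite on. The standard repair is to run the dominance argument on $y$ itself: expand $y$ in the columns of the fundamental solution $Y_\nu$ of Corollary \ref{6giugno2023-6}, valid on $\mathscr{S}_\nu\supset S$; choosing a ray in the open sector $S$ off the finitely many directions where ${\rm Re}((\lambda_{i_1}^{(0)}-\lambda_{i_2}^{(0)})z^r)=0$, strict exponential dominance kills every coefficient except the $j$-th, so $y=c\,\psi_j$ with $\psi_j$ the $j$-th column of $Y_\nu$ and $c\neq0$ (here one uses $f_0\neq0$ and that the $j$-th column of $F_0^{(a)}$ is nonzero, since $\det F_0^{(a)}\neq0$). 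Comparing $\psi_j$ with the corollary's subdominant solution on $S$ by the same argument gives $\psi_j=y_j^{(k)}$, the uniqueness of asymptotic coefficients identifies the $f_k$ with $c$ times those of the $j$-th column of $F_a(z)$, and then both the uniqueness of $y$ and the extension of its behaviour to the wider sector follow exactly as you conclude.
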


\subsection{Remark on  the parametric case}\label{sec4par}  The results of Section \ref{18settembre2023-6} can be applied to equation \eqref{30maggio2023-2}, after the change of variables $x=z^r$. 
 However, the reduction of    the initial equation \eqref{30maggio2023-1} to \eqref{30maggio2023-2} involves constructing the  truncation $F^{(M^*)}(z,\omega)$ of a formal solution and the computation of  $J$. These objects are obtained through  a finite repetition  of consecutive shearing transformations and diagonalizations \cite{Was65}, where the analytic properties of the quantities involved are not generally well controlled. For this reason, we will not further explore the parametric case of \eqref{30maggio2023-1}, which lies beyond the scope of this paper.
  
 The case of ramified irregular singularities with parameters remains an open area of research. 
 The reader may refer to  \cite{BV85,Sch01}, where some sufficient conditions for the existence of  holomorphic formal solutions with respect to the parameters are provided. In particular, the condition that the degrees of $q_i(z,\omega)-q_j(z,\omega)$ are independent of $\omega$ is imposed. This condition is called ``well behaviour'' in \cite{Sch01}. 
 
 \begin{rem}
 In \cite{CG18,CDG19}, the reader can find a detailed treatment of both formal and asymptotic solutions, in the parametric case, for possibly non-generic systems with irregular singularities of Poincar\'e rank 1. More precisely, in {\it loc.\,cit.}\,\,the leading term of the coefficients at an irregular singularity is assumed to be diagonal, with not-necessarily simple spectrum. See also \cite{CG17,CDG20} for geometrical applications. In the isomonodromic case, the dependence of solutions on parameters has been extensively studied in the literature. In addition to \cite{CG18, CDG19}, readers may refer to \cite{BM05, BTL13, Bo02, Bo12, Bo14, Guz21, Guz22}.
 \end{rem}

\vskip2mm

\end{document}